\renewcommand{\H}{\mathbb{H}}
\newcommand{\N}{\mathbb{N}}
\newcommand{\R}{\mathbb{R}}
\renewcommand{\S}{\mathbb{S}}
\newcommand{\cL}{\mathcal{L}}
\newcommand{\cS}{\mathcal{S}}
\newcommand{\cU}{\mathcal U}
\newcommand{\cR}{\mathcal{R}}
\newcommand{\cT}{\mathcal{T}}
\newcommand{\cV}{\mathcal{V}}
\newcommand{\sE}{\mathscr{E}}
\newcommand{\sL}{\mathscr{L}}
\newcommand{\sM}{\mathscr{M}}
\newcommand{\sT}{\mathscr{T}}
\newcommand{\bE}{\mathbf{E}}
\newcommand{\Imm}{\mbox{\rm Im}\,}
\renewcommand{\exp}{\mbox{\rm exp}\;\!}
\newcommand{\Lip}{\mbox{\rm Lip}}
\newcommand{\bcup}{\bigcup}
\newcommand{\tens}{\otimes}
\newcommand{\sm}{\setminus}
\newcommand{\lgl}{\langle}
\newcommand{\rgl}{\rangle}
\newcommand{\pa}{\partial}
\newcommand{\con}{\subset}
\newcommand{\na}{\nabla}
\newcommand{\id}{{\rm id}} 
\newcommand{\ep}{\varepsilon} 
\newcommand{\ga}{\gamma}
\newcommand{\be}{\beta}
\newcommand{\al}{\alpha}
\newcommand{\de}{\delta}
\newcommand{\ze}{\zeta}
\newcommand{\ro}{\rho}
\newcommand{\si}{\sigma}
\newcommand{\te}{\theta}
\newcommand{\De}{\Delta}
\newcommand{\Ga}{\Gamma}
\newcommand{\La}{\Lambda}
\newcommand{\Om}{\Omega}
\newcommand{\vp}{\varphi}
\newcommand{\var}{\mbox{\rm Var}}
\DeclarePairedDelimiter\scal{\langle}{\rangle}
\definecolor{darkgreen}{rgb}{0.0, 0.56, 0.0}
\definecolor{firebrick}{rgb}{0.7, 0.13, 0.13}
\theoremstyle{plain}
\newtheorem{thm}{Theorem}[section] 
\theoremstyle{plain}
\theoremstyle{plain}
\newtheorem{prop}[thm]{Proposition}
\theoremstyle{plain}
\newtheorem{lemma}[thm]{Lemma}
\theoremstyle{plain}
\newtheorem{cor}[thm]{Corollary}
\theoremstyle{definition}
\newtheorem{defn}[thm]{Definition} 
\theoremstyle{definition}
\newtheorem{remark}[thm]{Remark}
\theoremstyle{definition}
\title[Convergence of elastic flows of curves into manifolds]{Convergence of elastic flows of curves into manifolds}
\author{Marco Pozzetta}
\address{Dipartimento di Matematica e Applicazioni, Universit\`a di Napoli Federico II, Via Cintia, Monte S. Angelo, 80126 Napoli, Italy.}
\email{marco.pozzetta@unina.it}
\date{\today}
\newtheorem*{rep@theorem}{\rep@title}
\newcommand{\newreptheorem}[2]{%
	\newenvironment{rep#1}[1]{%
		\def\rep@title{#2 \ref{##1}}%
		\begin{rep@theorem}}%
		{\end{rep@theorem}}}
\begin{document}

\begin{abstract}
	For a given $p\in[2,+\infty)$, we define the $p$-elastic energy $\sE$ of a closed curve $\ga:\S^1\to M$ immersed in a complete Riemannian manifold $(M,g)$ as the sum of the length of the curve and the $L^p$--norm of its curvature (with respect to the length measure). We are interested in the convergence of the $(L^p,L^{p'})$--gradient flow of these energies to critical points. By means of parabolic estimates, it is usually possible to prove sub-convergence of the flow, that is, convergence to critical points up to reparametrizations and, more importantly, up to isometry of the ambient. Assuming that the flow sub-converges, we are interested in proving the smooth convergence of the flow, that is, the existence of the full limit of the evolving flow.\\
	We first give an overview of the general strategy one can apply for proving such a statement. The crucial step is the application of a \L ojasiewicz-Simon gradient inequality, of which we present a versatile version. Then we apply such strategy to the flow of $\sE$ of curves into manifolds, proving the desired improvement of sub-convergence to full smooth convergence of the flow to critical points. As corollaries, we obtain the smooth convergence of the flow for $p=2$ in the Euclidean space $\R^n$, in the hyperbolic plane $\H^2$, and in the two-dimensional sphere $\S^2$. In particular, the result implies that such flow in $\R^n$ or $\H^2$ remains in a bounded region of the space for any time.
\end{abstract}

\maketitle


\noindent{\small\textbf{MSC Codes (2020):} 53E40, 35R01, 46N20.}

\noindent{\small\textbf{Keywords:} Elastic flows, Geometric flows, \L ojasiewicz-Simon gradient inequality, smooth convergence.}

\medskip

\section{Introduction}

In this work we consider regular curves $\ga:\S^1\to M$ in complete Riemannian manifolds $(M^m,g)$, where $m\ge 2$ is the dimension of $M$ and will be usually omitted. The reader not used to Differential Geometry can safely assume that $M$ is the Euclidean space at this level. Fix $p\in[2,+\infty)$. If a curve $\ga$ is sufficiently regular, say of class $W^{2,p}$, we can define its $p$-elastic energy by setting
\[
\sE(\ga) \coloneqq \int_{\S^1} 1 + \frac{1}{p}|k|^p\,ds,
\]
where $k$ is the curvature of $\ga$ and $ds=|\ga'|dx$ is the length measure. More precisely, if $\tau = |\ga'|^{-1}\pa_x\ga$ is the unit tangent vector of $\ga$ and $D$ is the connection on $M$, then $k=D_\tau \tau$. In case $(M,g)$ is the Euclidean space $\R^m$, then one classically recovers $k=\pa_s^2 \ga$, where $\pa_s \coloneqq |\ga'|^{-1}\pa_x$ is the arclength derivative.

In the last years a considerable interest has been devoted towards this kind of energies. In this paper we are interested in studying some variational aspects of $\sE$ and in particular we will investigate some properties of a gradient flow of this energy. In order to explain this concept to the non-expert reader, let us assume for the moment that $M=\R^m$ with the Euclidean metric and everything is smooth. We define the first variation functional $\delta \sE$ at a given curve $\ga$ to be the operator
\[
\de\sE[\vp] \coloneqq \frac{d}{d\ep}\bigg|_{\ep=0} \sE(\ga+\ep\vp),
\]
where $\vp$ is a vector field along $\ga$. Explicit calculations usually lead to expressions like
\[
\de\sE[\vp] = \scal{\scal{V(\ga),\vp}},
\]
where $\scal{\scal{\cdot,\cdot}}$ here is some duality defining the action of the vector $V(\ga)$ on $\vp$. Then the $\scal{\scal{\cdot,\cdot}}$-gradient flow of $\sE$ is a function $\ga:[0,T)\times \S^1\to \R^m$ solving the evolution equation $\pa_t\ga(t,x) = - V(\ga(t,x))$. Moreover an initial datum $\ga(0,\cdot)=\ga_0(\cdot)$ is given, and we understand that the equation is satisfied in the classical sense. It is clear that different representations of the variation $\de\sE$ define different driving velocities $\pa_t\ga$, and thus different gradient flows of $\sE$.
In this paper we study the gradient flow defined by the $(L^p,L^{p'})$-duality $\scal{\scal{\cdot,\cdot}}= \scal{\cdot,\cdot}_{L^{p'},L^p}$. We shall see in \Cref{sec:Manifolds} how to explicitly calculate and define such gradient flows when $M$ is an arbitrary Riemannian manifold.

For a given gradient flow, a number of questions can be investigated, starting from the existence and uniqueness of a solution once an initial datum is given. In the case of geometric evolution equations, as the energy functional and the velocity of the flow are independent of the parametrization of the curves, uniqueness is always understood up to reparametrization (see \Cref{rem:Riparametrizzazioni} for additional comments). Short time existence and uniqueness results have been studied in the literature, mainly in the case $p=2$, starting from \cite{PoldenThesis} and \cite{HuPo99}. However, these evolution equations can be seen as parabolic evolution equations in the unknown given by the parametrization of the curve, and thus we will refer to general results like the one in \cite{MaMa12} for short time existence and uniqueness results in the framework of smooth curves.

Our study concerns the long time behavior of the solution of the gradient flow. One can hope that the solution $\ga(t,\cdot)$ admits a limit, for example in some $C^k$-topology, as $t\to T^-$, where $T\in(0,+\infty]$ is the maximal time of existence of the solution. In such a case the limit should be a curve $\ga_\infty$ which is a critical point for the energy $\sE$, as the flow stops at time $T$. The description of the long time behavior is quite often not an easy task, especially in case of evolution equations where high order space-derivatives appear. Nevertheless, it is often possible to prove very strong estimates on the solution that are uniform in time, and this is done by means of parabolic techniques. Let us say that $M=\R^m$ and $p=2$, then it is known that these bounds lead to the conclusion that the flow sub-converges, that is, there are a sequence of times $t_n\nearrow T^-$ and a sequence of points $p_n\in\R^m$ such that the sequence $\ga(t_n,\cdot)-p_n$ converges to a critical point curve $\ga_\infty$ in $C^k$ for any $k$, up to reparametrization. This has been studied in \cite{PoldenThesis} and then a complete proof is given in \cite{DzKuSc02}; in \cite{DaSp17} and \cite{DaLaLiPoSp18} the same conclusion is proved for the flow taking place in the hyperbolic plane $\H^2$ and in the unit $2$-sphere $\S^2$ respectively.

However, the sole sub-convergence cannot tell anything about the full limit as $t\to T^-$, and actually it does not prevent from the possibility that two different sequences $\ga(t_n,\cdot),\ga(\tau_n,\cdot)$ with $t_n,\tau_n\to T^-$ converge to different critical points of $\sE$, always up to reparametrization and, more importantly, isometry of the ambient. The sub-convergence does not imply that the flow remains in a compact region for any time either. In this paper we formalize and apply a method firstly appeared in \cite{ChFaSc09} for promoting the sub-convergence of a flow to the existence of the full limit as $t\to T^-$. In \cite{ChFaSc09} the authors apply these techniques to the Willmore flow of closed surfaces. The key ingredient to run the argument is a \L ojasiewicz-Simon gradient inequality for the energy functional under consideration. Such an inequality estimates the difference in energy between a chosen critical point and points sufficiently close to it in terms of some norm of the first variation functional of the energy (\Cref{cor:LojaFunctionalAnalytic}). As the norm of the first variation functional coincides with the norm of the velocity of the gradient flow, by its very definition, this furnishes an additional inequality that eventually can imply the full convergence of the flow. This method has been successfully applied in \cite{DaPoSp16} for proving the full convergence of the elastic flow of open curves subject to clamped boundary conditions (see also \cite{RuSp20}); we will borrow important notations from \cite{DaPoSp16}. We remark that the functional analytic tool used here and in those works, namely the \L ojasiewicz-Simon-type inequality, is ultimately based on the important results contained in \cite{Ch03}. The idea of using these inequalities for proving convergence of solutions to parabolic equations goes back to the seminal paper of Simon \cite{Si83}, that contributed to add his name to the inequality. The first appearances of the \L ojasiewicz-Simon gradient inequality are contained in \cite{Loja63, Loja84}.

Let us conclude by mentioning some related results in this area. Very recently many contributions have been given to the theory of gradient flows of networks, both in the context of elastic flows and of the curve shortening flow. Roughly speaking, a network is given by a suitable union of open immersed curves joined at their endpoints, possibly prescribing the angles that such curves must define at their junctions. Results about short and long time behavior of these flows are contained in \cite{DaLiPo19, DaPo14, GaMePl19a, GaMePl19b, MaNoPl17, MaNoPl19}. It would be interesting to apply our methods also to unsolved problems in the context of these flows, as well as for high order flows of higher dimensional manifolds like in \cite{Ma02}. We finally mention that different ideas appeared in the literature for proving the full convergence of a flow; we recall for example \cite{NoOk17} and \cite{MuSp20}, that are based either on a priori hypotheses on the critical points of the energy functional or on a known classification of such critical points.

\medskip
\subsection*{Main results and comments}

Let $p\ge 2$ be fixed. We will show in \Cref{sec:Manifolds} that we can define the gradient flow of $\sE$ with respect to the $(L^p,L^{p'})$-duality by the following evolution equation.

For a given smooth curve $\ga_0:\S^1\to M$, we say that $\ga:[0,T)\times \S^1\to M$ is the solution of the gradient flow of $\sE$ with datum $\ga_0$ if it classically satisfies the equation
\begin{equation}\label{eq:DefnFlowPGenericoCOPY}
\begin{cases}
\pa_t \ga = -\left( \na^2 |k|^{p-2} k + \frac{1}{p'} |k|^p k -k + R(|k|^{p-2} k,\tau)\tau \right) & \mbox{ on } [0,T)\times \S^1,\\
\ga(0,\cdot)=\ga_0(\cdot)   & \mbox{ on } \S^1,
\end{cases}
\end{equation}
where we understand that $|k|^{p-2}\equiv1$ in case $p=2$. In the above equation $\tau(t,x), k(t,x)$ respectively are the tangent vector and the curvature at the point $x$ of the curve $\ga(t,\cdot)$ at time $t$, $R$ is the Riemann tensor of $(M,g)$ (see \Cref{sec:Manifolds}), and $\na$ is the normal connection along $\ga(t,\cdot)$, that is
\[
\na \phi \coloneqq D_\tau  \phi - g(D_\tau \phi, \tau)\tau,
\]
for any vector field $\phi:[0,T)\times \S^1\to TM$ such that $\phi (t,x) \in T_{\ga(t,x)}M$. Observe that in case $p=2$ and $M=\R^m$ is the Euclidean space, then the flow reduces to the classical evolution
\begin{equation*}
\begin{cases}
\pa_t \ga = -\left( \na^2  k + \frac{1}{2} |k|^2 k -k  \right) & \mbox{ on } [0,T)\times \S^1,\\
\ga(0,\cdot)=\ga_0(\cdot)   & \mbox{ on } \S^1,
\end{cases}
\end{equation*}
and $\na$ is just the composition of the normal projection along $\ga$ with the arclength derivative:
\[
\na \phi = \pa_s \phi  - \scal{\pa_s \phi, \tau} \tau,
\]
for $\phi$ as above. Without loss of generality, by Nash Theorem, we will always assume that $(M,g)$ is smoothly isometrically embedded in the Euclidean space $\R^n$, for some $n$ sufficiently large. In this way it is meaningful to say that a sequence of curves $\{\ga_l\}_{l\in\N}$ converges in $C^k$ to a curve $\ga$. We mention that the evolution equation \eqref{eq:DefnFlowPGenericoCOPY} is called $L^2$-gradient flow of $\sE$ by several authors independently of $p$, instead of $(L^p,L^{p'})$-gradient flow.

The following theorem is our main result, which is \Cref{thm:ConvergenceManifoldsMAIN} in the following.

\begin{thm}\label{thm:MainIntro}
	Suppose that $(M,g)$ is an analytic complete Riemannian manifold endowed with an analytic metric tensor $g$. Let $p\ge2$ and suppose that $\ga:[0,+\infty)\times \S^1\to M$ is a smooth solution of \eqref{eq:DefnFlowPGenericoCOPY}. Suppose that there exist a sequence of isometries $I_n:M\to M$, a sequence of times $t_n\nearrow+\infty$, and a smooth critical point $\ga_\infty:\S^1\to M$ of $\sE$ such that
	\[
	I_n\circ\ga(t_n,\cdot) -  \ga_\infty (\cdot) \xrightarrow[n\to\infty]{} 0 \qquad \mbox{ in } C^{m}(\S^1),
	\]
	for any $m\in\N$, up to reparametrization. If $p>2$ assume also that  $|k_{\ga_\infty}(x)|\neq0$ for any $x$.
	
	Then the flow $\ga(t,\cdot)$ converges in $C^m(\S^1)$ to a critical point as $t\to+\infty$, for any $m$ and up to reparametrization.
\end{thm}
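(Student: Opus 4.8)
The plan is to use the energy $\sE$ along the flow as a Lyapunov functional and to run a \L ojasiewicz-Simon argument in a $W^{2,p}$-neighbourhood of the critical point $\ga_\infty$. The first point is that it is enough to close the argument for one single large index. Since \eqref{eq:DefnFlowPGenericoCOPY} is built only from the isometry-invariant quantities $k,\tau$ and $R$, if $I$ is an isometry of $M$ and $\ga$ solves the flow, then $I\circ\ga$ solves the flow as well and $\sE(I\circ\ga)=\sE(\ga)$. Hence for every $n$ the transported flow $\tilde\ga_n(t,\cdot):=I_n\circ\ga(t,\cdot)$ is again a solution with the same energy, and by hypothesis $\tilde\ga_n(t_n,\cdot)\to\ga_\infty$ in $C^m$ for every $m$, up to reparametrization. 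If we prove that for some fixed large $N$ the solution $\tilde\ga_N$ converges to a critical point $\bar\ga$ as $t\to+\infty$, then $\ga(t,\cdot)=I_N^{-1}\circ\tilde\ga_N(t,\cdot)$ converges to the critical point $I_N^{-1}\circ\bar\ga$, which is the assertion. Thus the sequence of isometries serves only to place the transported flow, at one instant, arbitrarily close to the fixed $\ga_\infty$ and with arbitrarily small energy gap; from that instant on we work with the single solution $\tilde\ga_N$ near $\ga_\infty$.

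The core is the interplay between the energy identity and \Cref{cor:LojaFunctionalAnalytic}. Denoting by $V$ the right-hand side of \eqref{eq:DefnFlowPGenericoCOPY}, so that $\pa_t\ga=-V$ and $\de\sE[\vp]=\int_{\S^1}\langle V,\vp\rangle\,ds$, the energy is nonincreasing and
\[
\frac{d}{dt}\sE(\ga(t,\cdot))=\de\sE[\pa_t\ga]=-\|\pa_t\ga\|_{L^2(ds)}^2 .
\]
Set $H(t):=\sE(\ga(t,\cdot))-\sE(\ga_\infty)=\sE(\tilde\ga_N(t,\cdot))-\sE(\ga_\infty)\ge0$; it is nonincreasing, and $H(t_n)\to0$ by continuity of $\sE$ and $\tilde\ga_n(t_n,\cdot)\to\ga_\infty$. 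As long as $\tilde\ga_N(t,\cdot)$ stays in the neighbourhood of $\ga_\infty$ where \Cref{cor:LojaFunctionalAnalytic} applies, it provides $H^{1-\theta}\le C\|\pa_t\ga\|_{L^2}$ for some $\theta\in(0,1/2]$ and $C>0$ (the dual norm in the abstract inequality being controlled by the $L^2$-norm of the velocity on the compact domain $\S^1$). Combining with the energy identity,
\[
-\frac{d}{dt}H^\theta=\theta H^{\theta-1}\|\pa_t\ga\|_{L^2}^2\ge\frac{\theta}{C}\|\pa_t\ga\|_{L^2},
\]
and integrating from $t_N$ gives the finite-length bound $\int_{t_N}^{t}\|\pa_t\ga\|_{L^2}\,d\tau\le\frac{C}{\theta}H^\theta(t_N)$, which is arbitrarily small once $H(t_N)$ is small. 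Hence $t\mapsto\tilde\ga_N(t,\cdot)$ is Cauchy, converges to a limit $\bar\ga$, and $\bar\ga$ is a critical point since the flow comes to rest.

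Two issues must be settled to make this rigorous, and they are the technical heart. First, \Cref{cor:LojaFunctionalAnalytic} treats $\sE$ as an analytic functional on a fixed Banach space, so I would write curves near $\ga_\infty$ as normal graphs over $\ga_\infty$ and regard $\sE$ as a functional of the normal section in $W^{2,p}$; the tangential mismatch between the normal velocity $-V$ and this gauge is reabsorbed by a reparametrisation, which is harmless because the conclusion is stated up to reparametrization. The hypothesis $|k_{\ga_\infty}|\neq0$ for $p>2$ enters exactly here: it makes $k\mapsto\frac1p|k|^p$ analytic near the curvature of $\ga_\infty$ and the leading operator $\na^2|k|^{p-2}k$ uniformly elliptic, which is what \Cref{cor:LojaFunctionalAnalytic} requires. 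Second, and this is the main obstacle, the dissipation and the finite-length bound are controlled in $L^2$, whereas both the \L ojasiewicz-Simon neighbourhood and the closeness to $\ga_\infty$ live in the stronger $W^{2,p}$ topology; the two are reconciled by a continuity (trapping) argument, interpolating the small $L^2$-displacement against the uniform higher-order bounds that accompany the sub-convergence (the curve staying in a fixed region of bounded geometry, so that the parabolic a priori estimates apply). This shows that, once $H(t_N)$ is small enough, $\tilde\ga_N(t,\cdot)$ cannot leave the neighbourhood, so the finite-length bound is global; the same interpolation upgrades the $L^2$-convergence $\tilde\ga_N(t,\cdot)\to\bar\ga$ to $C^m$-convergence for every $m$, and transporting back by the fixed $I_N^{-1}$ completes the proof.
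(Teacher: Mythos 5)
Your proposal is correct and follows essentially the same route as the paper's proof of \Cref{thm:ConvergenceManifoldsMAIN}: transport by a single isometry so that the flow sits near $\ga_\infty$ at one instant, combine the energy identity with the \L ojasiewicz--Simon inequality of \Cref{cor:FullLojasiewiczPGenerico} to obtain a finite-length bound on the $L^2$-trajectory, keep the solution trapped in the $W^{4,p}$-neighbourhood (not $W^{2,p}$, but this is immaterial) by a maximality argument that interpolates the small $L^2$-displacement against the uniform higher-order bounds of \Cref{prop:StimaParabolica}, and upgrade the convergence by interpolation. The only notable difference is at the very end, where you conclude directly from the Cauchy property furnished by the finite-length bound, while the paper additionally identifies the limit as $I_{n_{\ep_0}}\circ I^{-1}\circ \ga_\infty$ via Ascoli--Arzel\`a applied to the isometries; both suffice for the statement as given.
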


We remark that for high order geometric evolution equations as the ones considered here, no maximum principles hold, and then it is not possible to conclude that the flow always stays in a bounded region of the space by means of comparison arguments. This qualitative information however follows once it is known that the flow does converge, for example as a consequence of a result like \Cref{thm:MainIntro}.

\medskip

In the first part of this paper we outline the structure of the general strategy leading to the proof of a result like \Cref{thm:MainIntro}. To this scope we take as a model the elastic flow for $p=2$ in the Euclidean space, which is completely studied in \cite{MaPo20}, whose aim is presenting the crucial general steps of the method in the most simple way (see also \cite{MaPlPo20} for a survey on elastic flows).
We believe that these methods could be applied for proving convergence of high order flows out of their sub-convergence from a unified point of view. That is why in \Cref{sec:Rn} we state and prove the main abstract tool, which is the \L ojasiewicz-Simon gradient inequality, in a purely functional analytic setting, without reference to the elastic flows of curves. As in \cite{ChFaSc09} and \cite{DaPoSp16}, such inequality follows from the results of \cite{Ch03}, but we rephrase it in ready-to-use version that can be applied to different gradient flows; this is the content of \Cref{cor:LojaFunctionalAnalytic}.

In the second part of the work we carry out the above mentioned strategy in the case of the flow in \eqref{eq:DefnFlowPGenericoCOPY}, and then we prove \Cref{thm:MainIntro}. This will lead us to prove a number of properties on the first and second variations of the energy, and useful parabolic estimates for the $p$-elastic flow that might be interesting in themselves.

In this paper, we shall only study the flow of the $p$-elastic energy for exponent $p\ge 2$; the case $p \in [1,2)$ would cause a number of technical problems, even starting from the very definition of the evolution equation, and thus the understanding of the behavior of the flow for such exponents $p$ remains an open problem.

\medskip

Let us conclude by stating some consequences of \Cref{thm:MainIntro}. As we already mentioned, sub-convergence of the flow for $p=2$ has been proved in the literature in some ambient spaces; thus we can apply \Cref{thm:MainIntro} to get the following consequence.

\begin{cor}\label{cor:CorollaryIntro}
	Let $p=2$ and suppose that $\ga:[0,+\infty)\times \S^1\to M$ is a smooth solution of \eqref{eq:DefnFlowPGenericoCOPY}. Assume that $M$ is either the Euclidean space $\R^m$, the hyperbolic plane $\H^2$, or the standard $2$-sphere $\S^2$.\\	
	Then $\ga$ smoothly converges as $t\to+\infty$ to a critical point $\ga_\infty$ of $\sE$ up to reparametrization. In particular, the flow stays in a compact set of $M$ for any time.
\end{cor}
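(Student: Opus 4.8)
The plan is to deduce \Cref{cor:CorollaryIntro} from the main theorem, \Cref{thm:MainIntro}, by verifying that its hypotheses are met in each of the three ambient spaces. Since \Cref{thm:MainIntro} already promotes sub-convergence to full smooth convergence, the entire task reduces to checking two things for $p=2$ in $M=\R^m$, $\H^2$, and $\S^2$: first, that these ambients are analytic complete Riemannian manifolds with analytic metric tensors; and second, that the flow genuinely sub-converges in the precise sense required, namely that there exist isometries $I_n$, times $t_n\nearrow+\infty$, and a smooth critical point $\ga_\infty$ with $I_n\circ\ga(t_n,\cdot)\to\ga_\infty$ in every $C^{m}(\S^1)$ up to reparametrization.

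First I would address the regularity of the ambient. The Euclidean space $\R^m$ with the flat metric, the hyperbolic plane $\H^2$, and the sphere $\S^2$ are all real-analytic manifolds equipped with real-analytic metric tensors, and all three are complete. This is classical and requires no new argument; it is exactly the standing hypothesis of \Cref{thm:MainIntro}. The restriction to $p=2$ also makes the extra assumption in \Cref{thm:MainIntro} about $|k_{\ga_\infty}|\neq0$ vacuous, so there is nothing to verify on that front.

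The substantive step is invoking the sub-convergence results already established in the literature, which the introduction cites explicitly. For $M=\R^m$ with $p=2$, the sub-convergence of the $(L^2,L^2)$-gradient flow to a critical point up to reparametrization and translation is proved in \cite{DzKuSc02} (following \cite{PoldenThesis}); the translations are precisely isometries $I_n$ of $\R^m$. For $M=\H^2$ the analogous sub-convergence is the content of \cite{DaSp17}, and for $M=\S^2$ it is proved in \cite{DaLaLiPoSp18}; in these cases the sequence of ambient isometries $I_n$ arises naturally from the non-compactness or symmetry of the space in the cited arguments. In each case one extracts the sequence $t_n\nearrow+\infty$, the isometries $I_n$, and the limiting critical point $\ga_\infty$ supplied by those theorems, thereby matching verbatim the hypotheses of \Cref{thm:MainIntro}. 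Applying \Cref{thm:MainIntro} then yields the full smooth convergence $\ga(t,\cdot)\to\ga_\infty$ in $C^m(\S^1)$ as $t\to+\infty$.

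Finally, the assertion that the flow remains in a compact set of $M$ for all time is an immediate consequence of convergence: once $\ga(t,\cdot)\to\ga_\infty$ smoothly as $t\to+\infty$, the map $t\mapsto\ga(t,\cdot)$ is continuous into $C^m(\S^1)$ on $[0,+\infty)$ and extends continuously to the one-point compactification $[0,+\infty]$ by setting its value at $+\infty$ to be $\ga_\infty$; the image of this compact set is compact, and in particular the traces $\ga(t,\S^1)$ are contained in a fixed compact subset of $M$. The main obstacle, such as it is, is purely bibliographic rather than mathematical: one must be careful to confirm that the sub-convergence statements in \cite{DzKuSc02,DaSp17,DaLaLiPoSp18} are formulated with isometries of the correct form and with the full $C^m$-convergence for every $m$, so that they slot cleanly into the hypotheses of \Cref{thm:MainIntro}; no further estimates are needed once this matching is confirmed.
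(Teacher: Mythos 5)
Your proposal is correct and follows exactly the paper's own argument: the three ambients are analytic and complete, the sub-convergence with ambient isometries is supplied by \cite{DzKuSc02}, \cite{DaSp17}, and \cite{DaLaLiPoSp18} respectively, and \Cref{thm:MainIntro} (with the curvature hypothesis vacuous for $p=2$) then yields full smooth convergence, from which compactness of the trajectory is immediate. No discrepancy with the paper's proof.
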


The proof of \Cref{cor:CorollaryIntro} follows from \Cref{thm:MainIntro} by the fact that sub-convergence of the flow has been proved in \cite{DzKuSc02} if $M=\R^m$, in \cite{DaSp17} if $M=\H^2$, and in \cite{DaLaLiPoSp18} if $M$ is a $2$-sphere.

\medskip

\Cref{thm:MainIntro} is clearly applicable in the special case where the isometries $I_n$ appearing in the statement are the identity on $M$ for any $n$. This is precisely the case in which one already knows that the flow remains in a compact subset of $M$. Such a hypothesis is automatically satisfied if the ambient manifold $M$ is compact. Therefore we can state the following.

\begin{cor}\label{cor:CorollaryIntro2}
	Suppose that $(M,g)$ is an analytic compact Riemannian manifold endowed with an analytic metric tensor $g$. Let $p=2$ and suppose that $\ga:[0,+\infty)\times \S^1\to M$ is a smooth solution of \eqref{eq:DefnFlowPGenericoCOPY}. Suppose that $\|\ga(t,\cdot)\|_{C^m(\S^1)} \le C(m)<+\infty$ for any $t\ge 0$.	
	Then the flow $\ga(t,\cdot)$ converges in $C^m(\S^1)$ to a critical point as $t\to+\infty$, for any $m$ and up to reparametrization.
\end{cor}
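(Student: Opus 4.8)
The strategy is to reduce the statement to \Cref{thm:MainIntro} by taking the isometries appearing there to be the identity, $I_n=\id$ for every $n$. Since $M$ is compact it is in particular complete, so the standing hypotheses of \Cref{thm:MainIntro} are met; as $p=2$, the additional non-vanishing curvature assumption is not required. Thus it suffices to produce a sequence of times $t_n\nearrow+\infty$ and a smooth critical point $\ga_\infty$ of $\sE$ such that $\ga(t_n,\cdot)\to\ga_\infty$ in $C^m(\S^1)$ for every $m$, up to reparametrization; in other words, the whole task is to establish sub-convergence, after which the theorem does the rest.

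First I would exploit the gradient flow structure. Being the $L^2$-gradient flow of $\sE$ (here $p=p'=2$), the energy is non-increasing along the flow, with
\[
\frac{d}{dt}\,\sE(\ga(t,\cdot)) = -\,\|\pa_t \ga(t,\cdot)\|_{L^2}^2 \le 0 .
\]
Since $\sE(\ga(t,\cdot)) \ge \int_{\S^1} 1\, ds \ge 0$ is bounded below, the energy converges to some value $\sE_\infty$ as $t\to+\infty$, and integrating in time the dissipation identity yields
\[
\int_0^{+\infty} \|\pa_t \ga(t,\cdot)\|_{L^2}^2\, dt = \sE(\ga_0) - \sE_\infty < +\infty .
\]
In particular there exists a sequence $t_n\nearrow+\infty$ along which $\|\pa_t \ga(t_n,\cdot)\|_{L^2} \to 0$.

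Next I would use the uniform bounds and the compactness of $M$. By the assumption $\|\ga(t,\cdot)\|_{C^m(\S^1)}\le C(m)$, the Arzel\`a--Ascoli theorem together with a diagonal extraction provides a subsequence, not relabeled, such that $\ga(t_n,\cdot)$ converges in $C^m(\S^1)$, for every $m$, to a smooth immersed curve $\ga_\infty$. Since the right-hand side of \eqref{eq:DefnFlowPGenericoCOPY} is a fixed differential operator $V$ in $\ga$ and its derivatives, continuous with respect to convergence in $C^4$, the smooth convergence $\ga(t_n,\cdot)\to\ga_\infty$ forces $\pa_t\ga(t_n,\cdot)=-V(\ga(t_n,\cdot))\to -V(\ga_\infty)$ uniformly, hence in $L^2$; combined with $\|\pa_t\ga(t_n,\cdot)\|_{L^2}\to0$ this gives $V(\ga_\infty)=0$, so that $\ga_\infty$ is a smooth critical point of $\sE$.

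At this point the hypotheses of \Cref{thm:MainIntro} are satisfied with $I_n=\id$ for all $n$, and an application of that theorem yields the full smooth convergence of $\ga(t,\cdot)$ to a critical point as $t\to+\infty$, up to reparametrization, which is the claim; the qualitative statement on boundedness is immediate since $M$ itself is compact. The genuine content of the result lies entirely in \Cref{thm:MainIntro}, whose proof rests on the gradient inequality of \Cref{cor:LojaFunctionalAnalytic}. For the corollary itself, the only points requiring care are the verification that the subsequential limit $\ga_\infty$ is critical and the bookkeeping of reparametrizations so that the mode of convergence matches the one in the statement of \Cref{thm:MainIntro}; both are routine once the uniform $C^m$ bounds are granted by hypothesis.
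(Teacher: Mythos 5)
Your proposal is correct and follows exactly the route the paper takes: reduce to \Cref{thm:MainIntro} with $I_n=\id$, the only content being the sub-convergence along a sequence of times, which the paper simply asserts follows from compactness of $M$ and the uniform $C^m$ bounds. Your argument via the energy dissipation identity, Arzel\`a--Ascoli with a diagonal extraction, and continuity of the velocity operator supplies precisely the details the paper leaves implicit, so the two proofs coincide in substance.
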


\Cref{cor:CorollaryIntro2} follows from the fact that, since $M$ is compact, uniform bounds in $C^m$ for any $m$ guarantee the existence of a sequence of times $t_n\nearrow+\infty$ and of a critical point $\ga_\infty$ such that $\ga(t_n,\cdot) \to \ga_\infty$ in $C^{m}(\S^1)$ for any $m$ as $n\to+\infty$ up to reparametrization. Hence we can apply \Cref{thm:MainIntro} and \Cref{cor:CorollaryIntro2} follows.

Let us remark that under the assumptions that $M$ is an analytic compact manifold with an analytic metric $g$ and $p=2$, the uniform bounds in $C^m$ in the hypotheses of \Cref{cor:CorollaryIntro2} are likely to be true in general. Indeed, one should be able to derive the usual parabolic estimates in the same fashion of \cite{DzKuSc02}, thus getting the desired uniform bounds.

\medskip

Let us conclude with a few comments.

\begin{remark}
	\Cref{thm:MainIntro} remains true if one considers the analogously defined flow of the energy $\int \lambda + \tfrac1p |k|^p$ for any $\lambda>0$.
	We believe that the statement of \Cref{cor:CorollaryIntro} continues to hold true if $M$ is any hyperbolic space $\H^m$ or a sphere $\S^m$ with $m\ge2$, but the sub-convergence of the flow has not been proved explicitly in the literature in these ambients, up to the knowledge of the author. More generally, it is likely that \Cref{cor:CorollaryIntro} remains true whenever $(M,g)$ is a homogeneous manifold, that is a Riemannian manifold such that the group of isometries acts transitively on $M$; indeed, in such a case, one should be able to prove sub-convergence of the flow for $p=2$ exactly as in \cite{DzKuSc02}.
	
	We remark that the hypothesis of $(M^m,g)$ being of bounded geometry is not sufficient to imply that the solution of the flow converges. Indeed, in Appendix C we construct a simple example of a solution to the flow of the elastic energy with $p=2$ in an analytic complete surface in $\R^3$ that does not (sub)converge.
\end{remark}

\begin{remark}
	We remark that, even if \Cref{cor:CorollaryIntro} implies that the solution of the elastic flow for $p=2$ in $\R^m$ stays in a compact region, this result does not tell anything about the size of the compact set containing the flow. We believe it is a nice open question to quantify, if possible, the size of such compact set depending on the given initial datum $\ga_0$. We also mention that a related problem which is still open, up to the author's knowledge, is to prove or disprove the Huisken's conjecture stating that for the flow \eqref{eq:DefnFlowPGenericoCOPY} for $p=2$ in $\R^2$, if the datum $\ga_0$ does not intersect a closed halfplane, then the solution $\ga(t,\cdot)$ is never completely contained in such halfplane.
\end{remark}

\begin{remark}
	As mentioned above, the use of a \L ojeasiewicz-Simon inequality for proving the convergence of a gradient flow is today quite understood. A rather clear account of the method applied to the general setting of gradient flows in Hilbert or metric spaces is contained in the recent \cite{ChMi18} and \cite{HaMa19}, respectively. It is also clear from \cite[Theorem 3.27]{HaMa19} that the validity of a \L ojeasiewicz-Simon inequality implies a precise rate of convergence to equilibrium, which is, at worst, polynomial in time, depending on the exponent $\theta$ appearing in the established inequality, cf. \Cref{cor:LojaFunctionalAnalytic}. The way we shall derive the \L ojeasiewicz-Simon inequality does not provide an explicit value for the exponent $\theta$, which is just known to belong to the interval $(0,\tfrac12]$. However, exploiting the argument in the proof of the aforementioned \cite[Theorem 3.27]{HaMa19}, one expects that also for the geometric gradient flows studied in this paper the convergence to critical points is realized at a rate at least polynomial in time, i.e., that the $L^2$ distance between a suitable parametrization of the flow and the limit critical point decays at least polynomially in time.
\end{remark}

\medskip
\subsection*{Organization} In \Cref{sec:Rn} we outline the general strategy one can apply for proving that sub-convergence of a flow can be promoted to full convergence, and we prove the \L ojasiewicz-Simon gradient inequality we shall use (\Cref{cor:LojaFunctionalAnalytic}). In \Cref{sec:Manifolds} and \Cref{sec:ConvergenceFinal} we prove in detail that the sub-convergence of the flow of the $p$-elastic energy of curves in manifolds imply the smooth convergence of the flow, completing the proof of \Cref{thm:MainIntro}. In Appendix A we collect the computations providing a general expression for the second variation of the $p$-elastic energy. Appendix B is devoted to the proof of some parabolic estimates about the $p$-elastic flow that are needed in \Cref{sec:ConvergenceFinal}. Appendix C contains an example of solution to the elastic flow with exponent $p=2$ which does not converge.

\medskip
\subsection*{Acknowledgments} I am grateful to Carlo Mantegazza for his interest in this work and many useful conversations and suggestions. I also thank Matteo Novaga for having suggested me to study this problem.

\bigskip

\section{Elastic flow in the Euclidean space:\\ outline of the proof and functional analytic methods}\label{sec:Rn}

This section is devoted to the presentation of the general method for improving the sub-convergence of a flow to its full convergence. We consider here regular curves $\ga:\S^1\to \R^n$ and the classical elastic energy with exponent $p=2$, that is
\[
\sE(\ga) = \int_{\S^1} 1+\frac12 |k|^2 \, ds,
\]
if $\ga \in C^2$, where $k$ is the curvature vector of $\ga$ and $ds=|\ga'(x)|dx$ denotes integration with respect to the arclength. We will call $\tau=|\ga'(x)|^{-1}\ga'(x)$ the unit tangent vector of $\ga$ and we will denote by $\pa_s = |\ga'(x)|^{-1}\pa_x$ the differentiation with respect to the arclength. Recall therefore that $k=\pa_s^2 \ga=\pa_s\tau$. Finally, by analogy with the study we will carry out for regular curves into manifolds, if $\vp:\S^1\to\R^n$ is a differentiable vector field we define $\na\vp = \pa_s \vp - \scal{\pa_s \vp, \tau}\tau$, that is the normal projection of the arclength derivative of $\vp$. In case of risk of confusion, a subscript $\ga$ will be added to a geometric object understanding it refers to the curve $\ga$. Moreover, we will denote with the symbol $\ga^\top$ (resp. $\ga^\perp$) the projection onto the tangent space (resp. normal space) of $\ga$, i.e., if $\vp:\S^1\to\R^n$ is any field, then $\ga^\top\vp (x) = \scal{\vp(x),\tau(x)}\tau(x)$ (resp. $\ga^\perp \vp(x) = \vp(x) - \ga^\top\vp(x)$).
Observe that $\sE$ is a geometric functional, in the sense that the energy of a curve is independent of its parametrization; as we shall see, this fact will have several consequences.

\medskip

In the following we just want to collect the most crucial ingredients, focusing on the proof of an abstract \L ojasiewicz-Simon inequality. A posteriori, this part will be a particular case of the theory developed in \Cref{sec:Manifolds} and \Cref{sec:ConvergenceFinal}. As the study of these gradient flows into Riemannian manifolds (\Cref{sec:Manifolds} and \Cref{sec:ConvergenceFinal}) is more involved, for the convenience of the reader we preferred to first present the significant steps of the proof here in the case of curves in $\R^n$ and exponent $p=2$. In the case of the gradient flow of the $2$-elastic energy in $\R^n$ the strategy can actually be simplified and we refer to \cite{MaPo20} for such a case.

\bigskip

\subsection{First and second variations}
The strategy starts from a careful study of the properties of the first and second variations of $\sE$. To this aim we need to define precisely the Banach spaces of vector fields $\vp:\S^1\to\R^n$ along a curve $\ga$ defining variations of the given curve.

\begin{defn}
	Let $\ga:\S^1\to\R^n$ be a regular curve of class $H^4$. For $k \in \N$ we define
	\[
	H(\ga)^{k,\perp} \coloneqq \left\{  \vp\in W^{k,2}(\S^1,\R^n) \,\,:\,\,  \scal{\tau(x),\vp(x)}=0 \,\,\,\mbox{a.e. }x \right\},
	\]
	where we understand that $W^{0,2}(\S^1,\R^n)=L^2(\S^1,\R^n)$. Also we denote $H(\ga)^{0,\perp}$ by $L^2(\ga)^\perp$.
\end{defn}

\begin{remark}
	If $k\ge 2$ and $\ga:\S^1\to\R^n$ is a regular curve of class $H^4$, there exists $\ro>0$ such that $\ga+\vp$ is still a regular curve for any $\vp \in B_\ro(0)\con H^k(\S^1,\R^n)$ and for any $\vp \in B_\ro(0)\con H^k(\ga)^{k,\perp}$. In the following we will always assume that $\ro=\ro(\ga)$ is such that variations $\ga+\vp$ are regular curves for any $\vp$ as before.
\end{remark}

Adopting the notation of \cite{DaPoSp16}, it is worth to introduce the following notation.

\begin{defn}
	Let $\ga:\S^1\to\R^n$ be a regular curve of class $H^4$. For suitable $\ro>0$ we define
	\[
	\begin{split}
	E:B_\ro(0)\con H(\ga)^{4,\perp}\to\R \qquad\qquad E(\vp)&\coloneqq\sE(\ga+\vp),\\
	\bE:B_\ro(0)\con H^{4}(\S^1,\R^n)\to\R \qquad\quad \bE(\vp)&\coloneqq\sE(\ga+\vp).
	\end{split}
	\] 
\end{defn}

In this way we can classically see first and second variations of $\sE$ as elements of dual spaces; the reason for distinguishing between normal or arbitrary fields along a curve will also be clear soon. If $\ga$ is fixed, we have
\[
\de \bE : B_\ro(0) \con H^4(\S^1,\R^n) \to (H^4(\S^1,\R^n) )^\star 
\qquad\qquad
\de \bE(\vp)[\psi] = \frac{d}{d\ep}\bigg|_0 \sE(\ga+\vp+\ep\psi),
\]
and similarly
\[
\de E : B_\ro(0) \con H(\ga)^{4,\perp} \to (H(\ga)^{4,\perp} )^\star 
\qquad\qquad
\de E(\vp)[\psi] = \frac{d}{d\ep}\bigg|_0 \sE(\ga+\vp+\ep\psi).
\]
We refer to \Cref{prop:FirstVariation0} and \Cref{prop:VarBoldE} for the general computation of the first variation functionals. In the case we are considering, for $\vp,\psi \in B_\ro(0)\con  H^4(\S^1,\R^n)$ one obtains
\begin{equation}\label{eq:RefFirstVar}
\de \bE (\vp) [\psi]= \int_{\S^1} \left\lgl \na^2_{\ga+\vp} k_{\ga+\vp} + \frac12 |k_{\ga+\vp}|^2k_{\ga+\vp} - k_{\ga+\vp}, \psi\right\rgl\, ds_{\ga+\vp}
\eqqcolon \left\lgl \na_{L^2(ds_{\ga+\vp})} \bE (\vp) , \psi \right\rgl_{L^2(ds_{\ga+\vp})},
\end{equation}
and the very same formula holds for $E$ and $\vp,\psi \in H(\ga)^{4,\perp}$. In particular we write
\[
\na_{L^2(dx)} \bE(\vp) = |\ga'+\vp'|\na_{L^2(ds_{\ga+\vp})} \bE (\vp)  = |\ga'+\vp'| \left( \na^2_{\ga+\vp} k_{\ga+\vp} + \frac12 |k_{\ga+\vp}|^2k_{\ga+\vp} - k_{\ga+\vp} \right),
\]
and
\[
\na_{L^2(dx)} E (\vp) = \ga^\perp \na_{L^2(dx)} \bE (\vp).
\]
Setting $\vp=0$ we see that $\na_{L^2(ds_{\ga})} \bE (0)$ is normal along $\ga$ and then
\[
\de \bE(0)[\psi] = \de E (0)[\ga^\perp\psi] = \de \bE(0)[\ga^\perp\psi],
\]
that is the variation of $\sE$ at $\ga$ only depends on normal vector fields along $\ga$. This is ultimately due to the geometric nature of the functional $\sE$ and highlights the fact that $\sE$ is degenerate with respect to variations defined by tangential fields along $\ga$. This is the true reason why one introduces the distinction between normal fields along $\ga$ and general fields.

As we will be interested in invertibility properties of the variations of $\sE$, we will only need to evaluate the second variation of $\sE$ along normal fields, ruling out the tangential degeneracy of the functional. Therefore we define the operator $\cL=\de^2 E(0):H(\ga)^{4,\perp}\to (H(\ga)^{4,\perp})^\star$ by
\[
\cL(\vp)[\psi] = \frac{d}{d\ep}\bigg|_0 \frac{d}{d\eta}\bigg|_0 \sE\left( \ga +\eta\vp + \ep\psi \right) \qquad\quad\forall\,\vp,\psi\in H(\ga)^{4,\perp}.
\]
Observe that $\cL$ is symmetric, that is $\cL(\vp)[\psi]=\cL(\psi)[\vp]$ for any $\vp,\psi\in H(\ga)^{4,\perp}$.

\medskip

Now the first key observation is the fact that for a fixed regular curve $\ga$ of class $H^4$ and suitable $\vp$, the operators $\de E (\vp)$ and $\de \bE(\vp)$ actually belong to $(L^2(\ga)^\perp)^\star$ and $(L^2(\S^1,\R^n))^\star$ respectively, as they are represented by the $L^2$ fields $\na_{L^2(dx)} E(\vp)$ and $\na_{L^2(dx)} \bE(\vp)$ respectively. Moreover, the same holds for the second variation functional $\cL$, and more precisely we can state the following. For the general theory of compact and Fredholm operators we refer to \cite[Section 19.1]{HormanderIII}.

\begin{lemma}\label{lem:FredholmRn}
 	Let $\ga:\S^1\to\R^n$ be a smooth regular curve and $\vp\in H(\ga)^{4,\perp}$. The operator $\cL(\vp)$ is an element of $(L^2(\ga)^{\perp})^\star$ represented by the pairing
	\begin{equation*}
	\cL(\vp)[\psi] = \left\lgl |\ga'| \, \left( \na^4 \vp  + \Om(\vp) \right) , \psi \right\rgl_{L^2(dx)} 
	\qquad\qquad\forall\,\psi \in H(\ga)^{4,\perp} , 
	\end{equation*}
	where $\Om:H(\ga)^{4,\perp}\to L^2(\ga)^{\perp}$ is a compact operator.
	
	Moreover the operator $\na^4:H(\ga)^{4,\perp}\to L^2(\ga)^{\perp}$ is Fredholm of index zero, and then so is the operator
	\[
	\cL:H(\ga)^{4,\perp}\to (L^2(\ga)^{\perp})^\star.
	\]
\end{lemma}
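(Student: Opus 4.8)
The plan is to prove the two assertions in order: first establish the representation of $\cL(\vp)$ with principal part $\na^4\vp$ and compact remainder $\Om$, and then analyze $\na^4$ directly, transferring the conclusion to $\cL$ at the end. For the first part I would compute $\cL(\vp)[\psi]=\frac{d}{d\eta}\big|_0 \de E(\eta\vp)[\psi]$ by differentiating the first variation formula \eqref{eq:RefFirstVar} along the path $\eta\mapsto\ga+\eta\vp$; the objects to linearize are the curvature $k_{\ga+\eta\vp}$, the normal connection $\na_{\ga+\eta\vp}$, the tangential projections, and the length measure $ds_{\ga+\eta\vp}$. Since $\na^2 k$ involves four derivatives of the curve, the only fourth-order contribution in $\vp$ arises from linearizing $\na^2 k$ through $\de k[\vp]=\na^2\vp+(\text{lower order})$, which produces the principal term $\na^4\vp$. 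All the remaining contributions — from the zeroth-order curvature terms $\tfrac12|k|^2k-k$, from the variation of the measure and of the projections, and from the lower-order part of $\de k[\vp]$ — assemble into a differential operator in $\vp$ of order at most $3$ with smooth coefficients built from $\ga$ and its derivatives, because $\ga$ is smooth and regular. Integrating by parts to move all derivatives off $\psi$ (boundary terms vanish on the closed curve $\S^1$) yields $\cL(\vp)[\psi]=\langle|\ga'|(\na^4\vp+\Om(\vp)),\psi\rangle_{L^2(dx)}$, with $\Om$ collecting the lower-order terms; its precise shape can be read off from the general second-variation formula of Appendix A. Compactness of $\Om$ is then immediate: being of order at most $3$, it maps $H(\ga)^{4,\perp}$ boundedly into $H^1(\S^1,\R^n)$, and the Rellich embedding $H^1(\S^1)\hookrightarrow L^2(\S^1)$ is compact, so $\Om\colon H(\ga)^{4,\perp}\to L^2(\ga)^\perp$ is compact.

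For the operator $\na^4$ I would argue by ellipticity together with a self-adjointness observation. Since $\ga$ is regular, $\na=\ga^\perp\pa_s$ on normal fields with $\pa_s=|\ga'|^{-1}\pa_x$, so $\na$ has principal symbol $i|\ga'|^{-1}\xi\,\mathrm{Id}$ on the normal bundle, and hence $\na^4$ is elliptic of order $4$ with invertible principal symbol $|\ga'|^{-4}\xi^4\,\mathrm{Id}$ for $\xi\neq0$. By the elliptic theory on the closed manifold $\S^1$ (cf. \cite[Section 19.1]{HormanderIII}), $\na^4\colon H(\ga)^{4,\perp}\to L^2(\ga)^\perp$ has finite-dimensional kernel, closed range, and finite-dimensional cokernel, i.e.\ it is Fredholm. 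To obtain index zero I would use that $\na$ is skew-adjoint with respect to the arclength pairing: for normal fields $\vp,\psi$ one has $\langle\na\vp,\psi\rangle=\langle\pa_s\vp,\psi\rangle$, and since $\int_{\S^1}\pa_s\langle\vp,\psi\rangle\,ds=0$ on the closed curve, it follows that $\int_{\S^1}\langle\na\vp,\psi\rangle\,ds=-\int_{\S^1}\langle\vp,\na\psi\rangle\,ds$. Consequently $\na^2$ is self-adjoint and $\na^4=(\na^2)^2$ is a nonnegative self-adjoint elliptic operator, whose kernel and cokernel are isomorphic; as the Fredholm index does not depend on the choice among the equivalent $L^2(dx)$ and $L^2(ds)$ structures (here $|\ga'|$ is bounded above and below), the index of $\na^4$ vanishes.

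Finally, multiplication by $|\ga'|$ is an isomorphism of $L^2(\ga)^\perp$, and adding the compact operator $|\ga'|\,\Om$ changes neither Fredholmness nor the index; therefore $\cL=|\ga'|(\na^4+\Om)$ is Fredholm of index zero, completing the proof. I expect the main obstacle to be the bookkeeping in the second-variation computation, namely verifying that no fourth-order term leaks into $\Om$ and that the fourth-order part is exactly $\na^4\vp$ rather than $\pa_s^4\vp$ together with a nonnormal correction; this is precisely where the geometric identities for the linearization of $k$ and for the normal connection $\na$ must be used carefully.
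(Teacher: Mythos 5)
Your argument is correct, but for the Fredholmness part it takes a genuinely different route from the paper. The paper's proof of this lemma is a deferral: the representation formula is read off from the explicit second-variation computation (\Cref{prop:SecondVariation}, specialized to $p=2$ and $M=\R^n$), and the Fredholmness is obtained from \Cref{lem:FredholmnessP2}, where the author shows that $\id+\na^4\colon T(\ga)^{4,2,\perp}\to T(\ga)^{2,\perp}$ is actually \emph{invertible}: one proves coercivity of the bilinear form $a(\vp,\psi)=\int\scal{\na^2\vp,\na^2\psi}+\scal{\vp,\psi}\,ds$ on $T(\ga)^{2,2,\perp}$, minimizes the associated convex functional to get a weak solution, and then runs a hands-on regularity argument (testing against carefully constructed normal fields) to lift the minimizer to $W^{4,2}$; invertibility of $\id+\na^4$ plus compactness of the perturbation then gives index zero. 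You instead treat $\na^4$ as an elliptic operator on the smooth normal bundle over the closed manifold $\S^1$ (principal symbol $|\ga'|^{-4}\xi^4\,\mathrm{Id}$), obtain Fredholmness from standard elliptic theory, and get index zero from the formal self-adjointness of $\na^4$ with respect to the $ds$-pairing, noting that the index is insensitive to passing between the equivalent $L^2(dx)$ and $L^2(ds)$ structures. Both arguments are sound. Your route is shorter and invokes more machinery; the paper's route is more elementary and self-contained, and — more importantly for the rest of the paper — it is the argument that survives in the degenerate case $p>2$, where the leading coefficient $|k|^{p-2}$ enters the bilinear form (\Cref{lem:FredholmnessP>2}) and a bare symbol computation would require the nonvanishing of $k$ anyway, while the coercivity/regularity scheme adapts with minimal changes. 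For the first part of the statement your sketch matches the paper's intent; the only point worth flagging is that the bookkeeping you correctly identify as the main obstacle (that the fourth-order part is exactly $\na^4\vp$ and nothing of order four leaks into $\Om$) is precisely what Appendix A carries out in detail, and in fact the remainder there turns out to be of order at most two in $\vp$, which only strengthens your compactness argument.
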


\begin{proof}
	The calculation of $\cL$ for curves in $\R^n$ can be easily carried out explicitly, and we refer to \Cref{prop:SecondVariation} for the computation in the general case of curves in manifolds (see also \cite{DaPoSp16} for the case of $\R^n$).
	The complete claim will follow from \Cref{lem:FredholmnessP2}.	
\end{proof}

The second classical ingredient needed for obtaining the \L ojasiewicz--Simon gradient inequality is the analiticity of the energy functional and of its first variation. In our case, we have that for a fixed smooth regular curve $\ga:\S^1\to\R^n$ and suitable $\ro>0$ the maps
\[
\begin{split}
	E:B_\ro(0) \to \R \qquad\qquad\qquad\quad E(\vp)&=\sE(\ga+\vp),\\
	\de E: B_\ro(0) \to (L^2(\ga)^{\perp})^\star \qquad\qquad \de E(\vp)&=\na_{L^2(dx)} E(\vp),
\end{split}
\]
are analytic. We refer to \cite[Lemma 3.4]{DaPoSp16} for a detailed proof of this fact. Keeping in mind the framework we just described, and the main properties we found on first and second variaitons, we can now present the \L ojasiewicz-Simon inequality.

\bigskip

\subsection{An abstract \L ojasiewicz--Simon gradient inequality}

In this subsection we prove a general statement collecting some conditions under which a \L ojasiewicz-Simon inequality holds for a given energy functional. This result does not depend on whether we are considering curves in $\R^n$ or in manifolds, actually it is stated at a purely functional analytic level for an abstract energy functional, and we will use it both in the study of the flow in $\R^n$ and into manifolds.

We need to recall the functional analytic setting of \cite{Ch03}.
We assume that $V$ is a Banach space, $U\con V$ is open, and $E:U\to\R$ is a map of class $C^2$. We denote by $\sM:U\to V^\star$ be the Fréchet first derivative, and $\sL:U\to L(V;V^\star)$ the Fréchet second derivative. We also assume that $0\in U$. Let us denote
\[
\cL \coloneqq \sL(0) \in L(V;V^\star), \qquad\qquad V_0\coloneqq\ker \cL\con V.
\]

We recall that a closed subspace $S\con V$ is said to be complemented if there exists a continuous projection $P:V\to V$ such that $\Imm P = S$. A continuous projection is a linear continuous map $P:V\to V$ such that $P\circ P = P$. In such a case, we denote by $P^\star:V^\star\to V^\star$ the adjoint projection.

\begin{prop}[{\cite[Corollary 3.11]{Ch03}}]\label{cor:Chill}
	In the above notation, assume that $E$ is analytic and $0$ is a critical point of $E$, i.e., $\sM(0)=0$. Assume that $V_0$ is finite dimensional, and therefore complemented with a projection map $P$. Moreover there exists a Banach space $W\hookrightarrow V^\star$ such that:
	\begin{enumerate}[label=(\roman*)]
		\item $\sM: U \to W$ is $W$--valued and analytic; \label{IT.1}		
		\item $P^\star(W)\con W$; \label{IT.2}
		\item $\cL(V) = \ker P^\star \cap W$. \label{IT.3}
	\end{enumerate}
	Then there exist $C,\ro>0$ and $\te\in(0,\frac12]$ such that
	\begin{equation*}
	|E(\psi)-E(\vp)|^{1-\te}\le C \| \sM(\psi) \|_{W},
	\end{equation*}
	for any $\psi \in B_\ro(\vp)$.
\end{prop}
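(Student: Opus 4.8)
The plan is to establish this abstract \L ojasiewicz--Simon inequality by a \emph{Lyapunov--Schmidt reduction} to the finite-dimensional kernel $V_0$, followed by the classical \L ojasiewicz gradient inequality for real analytic functions on $\R^d$, and a final transfer of the resulting estimate back to $V$ measured in the auxiliary norm of $W$. Throughout I take the reference point $\vp$ in the conclusion to be the critical point, i.e. $\vp=0$, so that $E(\vp)=E(0)$ is the critical value and $B_\ro(\vp)=B_\ro(0)$. First I would set $V_1\coloneqq\ker P$, so that $V=V_0\oplus V_1$ with $\dim V_0<\infty$. The role of hypothesis \ref{IT.3} is to guarantee that $\cL|_{V_1}\colon V_1\to\ker P^\star\cap W$ is a Banach space isomorphism: it is injective because $V_1\cap\ker\cL=V_1\cap V_0=\{0\}$, and surjective onto $\ker P^\star\cap W=\cL(V)$ by \ref{IT.3} together with $\cL(V_0)=0$. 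I would then consider the map $G\colon U\to\ker P^\star\cap W$, $G(\vp)\coloneqq(\id-P^\star)\sM(\vp)$, which is well defined, $W$-valued and analytic by \ref{IT.1} and \ref{IT.2}. Since $\sM(0)=0$ one has $G(0)=0$ and $DG(0)|_{V_1}=(\id-P^\star)\cL|_{V_1}=\cL|_{V_1}$ is invertible, so the analytic implicit function theorem produces an analytic $g\colon B\con V_0\to V_1$ with $g(0)=0$, and, because $\cL(V_0)=0$, also $Dg(0)=0$, solving $G(\xi+g(\xi))=0$. The reduced energy $f(\xi)\coloneqq E(\xi+g(\xi))$ is then real analytic on an open subset of the finite-dimensional space $V_0$ and has a critical point at $0$.

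The two computations that make the reduction work are the following. Writing $\vp_\xi\coloneqq\xi+g(\xi)$, the relation $G(\vp_\xi)=0$ says exactly $\sM(\vp_\xi)=P^\star\sM(\vp_\xi)$, so that $\sM(\vp_\xi)$ annihilates $V_1=\ker P$; differentiating $f$ and using $Dg(\xi)\zeta\in V_1$ this yields $Df(\xi)[\zeta]=\sM(\vp_\xi)[\zeta]$ for $\zeta\in V_0$, i.e. the gradient of $f$ is the restriction of $\sM(\vp_\xi)$ to $V_0$. Since $\Imm P^\star$ is finite-dimensional of the same rank as $P$, and its elements are determined by their action on $V_0$, the restriction map identifies $\|\sM(\vp_\xi)\|_{W}$ with $\|\nabla f(\xi)\|$ up to a constant. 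Applying the finite-dimensional \L ojasiewicz inequality \cite{Loja63} to $f$ at $0$ then furnishes $\te\in(0,\tfrac12]$ and $C,\ro'>0$ with
\[
|f(\xi)-f(0)|^{1-\te}\le C\,\|\nabla f(\xi)\|\le C'\,\|\sM(\vp_\xi)\|_{W},
\]
which is the desired inequality along the reduced manifold $\{\vp_\xi\}$.

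Finally I would remove the restriction to the reduced manifold. For $\psi\in B_\ro(0)$ put $\xi=P\psi$ and compare $\psi$ with $\vp_\xi$: their difference $h\coloneqq\psi-\vp_\xi$ lies in $V_1$, and since $G(\vp_\xi)=0$ while $G(\psi)=(\id-P^\star)\sM(\psi)$, the invertibility of $DG(0)|_{V_1}$ (via the Lipschitz estimate from the implicit function theorem) gives $\|h\|_V\le C\|(\id-P^\star)\sM(\psi)\|_{W}\le C\|\sM(\psi)\|_{W}$. Because $\sM(\vp_\xi)$ vanishes on $V_1\ni h$, a second-order Taylor expansion of $E$ at $\vp_\xi$ yields $|E(\psi)-E(\vp_\xi)|\le C\|h\|_V^2\le C\|\sM(\psi)\|_{W}^2$, while the local Lipschitz continuity of $\sM\colon U\to W$ gives $\|\sM(\vp_\xi)\|_{W}\le\|\sM(\psi)\|_{W}+C\|h\|_V\le C\|\sM(\psi)\|_{W}$. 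Combining these with the reduced-manifold inequality, the subadditivity of $t\mapsto t^{1-\te}$, and the fact that $2(1-\te)\ge1$ (so that $\|\sM(\psi)\|_{W}^{2(1-\te)}\le\|\sM(\psi)\|_{W}$ for $\psi$ close to $0$), I obtain $|E(\psi)-E(0)|^{1-\te}\le C\|\sM(\psi)\|_{W}$ on a possibly smaller ball, which is the claim.

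I expect the main obstacle to be carrying out the Lyapunov--Schmidt reduction in the genuinely Banach (non-Hilbert) setting while keeping every estimate measured in the norm of $W$. Conditions \ref{IT.1}--\ref{IT.3} are tailored precisely for this: they supply the closed, complemented range that the implicit function theorem requires and make $G$ analytic and $W$-valued, so the delicate points are verifying that \ref{IT.3} makes $\cL|_{V_1}$ an isomorphism onto $\ker P^\star\cap W$ and that the $W$-norm of $\sM(\vp_\xi)$ is genuinely comparable to the finite-dimensional gradient $\nabla f$. The finite-dimensional \L ojasiewicz inequality itself I would treat as a black box.
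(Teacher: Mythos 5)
Your argument is correct, but it is not the paper's route: the paper does not prove this proposition at all. It simply observes that the statement is exactly \cite[Corollary 3.11]{Ch03} with $X=V$ and $Y=W$ therein, and reduces the matter to checking that Hypotheses 3.2 and 3.4 of \cite{Ch03} coincide with assumptions (i)--(iii). What you have written is, in substance, a reconstruction of the proof of that cited result: Lyapunov--Schmidt reduction along $V=V_0\oplus V_1$ via $G=(\id-P^\star)\sM$, the analytic implicit function theorem, the finite-dimensional \L ojasiewicz inequality for $f(\xi)=E(\xi+g(\xi))$, and the transfer back through the Taylor expansion at $\vp_\xi$ together with $\|h\|_V\le C\|(\id-P^\star)\sM(\psi)\|_W$. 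The hypotheses enter exactly where they must: (i) makes $G$ analytic and $W$-valued and gives $\cL\in L(V,W)$; (ii), combined with $W\hookrightarrow V^\star$ and the closed graph theorem, makes $P^\star$ bounded on $W$, so that $\ker P^\star\cap W$ is a closed subspace of $W$ and $G$ lands in it; (iii), together with $\cL(V_0)=0$ and the open mapping theorem, makes $\cL|_{V_1}\colon V_1\to\ker P^\star\cap W$ a Banach isomorphism. Two minor points, neither a gap. In comparing $\|\nabla f(\xi)\|$ with $\|\sM(\vp_\xi)\|_W$ you only need $\|\sM(\vp_\xi)|_{V_0}\|\le C\|\sM(\vp_\xi)\|_W$, which already follows from the embedding $W\hookrightarrow V^\star$; the finite-dimensional norm equivalence on $\Imm P^\star\cap W$ that you invoke yields the (unneeded) converse as well, so it is merely more than necessary. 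And the uniform Lipschitz bound for $h$ deserves one explicit sentence: $D_\eta G(\xi+\eta)=(\id-P^\star)\sL(\xi+\eta)|_{V_1}$ depends continuously on the base point in $L(V_1;W)$ because $\sM$ is analytic into $W$, hence it stays uniformly invertible on a small ball around $0$, which is what licenses $\|h\|_V\le C\|G(\psi)\|_W$ with a constant independent of $\xi$. Your route buys self-containedness; the paper's buys brevity, since Chill's corollary is stated in exactly this generality.
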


\Cref{cor:Chill} is exactly \cite[Corollary 3.11]{Ch03} with $X=V$ and $Y=W$ therein. Indeed one can check that the hypotheses of \cite[Corollary 3.11]{Ch03}, that include Hypotheses 3.2 and 3.4 in \cite{Ch03}, reduce to the assumptions considered here in \Cref{cor:Chill}.
Applying \Cref{cor:Chill} we can prove the following.

\begin{cor}\label{cor:LojaFunctionalAnalytic}
	Let $E:B_{\ro_0}(0)\con V \to \R$ be an analytic map, where $V$ is a Banach space and $0$ is a critical point of $E$. Suppose that $W \equiv  Z^\star \hookrightarrow V^\star$ is a Banach space with $V\hookrightarrow Z$, and that $\sM: B_\ro(0) \to W$ is $W$-valued and analytic. Suppose also that $\cL\coloneqq\sL(0) \in L(V,W)$ and $\cL: V\to W$ is Fredholm of index zero.
	
	Then the hypotheses of \Cref{cor:Chill} are satisfied. In particular there exist $C$, $\ro>0$ and $\te\in(0,\frac12]$ such that
	\begin{equation*}
	|E(\psi)-E(0)|^{1-\te}\le C \| \sM(\psi) \|_{W},
	\end{equation*}
	for any $\psi \in B_\ro(0)$.
\end{cor}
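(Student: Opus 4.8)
The goal is to derive the conclusion of Corollary \ref{cor:LojaFunctionalAnalytic} by verifying that the hypotheses of Proposition \ref{cor:Chill} are met under the simpler, more structured assumptions given here. The plan is to identify the abstract ingredients $V, W, \cL$ of Proposition \ref{cor:Chill} with the concrete data of the corollary, and then check conditions \ref{IT.1}, \ref{IT.2}, \ref{IT.3} one by one. Condition \ref{IT.1} is essentially granted: by hypothesis $\sM: B_\ro(0) \to W$ is $W$-valued and analytic, which is exactly what \ref{IT.1} demands. So the real work concentrates on the finite-dimensionality of $V_0 = \ker\cL$, the existence of a suitable projection $P$, and conditions \ref{IT.2} and \ref{IT.3}.

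First I would extract the consequences of the Fredholm-of-index-zero hypothesis on $\cL: V \to W$. Fredholmness immediately gives that $V_0 = \ker\cL$ is finite dimensional and that the range $\cL(V)$ is a closed, finite-codimensional subspace of $W$; index zero means $\dim\ker\cL = \codim_W \cL(V)$. Finite-dimensionality of $V_0$ makes it complemented in $V$, so there is a continuous projection $P: V \to V$ with $\Imm P = V_0$, as required by Proposition \ref{cor:Chill}. The crux is then constructing $P$ so that its adjoint $P^\star: V^\star \to V^\star$ behaves well with respect to $W$, namely so that \ref{IT.2} $P^\star(W) \con W$ holds and so that \ref{IT.3} $\cL(V) = \ker P^\star \cap W$ holds. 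Here the structural assumption $W \equiv Z^\star$ with $V \hookrightarrow Z$ is what makes this possible: it provides a genuine duality pairing between $W$ and $Z \supseteq V$, so that elements of $V_0 \subset V \subset Z$ act as continuous functionals on $W$, and the adjoint of a projection onto $V_0$ can be realized as an operator that maps $W$ into $W$ rather than merely into $V^\star$.

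The step I expect to be the main obstacle is the precise construction of $P$ realizing \ref{IT.3}, because it couples the decomposition of the domain $V$ with the decomposition of the target $W$. Since $\cL$ is self-referential only through $\cL = \sL(0)$ and no symmetry is assumed at this abstract level, I would exploit the Fredholm structure directly: choose a closed complement $V_1$ of $V_0$ in $V$ (so $V = V_0 \oplus V_1$) and let $P$ be the projection onto $V_0$ along $V_1$. One then checks that $\cL|_{V_1}: V_1 \to \cL(V)$ is an isomorphism onto the closed range $\cL(V)$. To match $\cL(V)$ with $\ker P^\star \cap W$, I would identify, via the pairing coming from $W = Z^\star$ and $V \hookrightarrow Z$, how $P^\star$ acts: a functional $w \in W$ lies in $\ker P^\star$ exactly when $w$ annihilates $\Imm P = V_0$, i.e.\ when $\langle w, v\rangle = 0$ for all $v \in V_0$. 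The content of \ref{IT.3} is therefore the claim $\cL(V) = \{w \in W : \langle w, v\rangle = 0 \ \forall v \in V_0\}$, which is an orthogonality/annihilator identity between range and kernel. By the index-zero hypothesis both sides have the same finite codimension in $W$, so it suffices to prove one inclusion; the inclusion $\cL(V) \subseteq V_0^\perp \cap W$ should follow from a symmetry or self-adjointness property of $\cL$ relative to the $Z$--$Z^\star$ pairing (which in the intended applications is guaranteed because $\cL$ is a second variation and hence symmetric, cf.\ the symmetry of $\cL$ noted before Lemma \ref{lem:FredholmRn}). Once this annihilator identity is in hand, condition \ref{IT.2} is automatic, since $P^\star$ maps $W$ into the span of finitely many evaluation functionals dual to a basis of $V_0$, all of which lie in $W$ by the $W = Z^\star$ structure.

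Having verified \ref{IT.1}--\ref{IT.3}, together with analyticity of $E$ and $\sM(0) = 0$ (both hypotheses of the corollary), I would simply invoke Proposition \ref{cor:Chill} with $\vp = 0$ to obtain constants $C, \ro > 0$ and $\te \in (0,\tfrac12]$ with
\[
|E(\psi) - E(0)|^{1-\te} \le C\,\|\sM(\psi)\|_W
\]
for all $\psi \in B_\ro(0)$, which is exactly the asserted inequality. The only delicate point to flag in the write-up is ensuring the pairing used to define $P^\star$ and to state the annihilator identity is the $Z$--$Z^\star$ pairing and is consistent with the embedding $V \hookrightarrow Z$; this is what lets one pass freely between the abstract adjoint $P^\star: V^\star \to V^\star$ of Proposition \ref{cor:Chill} and the concrete action on $W = Z^\star$.
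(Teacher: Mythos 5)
Your proposal is correct and follows the same overall strategy as the paper's proof: verify \ref{IT.1} directly from the hypotheses, use Fredholmness of index zero to get that $V_0=\ker\cL$ is finite dimensional and complemented, and then check \ref{IT.2} and \ref{IT.3} using the duality structure $W=Z^\star$, $V\hookrightarrow Z$. The one place where your mechanics genuinely differ is \ref{IT.3}: you prove the inclusion $\cL(V)\subseteq\{w\in W:\ \scal{w,v}=0\ \ \forall v\in V_0\}$ directly from the symmetry of $\cL$ and conclude by comparing finite codimensions (both equal $\dim V_0$: one side by index zero, the other because the restriction map $W=Z^\star\to V_0^\star$ is onto, by Hahn--Banach extension in $Z$). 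The paper instead passes to the adjoint $\cL^\star:W^\star\to V^\star$, proves $\cL^\star\circ J=\cL$ for the canonical injection $J:V\to Z^{\star\star}$, identifies $\ker\cL^\star=J(V_0)$ by a dimension count, and then uses $\Imm\cL=(\ker\cL^\star)^\perp$ together with closedness of the range. The two arguments rest on exactly the same two ingredients (symmetry of $\cL$ and the index-zero count), and yours is slightly more economical since it avoids the bidual. Note also that the symmetry you invoke is not merely available ``in the intended applications'': it holds at this abstract level because $\cL=\sL(0)$ is a second Fr\'echet derivative, which is how the paper justifies it. The only point you should tighten is \ref{IT.2}: it is not ``automatic'' for an arbitrary projection $P$ onto $V_0$, since the coordinate functionals of an arbitrary complement need not extend continuously to $Z$, and then $P^\star(W)\subseteq W$ can fail. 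The fix, implicit in your appeal to ``the $W=Z^\star$ structure,'' is to \emph{choose} $P$ by first extending a dual basis of the finite-dimensional subspace $V_0\subset Z$ to functionals $g_1,\dots,g_n\in Z^\star=W$ and setting $Pv=\sum_i g_i(v)e_i$; then $P^\star f=\sum_i f(e_i)\,g_i\in W$ and \ref{IT.2} is indeed immediate.
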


\begin{proof}
	By hypothesis $V_0 \coloneqq \ker \cL $ is finite dimensional, and thus it is closed and complemented with a projection $P:V\to V$ such that $\Imm P = V_0$. Moreover \ref{IT.1} of \Cref{cor:Chill} is satisfied by assumption.
	
	We can write that $V=V_0 \oplus V_1$ where $V_1=\ker P$. If $P^\star:V^\star\to V^\star$ is the adjoint projection, we see that also $V^\star= V_0^\star \oplus V_1^\star$ and 
	\[
	V_0^\star = \Imm P^\star, \qquad V_1^\star = \ker P^\star.
	\]	
	Let us introduce the canonical isometric injection $J_0:Z\to Z^{\star\star}$. Let us call $J:V\to (Z)^{\star\star}$ the restriction of $J_0$ to $V$. Hence
	\[
	J:V\to J(V) \con Z^{\star\star}.
	\]
	We claim that $\cL:V\to W$ satisfies that if
	\[
	\cL^\star: W^\star \to V^\star
	\]
	is the adjoint of $\cL$, then
	\begin{equation}\label{eq:QuasiSelfAdjoint}
	\cL^\star \circ J=\cL .
	\end{equation}
	Indeed, using that $\cL$ is symmetric because it is a second Fréchet derivative, for any $\vp,\psi\in V$ and $F\coloneqq J(\psi) \in J(V)\con Z^{\star\star}$ we find
	\[
	(\cL^\star \circ J )(\psi) [\vp] = \cL^\star(F)[\vp]=F(\cL\vp)
	= (J(\psi))(\cL\vp) = (\cL\vp)[\psi] =\cL(\psi)[\vp].
	\]
	
As a general consequence of the fact that $\cL$ is Fredholm of index zero, we have that
\[
{\rm dim} \ker \cL= {\rm dim} \ker \cL^\star,
\]
where ${\rm dim}(\cdot)$ denotes the dimension of a finite dimensional space. Indeed, index zero means that ${\rm dim}\ker \cL = \dim  {\rm coker} \cL$, where we split $W$ as
\[
W= \Imm \cL\oplus {\rm coker} \cL,
\]
and ${\rm coker} \cL$ is finite dimensional.
Therefore $W^\star = (\Imm \cL)^\star  \oplus ({\rm coker} \cL)^\star$. And since $\ker \cL^\star = (\Imm\cL)^\perp = ({\rm coker} \cL)^\star$, we conclude that $\dim \ker \cL^\star = \dim ({\rm coker} \cL)^\star = \dim {\rm coker} \cL = \dim \ker \cL$.

We claim that
\begin{equation}\label{eq:3}
	J(\Imm P) = \ker \cL^\star \cap J(V).
\end{equation}
Indeed by \eqref{eq:QuasiSelfAdjoint} we see that
\[
\ker \cL = \ker (\cL^\star\circ J) = J^{-1}(\ker\cL^\star).
\]
Applying $J$ on both sides we get $J(\Imm P) =\ker\cL^\star\cap J(V)$, that is \eqref{eq:3}. Since $\Imm P = \ker \cL$ and $J$ is injective, we have $\dim\ker\cL={\rm dim} (J(\Imm P)) = {\rm dim} (\ker \cL^\star \cap J(V))$. Since ${\rm dim} \ker \cL = {\rm dim}\ker \cL^\star$, it follows that $\ker \cL^\star \cap J(V)= \ker \cL^\star$, and then
\[
J(\Imm P) = \ker \cL^\star.
\]
Therefore, recalling that $V^{\star\star}\hookrightarrow W^\star$ and that $W\hookrightarrow V^\star$, we get
\begin{equation}
\begin{split}
	(\ker\cL^\star)^\perp &= \left\{ w \in W \,\,:\,\, \scal{f,w}_{W^\star,W}=0 \,\,\,\forall\, f \in J(\Imm P) \right\} \\
	&= \left\{ w \in W \,\,:\,\, \scal{J(v),w}_{W^\star,W}=0 \,\,\,\forall\, v \in \Imm P \right\} \\
	&= \left\{ w \in W \,\,:\,\, \scal{w,v}_{V^\star,V}=0 \,\,\,\forall\, v \in \Imm P \right\} \\
	&= (\Imm P)^\perp \cap W.
\end{split}
\end{equation}
Finally, as $\Imm\cL$ is closed, this implies
\[
\begin{split}
\Imm\cL&= (\ker \cL^\star)^\perp = (\Imm P)^\perp \cap W 
= \left\{ 
f \in V^\star \,\,\,:\,\,\, \scal{f,Pv}_{V^\star,V}=0 \,\,\,\forall\,v \in V
 \right\} \cap W \\
 &= \ker P^\star \cap W,
\end{split}
\]
and then \ref{IT.3} of \Cref{cor:Chill} is verified.

	
	We are just left with proving \ref{IT.2}, that is, $P^\star(Z^\star)\con Z^\star$.
	Observe that if we check that $P^\star(Z^\star\cap V_0^\star) \con Z^\star \cap V_0^\star$, then we are done, indeed we would get
	\[
	P^\star(Z^\star)= P^\star ( Z^\star \cap V_0^\star \oplus Z^\star \cap V_1^\star  ) = P^\star (Z^\star\cap V_0^\star) \con Z^\star \cap V_0^\star \con Z^\star.
	\]
	Now if $f_0\in Z^\star \cap V_0^\star$, writing any $\vp\in V$ as $\vp=\vp_0\oplus \vp_1 \in V_0\oplus V_1$, we get
	\[
	P^\star(f_0)[\vp]=f_0(P\vp)=f_0(\vp_0) = f_0(\vp_0)+f_0(\vp_1) = f_0(\vp),
	\]
	indeed $f_0(\vp_1)=(P^\star f_0)(\vp_1)=f_0(P\vp_1)=f_0(0)=0$. Hence we proved that $P^\star f_0=f_0$ for any $f_0\in Z^\star \cap V_0^\star$, and thus got that $P^\star(Z^\star\cap V_0^\star) \con Z^\star \cap V_0^\star$.	
\end{proof}

Let us mention that an equivalent result has been proved independently in the recent \cite{Ru20}.

\bigskip

\subsection{Convergence of the elastic flow in the Euclidean space}

If $\ga:\S^1\to\R^n$ is a smooth critical point of $\sE$, the analysis on the first and second variations, together with \Cref{lem:FredholmRn}, implies that we can apply \Cref{cor:LojaFunctionalAnalytic} with the spaces $V=H(\ga)^{4,\perp}$ and $Z=L^2(\ga)^\perp$. This gives that for some $\ro>0$ it holds the \L ojasiewicz--Simon inequality
\[
\left| \sE(\ga+\vp) - \sE(\ga)  \right|^{1-\te} \le C \|\de E (\vp) \|_{(L^2(\ga)^\perp)^\star} \le C \|\na_{L^2(dx)} E (\vp) \|_{L^2(dx)},
\]
for any $\vp \in B_\ro(0) \con H(\ga)^{4,\perp}$. Now, using the geometric nature of $\sE$, the above inequality can be easily generalized to fields in $H^{4}(\S^1,\R^n)$ with suitably small norm. More precisely, one has that for some $\si>0$ for any $\psi \in B_\si(0)\con H^{4}(\S^1,\R^n)$ there is $\vp \in B_\ro(0)\con H(\ga)^{4,\perp}$ such that the curves $\ga+\psi$ and $\ga+\vp$ coincide up to reparametrization (see \Cref{lem:ReparametrizationManifold}). As $|\na_{L^2(dx)} E (\vp)|\le |\na_{L^2(dx)} \bE (\vp)|$ for any $\vp \in  B_\ro(0)\con H(\ga)^{4,\perp}$ and both $\sE$ and $\na_{L^2(dx)}\bE(\psi)$ are invariant under reparametrization, we find that
\begin{equation}\label{eq:LojaRn}
\left| \sE(\ga+\psi) - \sE(\ga)  \right|^{1-\te} \le  C \|\na_{L^2(dx)} \bE (\psi) \|_{L^2(dx)},
\end{equation}
for any $\psi \in B_\si(0)\con H^{4}(\S^1,\R^n)$.

\medskip

Following the ideas of \cite{Si83}, \cite{ChFaSc09}, and \cite{DaPoSp16}, we can now see how to use \eqref{eq:LojaRn} in order to derive the convergence of the gradient flow of $\sE$. Let us recall that by gradient flow of $\sE$ we mean here the evolution equation
\begin{equation}\label{eq:DefFlowRn}
\begin{cases}
\pa_t \ga = -\na^2 k - \frac12 |k|^2k + k,\\
\ga(0,\cdot)=\ga_0(\cdot),
\end{cases}
\end{equation}
for a given smooth curve $\ga_0:\S^1\to\R^n$, where one looks for a smooth solution $\ga:[0,T)\times \S^1\to\R^n$. In this context, short time existence and sub-convergence of the flow as $t\to+\infty$ have been proved, and more precisely we can state the following.

\begin{thm}[\cite{PoldenThesis}, \cite{HuPo99}, {\cite[Theorem 3.2]{DzKuSc02}}]\label{thm:PoldenDzKuSc}
	For a given smooth curve $\ga_0:\S^1\to\R^n$, a global solution $\ga:[0,+\infty)\times \S^1\to\R^n $ to the flow defined in \eqref{eq:DefFlowRn} exists and it is unique. Moreover there exist a sequence of times $t_j\to+\infty$ and a sequence of points $p_j\in\R^n$ such that the immersions
	\[
	\ga(t_j,\cdot) - p_j,
	\]
	converge in $C^m$ to a critical point $\ga_\infty$ of $\sE$, up to reparametrization, for any $m\in \N$.
\end{thm}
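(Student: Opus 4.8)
The plan is to regard \eqref{eq:DefFlowRn} as a fourth-order quasilinear parabolic system for the parametrization of $\ga$ and to organize the proof into three standard steps: short-time existence and uniqueness, global existence via uniform-in-time a priori estimates, and extraction of a sub-convergent sequence of times. For the first step I observe that the right-hand side of \eqref{eq:DefFlowRn} has principal part $-\na^2 k$, a fourth-order operator whose symbol in the normal directions is $-|\xi|^4\,\mathrm{id}$; hence the flow is parabolic in its normal component, the only degeneracy being the tangential one, which is exactly the reparametrization invariance already isolated in the discussion of $\de\bE$. I would remove this degeneracy either by prescribing a suitable tangential velocity (a DeTurck-type normalization making the system strictly parabolic) or by appealing directly to the abstract short-time existence theory for quasilinear parabolic equations of \cite{MaMa12}. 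Either way one obtains, for smooth $\ga_0$, a unique smooth solution on a maximal interval $[0,T)$, uniqueness being understood up to reparametrization.

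For global existence, the crucial structural fact is that \eqref{eq:DefFlowRn} is the $L^2$-gradient flow of $\sE$: since $\pa_t\ga=-\na_{L^2(ds)}\bE$, one has
\[
\frac{d}{dt}\sE(\ga(t,\cdot)) = \scal{\na_{L^2(ds)}\bE,\,\pa_t\ga}_{L^2(ds)} = -\int_{\S^1}|\pa_t\ga|^2\, ds \le 0 .
\]
In particular $\sE(\ga(t,\cdot))\le\sE(\ga_0)$ for all $t$, which bounds both the length $L(t)=\int_{\S^1} ds$ and $\int_{\S^1}|k|^2\,ds$ uniformly in time. Combining Fenchel's inequality $\int_{\S^1}|k|\,ds\ge 2\pi$ with Cauchy--Schwarz gives $4\pi^2\le L(t)\int_{\S^1}|k|^2\,ds$, hence a uniform strictly positive lower bound on $L(t)$, so the curve cannot collapse. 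To rule out a finite-time singularity I would then derive uniform bounds on all the higher curvature energies $\int_{\S^1}|\na^j k|^2\,ds$: one computes the evolution equation of each such quantity using the commutator $[\pa_t,\pa_s]$ and the evolution of the length element, integrates by parts, and absorbs the resulting top-order terms by Gagliardo--Nirenberg interpolation on $\S^1$, exploiting the bounds already obtained on $L(t)$ and $\int|k|^2\,ds$. Together with the length lower bound, these estimates control all space-time derivatives of $\ga$ on $[0,T)$, forcing $T=+\infty$.

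For sub-convergence, integrating the energy identity over $[0,+\infty)$ yields
\[
\int_0^{+\infty}\!\!\int_{\S^1}|\pa_t\ga|^2\, ds\, dt = \sE(\ga_0)-\lim_{t\to+\infty}\sE(\ga(t,\cdot)) < +\infty ,
\]
so there is a sequence $t_j\to+\infty$ with $\|\pa_t\ga(t_j,\cdot)\|_{L^2(ds)}\to 0$. Reparametrizing each $\ga(t_j,\cdot)$ by constant speed and translating by points $p_j\in\R^n$ chosen so that the curves remain in a fixed bounded region, the uniform $C^m$ bounds from the previous step let me extract, by Arzel\`a--Ascoli and a diagonal argument, a subsequence along which $\ga(t_j,\cdot)-p_j\to\ga_\infty$ in $C^m$ for every $m$, up to reparametrization. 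Since the velocity $\na_{L^2(ds)}\bE$ evaluated at $\ga(t_j,\cdot)$ tends to $0$ in $L^2$ while, by the $C^4$-convergence, it also converges to $\na_{L^2(ds)}\bE(\ga_\infty)$, one concludes $\na_{L^2(ds)}\bE(\ga_\infty)=0$, i.e. $\ga_\infty$ is a critical point of $\sE$.

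The main obstacle is the second step. The gradient-flow structure, the lower length bound, and the final compactness extraction are comparatively routine once the estimates are available; by contrast, computing the evolution equations of $\int_{\S^1}|\na^j k|^2\,ds$ and, above all, engineering the interpolation inequalities that close the a priori bounds uniformly in time is the genuinely delicate part, and it constitutes the bulk of the analysis in \cite{DzKuSc02}.
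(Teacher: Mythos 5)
Your outline is correct and coincides with the strategy of the works cited for this statement (the paper itself does not reprove Theorem \ref{thm:PoldenDzKuSc} but quotes it from \cite{PoldenThesis}, \cite{HuPo99}, and \cite{DzKuSc02}): short-time existence by quasilinear parabolic theory after removing the tangential degeneracy, global existence from the energy identity, the Fenchel/Cauchy--Schwarz lower bound on the length, and Gagliardo--Nirenberg-type estimates on $\int|\na^j k|^2\,ds$, followed by extraction of times where $\|\pa_t\ga\|_{L^2(ds)}\to 0$ and an Arzel\`a--Ascoli argument after translating by $p_j$. You also correctly identify the closing of the higher-order curvature estimates as the genuinely delicate step, which is exactly the content of \cite[Theorem 3.2]{DzKuSc02} and of the analogous computations in Appendix B of this paper.
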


%

We can now illustrate the argument that leads to the convergence as $t\to+\infty$ of the solution of this gradient flow. Here we sketch the proof we will use for the flow of curves in manifolds in \Cref{sec:ConvergenceFinal}; however, as already mentioned, in this case where the ambient is the Euclidean space, the proof can be simplified and we refer the reader to \cite{MaPo20} for such a proof.

Let $\ga_0$ be fixed, and let $\ga,\ga_\infty,t_j,p_j$ be given by \Cref{thm:PoldenDzKuSc}. Without loss of generality we assume that $\ga_\infty$ is parametrized with constant speed.
Fix $m\ge8$ and let $\ep\in(0,1)$ to be chosen. By \Cref{thm:PoldenDzKuSc} there exists $p_{j_0}$ such that
\[
\|\bar \ga(t_{j_0},\cdot)- p_{j_0} - \ga_\infty (\cdot) \|_{C^{m}(\S^1)} \le \ep,
\]
where $\bar\ga(t,\cdot)$ is the constant speed reparametrization of $\ga(t,\cdot)$.
We want to show that if $\ep$ is sufficiently small, then actually $\bar{\ga}$ smoothly converges.
We rename $\ga_0(\cdot) = \bar{ \ga}(t_{j_0},\cdot)$. By short time existence and uniqueness results (\Cref{thm:LocalExistencePGenerico}, \cite{PoldenThesis}) there exists a solution $\tilde\ga:[0,+\infty)\times \S^1\to \R^n$ of
\[
\begin{cases}
\pa_t \tilde\ga = -\na^2 k_{\tilde\ga} - \frac12 |k_{\tilde\ga}|^2k_{\tilde\ga} + k_{\tilde\ga},\\
\tilde\ga(0,\cdot)=\ga_0(\cdot).
\end{cases}
\]
We denote by $\hat\ga$ the constant speed reparametrization of $\tilde\ga$.
For $\ep$ sufficiently small we can write $\hat\ga$ as a variation of $\ga_\infty$. More precisely, there is some maximal $T'\in(0,+\infty]$ such that for any $t\in[0,T')$ there exists $\psi_t\in B_\si(0)\con H^4(\S^1,\R^n)$ such that $\hat\ga(t,\cdot)= \ga_\infty(\cdot) + \psi_t(\cdot)$, where $\si$ is as in \eqref{eq:LojaRn}.

Suppose by contradiction that $T'<+\infty$. Suitable parabolic estimates give that for any $t<T'$, knowing that the flow $\hat \ga$ remains close in $W^{4,p}$ to the fixed $\ga_\infty$, the norm $\|k_{\hat \ga}\|_{W^{l,2}}$ is bounded by a term depending on $\|k_{\ga_0}\|_{W^{l,2}}$, on $l$, and on $\|k_{\ga_\infty}\|_{W^{2,2}}$ for any $l\in\N$. This fact is technical but rather classical in the theory of parabolic geometric equations and the details are carried out in \Cref{prop:StimaParabolica}. In particular, as $\ga_0$ is close to $\ga_\infty$ in $C^m$, these parabolic estimates applied for $l=m-2$ together with Sobolev embeddings eventually imply that
\[
\sup_{[0,T')} \|\hat \ga(t,\cdot) - \ga_0(\cdot)\|_{C^{m-3}(\S^1)} \le C(\ga_\infty),
\]
and the key fact here is that the constant on the right deos not depend on $\ep$.
By triangular inequality we then deduce
\begin{equation}\label{eq:BoundEstimateRn}
\sup_{[0,T')} \|\hat \ga(t,\cdot) - \ga_\infty(\cdot)\|_{C^{m-3}(\S^1)} \le C(\ga_\infty).
\end{equation}
Now we consider the evolution of
\[
H(t)\coloneqq (\sE(\hat\ga(t,\cdot)) - \sE(\ga_\infty) )^\te,
\]
where $\te$ is the \L ojasiewicz-Simon exponent of \eqref{eq:LojaRn}.
%
%
Using \eqref{eq:LojaRn} it is immediate to estimate that
\begin{equation}\label{eq:DifferentialInequalityRn}
	-\frac{d}{dt}H(t) \ge C(\ga_\infty) \|\pa^\perp_t\hat\ga\|_{L^2(dx)},
\end{equation}
where $\pa^\perp_t\hat\ga$ is just the projection of the velocity $\pa_t \hat \ga$ onto the normal space of $\hat \ga$. Using \eqref{eq:DifferentialInequalityRn} and the fact that $\|\pa^\perp_t\hat\ga\|_{L^2(ds_{\hat\ga})} = \|\pa_t\tilde\ga\|_{L^2(ds_{\tilde\ga})}$, one can show that the parametrization of $\tilde\ga$ does not degenerate, that is, the speed $|\pa_x\tilde\ga(t,x)|$ is bounded away from zero uniformly in time, and it is actually close to the speed of $\hat\ga$.

Therefore one shows that
\[
\begin{split}	
\|\hat\ga(t,\cdot)-\ga_\infty\|_{L^2(dx)} 
&\le C(\ga_\infty)\ep^\te,
\end{split}
\]
for $t\in[0,T')$. Suitable interpolation inequalities (see \Cref{rem:Interpolation}) together with \eqref{eq:BoundEstimateRn} imply that
\begin{equation*}
\begin{split}	
\|\hat\ga(t,\cdot)-\ga_\infty\|_{W^{4,2}}
&\le C  \|\hat\ga(t,\cdot)-\ga_\infty\|_{C^5}^{\alpha} \|\hat\ga(t,\cdot)-\ga_\infty\|_{L^2(dx)}^{1-\alpha} \\
& \le  C \|\ga_0-\ga_\infty\|^{\te(1-\alpha)}_{C^2(\S^1)} \le C \ep^{\te(1-\alpha)}
\end{split}
\end{equation*}
for $t\in[0,T')$ and some $\alpha\in(0,1)$. Hence if $\ep$ is sufficiently small this implies that $\|\psi_t\|_{W^{4,2}}\le \tfrac12\si$ for any $t\in[0,T')$, contradicting the maximality of $T'$.

Hence we have that for any $t\in[0,+\infty)$ the flow $\hat\ga(t,\cdot)$ can be written as $\ga_\infty + \psi_t$ for some uniformly bounded fields $\psi_t$; in particular the evolution $\hat\ga(t,x)$ stays in a compact set for any $t$. Once boundedness in space is achieved, the above estimates eventually imply that $\hat\ga$ smoothly converges to a translation of $\ga_\infty$, and then the same holds for the original flow $\ga$.

\medskip

As a result of this argument, or as a particular case of \Cref{thm:ConvergenceManifoldsMAIN}, we can state the following.

\begin{thm}\label{thm:ConvergenceRn}
	For a given smooth curve $\ga_0:\S^1\to\R^n$, a global solution $\ga:[0,+\infty)\times \S^1\to\R^n $ to the flow defined in \eqref{eq:DefFlowRn} exists, it is unique, and it smoothly converges as $t\to+\infty$ to a critical point $\ga_\infty$ of $\sE$ up to reparametrization. In particular, the flow stays in a compact set of $\R^n$ for any time.
\end{thm}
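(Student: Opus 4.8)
The plan is to combine the existence, uniqueness, and sub-convergence supplied by \Cref{thm:PoldenDzKuSc} with the \L ojasiewicz--Simon inequality \eqref{eq:LojaRn}, organizing the discussion preceding the statement into a proof. First I would invoke \Cref{thm:PoldenDzKuSc} to obtain the global solution $\ga$, its uniqueness, a sequence of times $t_j\to+\infty$, points $p_j\in\R^n$, and a critical point $\ga_\infty$ (normalized to have constant speed) with $\ga(t_j,\cdot)-p_j\to\ga_\infty$ in every $C^m$ up to reparametrization. Fixing $m\ge 8$ and a small $\ep\in(0,1)$, I would select $j_0$ for which the constant-speed reparametrization of $\ga(t_{j_0},\cdot)$, translated by $p_{j_0}$, lies within $\ep$ of $\ga_\infty$ in $C^m$, restart the flow from this time, and rename the datum $\ga_0$. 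Writing $\hat\ga$ for the constant-speed reparametrization of the flow $\tilde\ga$ issued from $\ga_0$, I would let $T'\in(0,+\infty]$ be maximal so that $\hat\ga(t,\cdot)=\ga_\infty+\psi_t$ with $\psi_t\in B_\si(0)\con H^4(\S^1,\R^n)$ for all $t<T'$, where $\si$ is the radius in \eqref{eq:LojaRn}.

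The heart of the argument is to show $T'=+\infty$ by contradiction, assuming $T'<+\infty$. The first ingredient is the parabolic estimate of \Cref{prop:StimaParabolica}: since $\hat\ga$ stays $W^{4,p}$-close to $\ga_\infty$ on $[0,T')$, the norms $\|k_{\hat\ga}\|_{W^{l,2}}$ are controlled by the initial data and $\ga_\infty$, which after Sobolev embedding and the triangle inequality yields \eqref{eq:BoundEstimateRn}, the crucial point being that the constant is independent of $\ep$. The second ingredient is the monotone quantity $H(t)=(\sE(\hat\ga(t,\cdot))-\sE(\ga_\infty))^\te$ with $\te$ the exponent of \eqref{eq:LojaRn}; differentiating $H$ along the flow and applying \eqref{eq:LojaRn} gives the differential inequality \eqref{eq:DifferentialInequalityRn}.

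Integrating \eqref{eq:DifferentialInequalityRn} in time produces an $L^2$ length-bound on the trajectory of $\hat\ga$, provided one first verifies that the parametrization does not degenerate, i.e., that $|\pa_x\tilde\ga(t,\cdot)|$ stays uniformly bounded away from zero, so that the $L^2(dx)$ and $L^2(ds)$ norms of the normal velocity remain comparable; this is what converts the energy decay into the displacement estimate $\|\hat\ga(t,\cdot)-\ga_\infty\|_{L^2(dx)}\le C(\ga_\infty)\ep^\te$ on $[0,T')$. Interpolating between this and the $\ep$-independent $C^5$-bound coming from \eqref{eq:BoundEstimateRn} gives $\|\psi_t\|_{W^{4,2}}\le C\ep^{\te(1-\alpha)}$, which for $\ep$ small forces $\|\psi_t\|_{W^{4,2}}\le\tfrac12\si$, contradicting the maximality of $T'$. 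Hence $T'=+\infty$, the fields $\psi_t$ are uniformly bounded, and the flow remains in a compact set; running the same estimates for arbitrarily large $m$ upgrades the $L^2$-convergence to smooth convergence of $\hat\ga$, and therefore of $\ga$, to a translate of $\ga_\infty$.

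The main obstacle is precisely the non-degeneracy of the parametrization. Because the geometric flow only prescribes the normal velocity, the tangential motion must be controlled so that the abstract inequality \eqref{eq:LojaRn}, which is phrased in terms of the $L^2(dx)$-norm of $\na_{L^2(dx)}\bE$, translates into a genuine bound on the evolving curve measured against the fixed $\ga_\infty$; this is where the geometric invariance of $\sE$ and the passage to the constant-speed reparametrization $\hat\ga$ are essential, and it is the step that requires the most care in the estimates of \Cref{prop:StimaParabolica}.
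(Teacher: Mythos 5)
Your proposal follows essentially the same route as the paper: it assembles \Cref{thm:PoldenDzKuSc}, the \L ojasiewicz--Simon inequality \eqref{eq:LojaRn}, the parabolic estimate \eqref{eq:BoundEstimateRn}, the differential inequality \eqref{eq:DifferentialInequalityRn}, the non-degeneracy of the parametrization, and the interpolation step into the same contradiction argument on the maximal time $T'$, which is exactly the outline the paper gives (and then fills in rigorously as a special case of \Cref{thm:ConvergenceManifoldsMAIN}). You also correctly identify the control of the tangential motion as the delicate point, so the proposal is consistent with the paper's proof.
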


The variational approach leading to the above theorem and the abstract tool \Cref{cor:LojaFunctionalAnalytic} suggest that we can try to extend the result to the gradient flow of elastic functionals of curves immersed into Riemannian manifolds. The rest of the paper is, in fact, devoted to prove rigorously that, under suitable hypotheses, the sub-convergence of the gradient flow of the $p$-elastic energy can be improved to full convergence of the flow also on Riemannian manifolds. This will fill the gaps in the heuristic proof presented above in the case of the flow in the Euclidean space for $p=2$.

\bigskip

\section{Elastic flows into manifolds:\\ first and second variations and \L ojasiewicz-Simon inequalities}\label{sec:Manifolds}

In this section we analyze the first and second variation of the $p$-elastic energy of curves in manifolds and we establish the \L ojasiewicz-Simon gradient inequality for such energies. Let us start with a few definitions.

In the following $(M^m,g)$ will be a fixed complete Riemannian manifold of dimension $m\ge 2$. By Nash Theorem we can assume without loss of generality that $(M^m,g)\hookrightarrow \R^n$ isometrically and that a smooth curve into $M$ is a smooth regular curve $\ga:\S^1\to\R^n$ with $\ga(x)\in M$ for any $x$. The exponential map of $M$ will be denoted by $\exp:TM\to M$.

Having identified $M$ with a subset of $\R^n$, we will denote by $\scal{\cdot,\cdot}$ both the Euclidean product and the metric on $M$. If $V$ is a vector field in $\R^n$ and $x\in M$, by $M^\top V(x)$ (resp. $M^\perp V(x)$) we denote tangent (resp. normal) projection of $V$ on the tangent space of $T_xM$ (resp. the normal space of $T_xM^\perp$). We denote by $\pa_v$ a directional derivative in $\R^n$, and by $D$ the Levi-Civita connection on $M$, so that
\[
(D_v X) (x)= M^\top(\pa_v X)(x),
\]
for tangent fields $v,X$ on $M$. For a smooth curve $\ga$ into $M$, we also define $\pa_s=|\ga'|^{-1}\pa_x$, $\tau=\pa_s\ga$, and $ds=|\ga'|dx$. A subscript $\ga$ will be added in case of confusion if more than a curve is considered.

The symbol $\na$ will denote the normal connection along a curve $\ga$ in $M$, that is
\[
(\na_v \phi)(x) = (M^\top-\ga^\top)(\pa_v \phi)(x)=(M^\top-\ga^\top M^\top)(\pa_v \phi)(x) = D_v\phi(x) -\scal{D_v\phi(x),\tau(x)}\tau(x),
\]
for any smooth field $\phi \in TM\cap(T\ga)^\perp$. Unless otherwise stated we will always denote
\[
 \ga^\perp\coloneqq M^\top-\ga^\top,
\]
that is $\ga^\perp$ is the normal projection along $\ga$ as a submanifold of $M$.
We will also write $\na \coloneqq\na_{\tau}$, in analogy with the notation used for curves in $\R^n$.

\begin{remark}
	If $\vp,\psi\in C^1(\S^1,\R^n)$ are fields such that $\vp,\psi\in TM\cap (T\ga)^\perp$ for a given $\ga\in W^{4,p}_{imm}(\S^1,M)$, then
	\[
	\int_{\S^1} \scal{\na\vp,\psi}\,ds =\int \scal{\ga^\perp M^\top|\ga'|^{-1}\pa_x\vp,\psi}|\ga'|\,dx = - \int_{\S^1}  \scal{\vp,\na \psi}\,ds,
	\]
	that is, integration by parts holds for normal fields with respect to the normal connection $\na$ and the arclength measure $ds$.
\end{remark}

The curvature vector of a curve $\ga$ into $M$ is
\[
k= D_{\tau}\tau \,\,\,\in \,T\ga^\perp\con TM.
\]

We adopt the following convention on the Riemann tensor $R$ of $M$. If $X,Y,Z,W$ are tangent fields on $M$ then
\[
R(X,Y,Z,W)=\scal{R(Z,W)Y,X},
\]
where $R(Z,W)Y=D_ZD_W Y - D_WD_Z Y -D_{[Z,W]}Y$. We will sometimes use basic facts in Riemannian geometry, for which we refer to \cite{DoCarmo}.

\begin{remark}
	The curvature $k$ of $\ga$ into $M\con\R^n$ is the geodesic curvature of the curve on $M$. In particular we can also write that
	\[
	k = M^\top(\pa_s\tau )= M^\top(\pa^2_s \ga).
	\]
\end{remark}

Let us also define the Sobolev spaces
\[
W^{k,p}(\S^1,M)\coloneqq\left\{ \ga:\S^1\to\R^n \,\,\big|\,\, \ga\in W^{k,p}(\S^1,\R^n), \,\,\ga(x)\in M \mbox{ a.e. }x \right\},
\]
for $k\in\N$ with $k\ge 1$ and $p\in[1,+\infty)$. For $k\ge2$ and $p>1$ we denote by $W_{imm}^{k,p}(\S^1,M)$ the open subset of $W^{k,p}(\S^1,M)$ of immersions, that is the subset of functions $\ga$ such that $|\ga'|\ge c(\ga)>0$. The symbols introduced above for smooth curves will be analogously used for Sobolev curves sufficiently regular. Spaces $L^p(\S^1,M)$ are defined analogously.

For a fixed exponent $p\in(1,+\infty)$ and $\ga:\S^1\to M$ an immersion of class $W^{2,p}(\S^1,M)$ we define
\[
\sE(\ga) = \int_{\S^1} 1 + \frac1p |k|^p\,ds.
\]

Now we need to define the Banach spaces of vector fields along curves that we will use to produce variations of a given curve.

\begin{defn}
	If $\ga$ is a fixed immersion of class $C^1$, for $k\in \N$ we define
	\[
	T(\ga)^{k,p}\coloneqq\left\{ \vp \in W^{k,p}(\S^1,\R^n) \,\,:\,\, \vp(p)\in T_{\ga(p)}M \,\,\forall\, p\in \S^1 \right\},
	\]
	\[
	T(\ga)^{k,p,\perp}\coloneqq\left\{ \vp \in W^{k,p}(\S^1,\R^n) \,\,:\,\, \scal{\vp,\pa_x\ga}\equiv0, \,\,\vp(p)\in T_{\ga(p)}M \,\,\forall\, p\in \S^1 \right\},
	\]
	for any $k\ge1$,
	and
	\[
	T(\ga)^{p}=T(\ga)^{0,p}\coloneqq\left\{ \vp\in L^p(\S^1,\R^n) \,\,:\,\,  \vp(p)\in T_{\ga(p)}M \mbox{ a.e. on } \S^1 \right\},
	\]
	\[
	T(\ga)^{p,\perp}=T(\ga)^{0,p,\perp}\coloneqq\left\{ \vp\in L^p(\S^1,\R^n) \,\,:\,\, \scal{\vp,\pa_x\ga}=0, \,\, \vp(p)\in T_{\ga(p)}M \mbox{ a.e. on } \S^1 \right\}.
	\]
\end{defn}
When nothing is specified, $L^p$-spaces are equipped with the Lebesgue measure. If for a given curve $\ga$ we want to employ the induced length measure on $\S^1$ we will specify $L^p(ds_\ga)$.

The following lemma shows that the spaces $T(\ga)^{k,p}$ do not depend on the embedding of $M$ into $\R^n$.

\begin{lemma}
	Let $\ga$ be a fixed immersion of class $C^1$. Let $k\in \N$ and $\vp\in T(\ga)^{k,p}$. If there exists $\phi\in T(\ga)^{p}$ such that
	\[
	\int \scal{D^k_s\vp,D_s\psi}\,ds = - \int\scal{\phi,\psi}\,ds,
	\]
	for any $\psi :\S^1\to\R^n$ of class $C^\infty$ such that $\psi(x)\in T_{\ga(x)}M$, then $\vp \in T(\ga)^{k+1,p}$.
\end{lemma}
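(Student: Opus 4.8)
The plan is to turn the \emph{intrinsic} (covariant) weak-derivative condition in the hypothesis into an \emph{extrinsic} statement about the $\R^n$-valued Sobolev regularity of $\vp$. Write $f\coloneqq D_s^k\vp$, which is a tangent field along $\ga$ and lies in $L^p$ since $\vp\in T(\ga)^{k,p}$. The first observation is that, because $f$ is tangent, $M^\top f=f$, so $\scal{f,D_s\psi}=\scal{f,M^\top\pa_s\psi}=\scal{f,\pa_s\psi}$ for every tangent test field $\psi$; using $\pa_s=|\ga'|^{-1}\pa_x$ and $ds=|\ga'|\,dx$, the hypothesis becomes
\[
\int_{\S^1}\scal{f,\pa_x\psi}\,dx=-\int_{\S^1}\scal{|\ga'|\phi,\psi}\,dx
\]
for every smooth $\psi$ with $\psi(x)\in T_{\ga(x)}M$. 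By approximating $C^1$ sections of the (merely $C^1$) pullback bundle $\ga^*TM$ with smooth ones, one extends this identity to all tangent test fields of class $C^1$, continuity of both sides being clear since $f,|\ga'|\phi\in L^p$.

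The key step is then to upgrade this to a genuine $\R^n$-weak derivative of $f$, by testing against arbitrary (not necessarily tangent) smooth fields. Given smooth $\eta\colon\S^1\to\R^n$, I split $\eta=\eta^\top+\eta^\perp$ with $\eta^\top\coloneqq M^\top\eta$ and $\eta^\perp\coloneqq M^\perp\eta$. Applying the extended identity to the $C^1$ tangent field $\eta^\top$ handles the tangential part, while for the normal part I differentiate the pointwise orthogonality relation $M^\perp f\equiv 0$ and use $\scal{f,M^\perp\pa_x\eta}=\scal{M^\perp f,\pa_x\eta}=0$; since $M^\perp$ is a symmetric projection this gives $\scal{f,\pa_x\eta^\perp}=\scal{(\pa_x M^\perp)f,\eta}$, a term of order zero in $\eta$. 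Adding the two contributions yields
\[
\int_{\S^1}\scal{f,\pa_x\eta}\,dx=-\int_{\S^1}\scal{|\ga'|\phi-(\pa_x M^\perp)f,\,\eta}\,dx
\]
for all smooth $\eta$, where $|\ga'|\phi-(\pa_x M^\perp)f\in L^p$ because $\ga\in C^1$ forces $|\ga'|$ and $\pa_x M^\perp$ to be bounded and continuous. Hence $f=D_s^k\vp\in W^{1,p}(\S^1,\R^n)$, with $\pa_x(D_s^k\vp)=|\ga'|\phi-(\pa_x M^\perp)D_s^k\vp$. Geometrically, the normal $\R^n$-derivatives of the tangent field $f$ are controlled by the second fundamental form of $M\hookrightarrow\R^n$, and this is exactly what closes the normal directions that the hypothesis leaves untouched.

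Finally I would convert ``$D_s^k\vp\in W^{1,p}$'' into ``$\vp\in W^{k+1,p}=T(\ga)^{k+1,p}$''. Unwinding the iterated covariant derivative produces a Leibniz-type expansion $D_s^k\vp=\pa_s^k\vp+\sum_{j=0}^{k-1}c_j\,\pa_s^j\vp$, whose leading term is $\pa_s^k\vp$ and whose coefficients $c_j$ are universal expressions in the derivatives of the projection $M^\top\circ\ga$; since $\vp\in W^{k,p}$, the correction terms carry at most $k$ derivatives, so the derivative newly gained on $f$ transfers to $\pa_s^k\vp$ and gives $\pa_s^{k+1}\vp\in L^p$, i.e.\ $\vp\in W^{k+1,p}$. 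I expect the main obstacle to be precisely this last bookkeeping: matching intrinsic with extrinsic regularity forces the coefficients $c_j$ (and the factor $|\ga'|^{-1}$ hidden in $\pa_s$) to be differentiable enough, which is delicate under the bare $C^1$ hypothesis on $\ga$. The clean way to organize it is an induction on $k$, the base case $k=0$ being exactly the two displays above applied to $\vp$ itself; at each step one transfers only a single derivative, so that the regularity of $\ga$ available in the applications (where $\ga$ is smooth) suffices and every lower-order term stays in the asserted space.
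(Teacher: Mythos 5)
Your proof is correct and follows essentially the same route as the paper's: both upgrade the hypothesis to arbitrary $\R^n$-valued test fields by splitting them into tangential and normal parts, observe that the normal part contributes only a zero-order term (your $(\pa_x M^\perp)f$ is exactly the paper's second-fundamental-form term $-|\ga'|\,B(\tau,D_s^k\vp)$ written with a normal frame), and then transfer the gained derivative from $D_s^k\vp$ to $\pa_s^k\vp$ via the Leibniz-type expansion of the iterated covariant derivative. The regularity bookkeeping you flag at the end is a genuine point --- the paper's claim that the correction term $\Om_k(\vp,\dots,\pa_s^{k-1}\vp)$ lies in $W^{1,p}$ tacitly uses more than $C^1$ regularity of $\ga$, harmless in the applications --- and your explicit density argument for $C^1$ tangential test fields makes precise a step the paper leaves implicit.
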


\begin{proof}
	Let us first prove by induction that for any $n\in \N$ with $n\ge 1$ if $\alpha \in T(\ga)^{n,p}$ then
	\begin{equation}\label{eq:DnPartialn}
	D^n_s\alpha = \pa_s^n \alpha + \Om_n(\al,...,\pa_s^{n-1}\alpha),
	\end{equation}
	where $\Om_n$ is smooth in its entries, it only depends on $M$, and $ \Om_n(\al,...,\pa_s^{n-1}\al) \in W^{1,p}$. In fact for $n=1$ we have
	\[
	D_s\alpha = \pa_s\alpha + \scal{\alpha,\pa_s N_j} N_j,
	\]
	where $\{N_j\}$ is a local orthonormal frame of $TM^\perp$, and summation over $j$ is understood.
	Since
	\[
	\pa_s M^\top - M^\top \pa_s = - \pa_s N_j \tens N_j - N_j \tens \pa_s N_j,
	\]
	for $n\ge1$ we get
	\[
	\begin{split}
	D_s^{n+1}\alpha & = M^\top\pa_s \left( \pa_s^n \alpha + \Om_n(\alpha,...,\pa_s^{n-1}\alpha) \right) \\
	&= \pa_s \left( \pa_s^n\alpha - \scal{\pa_s^n\alpha,N_j}N_j \right)
	+ \pa_s (M^\top\Om_n) + (\pa_s N_j \tens N_j + N_j \tens \pa_s N_j)(\pa_s^n\alpha +\Om_n) \\
	&= \pa_s^{n+1}\alpha +\pa_s (M^\perp \Om_n)
	+ \pa_s (M^\top\Om_n) + (\pa_s N_j \tens N_j + N_j \tens \pa_s N_j)(\pa_s^n\alpha +\Om_n) \\
	&= \pa_s^{n+1}\alpha +\pa_s  \Om_n
	+ (\pa_s N_j \tens N_j + N_j \tens \pa_s N_j)(\pa_s^n\alpha +\Om_n),
	\end{split}
	\]
	that proves \eqref{eq:DnPartialn}.
	
	For $\Psi:\S^1\to\R^n$ of class $C^\infty$ we write $\Psi=\psi + \Psi^\perp$ where $\psi \in TM$.
	We have
	\[
	\scal{D^k_s \vp, \pa_s\Psi} = \scal{D^k_s \vp, D_s\psi} + \scal{\Psi,N_j}\scal{D^k_s\vp, \pa_s N_j} 
	= \scal{D^k_s \vp, D_s\psi} - \scal{\Psi, B(\tau, D^k_s\vp)}
	,
	\]
	where $\{N_j\}$ is a local orthonormal frame of $TM^\perp$. Hence
	\[
	\int \scal{D^k_s \vp, \pa_s \Psi} \,ds = \int -\scal{\phi,\psi} - \scal{\Psi, B(\tau, D^k_s\vp)} \,ds = -\int \scal{\phi + B(\tau,D^k_s\vp) , \Psi}\,ds,
	\]
	which shows that $D_s^k \vp \in T(\ga)^{1,p}$. And therefore by \eqref{eq:DnPartialn} also $\pa_s^k \vp \in W^{1,p}$.	
\end{proof}

Let us state here another simple lemma about the regularity of the objects we will deal with.

\begin{lemma}\label{lem:CompositionSobolev}
	Let $g \in W^{k,p}((0,1),B_r(0))$ with $B_r(0) \con \R^N$. Let $f:B_{2r}(0)\to \R$ be a bounded function of class $C^k$ with bounded continuous derivatives up to order $k$. Then $f\circ g \in W^{k,p}(0,1)$ and the operator
	\[
	W^{k,p}((0,1),B_r(0)) \ni \,\, g \mapsto f\circ g  \,\,\in W^{k,p}(0,1)
	\]
	is of class $C^k$.
\end{lemma}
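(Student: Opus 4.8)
The plan is to reduce everything to the one--dimensional Sobolev embedding and the chain rule. First I would use that on an interval $W^{k,p}((0,1),\R^N)$ embeds continuously into $C^{k-1}([0,1],\R^N)$, so that for $g\in W^{k,p}$ all derivatives $g,g',\dots,g^{(k-1)}$ are continuous and bounded, while only $g^{(k)}$ merely lies in $L^p$. To prove $f\circ g\in W^{k,p}$ I would compute the weak derivatives of the composition by the Fa\`a di Bruno formula
\[
(f\circ g)^{(j)}=\sum_{\mathbf m} c_{\mathbf m}\,\big(D^{|\mathbf m|}f\big)(g)\,\prod_{i=1}^{j}\big(g^{(i)}\big)^{m_i},\qquad 1\le j\le k,
\]
where the sum runs over multi-indices $\mathbf m=(m_1,\dots,m_j)$ with $\sum_i i\,m_i=j$ and $|\mathbf m|=\sum_i m_i$. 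For Sobolev $g$ this identity is justified by mollification: approximate $g$ by smooth maps $g_\ep$, apply the classical formula, and pass to the limit using the embedding above. Now each $\big(D^{|\mathbf m|}f\big)(g)$ is bounded and continuous by hypothesis, and every factor $g^{(i)}$ with $i\le k-1$ is bounded and continuous; the only factor that is merely in $L^p$ is $g^{(k)}$, which appears (linearly) solely in the $j=k$ term $Df(g)\,g^{(k)}$. Hence every $(f\circ g)^{(j)}$ is a product of bounded continuous functions with at most one $L^p$ factor, so it lies in $L^p$, and one reads off a bound of the form $\|f\circ g\|_{W^{k,p}}\le C(\|f\|_{C^k},\|g\|_{W^{k,p}})$.

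For the $C^k$ regularity of $\Phi\colon g\mapsto f\circ g$ I would argue by induction on $k$, the candidate Fr\'echet differential at $g$ being $D\Phi(g)[h]=Df(g)\cdot h=\sum_j (\partial_j f)(g)\,h_j$. The two structural facts to lean on are: (a) in one dimension $W^{k,p}(0,1)$ is a Banach algebra for $k\ge1$, so pointwise multiplication is a bounded bilinear --- hence smooth --- map $W^{k,p}\times W^{k,p}\to W^{k,p}$; and (b) the differential $D\Phi$ is assembled from this multiplication and the composition operator attached to the gradient $Df$. The inductive step then identifies $D\Phi(g)$ with a bounded operator, reduces its continuous (and then $C^{k-1}$) dependence on $g$ to the corresponding statement for the composition operator of $Df$, and controls the remainder by Taylor's theorem with integral form, $f(g+h)-f(g)-Df(g)\cdot h=\int_0^1\big(Df(g+th)-Df(g)\big)\cdot h\,dt$, which is shown to be $o(\|h\|_{W^{k,p}})$ by combining the embedding, the algebra property, and the uniform continuity of the derivatives of $f$ on the ball where the arguments range.

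The step I expect to be the main obstacle is the control of the top--order term. Upon differentiating the composition, the highest derivative $g^{(k)}\in L^p$ gets multiplied against derivatives of $f$ evaluated at $g$; keeping such products inside $W^{k,p}$ --- rather than losing one order of Sobolev regularity, as the naive estimate $W^{k-1,p}\cdot W^{k,p}\subset W^{k-1,p}$ would suggest --- is precisely where the full strength of the hypotheses on $f$ and the uniform continuity of its top derivative $D^kf$ must enter. The way I would handle it is to isolate the linear appearance of $g^{(k)}$ in the top--order term and estimate the resulting differences in $L^p$ through the modulus of continuity of $D^kf$ together with the convergence of the $k$--th derivatives in $L^p$; this is the crux of both the well--definedness of the differential and the closing of the induction.
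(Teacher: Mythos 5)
For the first assertion, that $f\circ g\in W^{k,p}(0,1)$, your argument is essentially the paper's: both use the embedding $W^{k,p}((0,1))\hookrightarrow C^{k-1}$, write the top derivative of the composition via the chain rule for smooth approximants, observe that every factor except the single linear occurrence of $g^{(k)}$ is bounded and continuous, and pass to the limit in $L^p$. The paper does not even name Fa\`a di Bruno and lumps the intermediate terms into a polynomial $P$, but the mechanism is identical, and this half of your proposal is fine.

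The genuine gap is in the second assertion, the $C^k$ regularity of $\Phi\colon g\mapsto f\circ g$, and it is a gap you half-see but whose proposed cure does not work. Your candidate differential is $D\Phi(g)[h]=\nabla f(g)\cdot h$. If $f$ is only of class $C^k$, then $\nabla f$ is only $C^{k-1}$, so by the first part of the lemma $\nabla f\circ g$ lies in $W^{k-1,p}$ and in general \emph{not} in $W^{k,p}$: its $(k-1)$-st derivative contains the term $(\nabla^kf)(g)[g',\dots,g']$, which is merely continuous and need not be weakly differentiable. Hence $\nabla f(g)\cdot h$ need not belong to the target space $W^{k,p}$ at all; and since any Fr\'echet (indeed G\^ateaux) derivative of $\Phi$ must agree pointwise with $\nabla f(g)\cdot h$ as the limit of difference quotients, $\Phi$ fails to be differentiable into $W^{k,p}$ for suitable $f\in C^k\setminus C^{k+1}$ --- already for $k=1$, $p=2$: take $f'(x)=|x|^{1/2}$ near $0$, $g(t)=t-\tfrac12$, and $h$ with $h(\tfrac12)\neq0$, so that $(f'(g)h)'\notin L^2$. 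Neither the Banach algebra property nor the modulus of continuity of $\nabla^kf$ can repair this; what is missing is extra regularity of $f$ (each order of differentiability of the operator costs, roughly, one further derivative of $f$ beyond the $k$ already spent on landing in $W^{k,p}$). To be fair, the paper's own proof is no better at this point: it verifies continuity carefully and then asserts that ``an analogous argument'' yields $C^k$. In every application of the lemma the outer function is smooth (charts and the exponential map of an analytic manifold), so the statement actually needed is the one with $f\in C^\infty$, where both your scheme and the paper's do close; but as written, neither proof establishes the $C^k$ claim under the stated hypothesis on $f$.
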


\begin{proof}
	Since $W^{k,p}((0,1),B_r(0)) \con C^{k-1,\alpha}((0,1),B_r(0))$ we see that $f\circ g \in W^{k-1,p}(0,1)$. Now for a function $\tilde{g} \in C^\infty((0,1),B_{\frac32 r}(0))$ the chain rule gives
	\[
	(f\circ\tilde{g})^{(k)} = (\na^k f)(\tilde{g})[\tilde{g}', \tilde{g}', ..., \tilde{g}']+
	P((\na^{k-1} f) (\tilde{g}), ... , (\na^2 f)(\tilde{g}), \tilde g', ...,  \tilde g^{(k-1)} ) +  \scal{(\na f)(\tilde{g}) , \tilde g^{(k)}},
	\]
	where $P$ is some polynomial.
	Considering a sequence $g_n \in C_c^\infty((0,1),B_{\frac32 r}(0))$ converging in $W^{k,p}$ to $g$ and thus also strongly in $C^{k-1}$ we see that
	\[
	(\na^k f)(g_n)[g_n', g_n', ..., g_n'] \to  (\na^k f)({g})[{g}', {g}', ..., {g}']
	\]
	uniformly, and
	\[
	\scal{(\na f)(g_n) ,  g_n^{(k)} } \to \scal{(\na f)({g}) ,  g^{(k)} }
	\]
	in $L^p$, and therefore $f\circ g \in  W^{k,p}(0,1)$.
	
	If now $g_h \in W^{k,p}((0,1),B_r(0))$ is a sequence converging to $g$ in $W^{k,p}$, and then in $C^{k-1}$, we have that $f\circ g_h \to f\circ g $ in $C^{k-1}$ and $(f\circ g_h)^{(k)}
	\to (f\circ g)^{(k)}$ in $L^p$ by the above formulas, and then $g\mapsto f\circ g$ is continuous between the corresponding Sobolev spaces. Since $f\in C^k$ with bounded derivatives, an analogous argument shows that $g\mapsto f\circ g$ is of class $C^k$.	
\end{proof}

\begin{cor}\label{cor:CompositionSobolevTangent}
	Let $\ga:\S^1\to M^m$ be a fixed regular curve of class $C^1$ and let $F:TM\to N^n$ a smooth map between manifolds. If $\vp \in T(\ga)^{k,p}$ then $F\circ \vp$ is of class $W^{k,p}$, in the sense that for any local chart $(U,\ze)$ on $N$ the map $\ze\circ F\circ \vp$ is of class $W^{k,p}$. Moreover for any local chart $(U,\ze)$ in $N$ the operator
	\[
	T(\ga)^{k,p} \ni\,\, \vp \mapsto \ze\circ F\circ \vp \,\,\in W^{k,p}((0,1),\R^n)
	\]
	is of class $C^\infty$.
\end{cor}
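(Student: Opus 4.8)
The plan is to reduce the statement to the flat composition result of Lemma~\ref{lem:CompositionSobolev} by localizing on $\S^1$ and passing to coordinates on the manifold $TM$. Throughout I regard $TM$ as an embedded submanifold of $\R^n\times\R^n$ through the isometric embedding $M\hookrightarrow\R^n$, so that the tangent field $\vp$ along $\ga$ is identified with the map $\Phi_\vp\colon p\mapsto(\ga(p),\vp(p))\in TM$, and $F\circ\vp$ means $F\circ\Phi_\vp$. Since $k\ge1$ and we are in one space dimension, $W^{k,p}(\S^1,\R^n)\hookrightarrow C^0$, so $\vp$ is continuous; together with $\ga\in C^1$ this makes $\Phi_\vp$ continuous, whence $\Phi_\vp(\S^1)$ is compact in $TM$.

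I would then cover $\Phi_\vp(\S^1)$ by finitely many coordinate charts of $TM$ induced by charts $(V,x)$ of $M$, so that $\eta=(x,\dot x)\colon TV\to\R^{2m}$, and choose a finite cover of $\S^1$ by open arcs $I_j$ with $\Phi_\vp(\overline{I_j})$ contained in one such chart $(\cO_j,\eta_j)$, together with a subordinate smooth partition of unity $\{\chi_j\}$. Because being $W^{k,p}$ is a local property and multiplication by the fixed smooth functions $\chi_j$ is bounded on $W^{k,p}$, the global claim $\ze\circ F\circ\vp\in W^{k,p}$ follows once I show $\ze\circ F\circ\vp\in W^{k,p}(I_j)$ for each $j$; the second assertion concerns an operator already localized to an arc $(0,1)$, on which I may assume $\Phi_\vp(\overline{(0,1)})$ lies in a single chart and $F\circ\vp$ takes values in $U$.

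On such an arc $I$ I would write $\ze\circ F\circ\vp=\tilde f\circ g$, where $g\coloneqq\eta\circ\Phi_\vp\colon I\to\R^{2m}$ is the coordinate representative of $(\ga,\vp)$ and $\tilde f\coloneqq\ze\circ F\circ\eta^{-1}$ is the smooth coordinate representative of $\ze\circ F$. The fibre components of $g$ are $a^i=\langle\vp,\omega^i\circ\ga\rangle$, with $\omega^i$ the ambient $\R^n$-representatives of the dual coframe, so $\vp\mapsto g$ is affine, linear in the fibre part and with a $\vp$-independent base part coming from $\ga$. As $g(\overline I)$ is precompact in $\eta(\cO)$, I would fix $r>0$ with $g(\overline I)\subset B_r(0)$ and $\overline{B_{2r}(0)}\subset\eta(\cO)$ and cut off each component of $\tilde f$ outside $B_{2r}(0)$, obtaining bounded $C^\infty$ functions with bounded derivatives that agree with $\tilde f$ on $B_{2r}(0)$. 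Lemma~\ref{lem:CompositionSobolev}, applied componentwise, then yields $\tilde f\circ g\in W^{k,p}(I)$ and that the superposition operator $g\mapsto\tilde f\circ g$ is of class $C^k$; since $\tilde f$ is in fact $C^\infty$ and $W^{k,p}(I)$ is a Banach algebra in one dimension, the same computation of Fréchet derivatives shows this operator is $C^\infty$, and composing with the affine bounded map $\vp\mapsto g$ and patching with $\{\chi_j\}$ gives both the regularity and the $C^\infty$-dependence on $\vp$.

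The main obstacle is ensuring that the coordinate representative $g$ genuinely lies in $W^{k,p}(I,\R^{2m})$, which is precisely the hypothesis needed to invoke Lemma~\ref{lem:CompositionSobolev}. Its fibre components $a^i=\langle\vp,\omega^i\circ\ga\rangle$ are controlled by the Leibniz rule, using the Banach algebra property and that $\omega^i\circ\ga$ is smooth, while its base components $x^i\circ\ga$ inherit the regularity of the fixed curve $\ga$. Thus the delicate point is that $\ga$ and the pulled-back frame must be regular enough relative to $k$ for $g$ to be $W^{k,p}$ (in the applications $\ga$ is a smooth critical point or flow line, so $x^i\circ\ga,\ \omega^i\circ\ga\in C^\infty$ and no loss occurs); granting this, the reduction to the flat Lemma~\ref{lem:CompositionSobolev} and the cutoff argument are routine.
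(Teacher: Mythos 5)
Your proof is correct and follows essentially the same route as the paper's: both reduce the claim to Lemma~\ref{lem:CompositionSobolev} by passing to coordinate charts on $TM$ and exploiting the boundedness of the coordinate representative of $(\ga,\vp)$, you merely make the localization, cutoff and patching steps explicit. Your observation that the base components $x^i\circ\ga$ limit the regularity of the chart representative is a genuine point which the paper's one-line proof glosses over (the statement only assumes $\ga\in C^1$), and it is resolved exactly as you indicate, since in every application $\ga$ is at least of class $W^{4,p}$.
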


\begin{proof}
	For any local chart $(V,\xi)$ on $TM$, \Cref{lem:CompositionSobolev} implies that $\xi \circ \vp \in W^{k,p}(\S^1,\R^{2m})$. Since $\ze\circ F\circ \xi^{-1}$ is smooth and $\xi\circ \vp$ is bounded, we get that $\ze\circ F\circ \xi^{-1} \circ \xi\circ \vp$ is of class $W^{k,p}$. Smoothness of the operator $\vp\mapsto \ze\circ F\circ \vp$ follows by applying \Cref{lem:CompositionSobolev}.
\end{proof}

In \Cref{cor:CompositionSobolevTangent} a map fitting the hypothesis is the exponential map $\exp:TM\to M$. This leads to the following definition.

\begin{defn}\label{def:Variation}
	Let $\ga\in W_{imm}^{4,p}(\S^1,M)$. A map $\Phi:(-\ep_0,\ep_0)\times \S^1\to M$ is a variation of $\ga$ if
	\[
	\Phi(0,\cdot)=\ga(\cdot), \qquad \Phi(s,\cdot)\in W_{imm}^{4,p}(\S^1,M) \quad\forall\,s, \qquad
	\Phi(\cdot,x)\in W^{4,p}((-\ep_0,\ep_0),\R^n) \quad\forall\,x.
	\]
	In such a case we write that $\Phi\in \var(\ga)$ with variation field $\vp(x)\coloneqq\pa_\ep\Phi(0,x)$. If it also occurs that $\vp\in T\ga^\perp$, then we say that $\Phi$ is a normal variation and we write that $\Phi\in \var^\perp(\ga)$.
\end{defn}

Using the exponential map of $M$, we shall always use a typical construction of variations of a curve given a variation field. More precisely, suppose that $\vp \in T(\ga)^{4,p}$ for an immersed curve $\ga\in W^{4,p}(\S^1,M)$. We then define the variation
\begin{equation}
\Phi=\Phi(\ep,x):(-\ep_0,\ep_0)\times \S^1\to M \qquad\qquad \Phi(\ep,x)=\exp_{\ga(x)}(\ep\vp(x)),
\end{equation}
where $\exp_p:T_pM\to M$ is the exponential map of $M$. Since $\S^1$ is compact, the definition of $\Phi$ is well posed for $\ep_0$ small enough. It holds that
\[
\Phi(0,x)=\ga(x), \qquad \pa_\ep \Phi (0,x)= \vp(x).
\]
We also set $\ga_\ep(\cdot)=\Phi(\ep,\cdot)$. Finally for any $v\in T_pM$ we will denote by $\si_v:[0,l_v)\to M$ the geodesic in $M$ such that $\si(0)=p$ and $\si'(0)=v$. In this way we can write that
\[
\ga_\ep(x)=\Phi(\ep,x)=\si_{\ep\vp(x)}(1)=\si_{\vp(x)}(\ep).
\]
\Cref{cor:CompositionSobolevTangent} implies the following lemma.

\begin{lemma}\label{lem:Variation}
	Fix an immersed curve $\ga\in W^{4,p}(\S^1,M)$. Then there exist a radius $\ro(\ga)>0$ and $\ep_0(\ga)>1$ such that $\Phi(\ep,x)=\exp_{\ga(x)}(\ep\vp(x))$ is a variation of $\ga$ in the sense of \Cref{def:Variation} with variation field $\vp$ for any $|\ep|<\ep_0$ and any $\vp \in T(\ga)^{4,p}$ with $\|\vp\|_{W^{4,p}}\le \ro$.
\end{lemma}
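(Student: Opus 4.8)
The plan is to read the entire statement off \Cref{cor:CompositionSobolevTangent}, applied to the exponential map. First I would record that, since $(M,g)$ is complete, by Hopf--Rinow the map $\exp:TM\to M$ is defined on all of $TM$ and is smooth; composing with the fixed isometric embedding $M\con\R^n$ we may regard it as a smooth map $TM\to\R^n$. Since $\ga\in W^{4,p}(\S^1,M)\con C^1$ is a fixed regular curve, taking $N=\R^n$ and $\ze=\id$ in \Cref{cor:CompositionSobolevTangent} yields that the composition operator
\[
E: T(\ga)^{4,p}\to W^{4,p}(\S^1,\R^n),\qquad E(w)\coloneqq \exp\circ w,
\]
is of class $C^\infty$, with $E(0)=\ga$ and $E(w)(x)\in M$ for every $x$. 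Fixing once and for all $\ep_0\coloneqq 2>1$ and noting that $\Phi(\ep,\cdot)=E(\ep\vp)$, this already gives $\Phi(\ep,\cdot)\in W^{4,p}(\S^1,M)$ for every $\ep$ and every $\vp\in T(\ga)^{4,p}$, together with $\Phi(0,\cdot)=E(0)=\ga$.

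The only quantitative point is the choice of $\ro$ guaranteeing that each $\Phi(\ep,\cdot)$ is an immersion, uniformly for $|\ep|<\ep_0$. Here I would use the continuity of $E$ at the origin: since $E$ is $C^\infty$ and $E(0)=\ga$, for every $\de>0$ there is $\eta>0$ such that $\|w\|_{W^{4,p}}\le\eta$ implies $\|E(w)-\ga\|_{W^{4,p}}\le\de$. Choosing $\ro\coloneqq \eta/\ep_0$ forces $\|\ep\vp\|_{W^{4,p}}\le\ep_0\ro=\eta$ for all $|\ep|<\ep_0$ and all $\|\vp\|_{W^{4,p}}\le\ro$, whence $\|\Phi(\ep,\cdot)-\ga\|_{W^{4,p}}\le\de$. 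By the Sobolev embedding $W^{4,p}(\S^1)\hookrightarrow C^1(\S^1)$ (valid since $p>1$) this controls $\|\Phi(\ep,\cdot)-\ga\|_{C^1}\le C\de$; as $\ga$ is an immersion, $|\ga'|\ge c(\ga)>0$, so taking $\de$ small enough that $C\de<\tfrac12 c(\ga)$ gives $|\pa_x\Phi(\ep,x)|\ge\tfrac12 c(\ga)>0$. Hence $\Phi(\ep,\cdot)\in W_{imm}^{4,p}(\S^1,M)$ for all $|\ep|<\ep_0$, which is exactly the uniform immersion requirement in \Cref{def:Variation}. This interplay between the continuity statement of \Cref{cor:CompositionSobolevTangent} and the Sobolev embedding is the heart of the argument, and is where the smallness of $\ro$ is spent.

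It remains to check the regularity in the $\ep$--variable and to identify the variation field. For fixed $x$ one has $\Phi(\ep,x)=\si_{\vp(x)}(\ep)$, the geodesic of $M$ issuing from $\ga(x)$ with initial velocity $\vp(x)$; since geodesics solve a smooth ODE, $\ep\mapsto\Phi(\ep,x)$ is smooth on $(-\ep_0,\ep_0)$ and in particular lies in $W^{4,p}((-\ep_0,\ep_0),\R^n)$. Finally $\pa_\ep\Phi(0,x)=\tfrac{d}{d\ep}\big|_{0}\si_{\vp(x)}(\ep)=\si_{\vp(x)}'(0)=\vp(x)$, so all three conditions of \Cref{def:Variation} hold and $\Phi\in\var(\ga)$ with variation field $\vp$. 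I expect no genuine obstacle beyond the uniform immersion step above: the $\ep$--regularity is immediate from smoothness of geodesics and the boundedness of the interval $(-\ep_0,\ep_0)$, while well-posedness of $\Phi$ for all relevant $\ep$ is granted by completeness (alternatively, by the compactness of $\S^1$ and the bound $\|\ep\vp\|_{W^{4,p}}\le\eta$, which confine $\ep\vp$ to a fixed compact subset of $TM$).
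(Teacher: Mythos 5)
Your proof is correct and follows exactly the route the paper intends: the paper gives no explicit proof of this lemma, stating only that \Cref{cor:CompositionSobolevTangent} implies it, and your argument is the natural fleshing-out of that remark (smoothness of the composition operator with $\exp$, continuity at the origin combined with the Sobolev embedding $W^{4,p}\hookrightarrow C^1$ to preserve the immersion condition, and smoothness of geodesics in $\ep$ for the regularity in the first variable). No gaps.
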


As in \Cref{sec:Rn} we introduce the following functionals.

\begin{defn}
	Let $\ga\in W_{imm}^{4,p}(\S^1,M)$ be fixed and let $\ro$ be given by \Cref{lem:Variation}. For $\vp \in B_\ro(0)\con T(\ga)^{4,p}$ we define
	\[
	E: B_\ro(0)\con T(\ga)^{4,p,\perp} \to \R \qquad\qquad E(\vp)\coloneqq\sE(\Phi(1,\cdot)),
	\]
	and for $\vp \in B_\ro(0)\con T(\ga)^{4,p,\perp}$ we define
	\[
	\bE: B_\ro(0)\con T(\ga)^{4,p} \to \R \qquad\qquad \bE(\vp)\coloneqq\sE(\Phi(1,\cdot)),
	\]
	where $\Phi$ is the variation associated with the given field $\vp$.
\end{defn}

\bigskip

\subsection{First and second variations}

Let $\ga\in W_{imm}^{4,p}(\S^1,M)$ be fixed. We want to compute the variations of $E$ and $\bE$. For $\vp$ in the suitable domains of the two functionals we recall that the first variations are defined as
\[
\de\bE(\vp)\in (T(\ga)^{4,p})^\star \qquad\qquad \de\bE(\vp)[\psi] \coloneqq \frac{d}{ds}\bigg|_0 \bE(\vp+s\psi),
\]
\[
\de E(\vp)\in (T(\ga)^{4,p,\perp})^\star \qquad\quad \de E(\vp)[\psi]\coloneqq \frac{d}{ds}\bigg|_0 E(\vp+s\psi).
\]

Let us collect some computations first.

\begin{lemma}\label{lem:Variations}
	Let $\si\in W_{imm}^{4,p}(\S^1,M)$ and $\Phi\in\var(\si)$ with variation field $\vp$. Denote $\si_\ep(x)=\Phi(\ep,x)$. Then
	\begin{equation}\label{eq:Vards}
	\frac{d}{d\ep}\bigg|_0 ds_{\si_\ep} = \scal{\tau_\si, \pa_{s_\si}\vp} ds_{\si},
	\end{equation}
	
	\begin{equation}\label{eq:VarLungh}
	\frac{d}{d\ep}\bigg|_0 \int_{\S^1}\,ds_{\si_\ep} = -\int_{\S^1} \scal{\vp,k_\si}\,ds_\si, 
	\end{equation}
	
	\begin{equation}\label{eq:Vartau}
	\frac{d}{d\ep}\bigg|_0 \tau_{\si_\ep} = \pa_{s_\si}\vp -\scal{\pa_{s_\si}\vp,\tau_\si}\tau_\si,
	\end{equation}
	
	\begin{equation}\label{eq:VarK}
	(M^\top-\si^\top)\left(\frac{d}{d\ep}\bigg|_0 k_{\si_\ep} \right)= (M^\top-\si^\top)\pa_s((M^\top-\si^\top)\pa_s\vp) -\scal{\tau_\si,\pa_{s_\si}\vp}k_\si
	+ R(\vp^\perp,\tau_\si)\tau_\si,
	\end{equation}
	where $\vp^\perp=\si^\perp \vp$.
	
	\noindent If also $\Phi\in\var^\perp$, i.e. $\vp\in T\si^\perp$, then
	
	\begin{equation}\label{eq:Vardsperp}
	\frac{d}{d\ep}\bigg|_0 ds_{\si_\ep} = -\scal{k_\si,\vp}ds_\si,
	\end{equation}
	
	\begin{equation}\label{eq:Vartauperp}
	\frac{d}{d\ep}\bigg|_0 \tau_{\si_\ep} = \na \vp,
	\end{equation}
	
	\begin{equation}\label{eq:VarKPerp}
	(M^\top-\si^\top)\left(\frac{d}{d\ep}\bigg|_0 k_{\si_\ep}\right) = \na^2 \vp +\scal{\vp,k_\si}k_\si
	+ R(\vp,\tau_\si)\tau_\si.
	\end{equation}
\end{lemma}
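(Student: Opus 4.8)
The plan is to obtain all of \eqref{eq:Vards}--\eqref{eq:VarKPerp} by differentiating the defining expressions $v\coloneqq|\pa_x\Phi|$, $\tau=v^{-1}\pa_x\Phi$ and $k=v^{-1}D_x\tau$ along the variation $\Phi(\ep,x)=\exp_{\si(x)}(\ep\vp(x))$, using two structural facts about the pullback of the Levi--Civita connection $D$ under $\Phi$. Since $\pa_\ep\Phi$ and $\pa_x\Phi$ come from coordinate directions, equality of mixed Euclidean partials plus torsion-freeness gives the symmetry $D_\ep\pa_x\Phi=D_x\pa_\ep\Phi$; and the curvature identity $D_\ep D_x X - D_x D_\ep X = R(\pa_\ep\Phi,\pa_x\Phi)X$ holds for every field $X$ along $\Phi$, with the sign matching the convention fixed above. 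Throughout I will use that $\si^\perp=M^\top-\si^\top$ annihilates vectors normal to $M$ and that $\si^\perp M^\top=\si^\perp$, so that $\si^\perp$ applied to a Euclidean $\ep$-derivative agrees with $\si^\perp$ applied to the covariant derivative $D_\ep$.

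I would begin with the speed: differentiating $v=|\pa_x\Phi|$ at $\ep=0$, where $\pa_\ep\pa_x\Phi|_0=\pa_x\vp$ and $\pa_x\si=|\si'|\tau_\si$, gives $\tfrac{d}{d\ep}\big|_0 v=|\si'|\scal{\pa_{s_\si}\vp,\tau_\si}$, which is \eqref{eq:Vards} after multiplying by $dx$. Integrating over $\S^1$ and integrating by parts with respect to $ds_\si$ (legitimate since $\S^1$ is closed) turns $\scal{\pa_{s_\si}\vp,\tau_\si}$ into $-\scal{\vp,\pa_{s_\si}\tau_\si}$, and since $\vp$ is tangent to $M$ one has $\scal{\vp,\pa_{s_\si}\tau_\si}=\scal{\vp,M^\top\pa_{s_\si}\tau_\si}=\scal{\vp,k_\si}$, which is \eqref{eq:VarLungh}. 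The tangent identity \eqref{eq:Vartau} then follows by the quotient rule applied to $\tau=v^{-1}\pa_x\Phi$, inserting the derivative of $v$ just found; no connection enters, and the answer is the full Euclidean derivative $\pa_{s_\si}\vp-\scal{\pa_{s_\si}\vp,\tau_\si}\tau_\si$.

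The heart of the argument is \eqref{eq:VarK}. Writing $k=v^{-1}D_x\tau$ and differentiating covariantly, $D_\ep k=(\pa_\ep v^{-1})D_x\tau+v^{-1}\big(D_x D_\ep\tau+R(\pa_\ep\Phi,\pa_x\Phi)\tau\big)$, where the curvature identity commutes $D_\ep$ and $D_x$. Evaluating at $\ep=0$ with $D_x\tau|_0=|\si'|k_\si$, $\pa_\ep v^{-1}|_0=-|\si'|^{-1}\scal{\pa_{s_\si}\vp,\tau_\si}$, and $D_\ep\tau|_0=(M^\top-\si^\top)\pa_{s_\si}\vp$ (the covariant counterpart of \eqref{eq:Vartau}, obtained from the symmetry $D_\ep\pa_x\Phi=D_x\pa_\ep\Phi$), I obtain $D_\ep k|_0=-\scal{\pa_{s_\si}\vp,\tau_\si}k_\si+D_{s_\si}\big((M^\top-\si^\top)\pa_{s_\si}\vp\big)+R(\vp,\tau_\si)\tau_\si$. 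Applying $\si^\perp$, the left-hand side becomes the left-hand side of \eqref{eq:VarK} because $\si^\perp\pa_\ep k=\si^\perp D_\ep k$; the middle term becomes $(M^\top-\si^\top)\pa_{s_\si}\big((M^\top-\si^\top)\pa_{s_\si}\vp\big)$ via $\si^\perp M^\top=\si^\perp$; and $\si^\perp k_\si=k_\si$ gives the second summand. For the curvature term I split $\vp=\vp^\top+\vp^\perp$ and use antisymmetry of $R$ in its first two slots to kill $R(\vp^\top,\tau_\si)\tau_\si=\scal{\vp,\tau_\si}R(\tau_\si,\tau_\si)\tau_\si=0$; the pairing antisymmetry $\scal{R(\vp^\perp,\tau_\si)\tau_\si,\tau_\si}=0$ together with $R(\vp^\perp,\tau_\si)\tau_\si\in TM$ shows this vector is already normal to $\si$, so $\si^\perp$ fixes it. This yields exactly \eqref{eq:VarK}.

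The normal case is then a specialization. When $\vp\in T\si^\perp$ we have $\scal{\vp,\tau_\si}\equiv0$, so $\scal{\pa_{s_\si}\vp,\tau_\si}=-\scal{\vp,\pa_{s_\si}\tau_\si}=-\scal{\vp,k_\si}$; inserting this into \eqref{eq:Vards} and \eqref{eq:Vartau} produces \eqref{eq:Vardsperp}, and---after passing to the tangential-to-$M$ part, i.e. $D_\ep\tau|_0$---\eqref{eq:Vartauperp}, since $(M^\top-\si^\top)\pa_{s_\si}\vp=\na\vp$. For \eqref{eq:VarKPerp} I put $\vp^\perp=\vp$ in \eqref{eq:VarK}: the iterated projection $(M^\top-\si^\top)\pa_{s_\si}\big((M^\top-\si^\top)\pa_{s_\si}\vp\big)$ equals $\na(\na\vp)=\na^2\vp$ because $\na\vp$ is a normal field, while $-\scal{\tau_\si,\pa_{s_\si}\vp}k_\si=\scal{\vp,k_\si}k_\si$ by the identity just derived, giving $\na^2\vp+\scal{\vp,k_\si}k_\si+R(\vp,\tau_\si)\tau_\si$. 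I expect the only genuinely delicate point to be the bookkeeping in \eqref{eq:VarK}: keeping the Euclidean derivatives of \eqref{eq:Vartau} separate from their covariant $M^\top$-projections, justifying that $\si^\perp$ converts one into the other, and invoking the curvature symmetries correctly; the remaining identities are routine differentiation once this is in place.
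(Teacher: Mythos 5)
Your proof is correct, but it reaches the key identity \eqref{eq:VarK} by a genuinely different route from the paper. The paper works extrinsically throughout: it writes $k_{\si_\ep}=\pa_{s_{\si_\ep}}\tau_{\si_\ep}-\sum_j\scal{\pa_{s_{\si_\ep}}\tau_{\si_\ep},N_j\circ\si_\ep}N_j\circ\si_\ep$ for an orthonormal frame $\{N_j\}$ of $(TM)^\perp$, differentiates each term in $\ep$, and only at the end recognizes the resulting combination of shape operators, $\scal{B(\tau,\tau),N_j}S_{N_j}(\vp)-\scal{B(\tau,\vp),N_j}S_{N_j}(\tau)$, as the projection of $R(\vp,\tau)\tau$ via the Gauss equation. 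You instead work intrinsically with the pullback connection: the symmetry $D_\ep\pa_x\Phi=D_x\pa_\ep\Phi$ and the commutation identity $D_\ep D_x-D_xD_\ep=R(\pa_\ep\Phi,\pa_x\Phi)$ produce the Riemann tensor directly, with the sign consistent with the paper's convention $R(Z,W)Y=D_ZD_WY-D_WD_ZY-D_{[Z,W]}Y$, which is exactly what yields the $+R(\vp^\perp,\tau_\si)\tau_\si$ term. Your route is shorter and makes the appearance of $R$ conceptually transparent; the paper's route keeps everything in the ambient language ($\pa_s$, $N_j$, $B$) that is reused verbatim in the second-variation computation of Appendix A. The one delicate point, which you handle correctly, is the passage between Euclidean and covariant $\ep$-derivatives: since $\si^\perp M^\perp=0$ one indeed has $\si^\perp\pa_\ep X=\si^\perp D_\ep X$ for $M$-tangent fields $X$, which is what lets you read \eqref{eq:VarK} off the covariant identity for $D_\ep k$. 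Relatedly, your reading of \eqref{eq:Vartauperp} as a statement about the $M$-tangential part $D_\ep\tau|_0=M^\top\pa_\ep\tau|_0$ is the right one: the full Euclidean derivative computed in \eqref{eq:Vartau} differs from $\na\vp$ by the $M$-normal component $B(\tau_\si,\vp)$, and it is only the projected version that the paper uses later (e.g.\ $\pa_t^\perp\tau=\na V$ in Appendix B). The remaining identities check out as you describe.
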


\begin{proof}
	\Cref{eq:Vards} follows by a direct calculation. Then \eqref{eq:Vardsperp} follows by the fact that $\scal{\tau_{\si},\pa_x\vp}= -\scal{\pa_x\tau_{\si},\vp}$ for $\vp\in T\si^\perp$, and $\scal{\pa_{s_\si}^2\si,\vp}=\scal{M^\top(\pa^2_{s_\si} \si),\vp}$. Moreover for general variation field $\vp$ we have that
	\[
	\begin{split}
	\scal{\pa_{s_\si}\vp,\tau_\si} &= \left\lgl \pa_{s_\si} \left(\scal{\vp,\tau_\si}\tau_\si + \sum_2^m \scal{\vp,e_i}e_i\right) , \tau_\si \right\rgl = \pa_s(\scal{\vp,\tau_\si}) + \sum_2^m \scal{\vp,e_i}\scal{\pa_s(e_i\circ\si),\tau_\si} =\\
	&=\pa_s(\scal{\vp,\tau_\si}) + - \scal{\vp,k_\si},
	\end{split}
	\]
	where $\{\tau,e_2,...,e_m\}$ is a local orthonormal frame of $TM$. \Cref{eq:VarLungh} then follows by integration.
	
	\Cref{eq:Vartau} and \Cref{eq:Vartauperp} also follows by direct calculations and the definition of the normal connection $\na$.
	
	Now for $j=m+1,...,n$ let $N_j:U\to S^{n-1}$ be unit vector fields locally defined on a neighborhood of $\si(x)$ in $M$ such that $\{N_j\,:\,j=m+1,...,n\}$ is a local orthonormal frame of $(TM)^\perp$. Writing
	\[
	k_{\si_\ep}=D_{\tau_{\si_\ep}} \tau_{\si_\ep} = \pa_{s_{\si_\ep}} \tau_{\si_\ep} - \sum_j  \scal{ \pa_{s_{\si_\ep}} \tau_{\si_\ep}, N_j\circ \si_\ep}N_j\circ\si_\ep,
	\]
	we have that
	\[
	\begin{split}
	\frac{d}{d\ep}\bigg|_0 k_{\si_\ep} & = -\scal{\tau_\si,\pa_{s_\si}\vp}\pa_{s_\si}\tau_\si
	+\pa_{s_\si} \left(\pa_{s_\si}\vp -\scal{\pa_{s_\si}\vp,\tau_\si}\tau_\si\right)
	+\sum_j \scal{\tau_\si,\pa_{s_\si}\vp}\scal{\pa_{s_\si}\tau_\si,N_j\circ\si}N_j\circ\si +\\
	&\indent - \scal{\pa_{s_\si}\left(\pa_{s_\si}\vp -\scal{\pa_{s_\si}\vp,\tau_\si}\tau_\si\right), N_j\circ\si}N_j\circ\si
	- \scal{\pa_{s_\si}\tau_\si,\pa_\ep|_0(N_j\circ\si_\ep)}N_j\circ\si +\\
	&\indent - \scal{\pa_{s_\si}\tau_\si,N_j\circ\si}\pa_\ep|_0(N_j\circ\si_\ep).
	\end{split}
	\]
	Denoting by $S_{N_j}(v)\coloneqq-M^\top(\pa_v N_j)$ the shape operator of $M$ defined by $N_j$, we have that $M^\top[\pa_\ep|_0(N_j\circ\si_\ep)]=-S_{N_j}(\vp)$, and also
	\[
	\begin{split}
	(M^\top-\si^\top)&\pa_s((M^\top-\si^\top)\pa_s\vp)
	=\\
	&=  (M^\top-\si^\top)(\pa^2_s\vp) - \scal{\pa_s \vp, \tau_\si}k_\si - \sum_j \scal{\pa_s\vp,N_j}(M^\top-\si^\top)(\pa_s N_j)\\
	&=  (M^\top-\si^\top)(\pa^2_s\vp) - \scal{\pa_s \vp, \tau_\si}k_\si - \sum_j \scal{S_{N_j}(\tau_\si),\vp}\left( -S_{N_j}(\tau_\si) +\scal{N_j,\pa^2_s\si}\tau_\si \right) \\
	&= (M^\top-\si^\top)(\pa^2_s\vp) - \scal{\pa_s \vp, \tau_\si}k_\si - \sum_j \scal{B(\tau_\si,\vp),N_j}\left( \scal{B(\tau_\si,\tau_\si),N_j}\tau_\si - S_{N_j}(\tau_\si) \right),
	\end{split}
	\]
	where $B$ is the second fundamental form of $M$ in $\R^n$. Observe that if $\vp\in(T\si)^\perp$, then actually $(M^\top-\si^\top)\pa_s((M^\top-\si^\top)\pa_s\vp)=\na^2\vp$; for sake of readability, in this proof we will denote by $\na^2\vp$ the vector $(M^\top-\si^\top)\pa_s((M^\top-\si^\top)\pa_s\vp)$ for any $\vp\in TM$, that is, not only for normal fields along $\si$.
	
	We have
	\[
	\begin{split}
	M^\top\left(\frac{d}{d\ep}\bigg|_0 k_{\si_\ep} \right) 
	&= -\scal{\tau_\si,\pa_{s_\si}\vp}k_\si +M^\top(\pa^2_{s_\si}\vp) -\pa_{s_\si}\left(\scal{\pa_{s_\si}\vp,\tau_\si}\right) \tau_\si
	- \scal{\pa_{s_\si}\vp,\tau_\si} k_\si+\\
	&\indent 
	-\sum_j \scal{\pa_{s_\si}\tau_\si,N_j\circ\si} (-S_{N_j}(\vp)) \\
	&\,\,= -2\scal{\tau_\si,\pa_{s_\si}\vp}k_\si +(M^\top-\si^\top)(\pa^2_{s_\si}\vp)
	-\scal{\pa_{s_\si}\vp,\pa_{s_\si}\tau_\si}\tau_\si + \sum_j \scal{B(\tau_\si,\tau_\si),N_j} S_{N_j}(\vp) \\
	&\,\,= \na^2\vp - \scal{\tau_\si,\pa_{s_\si}\vp}k_\si - \scal{\pa_{s_\si}\vp,\pa^2_s\si}\tau_\si 
	+  \sum_j\scal{B(\tau_\si,\vp),N_j}\left( \scal{B(\tau_\si,\tau_\si),N_j}\tau_\si - S_{N_j}(\tau_\si) \right) +\\
	&\indent + \scal{B(\tau_\si,\tau_\si),N_j} S_{N_j}(\vp),
	\end{split}
	\]
	and
	\[
	\begin{split}
	\si^\top\left(\frac{d}{d\ep}\bigg|_0 k_{\si_\ep} \right) 
	&=\scal{\pa^2_{s_\si}\vp,\tau_\si}\tau_\si
	- \pa_{s_\si}(\scal{\pa_{s_\si}\vp,\tau_\si})\tau_\si
	-\sum_j  \scal{\pa_{s_\si}\tau_\si,N_j\circ\si}\,\si^\top(\pa_\ep|_0(N_j\circ\si_\ep)) =\\
	&=-\scal{\pa_{s_\si}\vp,\pa_{s_\si}\tau_\si}\tau_\si
	+ \sum_j \scal{B(\tau_\si,\tau_\si),N_j}\scal{B(\vp,\tau_\si),N_j}\tau_\si.
	\end{split}
	\]
	Hence
	\begin{equation}\label{eq:1}
	\begin{split}
	(M^\top-\si^\top)\left(\frac{d}{d\ep}\bigg|_0 k_{\si_\ep} \right)
	&= \na^2 \vp -\scal{\tau_\si,\pa_{s_\si}\vp}k_\si
	+\sum_j \scal{B(\tau_\si,\tau_\si),N_j} S_{N_j}(\vp) - \scal{B(\tau_\si,\vp),N_j}S_{N_j}(\tau_\si).
	\end{split}
	\end{equation}
	Understanding summation over repeated indices and letting $\{e_1,e_2,...,e_m\}$ be a local orthonormal frame of $TM$ along $\si$ with $e_1=\tau$ we have that
	\[
	\begin{split}
	\scal{&B(\tau_\si,\tau_\si),N_j} S_{N_j}(\vp) - \scal{B(\tau_\si,\vp),N_j}  S_{N_j}(\tau_\si) =\\
	&= \scal{B(\tau_\si,\tau_\si),N_j} 	\scal{S_{N_j}(\vp),e_i} e_i - \scal{B(\tau_\si,\vp),N_j}\scal{S_{N_j}(\tau_\si),e_i}e_i =\\
	&= \scal{B(\tau_\si,\tau_\si),N_j}\scal{B(\vp,e_i),N_j}e_i - \scal{B(\tau_\si,\vp),N_j}\scal{B(\tau_\si,e_i),N_j}e_i =\\
	&= \left(\scal{B(\tau_\si,\tau_\si),B(\vp,e_i)} - \scal{B(\tau_\si,\vp),B(\tau_\si,e_i)}\right)e_i =\\
	&= R(\tau_\si, e_i, \tau_\si, \vp)e_i=\\
	&= R(e_i,\tau_\si,\vp,\tau_\si)e_i=\\
	&=\scal{R(\vp,\tau_\si)\tau_\si,e_i}e_i,
	\end{split}
	\]
	where we used Gauss equation (\cite{DoCarmo}) and the symmetries of the Riemann tensor. Since $R(e_i,\tau_\si, \tau_\si, \tau_\si)=0$, the above calculation and \eqref{eq:1} imply \eqref{eq:VarK} and \eqref{eq:VarKPerp}.
\end{proof}

\begin{remark}\label{rem:Derivata|k|^p-2k}
	We remark that if $k\in[2,+\infty)$ and $\ga\in W^{4,p}_{imm}(\S^1;M)$, then
	\[
	\pa_s(|k|^{p-2}k)= (p-2)|k|^{p-4}\scal{k,\pa_s k} k + |k|^{p-2}\pa_s k \quad\in L^\infty,
	\]
	in the sense of weak derivatives. To see this fact one can take a sequence of smooth curves $\ga_n$ such that $\ga_n\to \ga$ strongly in $W^{4,p}$ and hence in $C^{3,\alpha}$ for an $\alpha\in(0,1)$ and argue by approximation.
\end{remark}

\begin{prop}\label{prop:FirstVariation0}
	Let $p\in[2,+\infty)$. Let $\ga\in W^{4,p}(\S^1,M)$ be a regular curve. For any $\psi \in B_\ro(0)\con T(\ga)^{4,p,\perp}$ it holds that
	\begin{equation*}
	\de E(0)[\psi]= \int -\scal{\na (|k|^{p-2}k),\na \psi}
	+ \left\lgl \frac{1}{p'}|k|^pk - k + R(|k|^{p-2}k,\tau)\tau , \psi \right\rgl\,ds.
	\end{equation*}
	For any $\psi \in B_\ro(0)\con T(\ga)^{4,p}$ it holds that
	\begin{equation*}
	\de \bE(0)[\psi]= \de\bE(0)[\psi^\perp]= \de E(0)[\psi^\perp],
	\end{equation*}
	where $\psi^\perp\coloneqq (\id-\ga^\top)\psi$.
\end{prop}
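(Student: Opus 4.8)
The plan is to obtain the formula for $\de E(0)[\psi]$ by differentiating $\sE(\ga_\ep)=\int_{\S^1}1+\tfrac1p|k_{\ga_\ep}|^p\,ds_{\ga_\ep}$ at $\ep=0$ along the exponential variation $\ga_\ep$ with field $\psi$, feeding in the variation formulas of \Cref{lem:Variations}, and then to reduce the case of general fields to the normal case by exploiting the reparametrization invariance of $\sE$.

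For normal $\psi\in T(\ga)^{4,p,\perp}$ I would split $\sE$ into its length and curvature parts. The length term contributes $-\int\scal{k,\psi}\,ds$ by \eqref{eq:Vardsperp}. For the curvature term the product rule gives
\[
\frac{d}{d\ep}\bigg|_0\frac1p\int_{\S^1}|k_{\ga_\ep}|^p\,ds_{\ga_\ep}
=\int_{\S^1}|k|^{p-2}\scal*{k,\tfrac{d}{d\ep}\big|_0 k_{\ga_\ep}}\,ds
+\frac1p\int_{\S^1}|k|^p\,\tfrac{d}{d\ep}\big|_0 ds_{\ga_\ep}.
\]
The key point is that, since $k\in T\ga^\perp$, pairing against $k$ discards all but the normal projection $(M^\top-\ga^\top)\tfrac{d}{d\ep}|_0 k_{\ga_\ep}$, which is exactly \eqref{eq:VarKPerp}. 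Substituting \eqref{eq:VarKPerp} and \eqref{eq:Vardsperp}, using $|k|^{p-2}|k|^2=|k|^p$, and integrating the $\scal{|k|^{p-2}k,\na^2\psi}$ contribution by parts once with respect to the normal connection $\na$ (legitimate by the boundaryless integration-by-parts rule for normal fields, the weak differentiability of $|k|^{p-2}k$ being guaranteed by \Cref{rem:Derivata|k|^p-2k}), I would produce the term $-\int\scal{\na(|k|^{p-2}k),\na\psi}\,ds$. The two contributions of the form $\int|k|^p\scal{k,\psi}\,ds$ combine with coefficient $1-\tfrac1p=\tfrac1{p'}$, while the curvature term $\int|k|^{p-2}\scal{k,R(\psi,\tau)\tau}\,ds$ becomes $\int\scal{R(|k|^{p-2}k,\tau)\tau,\psi}\,ds$ by linearity of $R$ in its first slot together with the symmetries of the Riemann tensor (which give $\scal{k,R(\psi,\tau)\tau}=\scal{R(k,\tau)\tau,\psi}$). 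Collecting everything yields the stated expression for $\de E(0)[\psi]$.

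For the second assertion I would write $\psi=\psi^\top+\psi^\perp$ with $\psi^\top=\ga^\top\psi$ and $\psi^\perp=(\id-\ga^\top)\psi\in T(\ga)^{4,p,\perp}$. Since the exponential variations defining $E$ and $\bE$ agree on normal fields, $\de\bE(0)[\psi^\perp]=\de E(0)[\psi^\perp]$, so the claim reduces to $\de\bE(0)[\psi^\top]=0$. Conceptually this is the infinitesimal statement that a tangential field generates a first-order reparametrization, along which the geometric functional $\sE$ is stationary. It can also be checked directly from \Cref{lem:Variations}: for $\psi^\top=f\tau$ the length contribution vanishes by \eqref{eq:VarLungh} since $\scal{\tau,k}=0$, while \eqref{eq:Vards} and \eqref{eq:VarK} yield $\tfrac{d}{d\ep}|_0 ds_{\ga_\ep}=(\pa_s f)\,ds$ and $(M^\top-\ga^\top)\tfrac{d}{d\ep}|_0 k_{\ga_\ep}=f\na k$, so that the curvature contribution equals $\tfrac1p\int_{\S^1}f\,\pa_s(|k|^p)\,ds+\tfrac1p\int_{\S^1}|k|^p\,\pa_s f\,ds$, which is zero after one integration by parts.

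The main obstacle is bookkeeping rather than conceptual: one must track carefully which projection acts where — in particular that the pairing with $k$ automatically annihilates the tangential part of $\tfrac{d}{d\ep}|_0 k_{\ga_\ep}$ — and apply the Riemann symmetries without sign errors. The one genuine analytic subtlety is the regularity needed for the integration by parts in the curvature term, which is precisely what \Cref{rem:Derivata|k|^p-2k} supplies.
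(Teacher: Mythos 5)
Your proposal is correct, and every ingredient you invoke (the variation formulas of \Cref{lem:Variations}, the integration by parts for normal fields with respect to $\na$, the regularity of $|k|^{p-2}k$ from \Cref{rem:Derivata|k|^p-2k}, and the symmetry $\scal{k,R(\psi,\tau)\tau}=\scal{R(k,\tau)\tau,\psi}$) is exactly what the paper uses. The organization, however, is genuinely different. The paper runs a single computation for an arbitrary field $\psi\in T(\ga)^{4,p}$ using the general formulas \eqref{eq:Vards} and \eqref{eq:VarK}, and then massages the result — rewriting $\pa_s(\psi^\perp)$ in terms of $\pa_s\psi$ and recognizing the total derivative $\pa_s\bigl(\tfrac1p|k|^p\scal{\tau,\psi}\bigr)$ — until everything is expressed through $\psi^\perp$, obtaining \eqref{eq:GeneralVariation}; both claims then drop out at once. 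You instead decompose $\psi=\psi^\top+\psi^\perp$ from the start (using the linearity of the Gateaux derivative $\de\bE(0)$), compute the normal case with the cleaner formulas \eqref{eq:Vardsperp}--\eqref{eq:VarKPerp}, and verify separately that the tangential contribution is the integral of the exact derivative $\tfrac1p\pa_s(f|k|^p)$ and hence vanishes. Your route trades the paper's single longer manipulation for two shorter ones and makes the reparametrization invariance explicit rather than implicit; the paper's route has the advantage of producing the general-field formula \eqref{eq:GeneralVariation}, which is reused verbatim in \Cref{prop:VarBoldE} at varied curves $\ga_\vp$, where the clean normal/tangential split of the variation field is less immediate. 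Either way the result is the same, and your computation of $(M^\top-\ga^\top)\tfrac{d}{d\ep}\big|_0 k_{\ga_\ep}=f\na k$ for $\psi^\top=f\tau$ checks out.
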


\begin{proof}
	Let us consider $\psi \in B_\ro(0)\con T(\ga)^{4,p}$. Using \Cref{lem:Variations} with $\ga,\psi$ in place of $\si,\vp$, if $\ga_\ep(\cdot)=\Phi(\ep,\cdot)$ is the variation of $\ga$, computations show that\\
	\[
	\begin{split}
	\frac{d}{d\ep}&\bigg|_0 \sE(\Phi(\ep,\cdot)) =\\
	&= \int -\scal{k,\psi} +\frac1p |k|^p \scal{\tau,\pa_s\psi}\,ds + \int |k|^{p-2}\scal{k,\pa_\ep|_0 k_{\ga_\ep}}\,ds  \\
	&= \int -\scal{k,\psi}  +\frac1p |k|^p \scal{\tau,\pa_s\psi}
	-|k|^p\scal{\tau,\pa_s\psi} +|k|^{p-2}R(k,\tau,\psi^\perp,\tau) + \scal{|k|^{p-2}k,\pa_s(M^\top-\ga^\top)\pa_s\psi}\,ds \\
	&= \int -\scal{k,\psi} -\frac{1}{p'}|k|^p\scal{\tau,\pa_s\psi} +|k|^{p-2} R(\psi^\perp,\tau,k,\tau) - \scal{\na (|k|^{p-2}k),\pa_s\psi}\,ds.
	\end{split}
	\]
	Moreover $\pa_s(\psi^\perp)=\pa_s(\psi-\scal{\psi,\tau}\tau)= \pa_s\psi -\pa_s(\scal{\psi,\tau})\tau -\scal{\psi,\tau}\pa_s\tau$ and then
	\[
	\begin{split}
	\int - \scal{&\na (|k|^{p-2}k),\pa_s\psi}\,ds =\\
	&= \int - \scal{\na (|k|^{p-2}k),\pa_s(\psi^\perp)} - \scal{\na (|k|^{p-2}k),\scal{\psi,\tau}k} \,ds \\
	&= \int -\scal{\na (|k|^{p-2}k),\na(\psi^\perp)} \,ds
	+\int |k|^p\scal{\pa_s\psi,\tau} + |k|^p\scal{\psi,k} +\scal{\psi,\tau} |k|^{p-2}\scal{k,\na k}\,ds.
	\end{split}
	\]
	Using that $ -\frac{1}{p'}|k|^p\scal{\tau,\pa_s\psi}+ |k|^p\scal{\pa_s\psi,\tau} +\scal{\psi,\tau} |k|^{p-2}\scal{k,\na k}=\pa_s\left( \frac1p|k|^p\scal{\tau,\psi} \right)-\frac1p|k|^p\scal{k,\psi}$, we conclude that
	\begin{equation}\label{eq:GeneralVariation}
	\frac{d}{d\ep}\bigg|_0 \sE(\Phi(\ep,\cdot))
	=	\int -\scal{\na (|k|^{p-2}k),\na(\psi^\perp)} + \frac{1}{p'} |k|^p\scal{k, \psi^\perp} -\scal{k,\psi^\perp} +|k|^{p-2}R(\psi^\perp,\tau,k,\tau)\,ds.
	\end{equation}
\end{proof}

\begin{cor}\label{prop:VarBoldE}
	Let $p\in[2,+\infty)$. Let $\ga:\S^1\to M$ be a fixed smooth regular curve.
	For any $\vp,\psi \in B_\ro(0)\con T(\ga)^{4,p}$ it holds that
	\begin{equation*}
	\begin{split}
	\de \bE(\vp)[\psi] &=
	-\left\lgl
	\na_{{\ga_\vp}} |k_{\ga_\vp}|^{p-2}k_{\ga_\vp} , \na_{{\ga_\vp}} (\ga_\vp^\perp) \cT(\psi)
	\right\rgl_{L^{p'}(ds_{\ga_\vp}),L^p(ds_{\ga_\vp})}
	+\\ &\indent + 
	\left\lgl 
	\cT^{\star}\left(
	\frac{1}{p'} |k_{\ga_\vp}|^pk_{\ga_\vp} - k_{\ga_\vp} 
	+ R (|k_{\ga_\vp}|^{p-2}k_{\ga_\vp},\tau_{\ga_\vp})\tau_{\ga_\vp}
	\right), \psi
	\right\rgl_{L^{p'}(ds_{\ga_\vp}),L^p(ds_{\ga_\vp})}
	\end{split}
	\end{equation*}
	where $\ga_\vp(\cdot)=\Phi(1,\cdot)$, $\Phi$ is the variation of $\ga$ given by $\vp$, and $\cT:T_{\ga(x)}M\to T_{\Phi(1,x)}M$ is the function $\cT(\psi)=d[\exp_{\ga(x)}]_{\vp}(\psi)$ and $\cT^\star$ is its adjoint.
\end{cor}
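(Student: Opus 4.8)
The plan is to reduce the claim to the general first-variation identity \eqref{eq:GeneralVariation} established inside the proof of \Cref{prop:FirstVariation0}, by recognizing the variation field that the perturbation $s\mapsto\vp+s\psi$ induces on the curve $\ga_\vp$. First I would write $\bE(\vp+s\psi)=\sE(\ga_{\vp+s\psi})$ with $\ga_{\vp+s\psi}(x)=\exp_{\ga(x)}\big((\vp+s\psi)(x)\big)$; for $\vp\in B_\ro(0)$ and $|s|$ small this is, by \Cref{lem:Variation}, a genuine variation of the fixed immersion $\ga_\vp$ in the sense of \Cref{def:Variation}. By the chain rule for the differential of the exponential map its variation field at $s=0$ is
\[
\frac{d}{ds}\bigg|_0\exp_{\ga(x)}\big(\vp(x)+s\psi(x)\big)=d[\exp_{\ga(x)}]_{\vp(x)}(\psi(x))=\cT(\psi)(x),
\]
and by \Cref{cor:CompositionSobolevTangent} the field $\cT(\psi)$ is of class $W^{4,p}$ and tangent to $M$ along $\ga_\vp$, i.e. $\cT(\psi)\in T(\ga_\vp)^{4,p}$. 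Since the right-hand side of \eqref{eq:GeneralVariation} depends only on the first-order data of the variation, namely its variation field, I may evaluate it on the base curve $\ga_\vp$ with variation field $\cT(\psi)$.

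Applying \eqref{eq:GeneralVariation} in this way, and abbreviating $(\cT\psi)^\perp\coloneqq\ga_\vp^\perp\cT(\psi)$, the first resulting summand is precisely
\[
-\left\lgl\na_{\ga_\vp}|k_{\ga_\vp}|^{p-2}k_{\ga_\vp},\ \na_{\ga_\vp}(\cT\psi)^\perp\right\rgl_{L^{p'}(ds_{\ga_\vp}),L^p(ds_{\ga_\vp})},
\]
which is the first term of the claim. The remaining three summands equal, pointwise in $\S^1$,
\[
\scal{\tfrac{1}{p'}|k_{\ga_\vp}|^pk_{\ga_\vp}-k_{\ga_\vp},(\cT\psi)^\perp}+|k_{\ga_\vp}|^{p-2}R\big((\cT\psi)^\perp,\tau_{\ga_\vp},k_{\ga_\vp},\tau_{\ga_\vp}\big).
\]
Using the symmetry $R(X,\tau,k,\tau)=\scal{R(k,\tau)\tau,X}$ and the linearity of $R$ in its first argument, the curvature term rewrites as $\scal{R(|k_{\ga_\vp}|^{p-2}k_{\ga_\vp},\tau_{\ga_\vp})\tau_{\ga_\vp},(\cT\psi)^\perp}$, so that these three summands collapse to $\scal{W,(\cT\psi)^\perp}$ with
\[
W\coloneqq\tfrac{1}{p'}|k_{\ga_\vp}|^pk_{\ga_\vp}-k_{\ga_\vp}+R(|k_{\ga_\vp}|^{p-2}k_{\ga_\vp},\tau_{\ga_\vp})\tau_{\ga_\vp}.
\]

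The one genuinely delicate point, which I expect to be the crux, is to show that $W$ is orthogonal to $\tau_{\ga_\vp}$, so that the normal projection hidden in $(\cT\psi)^\perp$ can be discarded and $\cT^\star$ can be applied to $W$ itself rather than to $\ga_\vp^\perp W$. The first two summands of $W$ are multiples of $k_{\ga_\vp}$, which is normal to $\tau_{\ga_\vp}$; for the curvature term,
\[
\scal{R(|k_{\ga_\vp}|^{p-2}k_{\ga_\vp},\tau_{\ga_\vp})\tau_{\ga_\vp},\tau_{\ga_\vp}}=R(\tau_{\ga_\vp},\tau_{\ga_\vp},|k_{\ga_\vp}|^{p-2}k_{\ga_\vp},\tau_{\ga_\vp})=0
\]
by the antisymmetry of $R$ in its first pair of arguments, whence $\scal{W,\tau_{\ga_\vp}}=0$. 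Since $\ga_\vp^\top\cT(\psi)$ is a multiple of $\tau_{\ga_\vp}$, this yields $\scal{W,(\cT\psi)^\perp}=\scal{W,\cT(\psi)}=\scal{\cT^\star W,\psi}$ pointwise, with $\cT^\star$ the fiberwise adjoint of $\cT$. Integrating against $ds_{\ga_\vp}$ then recovers the second term of the claim and completes the proof. A secondary and routine matter is the justification of the reduction to first-order data, i.e. that $\de\bE(\vp)[\psi]$ is correctly computed through the variation field $\cT(\psi)$; this rests on the fact that \eqref{eq:GeneralVariation} involves only the first derivative of the variation and on the $W^{4,p}$-regularity of $\cT(\psi)$ furnished by \Cref{cor:CompositionSobolevTangent}.
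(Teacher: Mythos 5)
Your proposal is correct and follows essentially the same route as the paper: both recognize that the family $s\mapsto\exp_{\ga(x)}(\vp+s\psi)$ is a variation of the curve $\ga_\vp$ with variation field $\cT(\psi)=d[\exp_{\ga(x)}]_{\vp}(\psi)$, and then apply the general first-variation identity \eqref{eq:GeneralVariation} at the base curve $\ga_\vp$. You are in fact slightly more explicit than the paper on the final bookkeeping step — checking that the zeroth-order terms are orthogonal to $\tau_{\ga_\vp}$ so the normal projection can be dropped and the adjoint $\cT^\star$ introduced — which the paper leaves implicit.
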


\begin{proof}
	Let us denote by $\Phi_V^\alpha(t,x)=\si_V(t)$ the variation of a curve $\alpha$ with respect to a field $V(x)$ along $\alpha$. We need to consider the curve $\Phi_{\vp+\ep\psi}^\ga(1,\cdot)=\exp_{\ga(x)}(\vp+\ep\psi)$. We have that
	\[
	\frac{d}{d\ep}\bigg|_0 \Phi_{\vp+\ep\psi}^\ga(1,\cdot)= d[\exp_{\ga(x)}]_{\vp}(\psi).
	\]
	For any $x$ denote by $\cT:T_{\ga(x)}M\to T_{\Phi^\ga_\vp(1,x)}M$ the function
	\[
	\cT(\psi)=d[\exp_{\ga(x)}]_{\vp}(\psi).
	\]
	By chain rule we have that
	\[
	\frac{d}{d\ep}\bigg|_0 \sE(\Phi_{\vp+\ep\psi}^\ga(1,\cdot))= \frac{d}{d\ep}\bigg|_0 \sE\left( \Phi^{\Phi^\ga_\vp(1,\cdot)}_{\cT(\psi)}(\ep,\cdot) \right).
	\]
	Roughly speaking, differentiation of the variation of $\ga$ with respect to the field $\vp+\ep\psi$ is equivalent to differentiation at the varied curve $\ga_\vp\coloneqq\Phi_\vp^\ga(1,\cdot)$ with respect to the field $\cT(\psi)$.
	
	Therefore if we consider $\vp,\psi \in B_\ro(0)\con T(\ga)^{4,p}$, \Cref{eq:GeneralVariation} implies that
	\[
	\begin{split}
	\frac{d}{d\ep}\bigg|_0 \sE(\Phi_{\vp+\ep\psi}^\ga(1,\cdot))
	&= \int -\scal{\na_{{\ga_\vp}} |k_{\ga_\vp}|^{p-2}k_{\ga_\vp},\na_{{\ga_\vp}} (\ga_\vp^\perp\cT(\psi)) } +\\
	&\indent + \frac{1}{p'} |k_{\ga_\vp}|^p\scal{k_{\ga_\vp},\ga_\vp^\perp\cT(\psi)} +\\
	&\indent 
	-\scal{k_{\ga_\vp}, \ga_\vp^\perp\cT(\psi)}
	+ |k_{\ga_\vp}|^{p-2}R(\ga_\vp^\perp\cT(\psi), \tau_{\ga_\vp}, k_{\ga_\vp}, \tau_{\ga_\vp} )\,ds_{\ga_\vp}.
	\end{split}
	\]
\end{proof}

\medskip

Now we want to calculate the second variation of $\sE$. More precisely, as in the case of \Cref{sec:Rn}, for a smooth immersion $\ga:\S^1\to M$ we consider normal fields $\vp,\psi \in T(\ga)^{4,p,\perp}$ along $\ga$ and we compute
\[
\cL\coloneqq\de^2E(0): T(\ga)^{4,p,\perp} \to (T(\ga)^{4,p,\perp})^\star,
\]
that is
\[
\cL(\vp)[\psi] = \frac{d}{d\ep}\bigg|_0 \frac{d}{d\eta}\bigg|_0 E(\ep\vp+\eta\psi) 
= \frac{d}{d\ep}\bigg|_0 \frac{d}{d\eta}\bigg|_0 \sE(\Phi^\ga_{\ep\vp+\eta\psi}(1,\cdot)) ,
\]
where $\Phi^\ga_{\ep\vp+\eta\psi}(1,\cdot)$ is the variation of $\ga$ via the field $\ep\vp+\eta\psi$.
Observe that $\cL(\vp)[\psi]=\cL(\psi)[\vp]$.

We need a technical tool first.

\begin{lemma}
	Let $\ga:\S^1\to M$ be a fixed smooth immersion and let $\ro$ be given by \Cref{lem:Variation}. Let $\ga_\ep(\cdot)=\Phi(\ep,\cdot)$ with $\Phi$ the variation of $\ga$ with variation field $\vp\in T(\ga)^{4,p,\perp}$.
	
	If $V(\ep) \in T(\ga_\ep)^{1,p}$ is a field along $\ga_\ep$ differentiable with respect to $\ep$ with $\pa_\ep V(\ep)\in T(\ga_\ep)^{1,p}$, we have that
	\begin{equation}\label{eq:V1}
	\frac{d}{d\ep}\bigg|_0 \pa_{s_{\ga_\ep}} V(\ep) = \scal{k_\ga,\vp}\pa_{s_\ga}V(0) + \pa_{s_\ga}\frac{d}{d\ep}\bigg|_0 V(\ep).
	\end{equation}
	
	If also $V(\ep)\in TM\cap(T\ga_\ep)^\perp$ for any $\ep$, then
	\begin{equation}\label{eq:V2}
	\begin{split}
	(M^\top-\ga^\top)\frac{d}{d\ep}\bigg|_0  (\na_{{\ga_\ep}}V(\ep)) 
	&= (M^\top-\ga^\top)\pa_{s_\ga}((M^\top-\ga^\top)\frac{d}{d\ep}\bigg|_0 V(\ep)) +\\
	&\indent	- \scal{V(0),\na_{{\ga}}\vp}k_\ga 
	+ \scal{k_\ga,\vp}\na_{{\ga}}V(0) 
	+ \scal{V(0),k_\ga}\na_{{\ga}}\vp +\\
	&\indent+ (\id-\ga^\top)R(\vp,\tau_\ga)V(0).
	\end{split}
	\end{equation}
\end{lemma}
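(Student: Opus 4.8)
The plan is to prove the two identities in succession, the second relying on the first. For \eqref{eq:V1} I would write $\pa_{s_{\ga_\ep}}=|\pa_x\ga_\ep|^{-1}\pa_x$, so that $\pa_{s_{\ga_\ep}}V(\ep)=|\pa_x\ga_\ep|^{-1}\pa_xV(\ep)$, and differentiate in $\ep$ at $\ep=0$. Since $\pa_x$ commutes with $\tfrac{d}{d\ep}$, the factor $|\pa_x\ga|^{-1}\pa_x\tfrac{d}{d\ep}|_0V(\ep)$ gives directly $\pa_{s_\ga}\tfrac{d}{d\ep}|_0V(\ep)$. For the remaining factor I would invoke \eqref{eq:Vardsperp} (applicable because $\vp$ is normal), which reads $\tfrac{d}{d\ep}|_0 ds_{\ga_\ep}=-\scal{k_\ga,\vp}\,ds_\ga$, i.e. $\tfrac{d}{d\ep}|_0|\pa_x\ga_\ep|=-\scal{k_\ga,\vp}|\pa_x\ga|$; hence $\tfrac{d}{d\ep}|_0|\pa_x\ga_\ep|^{-1}=\scal{k_\ga,\vp}|\pa_x\ga|^{-1}$, producing the term $\scal{k_\ga,\vp}\pa_{s_\ga}V(0)$. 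This settles \eqref{eq:V1} quickly.

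For \eqref{eq:V2} I would write $\na_{\ga_\ep}V(\ep)=\ga_\ep^\perp\big(\pa_{s_{\ga_\ep}}V(\ep)\big)$ with $\ga_\ep^\perp=M^\top_{\ga_\ep}-\ga_\ep^\top$, set $U(\ep)\coloneqq\pa_{s_{\ga_\ep}}V(\ep)$, and apply the product rule together with the outer projection $\ga^\perp=M^\top-\ga^\top$, using $\ga^\perp\ga^\perp=\ga^\perp$:
\[
\ga^\perp\frac{d}{d\ep}\bigg|_0\big(\ga_\ep^\perp U(\ep)\big)=\ga^\perp\left(\frac{d}{d\ep}\bigg|_0\ga_\ep^\perp\right)U(0)+\ga^\perp\frac{d}{d\ep}\bigg|_0 U(\ep).
\]
The second term I compute through \eqref{eq:V1}, obtaining $\scal{k_\ga,\vp}\na V(0)+\ga^\perp\pa_{s_\ga}\dot V$, where $\dot V\coloneqq\tfrac{d}{d\ep}|_0V(\ep)$. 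I would then split $\dot V=\ga^\perp\dot V+\ga^\top\dot V+M^\perp\dot V$. Differentiating the constraints $\scal{V(\ep),\tau_{\ga_\ep}}=0$ (via \eqref{eq:Vartauperp}) and $\scal{V(\ep),N_j\circ\ga_\ep}=0$ yields $\ga^\top\dot V=-\scal{V(0),\na\vp}\tau_\ga$ and $M^\perp\dot V=B(\vp,V(0))$, with $B$ the second fundamental form of $M\hookrightarrow\R^n$. The $\ga^\perp\dot V$ part reproduces the leading term $\ga^\perp\pa_{s_\ga}(\ga^\perp\dot V)$, the $\ga^\top$ part gives $-\scal{V(0),\na\vp}k_\ga$ (since $\ga^\perp\pa_s\tau_\ga=k_\ga$), and $B(\vp,V(0))$ is left to be combined.

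For the projection-variation term I would use $M^\perp=\sum_j N_j\otimes N_j$, so that $\tfrac{d}{d\ep}|_0 M^\top_{\ga_\ep}$ is expressed through the directional derivatives $\pa_\vp N_j$ and the shape operators $S_{N_j}$, while $\tfrac{d}{d\ep}|_0\ga_\ep^\top$ is handled again by \eqref{eq:Vartauperp}. Applying $\ga^\perp$ and using $\scal{U(0),N_j}=\scal{B(\tau_\ga,V(0)),N_j}$ and $\scal{U(0),\tau_\ga}=-\scal{V(0),k_\ga}$, this contributes the term $\scal{V(0),k_\ga}\na\vp$ together with further $S_{N_j}$-type terms.

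The hard part is the final assembly. The two remaining groups of shape-operator terms, namely $\ga^\perp\pa_{s_\ga}\big(B(\vp,V(0))\big)$ and the $S_{N_j}$ contributions from the projection-variation term, must be recombined: expanding each $S_{N_j}$ in a tangent frame $\{e_i\}$ with $e_1=\tau_\ga$ turns them into sums of products $\scal{B(\cdot,\cdot),B(\cdot,\cdot)}$, and the Gauss equation—exactly as it is used in the proof of \Cref{lem:Variations}—converts these into the components $\scal{R(\vp,\tau_\ga)V(0),e_i}$. Careful bookkeeping of which pieces carry a $\tau_\ga$-component is precisely what makes the $S_{N_j}$-terms assemble into the full $R(\vp,\tau_\ga)V(0)$ while the leftover $\tau_\ga$-contributions subtract off its tangential part, yielding $(\id-\ga^\top)R(\vp,\tau_\ga)V(0)$. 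This Gauss-equation reduction, and the tracking of the $\tau_\ga$-components through it, is the main obstacle; once it is carried out, collecting the leading term, the two $k_\ga$-terms, and this curvature term reproduces exactly the right-hand side of \eqref{eq:V2}.
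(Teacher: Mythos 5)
Your proposal is correct and follows essentially the same route as the paper: a direct computation for \eqref{eq:V1} using the variation of the arclength element, and for \eqref{eq:V2} a differentiation of the projection and of the field separately, with the tangential and $M$-normal components of $\tfrac{d}{d\ep}\big|_0 V$ identified from the constraints and the two groups of shape-operator terms recombined into $(\id-\ga^\top)R(\vp,\tau_\ga)V(0)$ via the Gauss equation. The only difference is organizational (you decompose $\tfrac{d}{d\ep}\big|_0 V$ into its $\ga^\perp$, $\ga^\top$, and $M^\perp$ parts directly, whereas the paper reaches the leading term through a general commutation identity for $\ga^\perp\pa_{s_\ga}$), and the two computations coincide term by term.
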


\begin{proof}
	\Cref{eq:V1} follows by a direct calculation. In order to derive \eqref{eq:V2} let $\{N_j\}$ be a local orthonormal frame of $(TM)^\perp$. Understanding summation over $j$ it holds that
	\begin{equation}\label{eq:2}
	\begin{split}
	(M^\top-\ga^\top)\frac{d}{d\ep}\bigg|_0  (\na_{{\ga_\ep}}V(\ep)) 
	&= \scal{k_\ga,\vp}\na_{{\ga}}V(0)
	+ (M^\top-\ga^\top)\pa_{s_\ga} \frac{d}{d\ep}\bigg|_0 V(\ep) 
	-\scal{\pa_{s_\ga}V(0),\tau_\ga}\na_{{\ga}}\vp +\\
	&\qquad- \scal{\pa_{s_\ga}V(0),N_j} (M^\top-\ga^\top)\frac{d}{d\ep}\bigg|_0 N_j\circ \ga_\ep = \\
	&= (M^\top-\ga^\top)\pa_{s_\ga} \frac{d}{d\ep}\bigg|_0 V(\ep)
	+ \scal{k_\ga,\vp}\na_{{\ga}}V(0) 
	+ \scal{V(0),k_\ga}\na_{{\ga}}\vp +\\
	&\qquad+ \scal{V(0),S_{N_j}(\tau_\ga)}(S_{N_j}(\vp)-\scal{S_{N_j}(\vp),\tau_\ga}\tau_\ga).
	\end{split}
	\end{equation}
	Moreover for any field $W\in T(\ga)^{1,p}$ we have that
	\[
	(M^\top-\ga^\top)\pa_{s_\ga}((M^\top-\ga^\top)W) = (M^\top-\ga^\top)\pa_{s_\ga}W -\scal{W,\tau_\ga}k_\ga +\scal{W,N_j}(S_{N_j}(\tau_\ga) -\scal{S_{N_j}(\tau_\ga),\tau_\ga}\tau_\ga).
	\]
	Using $W= \frac{d}{d\ep}\big|_0 V(\ep)$ we deduce
	\[
	\begin{split}
	(M^\top-\ga^\top)\pa_{s_\ga} \frac{d}{d\ep}\bigg|_0 V(\ep) & = (M^\top-\ga^\top)\pa_{s_\ga}((M^\top-\ga^\top)\frac{d}{d\ep}\bigg|_0 V(\ep)) 
	+ \left\lgl \frac{d}{d\ep}\bigg|_0 V(\ep),\tau_\ga\right\rgl k_\ga +\\
	&\qquad-\left\lgl \frac{d}{d\ep}\bigg|_0 V(\ep), N_j \right\rgl (S_{N_j}(\tau_\ga) -\scal{S_{N_j}(\tau_\ga),\tau_\ga}\tau_\ga) =\\
	&= (M^\top-\ga^\top)\pa_{s_\ga}((M^\top-\ga^\top)\frac{d}{d\ep}\bigg|_0 V(\ep)) 
	- \scal{V(0),\na_{{\ga}}\vp}k_\ga +\\
	&\qquad- \scal{V(0),S_{N_j}(\vp)}(S_{N_j}(\tau_\ga) -\scal{S_{N_j}(\tau_\ga),\tau_\ga}\tau_\ga).
	\end{split}
	\]
	Moreover
	\[
	\begin{split}
	\scal{V(0),S_{N_j}(\tau_\ga)}&(S_{N_j}(\vp)-\scal{S_{N_j}(\vp),\tau_\ga}\tau_\ga) - \scal{V(0),S_{N_j}(\vp)}(S_{N_j}(\tau_\ga) -\scal{S_{N_j}(\tau_\ga),\tau_\ga}\tau_\ga) \\
	&= \scal{B(V(0),\tau_\ga),N_j}S_{N_j}(\vp)
	- \scal{B(V(0),\vp),N_j} S_{N_j}(\tau_\ga) 
	+\\ &\indent + \left(
	\scal{B(V(0),\vp),N_j}\scal{B(\tau_\ga,\tau_\ga),N_j} 
	- \scal{B(V(0),\tau_\ga),N_j}\scal{B(\vp,\tau_\ga),N_j}
	\right)\tau_\ga \\
	&= \left(
	\scal{B(V(0),\tau_\ga),B(\vp,e_i)} - \scal{B(V(0),\vp),B(\tau_\ga,e_i)}
	\right) e_i +\\
	&\indent + \left(
	\scal{B(V(0),\vp),B(\tau_\ga,\tau_\ga)} - \scal{B(V(0),\tau_\ga),B(\vp,\tau_\ga)}
	\right) \tau_\ga \\
	&= R(V(0),e_i,\tau_\ga,\vp)e_i + R(V(0),\tau_\ga,\vp,\tau_\ga)\tau_\ga \\
	&= -R(\tau_\ga,\vp)V(0) + \scal{R(\tau_\ga,\vp)V(0),\tau_\ga}\tau_\ga\\
	&= (\id-\ga^\top)R(\vp,\tau_\ga)V(0),
	\end{split}
	\]
	where $\{e_1,...,e_m\}=\{\tau_\ga,e_2,...,e_m\}$ is a local orthonormal frame of $TM$ and we used Gauss equation, and summation over repeated indices was understood. Inserting the previous identities in \eqref{eq:2} yields \eqref{eq:V2}.
\end{proof}

In the following proposition we calculate the second variation $\cL(\vp)[\psi]$ of $\sE$ with respect to normal variation fields $\vp,\psi$ along the given curve $\ga$. In the statement we isolate an integral depending on second order derivatives is $\psi$, a second integral depending at most on first order derivatives in $\psi$, and a third integral in which the first variation of the energy appears. The complete calculation is explicit in \Cref{eq:SecVarTotale}, and we shall also use such complete expression \eqref{eq:SecVarTotale} later on.

\begin{prop}\label{prop:SecondVariation}
	Let $p\in[2,+\infty)$. Let $\ga:\S^1\to M$ be a fixed smooth immersion and let $\ro$ be given by \Cref{lem:Variation}.
	For any $\vp,\psi \in B_\ro(0)\con T(\ga)^{4,p,\perp}$ it holds that
	\[
	\begin{split}
	\cL(\vp)[\psi]	
	&=
	\int
	\left\lgl 
	|k|^{p-2}\na^2\vp , \na^2\psi \right\rgl 
	+(p-2) |k|^{p-4}  \scal{k,\na^2\vp}\scal{k,\na^2\psi} +\\
	&\indent + \bigg\lgl |k|^{p-2} R(\vp,\tau)\tau 
	+ (p-2)|k|^{p-4}\scal{k, R(\vp,\tau)\tau}k
	+ |k|^{p-2}	\scal{k,\vp}k,\na^2\psi
	\bigg\rgl\,ds +\\
	&+\int A(\vp,\psi)\,ds
	-  \int \left(\left\lgl |k|^{p-2}k, \na^2 \psi \right\rgl 
	+ \left\lgl \frac{1}{p'}|k|^p k -k + R(|k|^{p-2}k,\tau)\tau , \psi   \right\rgl\right) \scal{k,\vp}\,ds,
	\end{split}
	\]
	where $A(\cdot,\cdot)$ is bilinear with $A(\vp,\psi)$ depending at most on first order derivatives in $\psi$, and, more precisely, the precise expression for $\cL(\vp)[\psi]$ is given by \eqref{eq:SecVarTotale}.
\end{prop}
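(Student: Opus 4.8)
The plan is to compute $\cL(\vp)[\psi]$ by differentiating the first variation formula \eqref{eq:GeneralVariation} along the normal variation of $\ga$ generated by $\vp$. Writing $\ga_\ep\coloneqq\Phi^\ga_{\ep\vp}(1,\cdot)=\exp_\ga(\ep\vp)$ and denoting by $V_\ep\coloneqq d[\exp_{\ga(x)}]_{\ep\vp}(\psi)$ the field transported along $\ga_\ep$ (the operator $\cT$ of \Cref{prop:VarBoldE}), the inner $\eta$-derivative at fixed $\ep$ is exactly the first variation of $\sE$ at $\ga_\ep$ in the direction $V_\ep$. Hence
\[
\cL(\vp)[\psi]=\frac{d}{d\ep}\bigg|_0 G(\ep),
\]
where $G(\ep)$ is the right-hand side of \eqref{eq:GeneralVariation} evaluated at $\ga_\ep$ with test field $V_\ep^\perp=\ga_\ep^\perp V_\ep$. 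The task thus reduces to differentiating, at $\ep=0$, each of the four summands of the integrand of \eqref{eq:GeneralVariation} together with the length element $ds_{\ga_\ep}$. Since $\ga$ is smooth and $\vp,\psi\in T(\ga)^{4,p,\perp}$, all the integrations by parts over $\S^1$ below are boundary-free and justified.

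First I would record the $\ep$-derivatives of the geometric quantities entering $G$. By \Cref{lem:Variations} one has $\frac{d}{d\ep}\big|_0 ds_{\ga_\ep}=-\scal{k,\vp}\,ds$ from \eqref{eq:Vardsperp}, $\frac{d}{d\ep}\big|_0\tau_{\ga_\ep}=\na\vp$ from \eqref{eq:Vartauperp}, and the normal part of $\frac{d}{d\ep}\big|_0 k_{\ga_\ep}$ equals $\na^2\vp+\scal{\vp,k}k+R(\vp,\tau)\tau$ by \eqref{eq:VarKPerp}; the product rule then gives the normal derivative of $|k_{\ga_\ep}|^{p-2}k_{\ga_\ep}$, and \eqref{eq:V1}--\eqref{eq:V2} convert the $\ep$-derivatives of $\pa_{s_{\ga_\ep}}$ and $\na_{\ga_\ep}$ acting on these fields into expressions in $\na$, $k$, the second fundamental form $B$, and $R$. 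The one delicate ingredient is $\frac{d}{d\ep}\big|_0 V_\ep$: this is the mixed second derivative at $0$ of $v\mapsto\exp_{\ga(x)}(v)$ in the directions $\vp,\psi$, and since $\exp_p(v)=p+v+\tfrac12 B(v,v)+O(|v|^3)$ for $M\hookrightarrow\R^n$, it equals $B(\vp,\psi)$, which is normal to $M$ in $\R^n$ and therefore annihilated by $\ga^\perp$. Consequently $\ga^\perp\frac{d}{d\ep}\big|_0 V_\ep^\perp$ reduces to the purely algebraic term $\ga^\perp\big[(\frac{d}{d\ep}\big|_0\ga_\ep^\perp)\psi\big]$, of order zero in the derivatives of $\psi$. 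This is the structural reason why, apart from the explicit $\na^2\psi$, no higher derivative of $\psi$ can survive.

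With these derivatives in hand I would differentiate the four summands. The leading term $-\scal{\na_{\ga_\ep}(|k_{\ga_\ep}|^{p-2}k_{\ga_\ep}),\na_{\ga_\ep}V_\ep^\perp}$, after inserting the normal derivative of $k_{\ga_\ep}$ and integrating by parts once on $\S^1$, yields precisely the first displayed integral: the principal part $\scal{|k|^{p-2}\na^2\vp,\na^2\psi}$, its companion $(p-2)|k|^{p-4}\scal{k,\na^2\vp}\scal{k,\na^2\psi}$ produced by the derivative of $|k|^{p-2}$, and the $\na^2\psi$-paired curvature terms displayed in the statement, which come from the lower-order part $\scal{\vp,k}k+R(\vp,\tau)\tau$ of $\frac{d}{d\ep}\big|_0 k_{\ga_\ep}$. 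The factor $\frac{d}{d\ep}\big|_0 ds_{\ga_\ep}=-\scal{k,\vp}\,ds$ multiplies the whole first-variation integrand; rewriting $-\scal{\na(|k|^{p-2}k),\na\psi}=\scal{|k|^{p-2}k,\na^2\psi}-\na\scal{|k|^{p-2}k,\na\psi}$ and discarding the total-derivative piece (which, paired with $\scal{k,\vp}$ and integrated by parts, only contributes a term of first order in $\psi$) produces the third displayed integral, in which the Euler--Lagrange density of \Cref{prop:FirstVariation0} reappears. Every other contribution --- from $\frac{d}{d\ep}\big|_0\tau_{\ga_\ep}$, from the projections and $B$ via \eqref{eq:V1}--\eqref{eq:V2}, and from the algebraic term $\ga^\perp[(\frac{d}{d\ep}\big|_0\ga_\ep^\perp)\psi]$ --- involves $\psi$ through at most $\na\psi$ and is collected into the bilinear form $A(\vp,\psi)$.

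I expect the main obstacle to be purely organizational bookkeeping: tracking the many lower-order terms generated by \eqref{eq:V2} and by the shape-operator identities, arranging the integrations by parts so that exactly $\na^2\psi$ and no higher derivative survives in the first and third integrals, and using the symmetry $\cL(\vp)[\psi]=\cL(\psi)[\vp]$ as a consistency check on the outcome. Carrying out this bookkeeping in full gives the explicit identity \eqref{eq:SecVarTotale}, and the stated three-part decomposition follows by grouping its terms according to the order of the derivatives of $\psi$ they contain.
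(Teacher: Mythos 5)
Your proposal follows essentially the same route as the paper's Appendix A proof: differentiate the first variation at $\ga_\ep=\exp_\ga(\ep\vp)$ with the transported test field $\cT\psi=d[\exp_{\ga(x)}]_{\ep\vp}(\psi)$, feed in \Cref{lem:Variations} and \eqref{eq:V1}--\eqref{eq:V2}, and sort the resulting terms by the order of the derivatives of $\psi$; the identification of the three displayed blocks (the $\na^2\psi$-block from $\pa_\ep|_0(|k_{\ga_\ep}|^{p-2}k_{\ga_\ep})$, the Euler--Lagrange block from $\pa_\ep|_0\,ds_{\ga_\ep}=-\scal{k,\vp}ds$, and everything else into $A$) matches the paper's $I_1,\dots,I_5$ bookkeeping. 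The one genuinely different ingredient is your treatment of the key term $\tfrac{d}{d\ep}\big|_0\cT\psi$: the paper proves $(M^\top-\ga^\top)\tfrac{d}{d\ep}\big|_0(\ga_\ep^\perp\cT\psi)=0$ intrinsically, by recognizing $J(\ep)=d[\exp_{\ga(x)}]_{\ep\vp}(\ep\psi)$ as a Jacobi field with $J(0)=0$ and reading off $\tfrac12 J''(0)=-R(J(0),\vp)\vp=0$ from the Jacobi equation, whereas you obtain the same vanishing extrinsically from $\exp_p(v)=p+v+\tfrac12 B_p(v,v)+O(|v|^3)$, so that the mixed second derivative is $B(\vp,\psi)\perp T_pM$. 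Both arguments are correct and deliver the structural point you correctly flag as the heart of the proposition (no derivative of $\psi$ beyond $\na^2\psi$ can survive); the extrinsic version is arguably shorter here since the paper has already fixed an isometric embedding $M\hookrightarrow\R^n$, while the Jacobi-field version is embedding-free. What remains in your plan is, as you say, the explicit bookkeeping needed to produce \eqref{eq:SecVarTotale} itself, which the statement requires.
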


The proof of \Cref{prop:SecondVariation} is contained in Appendix A.

\bigskip

\subsection{Critical points}

As we are interested in the properties of the variations evaluated at a critical point of $\sE$, we now consider the variations at such a curve. Let $p\in[2,+\infty)$, $\ga\in W^{4,p}_{imm}(\S^1,M)$ be fixed and let $\ro$ be given by \Cref{lem:Variation}. Recall that by \Cref{prop:FirstVariation0} the curve $\ga$ is a critical point if and only if for any $\psi \in B_\ro(0)\con T(\ga)^{4,p,\perp}$ it holds that
\begin{equation*}
\int -\scal{\na (|k|^{p-2}k),\na\psi}
+ \left\lgl \frac{1}{p'}|k|^pk - k + R(|k|^{p-2}k,\tau)\tau , \psi \right\rgl\,ds
=0.
\end{equation*}

\begin{lemma}\label{lem:RegularityCritPoint}
	Let $p\in[2,+\infty)$. Let $\ga\in W^{4,p}_{imm}(\S^1,M)$ be a critical point. Then $|k|^{p-2}k \in C^{3,\alpha}(\S^1,M)$ for some $\alpha\in(0,1)$, and either $\ga$ is a smooth geodesic and $k\equiv0$ or the set $\{x\,\,:\,\,k(x)=0\}$ is finite.
\end{lemma}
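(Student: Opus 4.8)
The idea is to convert the weak criticality condition of \Cref{prop:FirstVariation0} into a second order ODE along $\ga$ for the natural variable $w\coloneqq|k|^{p-2}k$, use that ODE to bootstrap regularity, and then analyze the zero set of $w$ (which equals that of $k$) by ODE uniqueness when $p=2$ and by a convexity argument when $p>2$.

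\textbf{Strong equation and regularity.} First I would record the a priori regularity of $\ga\in W^{4,p}_{imm}$: by Morrey embedding $\tau\in W^{3,p}\hookrightarrow C^{2,\al}$ and $k\in W^{2,p}\hookrightarrow C^{1,\al}$ with $\al=1-1/p$, while $w=|k|^{p-2}k\in C^{1,\al'}$ because $z\mapsto|z|^{p-2}z$ is $C^1$ with locally H\"older differential for $p\ge2$. By \Cref{rem:Derivata|k|^p-2k} one has $\na w\in L^\infty$, which legitimizes the integration by parts in the criticality condition; testing against all $\psi\in T(\ga)^{4,p,\perp}$ and using the integration-by-parts rule for the normal connection recorded just after its definition yields the strong Euler--Lagrange equation $\na^2 w = -\tfrac1{p'}|k|^pk + k - R(w,\tau)\tau \eqqcolon -G$ in the normal bundle. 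Since the right-hand side is continuous, $w\in C^2$; and since in fact $G\in C^{1,\be}$ for some $\be\in(0,1)$ (each summand is: $-k\in C^{1,\al}$; $|k|^pk$ is a $C^1$ map of $k\in C^{1,\al}$; and $R(w,\tau)\tau$ is a smooth function of $w\in C^{1,\al'}$ and $\tau\in C^{2,\al}$), integrating the ODE twice gives $w\in C^{3,\be}$, which is the first assertion with $\al\coloneqq\be$.

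\textbf{Dichotomy, case $p=2$.} Let $Z\coloneqq\{x:k(x)=0\}=\{w=0\}$, a compact subset of $\S^1$. I claim every accumulation point of $Z$ lies in $\Int Z$; granting this, $\Int Z$ is open and, since its boundary consists of accumulation points, also closed, hence empty or all of $\S^1$, giving respectively that $Z$ is finite or that $k\equiv0$ (in which case $\ga$ is a geodesic, smooth as a solution of the analytic geodesic ODE). For $p=2$ one has $w=k$ and the equation reads $\na^2 k=-\tfrac12|k|^2k+k-R(k,\tau)\tau$, a second order system whose right-hand side is smooth in $k$ and in $s$ (through $\tau,\ga$). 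In a parallel orthonormal normal frame it becomes $(k^\al)''=f^\al(k,s)$ with $f$ smooth; if $x_0\in Z$ is an accumulation point then $k(x_0)=0$ and, by Rolle applied componentwise, $\na k(x_0)=0$, so Picard--Lindel\"of uniqueness against the zero solution forces $k\equiv0$.

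\textbf{Dichotomy, case $p>2$, and the main obstacle.} For $p>2$ the operator degenerates at zeros of $k$ (the inverse relation $k=|w|^{(2-p)/(p-1)}w$ is only $\tfrac1{p-1}$-H\"older), so ODE uniqueness fails and a different mechanism is needed; this is the step I expect to be most delicate. Here I would study $f\coloneqq|w|^2$. Since $\na$ is metric and $w$ is normal, $\tfrac12 f'' = |\na w|^2 + \langle\na^2 w,w\rangle$, and inserting the Euler--Lagrange equation with $\langle k,w\rangle=|k|^p$ gives $\tfrac12 f'' = |\na w|^2 + |k|^p - \tfrac1{p'}|k|^{2p} - \langle R(w,\tau)\tau,w\rangle$. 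The crucial point is that $|k|^p=|w|^{p/(p-1)}$ with $p/(p-1)<2$, so this positive term dominates both $|k|^{2p}=o(|w|^{p/(p-1)})$ and the curvature error $|\langle R(w,\tau)\tau,w\rangle|\le C|w|^2=o(|w|^{p/(p-1)})$; hence there is $\de>0$ with $\tfrac12 f''\ge\tfrac14|w|^{p/(p-1)}+|\na w|^2\ge0$ wherever $|w|\le\de$, i.e. $f$ is convex near its zeros. If $x_0$ is an accumulation point of $Z$, pick zeros $x_n\to x_0$ of $f$ lying in $\{|w|\le\de\}$; on each interval bounded by $x_0$ and $x_n$ the nonnegative convex function $f$ vanishes at both endpoints, hence vanishes identically there, so $w\equiv0$ near $x_0$ and $x_0\in\Int Z$. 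This proves the claim and completes the dichotomy. Care is also required in the regularity step to justify the integration by parts (supplied by \Cref{rem:Derivata|k|^p-2k}) and to track how the limited smoothness at the origin of $z\mapsto|z|^{p-2}z$ and $z\mapsto|z|^pz$ propagates through the bootstrap, which is precisely what caps the conclusion at $C^{3,\al}$.
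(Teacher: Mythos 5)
Your regularity step and your treatment of $p=2$ are essentially the paper's own proof: pass from the weak criticality identity to the strong second order equation for $w=|k|^{p-2}k$ (the integration by parts being licensed by \Cref{rem:Derivata|k|^p-2k}), bootstrap to $C^{3,\alpha}$, and at an accumulation point of the zero set obtain $w=\na w=0$ by Rolle and conclude by Picard--Lindel\"of. Those parts are correct.

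The gap is in the case $p>2$, precisely at the step you yourself flag as delicate. Your computation of $\tfrac12 f''$ for $f=|w|^2$ is right, and it does show that $f$ is convex on $\{|w|\le\de\}$ and hence vanishes identically between any two of its zeros lying in that region. But this only gives that a one-sided accumulation point $x_0$ of $Z$ satisfies $w\equiv0$ on a \emph{one-sided} neighbourhood; it does not give $x_0\in\Int Z$, and, more seriously, convexity cannot propagate the zero set past the endpoint $b$ of a maximal arc $[a,b]\con Z$: a nonnegative convex function may vanish on $[a,b]$ and be strictly positive immediately to the right of $b$ (e.g.\ $(x-b)_+^2$). Indeed the differential inequality you derive, $\tfrac12 f''\ge\tfrac14 f^{p/(2(p-1))}$ with $f(b)=f'(b)=0$, is exactly compatible with the escape profile $f\sim(x-b)_+^{4(p-1)/(p-2)}$, which is the same non-Lipschitz escape phenomenon ($y''=c\,y^{1/(p-1)}$, $y(b)=y'(b)=0$, $y\sim(x-b)^{2(p-1)/(p-2)}$) that defeats Picard--Lindel\"of in the first place. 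Consequently the claim ``every accumulation point of $Z$ lies in $\Int Z$'' is not established, $\Int Z$ is not shown to be closed, and the configuration in which $Z$ contains a nondegenerate arc while $k\ne0$ just outside it (a ``flat core'') is not excluded by your argument. The paper disposes of this step by asserting uniqueness for the ODE satisfied by $V=|k|^{p-2}k$; your observation that for $p>2$ this is not a consequence of standard ODE uniqueness is legitimate, but the convexity of $|w|^2$ is not a sufficient substitute --- some further input (a first-integral argument, or a direct analysis excluding the escape solutions on a closed curve) is needed to rule out the flat-core scenario, and your proposal does not supply it.
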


\begin{proof}
	Let $V=|k|^{p-2}k$. By \Cref{rem:Derivata|k|^p-2k} we know that $V\in T(\ga)^{1,\infty,\perp}$. Moreover $\scal{D_s V,\tau}=-\scal{V,k}$ and $\na V= D_s V + \scal{V,k}\tau$, and then $V$ solves
	\[
	\int \scal{D_s V,D_s\psi}\,ds = \int \left\lgl \frac{1}{p'} |k|^pk - k + |k|^pk +R(V,\tau)\tau, \psi  \right\rgl \,ds,
	\]
	for any $\psi \in T(\ga)^{1,p,\perp}$. In particular the weak derivative $D_s(D_sV)$ exists in $L^\infty$, and $V\in T(\ga)^{2,\infty,\perp}\con C^{1,\alpha}$ for $\alpha\in(0,1)$ that may change from line to line. By assumption also $k\in C^{1,\alpha}$ and thus, from the previous equation, $D^2_s V \in C^{1,\alpha}$, that implies $V\in C^{3,\alpha}$.
	
	Now if at some $x_0$ it holds that $V(x_0)=0$ and $D_s(V)(x_0)=0$, since $V$ now solves
	\[
	D^2_sV = - \frac{1}{p'} |k|^pk + k - |k|^pk -R(V,\tau)\tau
	\]
	pointwise in the classical sense, by existence and uniqueness we would get that $V(x)=0$ and thus $k(x)=0$ in a neighborhood of $x_0$. Iterating the argument this would imply that $k\equiv0$. It follows that if $k\not\equiv0$ then the set $\{x\,\,:\,\,k(x)=0\}$ has to be finite, for otherwise by compactness and since $V\in C^{3,\alpha}$ this would imply the existence of a point $x_0$ with $V(x_0)=0$ and $D_s(V)(x_0)=0$.
\end{proof}

In the following the symbol $\Omega:X\to Y$ will usually denote a compact linear operator between Banach spaces $X$ and $Y$. As compact perturbations do not affect the Fredholmness properties of an operator, from which the \L ojasiewicz-Simon inequality eventually follows, the symbol $\Omega$ will denote operators which may change from line to line, but we will always specify the spaces between which $\Omega$ acts compactly.

\begin{prop}\label{prop:VariationsCritPoints}
	Let $p\in[2,+\infty)$. Let $\ga\in W^{4,p}_{imm}(\S^1,M)$ be a critical point and let $\ro$ be given by \Cref{lem:Variation}. Then
	\begin{equation*}
	\de E(0)[\psi] = \left\lgl \na^2(|k|^{p-2}k) + \frac{1}{p'}|k|^pk - k + R(|k|^{p-2}k,\tau)\tau
	, \psi \right\rgl_{L^{p'}(ds),L^p(ds)},
	\end{equation*}
	for any $\psi\in B_\ro(0)\con T(\ga)^{4,p}$, and
	\begin{equation}\label{eq:SecondVariationCritPoint}
	\begin{split}
	\cL(\vp)[\psi] &= 
	\int 
	|k|^{p-2}\scal{\na^2\vp,\na^2\psi}  + (p-2)|k|^{p-4}\scal{k,\na^2\vp}\scal{k,\na^2\psi}
	+\\
	&\indent 	+ \left\lgl
	(p-2)|k|^{p-4}\scal{k,R(\vp,\tau)\tau}k
	+|k|^{p-2}R(\vp,\tau)\tau 
	, \na^2\psi
	\right\rgl 
	\,ds
	+\int
	\scal{\Om(\vp),\psi}
	\,ds ,
	\end{split} 
	\end{equation}
	for any $\vp,\psi\in B_\ro(0)\con T(\ga)^{4,p,\perp}$, where $\Om: T(\ga)^{4,p,\perp}\to T(\ga)^{p',\perp}$
	is compact.
\end{prop}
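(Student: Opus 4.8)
The plan is to handle the two displayed identities separately, in both cases starting from the general-curve computations of \Cref{prop:FirstVariation0} and \Cref{prop:SecondVariation} and then exploiting the extra regularity granted by \Cref{lem:RegularityCritPoint}. For the first identity I would simply integrate by parts the leading term of the first variation. By \Cref{prop:FirstVariation0}, for normal $\psi$ one has $\de E(0)[\psi]=\int -\scal{\na(|k|^{p-2}k),\na\psi}+\scal{\tfrac{1}{p'}|k|^pk-k+R(|k|^{p-2}k,\tau)\tau,\psi}\,ds$. Since $\ga$ is critical, \Cref{lem:RegularityCritPoint} gives $|k|^{p-2}k\in C^{3,\alpha}$, so $\na^2(|k|^{p-2}k)$ is a well-defined normal field of class $C^{1,\alpha}\subset L^{p'}$. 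Applying the integration-by-parts rule for normal fields with respect to $\na$ and $ds$ converts $-\int\scal{\na(|k|^{p-2}k),\na\psi}\,ds$ into $\int\scal{\na^2(|k|^{p-2}k),\psi}\,ds$, which is exactly the asserted $\scal{\cdot,\cdot}_{L^{p'}(ds),L^p(ds)}$ pairing. For a general $\psi\in T(\ga)^{4,p}$ the represented field is normal along $\ga$, so the pairing only sees $\psi^\perp$, which matches $\de E(0)[\psi^\perp]$.

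For the second identity the key is a single exact cancellation inside the expression of \Cref{prop:SecondVariation}. The term $\scal{|k|^{p-2}\scal{k,\vp}k,\na^2\psi}$ appearing among the $\na^2\psi$-contributions equals $|k|^{p-2}\scal{k,\vp}\scal{k,\na^2\psi}$, which is precisely the opposite of the $\na^2\psi$-part $-\scal{k,\vp}\scal{|k|^{p-2}k,\na^2\psi}$ of the last integral. These two cancel identically (this is purely algebraic and does not even use criticality), leaving exactly the four $\na^2\psi$-terms displayed in \eqref{eq:SecondVariationCritPoint}.

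It then remains to absorb the surviving lower-order pieces into one compact operator $\Om$. The leftover part of the last integral, $-\int\scal{k,\vp}\scal{\tfrac{1}{p'}|k|^pk-k+R(|k|^{p-2}k,\tau)\tau,\psi}\,ds$, is already of the form $\int\scal{\Om_1(\vp),\psi}\,ds$ with $\Om_1(\vp)=-\scal{k,\vp}(\tfrac{1}{p'}|k|^pk-k+R(|k|^{p-2}k,\tau)\tau)$ of order zero in $\vp$; since $T(\ga)^{4,p,\perp}\hookrightarrow\hookrightarrow T(\ga)^{p',\perp}$ compactly on $\S^1$, the operator $\Om_1$ is compact. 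For the bilinear remainder $\int A(\vp,\psi)\,ds$, which is at most first order in $\psi$, I would integrate by parts the $\na\psi$-part so that $\psi$ appears undifferentiated. Because every term of the second variation is of order at most two in each argument, the $\vp$-coefficients of the first-order-in-$\psi$ terms have order at most two, hence at most three after one integration by parts, and the compact embedding $W^{1,p}(\S^1)\hookrightarrow\hookrightarrow L^{p'}(\S^1)$ makes the resulting operator $\Om_2$ compact. Setting $\Om=\Om_1+\Om_2$ yields the claim.

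The \emph{main obstacle} is the careful derivative bookkeeping in $A(\vp,\psi)$ via the explicit formula \eqref{eq:SecVarTotale}: one must verify that after the integration by parts no genuinely fourth-order-in-$\vp$ contribution survives in $\Om$, since such a term would map $W^{4,p}$ into $L^p$ without compactness, and that all coefficients built from $|k|^{p-2}$, $|k|^{p-4}k\otimes k$ and their arclength derivatives are regular enough to justify the manipulations. This is most delicate for $p>2$, where $|k|^{p-4}$ may blow up at zeros of $k$; here one relies precisely on the fact that $|k|^{p-2}k\in C^{3,\alpha}$ and that the zero set of $k$ is finite, both provided by \Cref{lem:RegularityCritPoint}.
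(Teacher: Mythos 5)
Your proposal is correct and follows essentially the same route as the paper: both reduce to the explicit second-variation formula \eqref{eq:SecVarTotale}, observe that the $\scal{k,\vp}\scal{|k|^{p-2}k,\na^2\psi}$ contributions cancel, and then absorb all remaining terms into a compact operator by one integration by parts (justified by the $C^{3,\alpha}$ regularity of $|k|^{p-2}k$ from \Cref{lem:RegularityCritPoint}) together with the compact Sobolev embeddings, the first identity being the same one-line integration by parts in both cases. The only difference is presentational: the paper additionally rewrites the $(D_\vp\cR)(\psi,\tau,\tau)$ and $R(\psi,\tau)\tau$ terms in local coordinates to make explicit that they are zeroth order in $\psi$ and at most second order in $\vp$, a bookkeeping step your blanket argument covers but does not spell out.
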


\begin{proof}
	We need to prove \eqref{eq:SecondVariationCritPoint}. By \Cref{lem:RegularityCritPoint} and by \eqref{eq:SecondVarTotale}, integration by parts yields
	\begin{equation*}
	\begin{split}
	\cL(\vp)[\psi] 
	&= 
	\int 
	|k|^{p-2}\scal{\na^2\vp,\na^2\psi}  + (p-2)|k|^{p-4}\scal{k,\na^2\vp}\scal{k,\na^2\psi} +\\
	&\indent 	+ \left\lgl
	(p-2)|k|^{p-4}\scal{k,R(\vp,\tau)\tau}k
	+|k|^{p-2}R(\vp,\tau)\tau 
	, \na^2\psi
	\right\rgl 
	\,ds +\\
	&+\int 
	\scal{(D_\vp\cR)(\psi,\tau,\tau),|k|^{p-2}k} 
	+ \bigg\lgl 
	R(\psi,\tau)\tau,
	|k|^{p-2}\na^2\vp +\\
	&\indent  +(p-2) |k_\ga|^{p-4}\scal{k_\ga,\na^2\vp}k_\ga
	+ |k_\ga|^{p-2} R(\vp,\tau_\ga)\tau_\ga 
	+ (p-2)|k_\ga|^{p-4}\scal{k_\ga, R(\vp,\tau_\ga)\tau_\ga}k_\ga
	\bigg\rgl
	\,ds +\\
	&+\int
	\scal{\Om(\vp),\psi}
	\,ds
	\end{split}
	\end{equation*}
	where by \Cref{lem:RegularityCritPoint} we could use that
	\[R(\na|k|^{p-2}k,\psi,\vp,\tau) = - R(\psi,\na|k|^{p-2}k,\vp,\tau), \]
	\[\int - R (\na\psi, |k|^{p-2}k, \vp,\tau ) = \int \scal{\psi, \na(R(\vp,\tau)|k|^{p-2}k)},\]
	\[\scal{R(\psi,\tau)\tau, \scal{\vp,k}|k|^{p-2}k} = \scal{\psi, R(\scal{\vp,k}|k|^{p-2}k,\tau)\tau}.\]
	Moreover $\Om: T(\ga)^{4,p,\perp}\to T(\ga)^{p',\perp}$
	is compact. For a given local reference frame $\{\pa_j\}$ in $M$ we can also write
	\[
	(D_\vp\cR)(\psi,\tau,\tau) = \vp^m\psi^i\tau^j\tau^k \left( \pa_m R^l_{ijk} + \Ga^\alpha_{mi}R^l_{\alpha jk}
	+ \Ga^\be_{mi}R^l_{i \be k}
	+ \Ga^\ga_{mi}R^l_{i j \ga}
	\right) \pa_l ,
	\]
	\[
	R(\psi,\tau)\tau = \psi^i\tau^j\tau^k R^l_{ijk},
	\]
	where $\vp=\vp^m\pa_m$, $\psi=\psi^i\pa_i$, $\tau=\tau^a\pa_a$, $R(\pa_i,\pa_j)\pa_k = R^l_{ijk}\pa_l$, and $\{\Ga^k_{ij}\}$ are the Christoffel symbols of $M$. This means that
	\[
	\begin{split}
	\cL(\vp)[\psi] &= 
	\int 
	|k|^{p-2}\scal{\na^2\vp,\na^2\psi}  + (p-2)|k|^{p-4}\scal{k,\na^2\vp}\scal{k,\na^2\psi}
	+\\
	&\indent 	+ \left\lgl
	(p-2)|k|^{p-4}\scal{k,R(\vp,\tau)\tau}k
	+|k|^{p-2}R(\vp,\tau)\tau 
	, \na^2\psi
	\right\rgl 
	\,ds
	+\int
	\scal{\Om(\vp),\psi}
	\,ds ,
	\end{split} 
	\]
	where $\Om: T(\ga)^{4,p,\perp}\to T(\ga)^{p',\perp}$
	is compact.
\end{proof}

We conclude with the following two observations about the regularity of critical points.

\begin{remark}\label{rem:RegularityP2}
	Let $p=2$. Then critical points are smooth up to reparametrization with respect to constant speed. Indeed \Cref{lem:RegularityCritPoint} implies that a constant speed critical point $\ga$ verifies that $k\in C^{3,\alpha}$, but then a bootstrap argument on the equation
	\[
	D_s^2 k = -\frac12 |k|^2k +k - |k|^2 - R(k,\tau)\tau,
	\]
	gives that $k\in C^\infty$ and thus $\ga\in C^\infty$.
\end{remark}

\begin{remark}\label{rem:RegularityP>2}
	Let $p>2$. If $\ga$ is a constant speed critical point of $\sE$ for some $p>2$ and if $k$ never vanishes, then $\ga$ is smooth. Indeed \Cref{lem:RegularityCritPoint} implies that a constant speed critical point $\ga$ verifies that $|k|^{p-2}k\in C^{3,\alpha}$, and thus the equation
	\[
	D_s^2 (|k|^{p-2}k) = -\frac{1}{p'} |k|^pk +k - |k|^p - R(|k|^{p-2}k,\tau)\tau,
	\]
	is classically satisfied. By a bootstrap argument we get that $|k|^{p-2}k$ is smooth, and then $|k|^p\in C^1$. Hence bootstrap on the equality
	\[
	k = D_s^2 (|k|^{p-2}k) + \frac{1}{p'} |k|^pk + |k|^p  +  R(|k|^{p-2}k,\tau)\tau,
	\]
	implies that $k$ is smooth, and then so is $\ga$.
\end{remark}

\bigskip

\subsection{Analysis of the second variations and \L ojasiewicz-Simon inequality}

In the following we study the properties of the variations of $\sE$, leading to the application of \Cref{cor:LojaFunctionalAnalytic} and then to the proof of the convergence of the gradient flows. We shall distinguish between the cases $p=2$ and $p>2$, indeed, as also \Cref{prop:VariationsCritPoints} suggests, we will see that the properties of the second variation at a curve $\ga$ depend on the zeros of the curvature of $\ga$ if $p>2$, while for $p=2$ the scenario is more regular.

For the convenience of the reader, let us start by recollecting the formulas for first and second variations at critical points under the form we will use them. We will always assume without loss of generality that critical points are parametrized with constant speed.

\begin{prop}\label{prop:VariationsP2}
	Let $p=2$. Let $\ga:\S^1\to M$ be a fixed smooth immersion and let $\ro$ be given by \Cref{lem:Variation}.
	
	For any $\vp,\psi \in B_\ro(0)\con T(\ga)^{4,2,\perp}$ it holds that
	\begin{equation}\label{eq:FirstVarP2}
	\begin{split}
	\de E(\vp)[\psi] &= 
	\left\lgl
	\ga^\perp \cT^\star\left(
	\na_{\ga_\vp}^2 k_{\ga_\vp} 
	+
	\frac{1}{2} |k_{\ga_\vp}|^2k_{\ga_\vp} - k_{\ga_\vp} 
	+ R (k_{\ga_\vp},\tau_{\ga_\vp})\tau_{\ga_\vp}
	\right)
	, \psi
	\right\rgl_{L^2(ds_{\ga_\vp}),L^2(ds_{\ga_\vp})}
	\end{split}
	\end{equation}
	where $\ga_\vp(\cdot)=\Phi(1,\cdot)$, $\Phi$ is the variation of $\ga$ given by $\vp$, and $\cT:T_{\ga(x)}M\to T_{\Phi(1,x)}M$ is the function $\cT(\psi)=d[\exp_{\ga(x)}]_{\vp}(\psi)$ and $\cT^\star$ is its adjoint.
	
	If $\ga$ is any critical point, then
	\begin{equation}\label{eq:SecVarP2}
	\begin{split}
	\cL(\vp)[\psi] &= 
	\int 
	\left\lgl \na^2\vp 
	, \na^2\psi
	\right\rgl 
	\,ds
	+\int
	\scal{\Om(\vp),\psi}
	\,ds ,
	\end{split} 
	\end{equation}
	for any $\vp,\psi\in B_\ro(0)\con T(\ga)^{4,2,\perp}$, where $\Om: T(\ga)^{4,2,\perp}\to T(\ga)^{2,\perp}$
	is compact.
%
\end{prop}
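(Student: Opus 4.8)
The plan is to deduce both identities by specialising to $p=2$ the formulas already established for arbitrary $p$, followed by a single integration by parts in each case.

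For \eqref{eq:FirstVarP2}, I would start from the expression for $\de\bE(\vp)[\psi]$ in \Cref{prop:VarBoldE}. Setting $p=2$ gives $|k|^{p-2}\equiv1$ and $p'=2$, so the first summand becomes $-\scal{\na_{\ga_\vp}k_{\ga_\vp},\na_{\ga_\vp}(\ga_\vp^\perp\cT(\psi))}_{L^2(ds_{\ga_\vp})}$ and the zeroth-order summand becomes $\scal{\cT^\star(\tfrac12|k_{\ga_\vp}|^2k_{\ga_\vp}-k_{\ga_\vp}+R(k_{\ga_\vp},\tau_{\ga_\vp})\tau_{\ga_\vp}),\psi}$. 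Integrating the first summand by parts with respect to the normal connection $\na_{\ga_\vp}$ and the length measure $ds_{\ga_\vp}$ (admissible since $\S^1$ is closed and $\ga_\vp^\perp\cT(\psi)$ is a normal field along $\ga_\vp$) transfers $\na_{\ga_\vp}$ onto $k_{\ga_\vp}$, producing $\scal{\na^2_{\ga_\vp}k_{\ga_\vp},\ga_\vp^\perp\cT(\psi)}$. As $\na^2_{\ga_\vp}k_{\ga_\vp}$ is itself normal along $\ga_\vp$, the projection $\ga_\vp^\perp$ may be dropped when pairing against it, and the two summands then combine into $\scal{\cT^\star V_{\ga_\vp},\psi}$ with $V_{\ga_\vp}=\na^2_{\ga_\vp}k_{\ga_\vp}+\tfrac12|k_{\ga_\vp}|^2k_{\ga_\vp}-k_{\ga_\vp}+R(k_{\ga_\vp},\tau_{\ga_\vp})\tau_{\ga_\vp}$. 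Finally, since $\psi\in T(\ga)^{4,2,\perp}$ is normal along $\ga$ and $\de E(\vp)[\psi]=\de\bE(\vp)[\psi]$ on such fields, I may insert $\ga^\perp$ in front of $\cT^\star V_{\ga_\vp}$ without altering the pairing; this is exactly \eqref{eq:FirstVarP2}.

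For \eqref{eq:SecVarP2}, I would specialise \eqref{eq:SecondVariationCritPoint} to $p=2$. Every factor $(p-2)$ vanishes and $|k|^{p-2}\equiv1$, so the only surviving non-principal term of leading order is $\int\scal{R(\vp,\tau)\tau,\na^2\psi}\,ds$, while the already compact operator $\Om\colon T(\ga)^{4,2,\perp}\to T(\ga)^{2,\perp}$ absorbs the remaining zeroth-order contributions. It then remains to rewrite $\int\scal{R(\vp,\tau)\tau,\na^2\psi}\,ds$ so that no derivatives fall on $\psi$. By \Cref{rem:RegularityP2} the constant-speed critical point $\ga$ is smooth, hence $\tau$, $k$ and the coefficients of $R(\,\cdot\,,\tau)\tau$ are smooth; integrating by parts twice with respect to $\na$ and $ds$ (again boundary-free on $\S^1$) moves the two normal derivatives onto $R(\vp,\tau)\tau$, producing $\int\scal{\na^2(R(\vp,\tau)\tau),\psi}\,ds$ up to further zeroth-order terms.

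The one point deserving care is the compactness claim for the operator $\vp\mapsto\na^2(R(\vp,\tau)\tau)$. Although this expression contains the top-order derivative $\na^2\vp$ (through the term $R(\na^2\vp,\tau)\tau$), it involves at most two normal derivatives of $\vp$, so it defines a bounded map $T(\ga)^{4,2,\perp}\to W^{2,2}(\S^1,\R^n)$. Since the domain $\S^1$ is compact and one-dimensional, the Rellich--Kondrachov embedding $W^{2,2}(\S^1)\hookrightarrow L^2(\S^1)=T(\ga)^{2,\perp}$ is compact, and the composition is therefore compact into $T(\ga)^{2,\perp}$; adding it to the pre-existing $\Om$ keeps the whole lower-order remainder compact, yielding \eqref{eq:SecVarP2}. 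The main subtlety throughout is precisely this bookkeeping: one must check that every correction generated by integrating by parts with the non-flat connection $\na$ and by differentiating $R$, $\tau$ and the measure gains at least two orders of regularity relative to $\vp$, so that it lands in $W^{2,2}$ and is swept into the compact part rather than contaminating the principal term $\scal{\na^2\vp,\na^2\psi}$.
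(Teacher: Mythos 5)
Your proposal is correct and follows essentially the same route as the paper, whose proof of this proposition is a one-line reduction to \Cref{prop:VarBoldE} and \eqref{eq:SecondVariationCritPoint}; you simply make explicit the two details the paper leaves implicit, namely the integration by parts (legitimate since $\na k_{\ga_\vp}$, $\ga_\vp^\perp\cT(\psi)$ and $R(\vp,\tau)\tau$ are all normal fields, so the boundary-free integration-by-parts rule for $\na$ and $ds$ applies) and the absorption of $\vp\mapsto\na^2(R(\vp,\tau)\tau)$ into the compact remainder via $T(\ga)^{4,2,\perp}\to W^{2,2}\hookrightarrow T(\ga)^{2,\perp}$. No gaps.
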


\begin{proof}
	\Cref{eq:FirstVarP2} and \Cref{eq:SecVarP2} immediately follow from \Cref{prop:VarBoldE} and \eqref{eq:SecondVariationCritPoint}.
%
\end{proof}

In case $p=2$, for a given immersion $\ga$, \Cref{prop:VariationsP2} implies that the operator $\de E(\vp) \in (T(\ga)^{4,2,\perp})^\star$ is represented by the function
\[
\na_{T(\ga)^2,T(\ga)^2} E (\vp)  = |\pa_x\ga_\vp| \ga^\perp \cT^\star\left(
\na_{\ga_\vp}^2 k_{\ga_\vp} 
+
\frac{1}{2} |k_{\ga_\vp}|^2k_{\ga_\vp} - k_{\ga_\vp} 
+ R (k_{\ga_\vp},\tau_{\ga_\vp})\tau_{\ga_\vp}
\right)
\qquad\in T(\ga)^{2,\perp},
\]
in the notation of \Cref{prop:VariationsP2}. In this way we can say that $\de E : T(\ga)^{4,2,\perp}\to (T(\ga)^{2,\perp})^\star$ via the paring
\[
\de E(\vp)[\psi] = \left\lgl \na_{T(\ga)^2,T(\ga)^2} E (\vp) , \psi \right\rgl_{L^2(dx),L^2(dx)}.
\]
Similarly we have that $\cL:T(\ga)^{4,2,\perp}\to  (T(\ga)^{2,\perp})^\star$ with
\[
\cL(\vp)[\psi] = \int \scal{\na^4 \vp, \psi} + \scal{\Om(\vp),\psi}
\,ds ,
\]
in the notation of \Cref{prop:VariationsP2}.

\medskip

Now we analogously consider $p>2$.

\begin{prop}\label{prop:VariationsP>2}
	Let $p>2$. Let $\ga:\S^1\to M$ be a fixed smooth immersion and $\ro$ be given by \Cref{lem:Variation}.
	
	For any $\vp,\psi \in B_\ro(0)\con T(\ga)^{4,p,\perp}$ it holds that
	\begin{equation*}
	\begin{split}
	\de E(\vp)[\psi] &= 
	-\left\lgl \na_{\ga_\vp} |k_{\ga_\vp}|^{p-2}k_{\ga_\vp} , \na_{\ga_\vp} T\ga_\vp^\perp \cT \psi  \right\rgl_{L^{p'}(ds_{\ga_\vp}),L^p(ds_{\ga_\vp})} +\\
	&\indent +	
	\left\lgl
	\ga^\perp \cT^\star\left(
	\frac{1}{p'} |k_{\ga_\vp}|^p k_{\ga_\vp} - k_{\ga_\vp} 
	+ R (|k_{\ga_\vp}|^{p-2}k_{\ga_\vp},\tau_{\ga_\vp})\tau_{\ga_\vp}
	\right)
	, \psi
	\right\rgl_{L^{p'}(ds_{\ga_\vp}),L^p(ds_{\ga_\vp})}
	\end{split}
	\end{equation*}
	where $\ga_\vp(\cdot)=\Phi(1,\cdot)$, $\Phi$ is the variation of $\ga$ given by $\vp$, and $\cT:T_{\ga(x)}M\to T_{\Phi(1,x)}M$ is the function $\cT(\psi)=d[\exp_{\ga(x)}]_{\vp}(\psi)$ and $\cT^\star$ is its adjoint.
	
	If $\ga$ is a critical point such that $|k_\ga(x)|\neq0$ for any $x$, then
	\begin{equation}\label{eq:FirstVarP>2}
	\begin{split}
	\de E(\vp)[\psi] &= 
	\left\lgl
	\ga^\perp \cT^\star\left(
	\na_{\ga_\vp}^2 |k_{\ga_\vp}|^{p-2}k_{\ga_\vp} 
	+
	\frac{1}{p'} |k_{\ga_\vp}|^p k_{\ga_\vp} - k_{\ga_\vp} 
	+ R (|k_{\ga_\vp}|^{p-2}k_{\ga_\vp},\tau_{\ga_\vp})\tau_{\ga_\vp}
	\right)
	, \psi
	\right\rgl_{L^{p'}(ds_{\ga_\vp}),L^p(ds_{\ga_\vp})}
	\end{split}
	\end{equation}
	and
	\begin{equation}\label{eq:SecVarP>2}
	\begin{split}
	\cL(\vp)[\psi] &= 
	\int 
	|k|^{p-2}\left\lgl \na^2\vp 
	, \na^2\psi
	\right\rgl 
	+ (p-2)|k|^{p-4}\scal{k,\na^2 \vp}\scal{k,\na^2 \psi}
	\,ds
	+\int
	\scal{\Om(\vp),\psi}
	\,ds ,
	\end{split} 
	\end{equation}
	for any $\vp,\psi\in B_\ro(0)\con T(\ga)^{4,p,\perp}$, where $\Om: T(\ga)^{4,p,\perp}\to T(\ga)^{p',\perp}$
	is compact.
	
	If $\ga$ is a geodesic then
	\begin{equation}\label{eq:SecVarP>2Geod}
	\cL(\vp)[\psi] = \int \scal{\na\vp,\na\psi} - R(\psi,\tau,\vp,\tau)\,ds,
	\end{equation}
	for any $\vp,\psi\in B_\ro(0)\con T(\ga)^{4,p,\perp}$.
\end{prop}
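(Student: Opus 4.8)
The first variation formula for general $\vp,\psi\in B_\ro(0)\con T(\ga)^{4,p,\perp}$ follows directly from \Cref{prop:VarBoldE}. Indeed, for two normal fields one has $\de E(\vp)[\psi]=\de\bE(\vp)[\psi]$, since $E(\vp+s\psi)=\sE(\Phi_{\vp+s\psi}(1,\cdot))=\bE(\vp+s\psi)$ along the whole segment $\vp+s\psi$, which consists of normal fields. It then remains only to rewrite the second pairing appearing in \Cref{prop:VarBoldE}: as $\psi\in T\ga^\perp$, one may insert the normal projection $\ga^\perp$ in front of $\cT^\star(\cdots)$ without altering the value of $\scal{\cT^\star(\cdots),\psi}_{L^{p'}(ds_{\ga_\vp}),L^p(ds_{\ga_\vp})}$, and this yields the stated expression.

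To obtain \eqref{eq:FirstVarP>2} the plan is to integrate by parts in the first summand. Since $\ga$ is a critical point with $|k_\ga|\neq0$ on the compact $\S^1$, \Cref{rem:RegularityP>2} gives that $\ga$ is smooth, and by continuity $|k_{\ga_\vp}|$ remains bounded away from zero for $\vp$ in a sufficiently small ball. Consequently the map $v\mapsto|v|^{p-2}v$ is smooth along $k_{\ga_\vp}$, so that $|k_{\ga_\vp}|^{p-2}k_{\ga_\vp}$ is as regular as $k_{\ga_\vp}$, in particular of class $W^{2,p}$, and $\na^2_{\ga_\vp}(|k_{\ga_\vp}|^{p-2}k_{\ga_\vp})$ is a well-defined $L^p$ field. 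Integration by parts with respect to the normal connection and the arclength measure (as in the Remark following the definition of $\na$) transfers the two normal derivatives onto $|k_{\ga_\vp}|^{p-2}k_{\ga_\vp}$; using that $\na^2$ of a normal field is again normal, and that $\scal{\cdot,T\ga_\vp^\perp\cT\psi}=\scal{\ga^\perp\cT^\star(\cdot),\psi}$ for normal $\psi$, the resulting term combines with the second summand into \eqref{eq:FirstVarP>2}. The hypothesis $|k_\ga|\neq0$ is used precisely here, as it is what makes this integration by parts legitimate.

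Formula \eqref{eq:SecVarP>2} follows from \eqref{eq:SecondVariationCritPoint} in \Cref{prop:VariationsCritPoints}. The two leading terms $|k|^{p-2}\scal{\na^2\vp,\na^2\psi}$ and $(p-2)|k|^{p-4}\scal{k,\na^2\vp}\scal{k,\na^2\psi}$ are already in the desired form, so it suffices to absorb the remaining curvature term $\int\scal{(p-2)|k|^{p-4}\scal{k,R(\vp,\tau)\tau}k+|k|^{p-2}R(\vp,\tau)\tau,\na^2\psi}\,ds$ into the compact operator $\Om$. I would integrate by parts twice to move $\na^2$ off $\psi$; writing the coefficient of $\na^2\psi$ as $\ga^\perp C(\vp)$ with $C$ linear and of order zero in $\vp$ and smooth coefficients (the geometry of $\ga$ being smooth and $|k|^{p-4}$ smooth because $|k|\neq0$), the outcome has the form $\int\scal{\na^2(\ga^\perp C(\vp)),\psi}\,ds$, where $\na^2(\ga^\perp C(\vp))$ involves $\vp$ only up to its second normal derivative. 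As $\S^1$ is compact, the embedding $W^{4,p}\hookrightarrow W^{2,p}$ is compact and, since $p\ge p'$, $L^p\hookrightarrow L^{p'}$ continuously; hence this term defines a compact operator $T(\ga)^{4,p,\perp}\to T(\ga)^{p',\perp}$ that can be added to $\Om$, giving \eqref{eq:SecVarP>2}.

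The geodesic case \eqref{eq:SecVarP>2Geod} is the most delicate, and I expect it to be the main obstacle: one cannot simply set $k\equiv0$ in \eqref{eq:SecondVariationCritPoint}, because the factor $|k|^{p-4}$ is singular at $k=0$ when $2<p<4$. I would instead argue from the full expression \eqref{eq:SecVarTotale} or, more transparently, from the splitting $\sE=\int 1\,ds+\tfrac1p\int|k|^p\,ds$. Along a two-parameter variation $\Phi_{\ep\vp+\eta\psi}$ of the geodesic $\ga$ one has $k_\ga=0$, so by \eqref{eq:VarKPerp} the curvature is linear to leading order in the parameters; hence $|k|^p$ vanishes to order $p>2$ at the origin and its mixed second derivative is zero, so the curvature part contributes nothing to $\cL$. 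Thus $\cL(\vp)[\psi]$ reduces to the second variation of the length functional at a geodesic, which is the classical index form $\int\scal{\na\vp,\na\psi}-\scal{R(\vp,\tau)\tau,\psi}\,ds$; recalling the convention $R(\psi,\tau,\vp,\tau)=\scal{R(\vp,\tau)\tau,\psi}$, this is exactly \eqref{eq:SecVarP>2Geod}. Apart from this computation and the careful use of $|k_\ga|\neq0$ in the step above, the remainder is bookkeeping on top of the first and second variation formulas already established.
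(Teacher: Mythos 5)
Your proposal is correct, and for the first three formulas it follows the same route as the paper, whose proof is simply the observation that everything follows from \Cref{prop:VarBoldE}, \eqref{eq:SecVarTotale}, \eqref{eq:SecondVariationCritPoint}, and \Cref{rem:RegularityP>2}; your restriction of $\bE$ to normal fields, the insertion of $\ga^\perp$ against a normal $\psi$, the integration by parts made legitimate by $|k_{\ga_\vp}|\ge c>0$ for small $\vp$, and the absorption of the zeroth-order curvature terms into $\Om$ via the compact embedding $W^{4,p}\hookrightarrow W^{2,p}$ are exactly the intended bookkeeping. The one place where you genuinely diverge is the geodesic case \eqref{eq:SecVarP>2Geod}: the paper obtains it by setting $k\equiv 0$ directly in the full expression \eqref{eq:SecVarTotale}, where every curvature term carries a factor that is $O(|k|^{p-2})$ and hence vanishes for $p>2$, leaving only $-\int\scal{\na^2\vp+R(\vp,\tau)\tau,\psi}\,ds$, which after integration by parts is the index form. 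Your argument instead splits $\sE$ into length plus $\tfrac1p\int|k|^p$ and shows the mixed second derivative of the curvature part vanishes at the origin because $k_{(\ep,\eta)}=O(|\ep|+|\eta|)$ and $p>2$ (so $\pa_\eta$ of that part is $O((|\ep|+|\eta|)^{p-1})$ with $p-1>1$); this is a correct and slightly more conceptual route that avoids handling \eqref{eq:SecVarTotale} term by term, and you rightly identified that one cannot use \eqref{eq:SecondVariationCritPoint} here because of the $|k|^{p-4}$ factor. Both approaches are sound; the paper's is shorter given that \eqref{eq:SecVarTotale} is already on record, while yours is more robust in that it does not require inspecting the full second variation formula.
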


\begin{proof}
	The statements immediately follow from \Cref{prop:VarBoldE}, \eqref{eq:SecVarTotale}, and \eqref{eq:SecondVariationCritPoint}, together with \Cref{rem:RegularityP>2}.	
\end{proof}

It is clear from \Cref{prop:VariationsP>2} that whenever $k$ vanishes, the leading terms in the bilinear form defining $\cL$ disappear, and we cannot expect strong Fredholmness properties on $\cL$.

However, if $p>2$, for a given smooth critical point $\ga$ with $|k(x)|\neq0$ for any $x$, \Cref{prop:VariationsP>2} implies that the operator $\de E(\vp) \in (T(\ga)^{4,p,\perp})^\star$ is represented by the function
\[
\na_{T(\ga)^{p'},T(\ga)^p} E (\vp)\,\,\in T(\ga)^{p',\perp},
\]
\[
\na_{T(\ga)^{p'},T(\ga)^p} E (\vp)  = |\pa_x\ga_\vp| \ga^\perp \cT^\star\left(
\na_{\ga_\vp}^2 |k_{\ga_\vp}|^{p-2} k_{\ga_\vp} 
+
\frac{1}{p'} |k_{\ga_\vp}|^p k_{\ga_\vp} - k_{\ga_\vp} 
+ R (|k_{\ga_\vp}|^{p-2}k_{\ga_\vp},\tau_{\ga_\vp})\tau_{\ga_\vp}
\right)
,
\]
in the notation of \Cref{prop:VariationsP>2}. In this way we can say that $\de E : T(\ga)^{4,p,\perp}\to (T(\ga)^{p,\perp})^\star$ via the paring
\[
\de E(\vp)[\psi] = \left\lgl \na_{T(\ga)^{p'},T(\ga)^p} E (\vp) , \psi \right\rgl_{L^{p'}(dx),L^p(dx)}.
\]
Similarly we have that $\cL:T(\ga)^{4,p,\perp}\to  (T(\ga)^{p,\perp})^\star$ with
\[
\cL(\vp)[\psi] = \int \scal{\na^2 \left(|k|^{p-2}\na^2 \vp \right), \psi} 
+ (p-2)\left\lgl \na^2 \left(|k|^{p-4}\scal{k,\na^2\vp}k\right) , \psi   \right\rgl \,ds 
+ \int 
\scal{\Om(\vp),\psi}
\,ds ,
\]
in the notation of \Cref{prop:VariationsP>2}.

\medskip

With the above results we can now derive the desired Fredholmenss properties on the second variation functionals. Once again, we shall divide the cases $p=2$ and $p>2$, as also the technical part of the two proofs is different.

\begin{lemma}\label{lem:FredholmnessP2}
	Let $p=2$. Let $\ga:\S^1\to M$ be a smooth critical point and let $\ro>0$ be given by \Cref{lem:Variation}. Then the operator $\cL: T(\ga)^{4,2,\perp}\to  (T(\ga)^{2,\perp})^\star$ represented by the function
	\[
	\cL(\vp) = \na^4 \vp + \Omega(\vp) \qquad\in T(\ga)^{2,\perp},
	\]
	where $\Om:T(\ga)^{4,2,\perp}\to T(\ga)^{2,\perp}$ is compact, is Fredholm of index zero.	
\end{lemma}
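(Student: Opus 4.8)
The plan is to show that the principal part $\na^4 : T(\ga)^{4,2,\perp} \to T(\ga)^{2,\perp}$ is Fredholm of index zero, and then invoke the stability of the Fredholm index under compact perturbations (see \cite[Section 19.1]{HormanderIII}) to conclude that $\cL = \na^4 + \Omega$ is Fredholm of index zero as well, since $\Omega$ is compact by \Cref{prop:VariationsP2}. Thus the entire difficulty is concentrated in the analysis of the top-order operator $\na^4$.

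First I would set up the comparison with the flat case. The normal connection $\na$ along $\ga$ differs from the ambient arclength derivative $\pa_s$ only by lower-order terms built from the second fundamental form $B$ of $M \hookrightarrow \R^n$ and the tangential projection, as is visible from the identity $\na\phi = (M^\top - \ga^\top)(\pa_s\phi)$ recorded in \Cref{sec:Manifolds}. Expanding $\na^4$ therefore yields $\na^4\vp = \pa_s^4\vp + (\text{lower-order terms})$, where the remainder involves at most third derivatives of $\vp$ with coefficients that are bounded (indeed continuous, since $\ga$ is smooth and $|\ga'|$ is bounded away from zero). By the Rellich–Kondrachov embedding $W^{4,2}(\S^1) \hookrightarrow\hookrightarrow W^{3,2}(\S^1)$, any operator sending $\vp$ to a finite sum of such lower-order terms is a compact operator from $T(\ga)^{4,2,\perp}$ into $L^2(\ga)^\perp = T(\ga)^{2,\perp}$. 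Hence $\na^4$ itself is a compact perturbation of the constant-coefficient operator $\vp \mapsto |\ga'|^{-4}\pa_x^4\vp$ (after accounting for the measure $ds = |\ga'|\,dx$), and it suffices to establish Fredholmness of index zero for this model operator.

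Next I would verify the model claim directly. The operator $\pa_s^4$ acting on the Hilbert space of $W^{4,2}$ normal fields along a closed curve is, up to the harmless smooth weight $|\ga'|$, an elliptic constant-coefficient operator on $\S^1$. Working in an orthonormal frame $\{e_2,\dots,e_m\}$ of the normal bundle $(T\ga)^\perp \cap TM$—which exists smoothly along the closed curve after passing to a double cover if the bundle is nonorientable, or more simply by treating $\na^4$ as self-adjoint with respect to the $L^2(ds)$ pairing—one sees that $\na^4$ is formally self-adjoint and nonnegative, since $\scal{\na^4\vp,\vp}_{L^2(ds)} = \scal{\na^2\vp,\na^2\vp}_{L^2(ds)} \ge 0$ by the integration-by-parts formula for $\na$ stated in \Cref{sec:Manifolds}. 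Its kernel consists of fields with $\na^2\vp \equiv 0$, a finite-dimensional space; moreover standard elliptic regularity for this ODE system on $\S^1$ gives that the range is closed and that $\mathrm{coker}\,\na^4 \cong \ker(\na^4)^\star = \ker\na^4$ by self-adjointness. Therefore $\dim\ker\na^4 = \dim\mathrm{coker}\,\na^4$, so $\na^4$ has index zero.

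\textbf{The main obstacle} I anticipate is purely bookkeeping rather than conceptual: one must carefully confirm that the bilinear form $\scal{\na^4\vp,\psi}$ representing $\cL$ really does pair into the stated dual spaces and that the remainder after extracting $\pa_s^4$ genuinely lands in $L^2$ with the correct compactness, given that $\na$ mixes the normal constraint $\scal{\vp,\pa_x\ga}=0$ into the computation through the projection $\ga^\perp$. One should check that the constraint defining $T(\ga)^{4,2,\perp}$ is preserved (or controlled by compact terms) at each application of $\na$, so that $\na^4$ maps into $T(\ga)^{2,\perp}$ as asserted; this is where the identities of \Cref{lem:Variations} and the structure of $\na = (M^\top - \ga^\top)\pa_s$ must be used to absorb the tangential corrections into the compact operator $\Omega$. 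Once this is verified, the index-zero conclusion transfers to $\cL$ immediately.
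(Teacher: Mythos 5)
Your argument is essentially correct but follows a genuinely different route from the paper. You prove that the leading operator $\na^4$ is Fredholm of index zero by soft means: it is an elliptic, formally self-adjoint, nonnegative fourth-order operator on sections of the normal bundle $TM\cap(T\ga)^\perp$ over the closed curve, so $\dim\ker = \dim\mathrm{coker}$ and the index vanishes; compact perturbation then handles $\Omega$. The paper instead proves the \emph{stronger} statement that $\id+\na^4:T(\ga)^{4,2,\perp}\to T(\ga)^{2,\perp}$ is invertible, via the direct method: coercivity of the bilinear form $a(\vp,\psi)=\int\scal{\na^2\vp,\na^2\psi}+\scal{\vp,\psi}\,ds$ on $T(\ga)^{2,2,\perp}$ (which requires a nontrivial comparison between $\na^2\vp$ and $\pa_s^2\vp$), existence of a minimizer of $F(\vp)=\int\tfrac12|\na^2\vp|^2+\tfrac12|\vp|^2-\scal{X,\vp}\,ds$, and then a hand-crafted regularity bootstrap $T(\ga)^{2,2,\perp}\to T(\ga)^{3,2,\perp}\to T(\ga)^{4,2,\perp}$ built on carefully constructed normal test fields. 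What your approach buys is brevity and generality (the same words would cover the $p>2$ case with the weight $|k|^{p-2}$ and the extra rank-one term, modulo the nondegeneracy of $k$); what it costs is that the entire regularity content is delegated to ``standard elliptic theory for systems on $\S^1$,'' which is legitimate but presupposes the identification of the constrained spaces $T(\ga)^{k,2,\perp}$ with Sobolev spaces of sections of a smooth vector bundle --- precisely the point the paper's explicit test-field constructions are designed to handle without citation.

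One imprecision worth flagging: your reduction to the ``constant-coefficient model operator $|\ga'|^{-4}\pa_x^4$'' is not quite right as stated, because $\pa_s^4\vp$ does not take values in $T(\ga)^{2,\perp}$; the correct model retains the fiberwise projection, i.e.\ $\vp\mapsto \ga^\perp M^\top\pa_s^4\vp$, an elliptic operator on sections of the normal bundle whose principal symbol is $\xi^4$ times the identity of each fiber. Since your second step works directly with $\na^4$ and its formal self-adjointness rather than with the flat model, this does not break the argument, but the flat reduction as written targets the wrong codomain and should be corrected or dropped.
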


\begin{proof}
	Since $\Om:T(\ga)^{4,2,\perp}\to T(\ga)^{2,\perp}$ is compact, it is equivalent to prove that
	\[
	\id + \na^4: T(\ga)^{4,2,\perp}(ds) \to T(\ga)^{2,\perp}(ds),
	\]
	is Fredholm of index zero. Indeed we claim that it is actually invertible. It is clearly injective, indeed if $\vp + \na^4\vp=0$, then multiplying by $\vp$ and integrating one has
	\[
	\int |\na^2\vp|^2 + |\vp|^2 \, ds =0,
	\]
	and then $\vp=0$. So we need to prove the surjectivity.
	
	Let $a:T(\ga)^{2,2,\perp}\times T(\ga)^{2,2,\perp}\to\R$ the continuous bilinear form defined by
	\[
	a(\vp,\psi)=\int_{\S^1} \lgl \na^2 \vp, \na^2 \psi\rgl + \lgl \vp,\psi\rgl\,ds,
	\]
	For $\vp\in T(\ga)^{2,2,\perp}$ it holds that
	\begin{equation}\label{eq:Nabla2D2}
	\na^2 \vp =  D_s^2 \vp - \lgl D_s^2\vp,\tau\rgl\tau  - \lgl D_s\vp,\tau\rgl k = D^2_s\vp + \left( 2\lgl D_s\vp,k\rgl + \lgl \vp, D_s k\rgl \right) \tau  - \lgl D_s\vp,\tau\rgl k,
	\end{equation}
	and
	\begin{equation}\label{eq:D2Partial2}
	D_s^2\vp = \pa_s^2\vp +\left( 2\scal{\pa_s \vp, \pa_s N_j} + \scal{\vp,\pa_s^2N_j} \right)N_j + \scal{\vp, S_{N_j}(\tau)}S_{N_j}(\tau) ,
	\end{equation}
	where $\{N_j\}$ is a local orthonormal frame of $TM^\perp$, and we understood sum over $j$.
	Therefore for $\vp\in T(\ga)^{2,2,\perp}$ we have that
	\[
	\begin{split}
	\int  \lgl D^2_s\vp,D^2_s\vp\rgl\,ds &= \int |\na^2\vp|^2 + |\lgl D_s\vp,\tau\rgl k|^2 + 2 \lgl D_s\vp,\tau\rgl \lgl \na^2\vp, k\rgl + |2\lgl D_s\vp,k\rgl + \lgl \vp, D_s k\rgl|^2 \,ds \\
	&\le \int \frac32 |\na^2\vp|^2 + C(\ga)\left(|\vp|^2 + |D_s\vp|^2\right)\,ds \\
	&\le \int \frac32 |\na^2\vp|^2 + C(\ga)\left(|\vp|^2 + |\pa_s\vp|^2\right)\,ds
	\end{split}
	\]
	and using also $\int |\pa_s \vp|^2\,ds = - \int \lgl \vp, \pa_s^2 \vp\rgl\,ds \le \tfrac12 \int \tfrac1\eta |\vp|^2 + \eta |\pa_s^2\vp|^2\,ds$ for any $\eta>0$ we conclude that
	\[
	\begin{split}
	\int |\pa_s^2\vp|^2 \,ds 
	&\le C(M,\ga) \int |\vp|^2 + |\pa_s\vp|^2 + |D_s^2\vp|^2 \,ds \\
	&\le C(M,\ga,\ep) \int |\vp|^2 + |\na^2 \vp|^2 \,ds + \ep \int |\pa_s^2\vp|^2 \,ds.
	\end{split}
	\]
	Hence we see that
	\[
	\int |\vp|^2 + |\pa_s\vp|^2 + |\pa_s^2\vp|^2\,ds \le C(\ga) a(\vp,\vp),
	\]
	that is, $a$ is coercive on $T(\ga)^{2,2,\perp}$.
	
	Now, if $X\in T(\ga)^{2,\perp}$ is fixed, we look at the energy functional $F:T(\ga)^{2,2,\perp}\to \R$ given by
	\[
	F(\vp) \coloneqq \int \frac12 |\na^2 \vp|^2 + \frac12 |\vp|^2 -\scal{X,\vp} \,ds.
	\]
	Since $-\scal{X,\vp} \ge -\tfrac14|\vp|^2 - |X|^2$, the coercivity of $a$ implies that $F$ has a minimizer $\vp\in T(\ga)^{2,2,\perp}$. Such minimizer $\vp$ satisfies the integral Euler-Lagrange equation
	\begin{equation}\label{eq:ELP2}
	a(\vp,\psi)= \int \scal{\na^2\vp,\na^2\psi} + \scal{\vp,\psi}\,ds = \int \scal{X,\psi},
	\end{equation}
	for any $\psi \in T(\ga)^{2,2,\perp}$. If we show that $\vp \in T(\ga)^{4,2,\perp}$, we will have proved that for any $X \in T(\ga)^{2,\perp}$ there exists $\vp \in T(\ga)^{4,2,\perp}$ such that $\na^4\vp+\vp=X$, and this will prove the required surjectivity. We are going to prove that $\vp \in T(\ga)^{3,2,\perp}$ first, and then $\vp \in T(\ga)^{4,2,\perp}$.
	
	Let $\Psi \in C^\infty(\S^1;\R^n)$ be any field. By \eqref{eq:Nabla2D2} and \eqref{eq:D2Partial2} we can write that
	\[
	\scal{\pa_s^2\vp, \pa_s \Psi} = \scal{\na^2 \vp, \pa_s \Psi} - \scal{A(\vp),\pa_s \Psi},
	\]
	where $A(\vp)$ is linear in $\vp$ and contains at most first order derivatives of $\vp$. Writing $\psi\coloneqq(M^\top-\ga^\top)\Psi$ we have that
	\[
	\begin{split}
	\scal{\pa_s^2\vp, \pa_s \Psi}
	&= \scal{\na^2\vp,\na\psi} + \scal{\Psi,N_j}\scal{\na^2\vp, \pa_s N_j} + \scal{\Psi,\tau} \scal{\na^2 \vp , k}  - \scal{A(\vp),\pa_s \Psi},
	\end{split}
	\]
	understanding summation over $j$, for a local orthonormal frame $\{N_j\}$ of $TM^\perp$. Let $\eta \in C^\infty_c(\S^1\sm\{0\})$ such that $\eta\ge0$ and $\int_{\S^1} \eta\,ds =1$. Define
	\[
	\Phi(x) = (M^\top-\ga^\top) \int_0^x \psi(t) - \eta(t) \psi_0\, ds(t),
	\]
	where $\psi_0\coloneqq\int_{\S^1} \psi\,ds $. By construction we see that $\Phi \in C^\infty(\S^1;\R^n)$ and $\Phi\in TM\cap T\ga^\perp$.
	
	Since for any differentiable $\ze:\S^1\to \R^n$ we have that
	\begin{equation}\label{eq:CommuteTPerpPartial}
	\begin{split}
		\pa_s((M^\top-\ga^\top) \ze) &= (M^\top-\ga^\top)\pa_s\ze -(\pa_sN_j \tens N_j + N_j \tens \pa_s N_j)\ze -(k\tens \tau + \tau\tens k)\ze \\
		&  \eqqcolon (M^\top-\ga^\top)\pa_s\ze + B(\ze),
	\end{split}
	\end{equation}
	we get that
	\[
	\na \Phi = \psi - \eta (M^\top-\ga^\top) \psi_0 + B\left(  \int_0^x \psi - \eta \psi_0\, ds \right),
	\]
	and thus
	\[
	\na^2 \Phi = \na \psi - \pa_s\eta  (M^\top-\ga^\top) \psi_0 + (M^\top-\ga^\top) \pa_s  \left(B\left(  \int_0^x \psi - \eta \psi_0\, ds \right)\right).
	\]
	Hence finally
	\[
	\begin{split}
	\left| \int \scal{\pa_s^2\vp,\pa_s\Psi} \right|
	&\le \left| \int \scal{\na^2\vp,\na\psi} \right| + \left| \int \scal{\pa_s(A(\vp)),\Psi} \right| + C(M,\ga,\|\vp\|_{W^{2,2}})\|\Psi\|_{L^2} \\
	&\le \left|\int \scal{\na^2\vp,\na^2\Phi}\right| 
	+ C(M,\ga,\|\vp\|_{W^{2,2}},\eta)\|\Psi\|_{L^2} \\
	&= \left|\int \scal{X,\Phi} - \scal{\vp,\Phi}\right| 
	+ C(M,\ga,\|\vp\|_{W^{2,2}},\eta)\|\Psi\|_{L^2} \\
	&\le C(M,\ga,\|\vp\|_{W^{2,2}},\eta,X)\|\Psi\|_{L^2},
	\end{split}
	\]
	that implies that $\vp \in T(\ga)^{3,2,\perp}$.
	Once again let $\Psi \in C^\infty(\S^1;\R^n)$ be any field. Using \eqref{eq:CommuteTPerpPartial} twice and writing $\psi=(M^\top-\ga^\top)\Psi$ as before, we have
	\[
	\begin{split}
	(M^\top-\ga^\top) \pa_s^2\Psi 
	&= \pa_s((M^\top-\ga^\top)\pa_s\Psi) + B(\pa_s\Psi) \\
	&= \pa_s( \na\psi + (M^\top-\ga^\top)\left[ \pa_s(\scal{\Psi,N_j}N_j) + \pa_s(\scal{\Psi,\tau}\tau) \right] ) + B(\pa_s\Psi) \\
	&= \pa_s\na\psi +\pa_s \left( \scal{\Psi,N_j}\pa_s N_j +\scal{\Psi,\tau}k \right) + B(\pa_s\Psi) .
	\end{split}
	\]
	Therefore
	\[
	\begin{split}
	\bigg| \int & \scal{\pa_s^3\vp,\pa_s\Psi}  \bigg| 
	= \left| \int \scal{\pa_s^2\vp,\pa_s^2\Psi}  \right| \\
	&\le \left| \int \scal{\na^2 \vp, \pa_s^2\Psi}\right| 
	+ \left|\int \scal{\pa_s^2 (A(\vp)), \Psi}\right| \\
	&\le \left| \int \scal{\na^2 \vp, \na^2\psi}\right| 
	+ \left| \int \scal{ \pa_s \na^2 \vp , \scal{\Psi,N_j}\pa_s N_j +\scal{\Psi,\tau}k   } \right| 
	+\left| \int \scal{\na^2\vp, B(\pa_s\Psi)} \right|
	+ \left|\int \scal{\pa_s^2 (A(\vp)), \Psi}\right| \\
	&\le \left| \int \scal{X,\psi} - \scal{\vp,\psi} \right| + C(M,\ga,\|\vp\|_{W^{3,2}}) \|\Psi\|_{L^2}
	+\left| \int \scal{\na^2\vp, B(\pa_s\Psi)} \right| \\
	&\le C(M,\ga,\|\vp\|_{W^{3,2}}) \|\Psi\|_{L^2},
	\end{split}
	\]
	where the last inequality follows integrating by parts using the definition of $B$, and this implies that $\vp \in T(\ga)^{4,2,\perp}$.
\end{proof}

\begin{lemma}\label{lem:FredholmnessP>2}
	Let $p>2$. Let $\ga:\S^1\to M$ be a smooth critical point with $|k(x)|\neq0$ for any $x$. There exists $\ro>0$ such that the operator $\cL: T(\ga)^{4,p,\perp}\to  (T(\ga)^{p,\perp})^\star$ represented by the function
	\[
	\cL(\vp) = \na^2 \left(|k|^{p-2}\na^2 \vp \right) 
	+(p-2) \na^2 \left(|k|^{p-4}\scal{k,\na^2\vp}k\right)
	+ \Omega(\vp) \qquad\in T(\ga)^{p',\perp},
	\]
	where $\Om:T(\ga)^{4,p,\perp}\to T(\ga)^{p',\perp}$ is compact, is Fredholm of index zero.	
\end{lemma}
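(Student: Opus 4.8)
The plan is to follow the scheme of the proof of \Cref{lem:FredholmnessP2}, the decisive new feature being that the leading coefficient is uniformly positive definite \emph{precisely because $|k|$ does not vanish}. Since Fredholmness and the index are stable under compact perturbations, and since the inclusion $T(\ga)^{4,p,\perp}\hookrightarrow T(\ga)^{p',\perp}$ is compact by Rellich's theorem, it suffices to prove that the principal operator
\[
\cL_0(\vp)\coloneqq \na^2\!\left(|k|^{p-2}\na^2\vp\right)+(p-2)\,\na^2\!\left(|k|^{p-4}\scal{k,\na^2\vp}\,k\right)
\]
is invertible after adding the identity, i.e.\ that $\id+\cL_0$ is an isomorphism between the spaces of the statement. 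Once this is established, writing $\cL=(\id+\cL_0)+\big(\Om-\id\big)$ exhibits $\cL$ as an invertible operator plus a compact one, so that $\cL$ is Fredholm of index zero.

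The key structural observation is that the symmetric endomorphism of the normal bundle governing the leading term, $A\coloneqq|k|^{p-2}\,\id+(p-2)\,|k|^{p-4}\,k\otimes k$, is uniformly positive definite: for every normal field $w$ one has
\[
\scal{Aw,w}=|k|^{p-2}|w|^2+(p-2)\,|k|^{p-4}\scal{k,w}^2\ \ge\ \big(\min_{\S^1}|k|\big)^{p-2}\,|w|^2\ >\ 0,
\]
where strict positivity of $\min_{\S^1}|k|$ uses the hypothesis $|k(x)|\neq0$ for all $x$, the compactness of $\S^1$, and $p>2$. This is exactly where the curvature assumption enters: if $k$ vanished somewhere the quadratic form would degenerate and $\cL_0$ would cease to be elliptic. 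As a consequence the continuous bilinear form $a(\vp,\psi)\coloneqq\int_{\S^1}\scal{A\,\na^2\vp,\na^2\psi}+\scal{\vp,\psi}\,ds$ is coercive on $T(\ga)^{2,2,\perp}$: the leading term dominates $\int|\na^2\vp|^2\,ds$ by positivity of $A$, and the intermediate term $\int|\pa_s\vp|^2\,ds$ is reabsorbed by interpolation exactly as in \Cref{lem:FredholmnessP2}, using the relations \eqref{eq:Nabla2D2}--\eqref{eq:D2Partial2} to pass between $\na^2$, $D_s^2$, and $\pa_s^2$.

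Injectivity of $\id+\cL_0$ is then immediate: pairing $(\id+\cL_0)\vp=0$ with $\vp$ gives $a(\vp,\vp)=0$, hence $\vp=0$. For surjectivity I would, for a given datum $X$, minimize the strictly convex functional $F(\vp)=\tfrac12\,a(\vp,\vp)-\int\scal{X,\vp}\,ds$ over the Hilbert space $T(\ga)^{2,2,\perp}$ (note that $\vp\mapsto\int\scal{X,\vp}\,ds$ is bounded there thanks to $T(\ga)^{2,2,\perp}\hookrightarrow C^0$); coercivity of $a$ yields a unique minimizer solving the weak Euler--Lagrange equation $a(\vp,\psi)=\int\scal{X,\psi}\,ds$ for all admissible $\psi$.

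The main obstacle is the regularity bootstrap upgrading this weak $W^{2,2}$ solution to the Sobolev regularity of the domain. I would carry it out as in \Cref{lem:FredholmnessP2}, testing the weak equation against fields $\Phi$ constructed as normal projections of primitives of $\psi$ and repeatedly invoking the commutation identity \eqref{eq:CommuteTPerpPartial} between $\pa_s$ and the normal projection in order to trade derivatives. Two new difficulties appear relative to the case $p=2$: first, the coefficient $A$ is now a non-constant, matrix-valued symmetric map, so differentiation falls also on $|k|^{p-2}$ and on the rank-one part $k\otimes k$, producing additional lower-order commutators that must be shown harmless; second, one must reach the $L^p$ rather than the $L^2$ integrability scale, so the gain has to be quantified in $W^{4,p}$ norms. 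Since $A$ is smooth and uniformly positive definite and the coupling term $(p-2)|k|^{p-4}k\otimes k$ is smooth and of strictly lower differential order once expanded, the extra commutators are controlled and the iteration raises the regularity of $\vp$ to that of the domain. This yields the invertibility of $\id+\cL_0$ and hence, by the reduction above, the Fredholmness of index zero of $\cL$.
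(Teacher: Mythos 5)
Your proposal follows the paper's proof essentially step for step: reduce to invertibility of $\id+\cL_0$ modulo compact perturbations (using that the inclusion $T(\ga)^{4,p,\perp}\hookrightarrow T(\ga)^{p',\perp}$ is compact), exploit the uniform positive definiteness of $A=|k|^{p-2}\id+(p-2)|k|^{p-4}k\otimes k$ — which is exactly where the hypothesis $|k(x)|\neq0$ enters — to get coercivity of the associated bilinear form, produce a weak solution by minimizing the convex quadratic functional, and bootstrap regularity as in \Cref{lem:FredholmnessP2}. The one device you leave implicit is the paper's choice of test field in the bootstrap, namely the (projected, periodized) primitive of $\nu=A^{-1}\psi$ rather than of $\psi$ itself, which is what makes the bilinear form reproduce $\int\scal{\na^2\vp,\na\psi}$ up to controlled lower-order terms; your variant (primitives of $\psi$ followed by inverting the smooth, uniformly positive definite $A$) achieves the same end, and your passing remark that the rank-one coupling term is "of strictly lower differential order" is inaccurate (it is part of the principal symbol) but harmless, since you have already absorbed it into $A$.
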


\begin{proof}
	Since $\Om:T(\ga)^{4,p,\perp}\to T(\ga)^{p',\perp}$ is compact, it is equivalent to prove that the map
	\[
	T(\ga)^{4,p,\perp}(ds) \ni\quad  \vp \,\,\mapsto\,\, \sT(\vp)\coloneqq  \na^2 \left(|k|^{p-2}\na^2 \vp \right) 
	+(p-2) \na^2 \left(|k|^{p-4}\scal{k,\na^2\vp}k\right)  +\vp  \quad  \in T(\ga)^{p',\perp}(ds),
	\]
	is Fredholm of index zero. Indeed we claim that it is actually invertible. The operator $\sT$ is clearly injective, indeed if $\sT(\vp)=0$, then integration by parts on $\int \scal{\vp,\sT(\vp)}\,ds$ yields
	\[
	0= \int |k|^{p-2} |\na^2\vp|^2 + (p-2) |k|^{p-4}\scal{k,\na^2\vp}^2 + |\vp|^2 \, ds,
	\]
	and then $\vp=0$. Hence we are left to prove the surjectivity.
	
	This time we consider $a:T(\ga)^{2,p,\perp}\times T(\ga)^{2,p,\perp}\to\R$ to be the continuous bilinear form
	\[
	a(\vp,\psi)\coloneqq\int_{\S^1} |k|^{p-2}\lgl \na^2 \vp, \na^2 \psi\rgl 
	+(p-2) |k|^{p-4}\scal{k,\na^2\vp}\scal{k,\na^2\psi}
	+ \lgl \vp,\psi\rgl\,ds.
	\]
	By hypothesis there are constants $c_1,c_2$ depending on $\ga$ such that $c_1\le |k(x)|\le c_2$ for any $x$.
	
	From the proof of \Cref{lem:FredholmnessP2} we know that
	\[
	\int |\vp|^2 + |\pa_s\vp|^2 + |\pa_s^2\vp|^2\,ds \le C(M, \ga) \int |\na^2\vp|^2 + |\vp|^2\,ds,
	\]
	for any $\vp \in T(\ga)^{2,p,\perp}$. Therefore
	\[
	\int |\vp|^2 + |\pa_s\vp|^2 + |\pa_s^2\vp|^2\,ds \le C(M, \ga) a(\vp,\vp),
	\]
	for any $\vp \in T(\ga)^{2,p,\perp}$. It follows that if $X\in T(\ga)^{p',\perp}$ is fixed, the convex functional $F:T(\ga)^{2,p,\perp}\to\R$ defined by
	\[
	F(\vp) = \int \frac12 |k|^{p-2}|\na^2\vp|^2 +\frac{p-2}{2}|k|^{p-4}\scal{k,\na^2\vp}^2 +\frac12|\vp|^2 -\scal{X,\vp}\,ds
	\]
	has a unique minimizer $\vp\in T(\ga)^{2,p,\perp}$. Such minimizer $\vp$ satisfies
	\begin{equation*}
	a(\vp,\psi) = \int \scal{X,\Phi}\,ds,
	\end{equation*}
	for any $\Phi \in T(\ga)^{2,p,\perp}$.
	If we show that $\vp \in T(\ga)^{4,p,\perp}$, we will have proved that for any $X \in T(\ga)^{p',\perp}$ there exists $\vp \in T(\ga)^{4,p,\perp}$ such that $\sT(\vp)=X$, and this will prove the required surjectivity of $\sT$. We are going to show that $\vp \in T(\ga)^{3,p,\perp}$ first, and then $\vp \in T(\ga)^{4,p,\perp}$.
	
	Let $\Psi \in C^\infty(\S^1;\R^n)$ be any field and let $\psi\coloneqq(M^\top-\ga^\top)\Psi$. Let $\nu \in C^{\infty}(\S^1;\R^n)$ be the normal field along $\ga$ defined by
	\[
	\nu = \frac{|k|^{2-p}}{p-1} \left\lgl\psi,\frac{k}{|k|} \right\rgl \frac{k}{|k|}+ |k|^{2-p}\sum_{i=1}^{m-2} \scal{\psi,e_i} e_i,
	\]
	where $\{e_1,...,e_{m-2},\tfrac{k}{|k|}\}$ is a fixed orthonormal frame of the normal bundle $T\ga^\perp$ along $\ga$ in $M$. We remark that $\nu \in TM\cap T\ga^\perp$.
	Letting also $\eta \in C^\infty_c(\S^1\sm\{0\})$ such that $\eta\ge0$ and $\int_{\S^1} \eta\,ds =1$, we define
	\[
	\nu_0 = \int_{\S^1} \nu\,ds , \qquad\qquad \Phi(x) = (M^\top-\ga^\top)\int_0^x \nu(y) -\eta(y) \nu_0\, ds(y).
	\]
	In this way, in the notation of \eqref{eq:CommuteTPerpPartial} we have
	\[
	\begin{split}
	\na \Phi 
	&= (M^\top-\ga^\top) \left[ (M^\top-\ga^\top)(\nu-\eta \nu_0) +B\left(\int_0^x \nu(x) -\eta(x) \nu_0\right)   \right] \\
	&= \nu -\eta (M^\top-\ga^\top)\nu_0 
	-\left\lgl N_j,\int_0^x \nu -\eta\nu_0 \right\rgl (M^\top-\ga^\top)(\pa_s N_j)
	-\left\lgl \tau,\int_0^x \nu -\eta\nu_0 \right\rgl k.
	\end{split}
	\]
	By construction
	\[
	\psi = |k|^{p-2}\nu +(p-2) |k|^{p-2} \left\lgl \nu ,\frac{k}{|k|} \right\rgl \frac{k}{|k|},
	\]
	and then
	\[
	\begin{split}
	|k|^{p-2}&\na\Phi +(p-2) |k|^{p-2} \left\lgl \na\Phi ,\frac{k}{|k|} \right\rgl \frac{k}{|k|} = \\
	&= \psi 
	+ |k|^{p-2} \left[   -\eta (M^\top-\ga^\top)\nu_0 
	-\left\lgl N_j,\int_0^x \nu -\eta\nu_0 \right\rgl (M^\top-\ga^\top)(\pa_s N_j)
	-\left\lgl \tau,\int_0^x \nu -\eta\nu_0 \right\rgl k  \right] +\\
	&\indent + (p-2)|k|^{p-4} \bigg\lgl  -\eta (M^\top-\ga^\top)\nu_0 
	-\left\lgl N_j,\int_0^x \nu -\eta\nu_0 \right\rgl (M^\top-\ga^\top)(\pa_s N_j)
	+\\
	&\indent -\left\lgl \tau,\int_0^x \nu -\eta\nu_0 \right\rgl k , k  \bigg\rgl k .
	\end{split}
	\]
	Therefore, as in the proof of \Cref{lem:FredholmnessP2}, we estimate
	\[
	\begin{split}
	\left| \int \scal{\pa_s^2\vp,\pa_s\Psi} \right|
	&\le \left| \int \scal{\na^2\vp,\na\psi} \right| 
	+ C(M,\ga,\|\vp\|_{W^{2,2}},\eta)\|\Psi\|_{L^2}  \\
	&\le \left| \int |k|^{p-2}\scal{\na^2\vp,\na^2\Phi} +(p-2)|k|^{p-4}\scal{k,\na^2\vp}\scal{k,\na^2\Phi}  \right|
	+ C(M,\ga,\|\vp\|_{W^{2,2}},\eta)\|\Psi\|_{L^2} \\
	&= \left| \int \scal{X,\Phi} - \scal{\vp,\Phi} \right| + C(M,\ga,\|\vp\|_{W^{2,2}},\eta)\|\Psi\|_{L^2} \\
	& \le C(M,\ga,\|\vp\|_{W^{2,2}},\eta)\|\Psi\|_{L^2} ,
	\end{split}
	\]
	that implies $\vp \in T(\ga)^{3,p,\perp}$.
	
	Now the definition of $\nu$ implies that
	\[
	\scal{\na \nu, e_i} = |k|^{2-p} \scal{\na\psi, e_i} + A_i(\psi) ,
	\qquad
	\left\lgl \na\nu , \frac{k}{|k|} \right\rgl  = \frac{|k|^{2-p}}{p-1} \left\lgl \na\psi , \frac{k}{|k|} \right\rgl +A_k(\psi),
	\]
	where $A_i(\psi),A_k(\psi)$ depend on $\ga$ and $M$, and they depend linearly on $\psi$ and they are independent of the derivatives of $\psi$. Therefore we have
	\[
	\na\psi = |k|^{p-2} \na \nu +(p-2) |k|^{p-4} \scal{k,\na \nu}k + A_0(\psi),
	\]
	with $A_0$ having the same properties of $A_i,A_k$. Finally we can estimate
	\[
	\begin{split}
	\bigg| \int \scal{\pa_s^3\vp,\pa_s\Psi}  \bigg| 
	&= \left| \int \scal{\pa_s^2\vp,\pa_s^2\Psi}  \right| \\
	&\le \left| \int \scal{\na^2 \vp, \na^2\psi}\right| 
	+ C(M,\ga,\|\vp\|_{W^{3,2}}) \|\Psi\|_{L^2} \\
	&\le \left| \int \left\lgl
	\na^2\vp ,
	\na\left(  |k|^{p-2} \na \nu +(p-2) |k|^{p-4} \scal{k,\na \nu}k  \right)
	\right\rgl \right|
	+\left| \int \scal{\na^2\vp, \na(A_0(\psi))} \right|+\\
	&\indent 
	+ C(M,\ga,\|\vp\|_{W^{3,2}}) \|\Psi\|_{L^2} \\
	&\le \left|\int |k|^{p-2}\scal{\na^2\vp,\na^2\nu} +(p-2)|k|^{p-4}\scal{k,\na^2\vp}\scal{k,\na^2\nu} \right|
	+\left| \int \scal{\na^3\vp, A_0(\psi)} \right| +\\
	&\indent 
	+ C(M,\ga,\|\vp\|_{W^{3,2}}) \|\Psi\|_{L^2} \\
	&\le \left|\int \scal{X,\nu} -\scal{\vp,\nu}  \right|
	+ C(M,\ga,\|\vp\|_{W^{3,2}}) \|\Psi\|_{L^2} \\
	& \le C(M,\ga,\|\vp\|_{L^2}) \|\nu\|_{L^2}  + C(M,\ga,\|\vp\|_{W^{3,2}}) \|\Psi\|_{L^2} \\
	&\le  C(M,\ga,\|\vp\|_{W^{3,2}}) \|\Psi\|_{L^2},
	\end{split}
	\]
	and we have proved that $\vp \in T(\ga)^{4,p,\perp}$.
\end{proof}

A last fact needed for applying \Cref{cor:LojaFunctionalAnalytic} is the analyticity of the operators, as stated in the next lemma, for which we mainly refer to \cite{DaPoSp16}. Here the analyticity of the ambient comes into play.

\begin{lemma}\label{lem:Analyticity}
	Let $\ga:\S^1\to M$ be a smooth regular curve and let $\ro>0$ be given by \Cref{lem:Variation}. Let $p\ge2$. Suppose that $(M,g)$ is an analytic complete Riemannian manifold endowed with an analytic metric tensor $g$.
	\begin{enumerate}
		\item If $p=2$, then the maps
		\[
		E: B_\ro(0) \to \R, \qquad\qquad\de E: B_\ro(0) \to (T(\ga)^{2,\perp})^\star,
		\]
		are analytic.
		\item  If $p>2$ and $|k(x)|\neq0$ for any $x$, then the maps
		\[
		E: B_\ro(0) \to \R, \qquad\qquad\de E: B_\ro(0) \to (T(\ga)^{p,\perp})^\star,
		\]
		are analytic, up to decrease $\ro$.
	\end{enumerate}
\end{lemma}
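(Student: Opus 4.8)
The plan is to reduce the claimed analyticity to that of a short list of elementary ``building-block'' maps, exploiting the fact that real analyticity between Banach spaces is preserved under sums, products, compositions, and quotients whose denominators are bounded away from zero. The detailed verification for the Euclidean ambient is carried out in \cite{DaPoSp16}, and the very same scheme applies here once one accounts for the two new features: the curved ambient geometry of $M$ and the exponent $p$. First I would record the building blocks. Since $W^{k,p}(\S^1)$ is a Banach algebra for $k\ge1$ (by the one-dimensional embedding $W^{1,p}\hookrightarrow C^0(\S^1)$), pointwise products of $W^{4,p}$-fields are analytic bilinear operations, and the arclength derivative $\pa_s=|\ga_\vp'|^{-1}\pa_x$ is analytic as long as $|\ga_\vp'|$ stays bounded away from zero, which holds on $B_\ro(0)$ by \Cref{lem:Variation}. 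Because $(M,g)$ is analytic, the exponential map $\exp:TM\to M$, the projection $M^\top$ onto $TM$, the second fundamental form $B$, the Riemann tensor $R$, and the Christoffel symbols are all analytic in their arguments. Using the analytic refinement of \Cref{lem:CompositionSobolev} and \Cref{cor:CompositionSobolevTangent} — namely that the superposition operator $\vp\mapsto F\circ\vp$ induced by an analytic $F$ is analytic on $W^{4,p}$ — the map $\vp\mapsto\ga_\vp=\exp_\ga(\vp)=\Phi(1,\cdot)$ is analytic from $B_\ro(0)\con T(\ga)^{4,p}$ into $W^{4,p}(\S^1,\R^n)$, and consequently so are $\tau_{\ga_\vp}$, $k_{\ga_\vp}$, and the differential $\cT=d[\exp_\ga]_\vp$ together with its adjoint $\cT^\star$, with values in the appropriate Sobolev spaces.

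The only place where the two cases diverge is the scalar nonlinearity $t\mapsto t^{p/2}$ entering through $|k_{\ga_\vp}|^p=(\scal{k_{\ga_\vp},k_{\ga_\vp}})^{p/2}$ (and $t\mapsto t^{(p-2)/2}$ entering through $|k_{\ga_\vp}|^{p-2}$ in the first variation). When $p=2$ these reduce to the linear map $t\mapsto t$ and the constant $1$, which are trivially analytic, so that $E(\vp)=\int 1+\tfrac12\scal{k_{\ga_\vp},k_{\ga_\vp}}\,ds_{\ga_\vp}$ is a composition of analytic maps followed by the bounded linear integration functional, hence analytic; reading the representation formula from \Cref{prop:VariationsP2} in the same way yields analyticity of $\de E$ with values in $(T(\ga)^{2,\perp})^\star$. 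When $p>2$, however, $t\mapsto t^{p/2}$ is real-analytic only on $(0,+\infty)$. Here I would invoke the hypothesis $|k_\ga(x)|\neq0$: by continuity and compactness one has $|k_\ga|\ge c_1>0$, and since $\vp\mapsto\scal{k_{\ga_\vp},k_{\ga_\vp}}$ is continuous into $C^0(\S^1)$ (using $W^{4,p}\hookrightarrow C^3$, so $k_{\ga_\vp}\in C^1$), after decreasing $\ro$ we may ensure $\scal{k_{\ga_\vp},k_{\ga_\vp}}\ge c_1^2/2$ uniformly for $\vp\in B_\ro(0)$. On this region $t\mapsto t^{p/2}$ and $t\mapsto t^{(p-2)/2}$ are analytic, so the associated superposition operators are analytic on the relevant Sobolev spaces, and the conclusion then follows exactly as in the case $p=2$, by inserting these into the integrand of $E$ and into the representation formula of \Cref{prop:VariationsP>2} for $\de E$ with values in $(T(\ga)^{p,\perp})^\star$.

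I expect the genuine work — and the main obstacle — to lie precisely in the analyticity of these superposition operators on Sobolev spaces, that is, in checking that composing an analytic scalar or tensorial nonlinearity with a $W^{4,p}$-field produces an analytic Nemytskii operator between the corresponding Sobolev spaces. This is where one must be careful about the loss of one derivative and about keeping the argument inside the domain of analyticity of the nonlinearity; it is exactly for the latter reason that $\ro$ must be decreased when $p>2$. Since this is essentially the content of \cite{DaPoSp16} adapted to an analytic ambient (whose analyticity feeds directly into that of $\exp$, $R$, $M^\top$, and the Christoffel symbols), I would organize the proof by first stating the analytic composition lemma in the form needed and then assembling $E$ and $\de E$ from the building blocks above, treating $p=2$ and $p>2$ in parallel and inserting the uniform lower bound on $|k_{\ga_\vp}|$ only at the single step where it is genuinely required.
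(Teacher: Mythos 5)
Your proposal is correct and follows essentially the same route as the paper: establish analyticity of $\vp\mapsto\ga_\vp$ via the analytic exponential map (which the paper justifies through analytic dependence of the geodesic ODE on its initial data), then defer to the composition argument of \cite[Lemma 3.4]{DaPoSp16} to assemble $E$ and $\de E$ from analytic building blocks, invoking the uniform lower bound on $|k_{\ga_\vp}|$ precisely where $t\mapsto t^{p/2}$ fails to be analytic at the origin when $p>2$. The identification of the superposition operators and of the non-vanishing of the curvature as the only genuinely delicate points matches the paper's treatment.
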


\begin{proof}
	We adopt the notation used in \eqref{eq:FirstVarP2} and \eqref{eq:FirstVarP>2}.
	For a fixed $\vp(x) \in T_{\ga(x)}M$ we have that $\ga_\vp(x)=\exp_{\ga(x)}(\vp(x)) = \si_{\vp(x)}(1)$, where $\si_{\vp(x)}$ is the geodesic starting at $\ga(x)$ with initial velocity $\si_{\vp(x)}'(0)=\vp(x)$. As the manifold and the metric are assumed to be analytic, so are the connection $D$ and the Christoffel symbols $\Ga_{ij}^k$ on $M$. It follows that, as $\si_{\vp(x)}$ solves a semi-linear ordinary differential equation with analytic coefficients, it depends analytically on the initial data. In particular the exponential map is analytic and the dependence of $\ga_\vp(x)$ on $\vp(x) \in T_{\ga(x)}M$ is analytic.
	Also, since the exponential map on $M$ is analytic, so is its differential, and it follows that $d[\exp_{\ga(x)}]$ is analytic as a map defined on $T(\ga)^{4,p,\perp}$.
	
	Now that we know that the map $\vp\mapsto\ga_\vp$ is analytic, following the exhaustive proof of \cite[Lemma 3.4]{DaPoSp16}, one can check that the formulas for $E$ and $\de E$ in \eqref{eq:FirstVarP2} and \eqref{eq:FirstVarP>2} are sums of compositions of analytic functions of the parametrization $\ga_\vp$ (in case $p>2$ we use that $|k_{\ga_\vp}|$ never vanishes for $\ro$ small enough). 
\end{proof}

We now have all the ingredients for applying \Cref{cor:LojaFunctionalAnalytic}, thus getting the \L ojasiewicz-Simon gradient inequality for the functional $E$.

\begin{cor}\label{cor:LojaNormalePGenerico}
	Suppose that $(M,g)$ is an analytic complete Riemannian manifold endowed with an analytic metric tensor $g$. Let $p\ge2$. Let $\ga:\S^1\to M$ be a smooth critical point of $\sE$. In case $p>2$ assume that $|k(x)|\neq0$ for any $x$. There exist $C,\ro>0$ and $\te\in(0,\frac12]$ such that
	\begin{equation}\label{eq:LojaNormalePGenerico}
	|E(\vp)-E(0)|^{1-\te} \le C \|  
	\na_{T(\ga)^{p'},T(\ga)^p} E (\vp)
	\|_{L^{p'}(ds_{\ga_\vp})},
	\end{equation}
	for any $\vp\in B_\ro(0)\con T(\ga)^{4,p,\perp}$, where $\ga_\vp(\cdot)=\Phi(1,\cdot)$ and $\Phi$ is the variation of $\ga$ given by $\vp$.
\end{cor}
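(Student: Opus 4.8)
The plan is to obtain \eqref{eq:LojaNormalePGenerico} as a direct application of the abstract \L ojasiewicz--Simon inequality \Cref{cor:LojaFunctionalAnalytic} to the functional $E$ at the critical point $\ga$, with the choice of spaces
\[
V = T(\ga)^{4,p,\perp}, \qquad Z = T(\ga)^{p,\perp}, \qquad W = Z^\star = (T(\ga)^{p,\perp})^\star.
\]
First I would record the structural identifications. The inclusion $V \hookrightarrow Z$ is continuous (it is the restriction of $W^{4,p}\hookrightarrow L^p$ to the relevant subbundle of fields) and dense, so dualizing yields the continuous injection $W = Z^\star \hookrightarrow V^\star$ required by \Cref{cor:LojaFunctionalAnalytic}. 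Since $T(\ga)^{p,\perp}$ is the space of $L^p(dx)$--sections of the subbundle $x\mapsto\{v\in T_{\ga(x)}M : \scal{v,\tau(x)}=0\}$ of $\S^1\times\R^n$, and $1<p<\infty$, its dual is canonically identified with $T(\ga)^{p',\perp}$ via the $L^{p'}(dx)$--$L^p(dx)$ pairing. Under this identification the Fr\'echet derivative $\sM(\vp)\coloneqq \de E(\vp)\in W$ is represented precisely by the field $\na_{T(\ga)^{p'},T(\ga)^p} E(\vp)\in T(\ga)^{p',\perp}$ (cf.\ \Cref{prop:VariationsP2} and \Cref{prop:VariationsP>2}), with $\|\sM(\vp)\|_W = \|\na_{T(\ga)^{p'},T(\ga)^p} E(\vp)\|_{L^{p'}(dx)}$.

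Next I would verify the three hypotheses of \Cref{cor:LojaFunctionalAnalytic}. Analyticity of $E$ on a ball and the fact that $\sM=\de E$ is $W$--valued and analytic are exactly the content of \Cref{lem:Analyticity}, which is where the analyticity of $(M,g)$ enters and where, when $p>2$, one shrinks $\ro$ so that $|k_{\ga_\vp}|$ stays bounded away from $0$. That $0$ is a critical point of $E$, i.e.\ $\sM(0)=\de E(0)=0$, is immediate from the hypothesis that $\ga$ is critical, via \Cref{prop:FirstVariation0}. Finally, $\cL\coloneqq\sL(0)=\de^2 E(0)\in L(V,W)$ is Fredholm of index zero: this is \Cref{lem:FredholmnessP2} when $p=2$ and \Cref{lem:FredholmnessP>2} when $p>2$ (using the nonvanishing of $k$), applied to the representations of $\cL$ furnished by \Cref{prop:VariationsP2} and \Cref{prop:VariationsP>2}.

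With all hypotheses in force, \Cref{cor:LojaFunctionalAnalytic} supplies constants $C,\ro>0$ and $\te\in(0,\tfrac12]$ such that
\[
|E(\vp)-E(0)|^{1-\te} \le C\,\|\sM(\vp)\|_W = C\,\|\na_{T(\ga)^{p'},T(\ga)^p} E(\vp)\|_{L^{p'}(dx)},
\]
for every $\vp\in B_\ro(0)\con T(\ga)^{4,p,\perp}$. It then remains only to pass from the Lebesgue measure $dx$ to the length measure $ds_{\ga_\vp}$ on the right-hand side. Since $\ga$ is a smooth immersion and $\vp$ ranges over a fixed small ball, \Cref{lem:Variation} guarantees that the speed $|\pa_x\ga_\vp|$ is bounded below by a constant $c_1(\ga)>0$ uniformly in $\vp$; hence for any field $f$ one has $\|f\|_{L^{p'}(ds_{\ga_\vp})}^{p'}=\int |f|^{p'}|\pa_x\ga_\vp|\,dx \ge c_1\|f\|_{L^{p'}(dx)}^{p'}$, so $\|f\|_{L^{p'}(dx)}\le c_1^{-1/p'}\|f\|_{L^{p'}(ds_{\ga_\vp})}$. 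Absorbing this factor into $C$ yields \eqref{eq:LojaNormalePGenerico}.

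The genuinely substantive analytic input, namely the analyticity of $E,\de E$ and the index-zero Fredholm property of $\cL$, is entirely imported from the preceding lemmas, so I do not expect a serious obstacle here. The only point demanding care is the bookkeeping: one must check that the duality identification $(T(\ga)^{p,\perp})^\star\cong T(\ga)^{p',\perp}$ uses the very same $L^{p'}$--$L^p$ pairing employed to define $\na_{T(\ga)^{p'},T(\ga)^p} E(\vp)$, and that the abstract norm $\|\sM(\vp)\|_W$ therefore coincides with the $L^{p'}$--norm of the representing field before the harmless change of measure above.
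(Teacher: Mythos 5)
Your proposal is correct and follows essentially the same route as the paper: the paper's proof is precisely the application of \Cref{cor:LojaFunctionalAnalytic} with $V=T(\ga)^{4,p,\perp}$ and $Z=T(\ga)^{p,\perp}$, invoking \Cref{prop:VariationsP2}, \Cref{prop:VariationsP>2}, \Cref{lem:FredholmnessP2}, \Cref{lem:FredholmnessP>2}, and \Cref{lem:Analyticity} to verify the hypotheses. Your additional care with the duality identification and the passage from $L^{p'}(dx)$ to $L^{p'}(ds_{\ga_\vp})$ via the uniform lower bound on $|\pa_x\ga_\vp|$ is a correct filling-in of a step the paper leaves implicit.
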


\begin{proof}
	Collecting the results of \Cref{prop:VariationsP2}, \Cref{prop:VariationsP>2}, \Cref{lem:FredholmnessP2}, \Cref{lem:FredholmnessP>2}, and \Cref{lem:Analyticity}, the statement follows from the direct application of \Cref{cor:LojaFunctionalAnalytic} taking $V=T(\ga)^{4,p,\perp}$, $Z=T(\ga)^{p,\perp}$, and $\ro_0>0$ depending on $\ga$ given by \Cref{lem:Variation}.
\end{proof}

As outlined in the strategy of \Cref{sec:Rn}, we can exploit the geometric invariance of the energy for extending the inequality \eqref{eq:LojaNormalePGenerico} to the functional $\bE$ and generic variations in $T(\ga)^{4,p}$. We need the following reparametrization result first.

\begin{lemma}\label{lem:ReparametrizationManifold}
	Let $\ga:\S^1\to M$ be a smooth immersion and $p\ge2$. Let $\ro_0(\ga)>0$ be given by \Cref{lem:Variation}. Then
	for any $\ro\in(0,\ro_0)$ there is $\si>0$ such that for any $\psi \in B_\si(0)\con T(\ga)^{4,p}$ there exists a diffeomorphism $L:\S^1\to \S^1$ of class $W^{4,p}$ and $\vp \in B_\ro(0)\con  T(\ga)^{4,p,\perp}$ such that
	\[
	\ga_\psi\circ L = \ga_\vp,
	\]
	where $\ga_\vp(\cdot)=\Phi(1,\cdot)$, $\ga_\psi(\cdot)=\Psi(1,\cdot)$, and $\Phi,\Psi$ are the variation of $\ga$ given by $\vp,\psi$ respectively.
\end{lemma}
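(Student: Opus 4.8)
The plan is to realize the general small variation $\ga_\psi$ as a normal graph over $\ga$ by means of the normal exponential map, and then to read off both the reparametrization $L$ and the normal field $\vp$ from the resulting tubular coordinates. First I would introduce the map $\Xi(x,v):=\exp_{\ga(x)}(v)$, defined for $x\in\S^1$ and $v$ in the normal space $N_x:=\{v\in T_{\ga(x)}M:\scal{v,\tau(x)}=0\}$ with $|v|$ small. Computing its differential at the zero section, the horizontal (base) direction is sent to $\ga'(x)=|\ga'(x)|\tau(x)$ while the fibre $N_x$ is sent onto itself, since $d[\exp_{\ga(x)}]_0=\id$; hence $d\Xi_{(x,0)}$ is an isomorphism onto $T_{\ga(x)}M$ and $\Xi$ is a local diffeomorphism near each $(x,0)$. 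Because $\ga$ is a smooth immersion and $\S^1$ is compact, one fixes a uniform normal radius and finitely many arcs on which the local inverse of $\Xi$ — in particular the \emph{nearby} base-point projection and the normal-component map — is single-valued and smooth; where $\ga$ self-intersects these branches still agree, since we only ever select the base point lying near the input parameter, a choice independent of the chosen arc.

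Since $\ga_\psi$ is $C^0$-close to $\ga$ for $\psi$ small (recall $W^{4,p}\hookrightarrow C^{3}$), for each $x$ the point $\ga_\psi(x)$ lies in the chart around $(x,0)$, so I may define $\beta(x)$ as its base point and $\nu(x)\in N_{\beta(x)}$ as its normal component, i.e. $\ga_\psi(x)=\exp_{\ga(\beta(x))}(\nu(x))$. The key point is that $\beta$ and $\nu$ are obtained by composing the \emph{smooth} local inverse of $\Xi$ on the left with $\ga_\psi\in W^{4,p}$; by \Cref{lem:CompositionSobolev} and \Cref{cor:CompositionSobolevTangent} they therefore lie in $W^{4,p}$ and depend continuously on $\psi$. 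At $\psi=0$ one has $\ga_0=\ga$, whence $\beta=\id$ and $\nu=0$; consequently, for $\si$ small enough $\beta$ is $C^1$-close to the identity and is a $W^{4,p}$-diffeomorphism of $\S^1$.

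I would then set $L:=\beta^{-1}$ and $\vp:=\nu\circ L$. The inverse of a $W^{4,p}$-diffeomorphism close to the identity is again $W^{4,p}$ (for $p\ge2$, since $W^{4,p}\hookrightarrow C^{3}$, inversion preserves the Sobolev class), and $\vp$ is $W^{4,p}$ as a composition of $W^{4,p}$ maps; moreover $\vp(x)=\nu(\beta^{-1}(x))\in N_{\beta(\beta^{-1}(x))}=N_x$, so $\vp\in T(\ga)^{4,p,\perp}$. Substituting $y=L(x)$ into $\ga_\psi(y)=\exp_{\ga(\beta(y))}(\nu(y))$ and using $\beta(L(x))=x$ yields $\ga_\psi(L(x))=\exp_{\ga(x)}(\vp(x))=\ga_\vp(x)$, which is the desired identity $\ga_\psi\circ L=\ga_\vp$. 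The smallness of $\vp$ follows from continuity of $\psi\mapsto\vp$ at $\psi=0$, where it vanishes: given $\ro\in(0,\ro_0)$ I choose $\si>0$ so that $\|\psi\|_{W^{4,p}}<\si$ forces both $\|\vp\|_{W^{4,p}}<\ro$ and $\beta$ (hence $L$) to be a genuine diffeomorphism.

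The main obstacle I expect is the regularity bookkeeping rather than the geometry. One must route every step requiring smooth (or merely continuous) dependence on $\psi$ through left-composition with the fixed smooth map $\Xi^{-1}$, so that \Cref{lem:CompositionSobolev} applies and no derivative is lost; the right-composition with $L=\beta^{-1}$, which as an operator would lose a derivative, is used only to produce the single field $\vp=\nu\circ L$, whose membership in $W^{4,p}$ needs only that the Sobolev class is preserved under composition with a fixed $W^{4,p}$-diffeomorphism. A secondary, purely technical point is the treatment of possible self-intersections of $\ga$, which is handled by the local/patching argument described above.
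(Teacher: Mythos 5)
Your proposal is correct and follows essentially the same route as the paper: a tubular-neighborhood parametrization of a neighborhood of $\ga$ by the exponential map restricted to the normal bundle, from which one reads off the base-point projection (your $\beta$, the paper's $G$) and the normal component, sets $L=\beta^{-1}$ and $\vp=\nu\circ L$, and obtains regularity by left-composition with the smooth local inverse (via \Cref{lem:CompositionSobolev}) and smallness by continuity at $\psi=0$. The local patching over finitely many arcs to handle self-intersections is also how the paper proceeds.
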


\begin{proof}
	By compactness there exists $\tau>0$ such that $\ga|_{(x-\tau,x+\tau)}:(x-\tau,x+\tau)\to M$ is an embedding for any $x\in \S^1$. Fix $\ro\in(0,\ro_0)$. If we choose $\si'>0$ is sufficiently small, depending only on $\ga$, and any $\psi \in B_{\si'}(0)\con T(\ga)^{4,p}$, we have that
	\[
	\ga_\psi\left(x-\frac\tau2,x+\frac\tau2\right)\con U_x,
	\]
	where $U_x\con M$ is an open neighborhood of $\ga\left(\left[x-\tfrac\tau2,x+\tfrac\tau2\right]\right)$ parametrized by the exponential map restricted to the normal bundle of $\ga$. More precisely, there exists an open connected set
	\[
	\Om_x \con \bcup_{y\in\left(x-\tau,x+\tau\right)} T_{\ga(y)}\ga^\perp,
	\]
	containing the origin of $T_{\ga(y)}\ga^\perp$ for any $y\in\left(x-\tau,x+\tau\right)$ such that any $q\in U_x$ can be uniquely written as $q= \exp^\perp (v_q)$ for some $v_q\in \Om_x$, where $\exp^\perp$ is the restriction of the exponential map to the normal bundle of $\ga$.
	
	Hence for any $y\in \left(x-\tfrac\tau2,x+\tfrac\tau2\right)$ there exists a unique $G(y)\in \left(x-\tau,x+\tau\right)$ and a unique $\vp \in T_{\ga(G(y))}\ga^\perp$ such that
	\[
	\ga_\psi (y) = \exp^\perp (\vp(G(y))).
	\]
	By defining $L=G^{-1}:G\left(x-\tfrac\tau2,x+\tfrac\tau2\right) \to \left(x-\tfrac\tau2,x+\tfrac\tau2\right)$ we see that
	\[
	\ga_\psi \circ L \, (y) = \exp(\vp(y)),
	\]
	for any $y \in G\left(x-\tfrac\tau2,x+\tfrac\tau2\right)$. Moreover since for $y\in \left(x-\tfrac\tau2,x+\tfrac\tau2\right)$ we can write explicitly
	\[
	G(y) = \left(\ga|_{(x-\tau,x+\tau)} \right)^{-1} \circ \pi \circ (\exp^\perp)^{-1}\circ \ga_\psi \, (y),
	\]
	where $\pi$ is the projection of the normal bundle, we see that $G$ is of class $W^{4,p}$, and then so is $L$. Also
	\[
	\vp(y) = (\exp^\perp)^{-1}\circ \ga_\psi \circ L \, (y),
	\]
	and then $\vp$ is of class $W^{4,p}$.	
	By arbitrariness of $x$, one can then define a normal field $\vp$ along $\ga$ and a diffeomorphism $L$ of $\S^1$ satisfying $\ga_\psi \circ L = \ga_\vp$.
	
	Finally, it follows from the construction that if $\psi_n \in T(\ga)^{4,p}$ converges to $0$, then the corresponding $L_n$ converges to the identity in $W^{4,p}$, and then also $\vp_n\to0$ in $W^{4,p}$. This proves that for the chosen $\ro$, taking a suitable $0<\si\le \si'$, for any $\psi\in B_\si(0)\con T(\ga)^{4,p}$ the resulting $\vp\in T(\ga)^{4,p,\perp}$ has norm less than the desired $\ro$.
\end{proof}

For the convenience of the reader, let us recall here that for a fixed smooth curve $\ga:\S^1\to M$ and $\ro>0$ sufficiently small we have that
\begin{equation*}
\begin{split}
\de \bE(\vp)[\psi] &= 
\left\lgl
\cT^\star\left(
\na_{s_{\ga_\vp}}^2 k_{\ga_\vp} 
+
\frac{1}{2} |k_{\ga_\vp}|^2k_{\ga_\vp} - k_{\ga_\vp} 
+ R (k_{\ga_\vp},\tau_{\ga_\vp})\tau_{\ga_\vp}
\right)
, \psi
\right\rgl_{L^2(ds_{\ga_\vp}),L^2(ds_{\ga_\vp})}
\end{split}
\end{equation*}
for any $\vp,\psi \in B_\ro(0)\con T(\ga)^{4,2}$, where $\ga_\vp(\cdot)=\Phi(1,\cdot)$, $\Phi$ is the variation of $\ga$ given by $\vp$, and $\cT:T_{\ga(x)}M\to T_{\Phi(1,x)}M$ is the function $\cT(\psi)=d[\exp_{\ga(x)}]_{\vp}(\psi)$ and $\cT^\star$ is its adjoint.
We therefore write
\[
\na_{T(\ga)^2,T(\ga)^2} \bE(\vp) = |\pa_x \ga_\vp | \cT^\star\left(
\na_{s_{\ga_\vp}}^2 k_{\ga_\vp} 
+
\frac{1}{2} |k_{\ga_\vp}|^2k_{\ga_\vp} - k_{\ga_\vp} 
+ R (k_{\ga_\vp},\tau_{\ga_\vp})\tau_{\ga_\vp}
\right).
\]

Analogously if $\ga:\S^1\to M$ is a smooth critical point of $\sE$ for some $p>2$ and $|k(x)|\neq0$ for any $x$, for $\ro>0$ sufficiently small we have that
\begin{equation*}
\begin{split}
\de \bE(\vp)[\psi] &= 
\left\lgl
\cT^\star\left(
\na_{s_{\ga_\vp}}^2 |k_{\ga_\vp}|^{p-2} k_{\ga_\vp} 
+
\frac{1}{p'} |k_{\ga_\vp}|^p k_{\ga_\vp} - k_{\ga_\vp} 
+ R (|k_{\ga_\vp}|^{p-2} k_{\ga_\vp},\tau_{\ga_\vp})\tau_{\ga_\vp}
\right)
, \psi
\right\rgl_{L^{p'}(ds_{\ga_\vp}),L^p(ds_{\ga_\vp})}
\end{split}
\end{equation*}
for any $\vp,\psi \in B_\ro(0)\con T(\ga)^{4,p}$, where $\ga_\vp$ and $\cT$ are as above. Therefore
\[
\na_{T(\ga)^{p'},T(\ga)^p} \bE(\vp) = |\pa_x \ga_\vp | \cT^\star\left(
\na_{s_{\ga_\vp}}^2 |k_{\ga_\vp}|^{p-2} k_{\ga_\vp} 
+
\frac{1}{p'} |k_{\ga_\vp}|^p k_{\ga_\vp} - k_{\ga_\vp} 
+ R (|k_{\ga_\vp}|^{p-2} k_{\ga_\vp},\tau_{\ga_\vp})\tau_{\ga_\vp}
\right).
\]

As anticipated, using \Cref{lem:ReparametrizationManifold} we can now improve \eqref{eq:LojaNormalePGenerico} to fields in $T(\ga)^{4,p}$.

\begin{cor}\label{cor:FullLojasiewiczPGenerico}
	Suppose that $(M,g)$ is an analytic complete Riemannian manifold endowed with an analytic metric tensor $g$. Let $p\ge2$. Let $\ga:\S^1\to\R^n$ be a smooth critical point of $\sE$. In case $p>2$ assume that $|k(x)|\neq0$ for any $x$. There exist $C,\si>0$ and $\te\in(0,\frac12]$ such that
	\begin{equation}\label{eq:FullLojaPGenerico}
	|\bE(\psi)-\bE(0)|^{1-\te} \le C \|  \na_{T(\ga)^{p'},T(\ga)^p} \bE(\psi) \|_{L^{p'}(ds_{\ga_\psi})},
	\end{equation}
	for any $\psi\in B_\si(0)\con T(\ga)^{4,p}$.
\end{cor}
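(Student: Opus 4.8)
The plan is to deduce \eqref{eq:FullLojaPGenerico} from the normal inequality \eqref{eq:LojaNormalePGenerico} by exploiting the geometric invariance of $\sE$ together with the reparametrization result \Cref{lem:ReparametrizationManifold}, exactly as the estimate \eqref{eq:LojaRn} is obtained in the Euclidean model of \Cref{sec:Rn}. Let $\ro>0$ be the radius furnished by \Cref{cor:LojaNormalePGenerico} and let $\si>0$ be the associated radius given by \Cref{lem:ReparametrizationManifold}. Given $\psi\in B_\si(0)\con T(\ga)^{4,p}$, that lemma produces a $W^{4,p}$ diffeomorphism $L$ of $\S^1$ and a normal field $\vp\in B_\ro(0)\con T(\ga)^{4,p,\perp}$ with $\ga_\psi\circ L=\ga_\vp$. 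Since $\sE$ is invariant under reparametrization, $\bE(\psi)=\sE(\ga_\psi)=\sE(\ga_\vp)=E(\vp)$ and $\bE(0)=\sE(\ga)=E(0)$, so the left-hand side of \eqref{eq:FullLojaPGenerico} equals $|E(\vp)-E(0)|^{1-\te}$, which by \eqref{eq:LojaNormalePGenerico} is bounded by $C\,\|\na_{T(\ga)^{p'},T(\ga)^p}E(\vp)\|_{L^{p'}(ds_{\ga_\vp})}$.

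It then remains to estimate this normal gradient norm by the full gradient norm at $\psi$, and I would do this in two steps. First, by \Cref{prop:VariationsP2} when $p=2$ and by \Cref{prop:VariationsP>2} when $p>2$, the two gradients differ only by the normal projection along $\ga$, namely $\na_{T(\ga)^{p'},T(\ga)^p}E(\vp)=\ga^\perp\,\na_{T(\ga)^{p'},T(\ga)^p}\bE(\vp)$; since $\ga^\perp$ is a fibrewise orthogonal projection it does not increase the pointwise norm, whence
\[
\|\na_{T(\ga)^{p'},T(\ga)^p}E(\vp)\|_{L^{p'}(ds_{\ga_\vp})}\le\|\na_{T(\ga)^{p'},T(\ga)^p}\bE(\vp)\|_{L^{p'}(ds_{\ga_\vp})}.
\]
Second, I would show that the full gradient norm is unchanged, up to a harmless constant, when passing from the parametrization $\ga_\vp$ to the reparametrized $\ga_\psi=\ga_\vp\circ L^{-1}$, i.e.
\[
\|\na_{T(\ga)^{p'},T(\ga)^p}\bE(\vp)\|_{L^{p'}(ds_{\ga_\vp})}\le C\,\|\na_{T(\ga)^{p'},T(\ga)^p}\bE(\psi)\|_{L^{p'}(ds_{\ga_\psi})}.
\]
Chaining the three displays yields \eqref{eq:FullLojaPGenerico}.

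The main obstacle is this last invariance. Its conceptual content is that, through the formulas for $\de\bE$ recalled just before the statement, $\na_{T(\ga)^{p'},T(\ga)^p}\bE$ represents the intrinsic $(L^{p'},L^p)$-velocity $V$ of $\sE$ along the varied curve — the normal field $\na^2|k|^{p-2}k+\tfrac1{p'}|k|^pk-k+R(|k|^{p-2}k,\tau)\tau$ appearing in \eqref{eq:DefnFlowPGenericoCOPY} — which is a geometric object: it transforms as $V(\ga_\psi)=V(\ga_\vp)\circ L^{-1}$ and $ds_{\ga_\psi}$ pulls back to $ds_{\ga_\vp}$ under $L$, so that $\|V(\ga_\vp)\|_{L^{p'}(ds_{\ga_\vp})}=\|V(\ga_\psi)\|_{L^{p'}(ds_{\ga_\psi})}$. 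The delicate point is that $\na_{T(\ga)^{p'},T(\ga)^p}\bE$ is not literally $V$ but its image under the adjoint differential $\cT^\star=\left(d[\exp_{\ga(x)}]_\bullet\right)^\star$ times the length density $|\pa_x\ga_\bullet|$, and these factors are attached to fields along two different parametrizations, $\ga_\vp$ and $\ga_\psi$, carried by differentials of the exponential map at the distinct base points $\ga(x)$ and $\ga(L(x))$. I would control them by noting that, for fields in the small balls $B_\ro,B_\si$, the map $\cT$ is uniformly close to the identity and $|\pa_x\ga_\bullet|$ is uniformly bounded above and below by constants depending only on $\ga$; consequently both gradient norms above are comparable to the geometric quantity $\|V\|_{L^{p'}(ds)}$ up to such constants, and the exact invariance of the latter closes the argument. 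In the Euclidean situation of \Cref{sec:Rn} one has $\cT=\id$ and straight-line geodesics, which is precisely why there the invariance is exact and no such bookkeeping is required.
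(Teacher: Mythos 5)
Your proposal is correct and follows essentially the same route as the paper: reduce to a normal field via \Cref{lem:ReparametrizationManifold}, use the geometric invariance of $\sE$ and of the velocity $V$ under reparametrization, and absorb the factors $\cT^\star=(d[\exp_{\ga(x)}]_\bullet)^\star$ and $|\pa_x\ga_\bullet|$ by uniform two-sided bounds on $d\exp$ and its adjoint for fields of small norm. The paper carries out the same bookkeeping in a single chain of inequalities by inserting $[(d[\exp_{\ga(x)}]_\psi)^\star]^{-1}(d[\exp_{\ga(x)}]_\psi)^\star$ before the velocity at $\ga_\psi$, which is exactly the comparison to $\|V\|_{L^{p'}(ds)}$ you describe.
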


\begin{proof}
	Let $\ro$ be as in \Cref{cor:LojaNormalePGenerico}. Without loss of generality $\ro<\ro_0(\ga)$, where $\ro_0(\ga)$ is given by \Cref{lem:Variation}, and then let $\si>0$ be the corresponding radius given by \Cref{lem:ReparametrizationManifold}. Let $\psi\in B_\si(0)\con T(\ga)^{4,p}$ and let $L:\S^1\to \S^1$ and $\vp \in B_\ro(0)\con  T(\ga)^{4,p,\perp}$ such that $\ga_\psi\circ L = \ga_\vp$ in the notation of \Cref{lem:ReparametrizationManifold}. Then
	\[
	\bE(\psi)=\sE(\ga_\psi)=\sE((\ga_\psi)\circ L ) = \sE (\ga_\vp)= E(\vp).
	\]
	Moreover by compactness and continuity of $w\mapsto d[\exp_{\ga(x)}]_{w}$ we see that there exists $R=R(\ga)>0$ such that for any $x\in \S^1$ and any $w\in T_{\ga(x)}M$ with $|w|\le R$ it holds that $d[\exp_{\ga(x)}]_{w}$ is invertible with $\|d[\exp_{\ga(x)}]_{w}\| \le C(\ga)$, and then the same holds for $(d[\exp_{\ga(x)}]_{w})^\star$  and their inverse.
%
	Up to take smaller $\ro$ and $\si$, we can assume that $|\vp|\le R$ and $|\psi|\le R$.
	
	Understanding that if $p=2$ then $|v|^{p-2}\equiv1$ for any vector $v$, we deduce that
	\[
	\begin{split}
	\|&\na_{T(\ga)^{p'},T(\ga)^p} E(\vp) \|^{p'}_{L^{p'}(ds_{\ga_\vp})}\le\\
	&\le  \int_{\S^1} \left| (d[\exp_{\ga(x)}]_{\vp})^\star\left(
	\na_{{\ga_\vp}}^2 |k_{\ga_\vp}|^{p-2} k_{\ga_\vp} 
	+\frac{1}{p'} |k_{\ga_\vp}|^p k_{\ga_\vp} - k_{\ga_\vp} 
	+ R (|k_{\ga_\vp}|^{p-2} k_{\ga_\vp},\tau_{\ga_\vp})\tau_{\ga_\vp}	
	\right)  \right|^{p'} \,ds_{\ga_\vp}  \\
	&\le C \int_{\S^1} \left|
	\na_{{\ga_\vp}}^2 |k_{\ga_\vp}|^{p-2} k_{\ga_\vp} 
	+
	\frac{1}{p'} |k_{\ga_\vp}|^p k_{\ga_\vp} - k_{\ga_\vp} 
	+ R (|k_{\ga_\vp}|^{p-2} k_{\ga_\vp},\tau_{\ga_\vp})\tau_{\ga_\vp}
	\right|^{p'} \,ds_{\ga_\vp}  \\
	&=  C \int_{\S^1} \left|  
	\na_{{\ga_\psi\circ L}}^2 | k_{\ga_\psi\circ L}|^{p-2} k_{\ga_\psi\circ L} 
	+
	\frac{1}{p'} |k_{\ga_\psi \circ L}|^p k_{\ga_\psi \circ L} - k_{\ga_\psi \circ L} 
	+ R (|k_{\ga_\psi\circ L}|^{p-2} k_{\ga_\psi \circ L},\tau_{\ga_\psi \circ L})\tau_{\ga_\psi \circ L}
	\right|^{p'} \,ds_{\ga_\psi \circ L } \\
	& =  C\int_{\S^1} \left|  
	\na_{{\ga_\psi}}^2 |k_{\ga_\psi }|^{p-2} k_{\ga_\psi} 
	+
	\frac{1}{p'} |k_{\ga_\psi }|^p k_{\ga_\psi } - k_{\ga_\psi } 
	+ R (|k_{\ga_\psi }|^{p-2} k_{\ga_\psi },\tau_{\ga_\psi })\tau_{\ga_\psi }
	\right|^{p'} \,ds_{\ga_\psi  } \\
	&=  C \int_{\S^1} \bigg| 
	\left[ (d[\exp_{\ga(x)}]_{\psi})^\star \right]^{-1} \cdot \\
	&\qquad\qquad\qquad \cdot (d[\exp_{\ga(x)}]_{\psi})^\star
	\left( 
	\na_{{\ga_\psi}}^2 |k_{\ga_\psi }|^{p-2} k_{\ga_\psi} 
	+
	\frac{1}{p'} |k_{\ga_\psi }|^p k_{\ga_\psi } - k_{\ga_\psi } 
	+ R (|k_{\ga_\psi }|^{p-2} k_{\ga_\psi },\tau_{\ga_\psi })\tau_{\ga_\psi }
	\right)
	\bigg|^{p'} \,ds_{\ga_\psi  } \\
	& \le C\int_{\S^1} \left| 
	(d[\exp_{\ga(x)}]_{\psi})^\star
	\left( 
	\na_{{\ga_\psi}}^2 |k_{\ga_\psi }|^{p-2} k_{\ga_\psi} 
	+
	\frac{1}{p'} |k_{\ga_\psi }|^p k_{\ga_\psi } - k_{\ga_\psi } 
	+ R (|k_{\ga_\psi }|^{p-2} k_{\ga_\psi },\tau_{\ga_\psi })\tau_{\ga_\psi }
	\right)
	\right|^{p'} \,ds_{\ga_\psi  } \\
	&\le C  \|  \na_{T(\ga)^{p'},T(\ga)^p} \bE(\psi) \|^{p'}_{L^{p'}(ds_{\ga_\psi})}
	\end{split}
	\]
	and therefore \eqref{eq:FullLojaPGenerico} readily follows from \eqref{eq:LojaNormalePGenerico}.
\end{proof}


\bigskip

\section{Convergence of $p$-elastic flows into manifolds}\label{sec:ConvergenceFinal}

In this final part we apply \Cref{cor:FullLojasiewiczPGenerico} and the strategy presented in $\R^n$ in \Cref{sec:Rn} to prove full convergence of the gradient flow of $\sE$ out of its sub-convergence.

Let us start by recalling the definitions of the gradient flows we are considering. For a given smooth curve $\ga_0:\S^1\to M$, we say that $\ga:[0,T)\times \S^1\to M$ is the solution of the gradient flow of $\sE$ with datum $\ga_0$ if it classically satisfies the equation
\begin{equation}\label{eq:DefnFlowPGenerico}
\begin{cases}
\pa_t \ga = -\left( \na^2 |k|^{p-2} k + \frac{1}{p'} |k|^p k -k + R(|k|^{p-2} k,\tau)\tau \right) & \mbox{ on } [0,T)\times \S^1,\\
\ga(0,\cdot)=\ga_0(\cdot)   & \mbox{ on } \S^1,
\end{cases}
\end{equation}
where we understand that $|k|^{p-2}\equiv1$ in case $p=2$.

In order to prove the convergence of the flow we need a local existence and uniqueness result. This is contained in the next theorem, whose proof is based on rather classical arguments about parabolic equations, and then we will just comment on that.

\begin{thm}\label{thm:LocalExistencePGenerico}
	Let $p\ge2$ and let $\ga_0:\S^1\to M$ be a smooth curve. If $p>2$ assume also that $|k_{\ga_0}(x)|\neq0$ for any $x$. There exists $T>0$ and a unique $\ga:[0,T)\times \S^1\to M$ such that $\ga(t,x)$ is a smooth solution of \eqref{eq:DefnFlowPGenerico}.
\end{thm}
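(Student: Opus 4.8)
The plan is to break the reparametrization invariance of \eqref{eq:DefnFlowPGenerico} by writing the evolving curve as a normal graph over the fixed datum, and then to invoke the classical short-time theory for uniformly parabolic quasilinear systems in the spirit of \cite{MaMa12}. Concretely, for small normal fields $N(t,\cdot)\in T(\ga_0)^{4,p,\perp}$ I would set $\ga(t,x)=\exp_{\ga_0(x)}(N(t,x))$ using the variation construction of \Cref{lem:Variation}. Since $d[\exp_{\ga_0(x)}]_{N}$ is invertible for $\|N\|$ small, the linear map $\pa_t N\mapsto(\pa_t\ga)^\perp$, where $\perp$ denotes the normal projection onto the evolving curve $\ga(t,\cdot)$, is invertible near $N=0$; hence imposing $(\pa_t\ga)^\perp=-V$, with $V$ the right-hand side of \eqref{eq:DefnFlowPGenerico} (which is a normal field, since $\scal{R(|k|^{p-2}k,\tau)\tau,\tau}=0$), is equivalent to a quasilinear equation $\pa_t N=\cF(N,\na N,\na^2 N,\na^3 N,\na^4 N)$ for the single unknown $N$. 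This fixes a parametrization and removes the tangential degeneracy of the geometric flow.

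The key structural point is that the linearization of $\cF$ at $N=0$ is exactly $-\cL$, the second variation operator computed in \Cref{prop:VariationsP2} and \Cref{prop:VariationsP>2}. Reading off the principal parts from \eqref{eq:SecVarP2} and \eqref{eq:SecVarP>2}, the symbol of the linearized operator in a direction $\xi$, acting on a normal vector $w$, is
\[
-|k_{\ga_0}|^{p-2}\xi^4 w-(p-2)|k_{\ga_0}|^{p-4}\xi^4\scal{k_{\ga_0},w}k_{\ga_0},
\]
whose quadratic form equals $-|k_{\ga_0}|^{p-2}\xi^4|w|^2-(p-2)|k_{\ga_0}|^{p-4}\xi^4\scal{k_{\ga_0},w}^2\le -|k_{\ga_0}|^{p-2}\xi^4|w|^2$. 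For $p=2$ this is $-\xi^4|w|^2$, so the system is uniformly parabolic with no extra hypothesis. For $p>2$ it is negative definite, and uniformly so precisely because $|k_{\ga_0}(x)|\neq0$ yields $c_1\le|k_{\ga_0}|\le c_2$ by compactness of $\S^1$; this is exactly where the non-vanishing curvature assumption enters. By continuity the lower bound $|k_{\ga(t,\cdot)}|\ge c_1/2$ persists for short time, so the equation stays uniformly fourth-order parabolic on a nontrivial interval $[0,T)$.

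With uniform fourth-order parabolicity and smooth (indeed analytic) coefficients inherited from the smoothness of $M$, $g$ and $\ga_0$, short-time existence and uniqueness of $N$ in a suitable parabolic class follow from the standard linearization-plus-contraction scheme: freeze the coefficients at $\ga_0$, solve the resulting constant-principal-part linear problem via Solonnikov--Schauder (or maximal $L^p$-regularity) estimates, and close a fixed point on a small $[0,T)$. Smoothness of $\ga=\exp_{\ga_0}(N)$ in space-time is then obtained by parabolic bootstrapping: once $N$ lies in a first regularity class the coefficients of $\cF$ inherit that regularity, and iterating parabolic Schauder estimates promotes $N$ to $C^\infty$. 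Uniqueness of the geometric flow up to reparametrization follows because any smooth solution of \eqref{eq:DefnFlowPGenerico} can, after a time-dependent reparametrization, be put into the normal-graph gauge (cf.\ the local reparametrization argument of \Cref{lem:ReparametrizationManifold}), where the uniqueness just established applies.

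The main obstacle I anticipate is not the abstract parabolic machinery, which is classical, but the two preparatory reductions: first, setting up the normal-graph gauge so that the degenerate geometric evolution becomes a genuinely parabolic equation in a single unknown, and checking invertibility of $\pa_t N\mapsto(\pa_t\ga)^\perp$; and second, certifying uniform parabolicity in the regime $p>2$, where the leading coefficient $|k|^{p-2}$ degenerates wherever the curvature vanishes. This is exactly why the hypothesis $|k_{\ga_0}|\neq0$ is indispensable and why one must propagate it for a short time by continuity.
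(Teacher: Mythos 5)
Your proposal is essentially correct but follows a genuinely different route from the paper. The paper does not introduce a normal-graph gauge: it covers $\ga_0(\S^1)$ by finitely many coordinate charts $(U_i,\Phi_i)$, writes the evolution in each chart as a quasilinear parabolic system $\pa_t\ga_i^l=-|\pa_x\ga_i|^{-4}\pa_x^4\ga_i^l+X^l(\ga_i,\dots,\pa_x^3\ga_i)$ for the full parametrization, invokes the short-time theory of \cite{MaMa12} chart by chart, and then glues the local solutions using the geometric (reparametrization) invariance of the velocity; for $p>2$ it rewrites the leading term as $|\pa_x\ga|^{-2p}\pa_x^2(|\ga^\perp\pa_x^2\ga|^{p-2}\pa_x^2\ga)$ and uses $|k_{\ga_0}|\neq0$ to keep the system strongly parabolic for short times. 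Your gauge-fixing approach buys a single scalar-system unknown $N$ on all of $\S^1$ with a transparent identification of the principal symbol via the second variation, at the cost of two reductions you should make explicit: first, the curve $\exp_{\ga_0}(N)$ only satisfies $(\pa_t\ga)^\perp=-V$, so it solves \eqref{eq:DefnFlowPGenerico} merely up to a tangential term, and to produce (and to uniquely characterize) the actual solution with exactly normal velocity you must append the reparametrization ODE of \Cref{rem:Riparametrizzazioni}; second, the formulas \eqref{eq:SecVarP2} and \eqref{eq:SecVarP>2} you read the symbol from are stated only at critical points, so for a general datum $\ga_0$ you should instead point to the general second variation \eqref{eq:SecVarTotale} of \Cref{prop:SecondVariation} (whose principal part is the same), or compute the linearization of $\cF$ directly. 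With those two points supplied, your argument is a legitimate alternative to the paper's chart-and-glue proof, and your identification of where the hypothesis $|k_{\ga_0}|\neq0$ enters coincides exactly with the paper's.
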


	The outline of the proof of \Cref{thm:LocalExistencePGenerico} goes as follows.
	We can fix finitely many local charts $\{(U_i,\Phi_i)\}_{i=1}^N$ on $M$ such that $\ga_0(\S^1)\con \cup_i U_i$. We can choose such charts so that for some $r_i>0,x_i\in \S^1$ we have $U_i= B_{r_i}(\ga_0(x_i))$, $\ga_0(\tfrac12(x_i+x_{i+1}))\in U_i\cap U_{i+1}$, and $B_{\frac{r_i}{2}}(x_i)\cap U_j=\emptyset$ for $i\neq j$, where we understand that $N+1=1$. Fix also points $a_i,b_i\in \S^1$ such that
	\[
	a_i<\tfrac12 (x_i+x_{i-1}) < b_{i-1} < x_i < a_{i+1},
	\]
	so that $\ga_0(a_i),\ga_0(b_{i-1}) \in U_{i-1}\cap U_{i}$. Next we consider the curves $\ga_{0,i}\coloneqq\Phi_i\circ \ga_0|_{a_i,b_i}:(a_i,b_i)\to \R^m$.
	
	Consider now $p=2$. In such local coordinates one checks that \eqref{eq:DefnFlowPGenerico} in terms of $\ga_i=\Phi_i\circ \ga$ becomes
	\begin{equation}\label{eq:FlowInChartP2}
	\begin{cases}
	\pa_t \ga_i^l = -\frac{1}{|\pa_x \ga_i|^4}\pa^4_x \ga_i^l + X^l(\ga_i,\pa_x\ga_i,\pa^2_x\ga_i,\pa_x^3\ga_i)  & l=1,...,m,\\
	\ga_i(0,\cdot)= \ga_{0,i},
	\end{cases}
	\end{equation}
	where $X^l:U_1\times U_2\times U_3\times U_4\to\R^m$ is smooth and $U_j\con\R^m$ is a suitable open bounded set for any $j=1,...,4$. It is possible to prove local existence and uniqueness with continuity with respect to the datum for \eqref{eq:FlowInChartP2}, thus getting a solution $\ga_i:[0,T_i)\times (a_i,b_i)\to\R^m$; indeed \eqref{eq:FlowInChartP2} is a parabolic quasi-linear system and one can replicate the very flexible strategy of \cite{MaMa12}, as also pointed out by the authors. Now if $T_i$ is sufficiently small, one has that $\Phi_i^{-1}\circ\ga_i$ makes sense and solves \eqref{eq:DefnFlowPGenerico} up to reparametrization on $[0,T_i)\times (a_i,b_i)$. Also, a solution of the flow in \eqref{eq:DefnFlowPGenerico} is independent of the parametrization of the curve at time $t$, and therefore $\Phi_i^{-1}\circ\ga_i$ and $\Phi_{i-1}^{-1}\circ\ga_{i-1}$ coincide on $(a_i,b_{i-1})$ up to a reparametrization on the interval $(x_{i-1},x_i)$.
	Hence we can glue together the solutions obtaining a flow solving \eqref{eq:DefnFlowPGenerico} as stated in \Cref{thm:LocalExistencePGenerico}.
	
	If instead $p>2$, assuming $k\neq0$, rewriting $k= |\pa_x\ga|^{-2}\ga^\perp(\pa_x^2\ga)$, where $\ga^\perp=M^\top-\ga^\top$, the evolution equation becomes
	\[
	\pa_t \ga = -\frac{1}{|\pa_x\ga|^{2p}} \pa_x^2\left( | \ga^\perp \pa_x^2 \ga |^{p-2} \pa_x^2 \ga  \right)
	+ X(\ga,\pa_x\ga,\pa_x^2\ga,\pa_x^3\ga),
	\]
	where $X:U_1\times U_2\times U_3\times U_4\to \R^n$ always denotes a smooth function and $U_j\con\R^m$ is a suitable open bounded set for any $j=1,...,4$. Exploiting the fact that by hypothesis the curvature of the datum does not vanish, the strong parabolicity of the system is preserved for short times and one proves \Cref{thm:LocalExistencePGenerico} by means of the same techniques.

\begin{remark}\label{rem:Riparametrizzazioni}
	Let us remark here a well known fact about the uniqueness up to reparametrizations in the theory of evolution equation of geometric nature. Let us say that $\ga:[0,T)\times\S^1\to M$ solves
	\begin{equation}\label{eq:CauchyCanonica}
	\begin{cases}
	\pa_t \ga(t,x) = V_\ga(t,x) ,\\
	\ga(0,\cdot)=\ga_0(\cdot),
	\end{cases}
	\end{equation}
	everything is smooth, and $V_\ga \in T\ga^\perp$ for any time is a normal velocity field computed in terms of the curve $\ga$ at any time, and $\ga$ is the unique solution of \eqref{eq:CauchyCanonica}. Suppose that the velocity is geometric in the sense that if $\chi:[0,T)\times\S^1\to\S^1$ satisfies that $\chi(t,\cdot)$ is a diffeomorphism for any $t$ and $\si(t,x)\coloneqq \ga(t,\chi(t,x))$, then $V_{\si}(t,x) = V_\ga(t,\chi(t,x))$. Observe that this is exactly the case of the family of flows we are considering.
	In such a case, it is immediate to check that $\si$ solves
	\[
	\begin{cases}
	\pa_t \si (t,x) = V_\si(t,x) + W(t,x)\tau_\si(t,x),\\
	\si(0,\cdot) = \ga_0(\chi(0,\cdot)),
	\end{cases}
	\]
	and $W$ can be computed explicitly in terms of $\chi$ and $\ga$.
	In complete analogy, if $\be:[0,T)\times \S^1 \to M$ is given and solves
	\[
	\begin{cases}
	\pa_t \be(t,x)= V_\be(t,x) + w(t,x)\tau_\be(t,x),\\
	\be(0,\cdot)= \ga_0 (\chi_0(\cdot)),
	\end{cases}
	\]
	where $\chi_0:\S^1\to\S^1$ is a diffeomorphism, then letting $\psi:[0,T)\times\S^1\to\S^1$ be the smooth solution of
	\[
	\begin{cases}
	\pa_t\psi(t,x) = - |(\pa_x\be)(t,\psi(t,x))|^{-1} w(t,\psi(t,x)),\\
	\psi(0,\cdot)=\chi_0^{-1}(\cdot),
	\end{cases}
	\]
	it immediately follows that $\hat\ga(t,x)\coloneqq \be(t,\psi(t,x))$ solves \eqref{eq:CauchyCanonica}, and then $\hat\ga=\ga$ by uniqueness.
	We shall use this sort of geometric uniqueness up to reparametrizations several times.
\end{remark}

\medskip
	
The next proposition contains a result about parabolic estimates we will need for the proof of the main theorem.
	
\begin{prop}\label{prop:StimaParabolica}
	Let $\ga:[0,\tau)\times \S^1 \to M\con \R^n$ be a smooth solution of \eqref{eq:DefnFlowPGenerico} and let $\hat \ga$ be its constant speed reparametrization. Let $\Ga:\S^1\to M\con \R^n$ be a fixed smooth curve parametrized with constant speed. If $p>2$ assume that $|k_{\Ga}(x)|\neq0$ for any $x$. Then there is $\bar{\si}=\bar\si(k_\Ga)>0$ depending only on $k_\Ga$ such that if
	\[
	\|\hat \ga(t,\cdot) - \Ga \|_{W^{4,p}} \le \bar{\si},
	\]
	for any $t\in[0,\tau)$, then
	\begin{equation*}
		\| k_{\hat \ga}(t,\cdot) \|_{W^{m,2}} \le   C(m,\|k_\Ga\|_{W^{2,2}},\bar{\si},\cU, \La) (1+\| k_{\hat \ga}(0,\cdot) \|_{W^{m,2}}),
	\end{equation*}
	for any $t\in[0,\tau)$ and any $m\in\N$, where $\hat \ga$ is the constant speed reparametrization of $\ga$. Also, $\cU=\cU(\bar{\si})$ is a bounded neighborhood of $\Ga$ in $M$ such that the flow $\ga$ is contained in $\cU$ for any $t\in[0,\tau)$, and $\La\coloneqq \sup_{x\in\cU} |B_x|$ is the maximal norm assumed by the second fundamental form of $M\hookrightarrow \R^n$ on $\cU$.
\end{prop}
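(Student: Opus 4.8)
The plan is to exploit the reparametrization invariance of the quantities $A_m(t) \coloneqq \int_{\S^1} |\na^m k|^2\, ds$, to record the uniform low-order control coming from the closeness hypothesis, and then to close a parabolic energy estimate for each $A_m$ by induction on $m$. Since $\hat\ga(t,\cdot)$ and $\ga(t,\cdot)$ parametrize the same geometric curve at each time and $A_m$ is geometric, I will compute $\tfrac{d}{dt}A_m$ along the original flow $\ga$, for which $\pa_t\ga = V$ is purely normal with $V = -\big(\na^2(|k|^{p-2}k) + \tfrac{1}{p'}|k|^p k - k + R(|k|^{p-2}k,\tau)\tau\big)$ as in \eqref{eq:DefnFlowPGenerico}; this avoids the tangential term carried by the constant-speed reparametrization $\hat\ga$. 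The hypothesis $\|\hat\ga(t,\cdot)-\Ga\|_{W^{4,p}}\le\bar\si$ for all $t$ guarantees that the length of $\hat\ga(t,\cdot)$ stays comparable to that of $\Ga$, that the flow remains in the bounded neighborhood $\cU$ (so $R$, the second fundamental form $B$ of $M\hookrightarrow\R^n$, and all their covariant derivatives are bounded on $\cU$ in terms of $\cU$ and $\La$), and that $\|k(t,\cdot)\|_{W^{2,p}}\le C(\|k_\Ga\|_{W^{2,2}},\bar\si)$; in the case $p>2$, shrinking $\bar\si$ (using $|k_\Ga|\neq 0$) also forces $0<c_1\le|k(t,x)|\le c_2$. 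These facts provide the base of the induction for $m\le 2$ and render all geometric coefficients bounded.

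For the inductive step I would differentiate $A_m$. From \Cref{lem:Variations} applied with variation field $V$ one has $\pa_t\,ds = -\langle k,V\rangle\,ds$, $\pa_t\tau = \na V$ and $\ga^\perp\pa_t k = \na^2 V + \langle V,k\rangle k + R(V,\tau)\tau$, while the commutators $[\pa_t,\na]$ on normal fields are given by \eqref{eq:V1}--\eqref{eq:V2}. Substituting $V$ shows that $k$ solves a fourth-order quasilinear parabolic equation whose principal part is $-\na^4 k$ when $p=2$ and, when $p>2$, the leading operator of \Cref{lem:FredholmnessP>2}, namely $-\na^2(|k|^{p-2}\na^2 k) - (p-2)\na^2(|k|^{p-4}\langle k,\na^2 k\rangle k)$; all other terms are of order at most three in $\na$, polynomial in $k$, with coefficients built from $R$, $B$ and their derivatives. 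Differentiating $A_m$ and integrating by parts with respect to $\na$ and $ds$ (using the integration-by-parts identity for normal fields recorded after the definition of $\na$) yields
\begin{equation*}
\frac{d}{dt}A_m + 2\int_{\S^1}|k|^{p-2}|\na^{m+2}k|^2\,ds = \int_{\S^1} P_m\,ds,
\end{equation*}
where for $p=2$ one reads $|k|^{p-2}\equiv 1$ and for $p>2$ the left-hand dissipation is bounded below by $2c_1^{p-2}\int|\na^{m+2}k|^2\,ds$; here $P_m$ is a finite sum of terms of schematic form $(\text{coeff})\cdot \na^{a_1}k\ast\cdots\ast\na^{a_r}k$ with $\sum_i a_i\le 2m+2$ and at most two factors of top order $m+2$, the coefficients being bounded functions of $k$, $R$, $B$ and their covariant derivatives.

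The heart of the argument is then to bound $\int P_m\,ds$. Using the Gagliardo--Nirenberg interpolation inequalities on the closed curve (whose length is comparable to that of $\Ga$), each such term is estimated by $\ep\int|\na^{m+2}k|^2\,ds$ plus a constant depending only on $\ep$, on the uniformly bounded norm $\|k\|_{W^{2,p}}$, and on the geometry $(\cU,\La)$; choosing $\ep$ small absorbs the top-order contributions into the dissipation. Combined with a further interpolation $A_m\le\ep'\int|\na^{m+2}k|^2\,ds + C_{\ep'}\|k\|_{L^2(ds)}^2$ and the uniform bound on $\|k\|_{L^2(ds)}$, this produces a differential inequality $\tfrac{d}{dt}A_m\le -c'A_m + C_m$ with $c'>0$ and $C_m$ depending only on the fixed data $(\|k_\Ga\|_{W^{2,2}},\bar\si,\cU,\La)$ and on the inductively controlled lower-order norms. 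Gronwall's lemma then gives $A_m(t)\le A_m(0) + C_m/c'$ uniformly in $t\in[0,\tau)$, and since $A_m(0)\le C\|k_{\hat\ga}(0,\cdot)\|_{W^{m,2}}^2$ the induction closes. Finally, summing over $j\le m$ and passing from $\na^j k$ in arclength to $\pa_x^j k_{\hat\ga}$ in the Lebesgue measure (which costs only powers of the constant speed and bounded geometric coefficients) gives the asserted estimate on $\|k_{\hat\ga}(t,\cdot)\|_{W^{m,2}}$.

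The main obstacle will be the derivation and bookkeeping of the remainder $P_m$: one must track exactly how many top-order factors ($\na^{m+2}k$ and $\na^{m+1}k$) occur in each contribution so as to guarantee that the Gagliardo--Nirenberg exponents are admissible and that the resulting top-order pieces carry an arbitrarily small prefactor. For $p>2$ this is aggravated by the nonlinear weight $|k|^{p-2}$, whose covariant derivatives must be expanded through $\na(|k|^{p-2}) = (p-2)|k|^{p-4}\langle k,\na k\rangle$ and iterated; here the uniform bounds $c_1\le|k|\le c_2$, available only because $|k_\Ga|$ never vanishes and $\bar\si$ is small, are indispensable both to keep the principal operator uniformly parabolic and to control these coefficient derivatives. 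Verifying that the degenerate weight does not obstruct the absorption is the technically delicate point.
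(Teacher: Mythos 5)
Your proposal is correct and follows essentially the same route as the paper's Appendix B: an energy estimate for the reparametrization-invariant quantities $\int|\na^m k|^2\,ds$ along the (normal-velocity) flow, extraction of the dissipation $\int|k|^{p-2}|\na^{m+2}k|^2\,ds$ bounded below via the non-vanishing of $|k|$ when $p>2$, absorption of the remainder through Gagliardo--Nirenberg interpolation with small $\ep$, induction on $m$, and a Gronwall/comparison argument. The only cosmetic difference is that the paper keeps the principal term in the quasilinear form $\int\langle\na^{m+2}k,\na^{m+2}(|k|^{p-2}k)\rangle\,ds$ and expands it by Leibniz, while you pre-expand the leading operator; the bookkeeping of near-top-order factors you flag as the delicate point is exactly what occupies the bulk of the paper's proof.
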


In \Cref{prop:StimaParabolica}, writing that a constant depends on an open set $\cU\con M$ is a shortcut for saying that such constant depends on the metric $g$ of $M$ on $\cU$, and thus on all the intrinsic geometric quantities depending on $g$ on $\cU$.

The proof of \Cref{prop:StimaParabolica} is a bit technical but based on classical arguments in the theory of geometric parabolic equations, and it is postponed to \Cref{app:StimaParabolica}.

\begin{remark}\label{rem:Interpolation}
	We recall separately some interpolation inequalities we shall employ.	
	For $\alpha\in(0,1)$ and $k\in\N$ with $k\ge1$ such that $s=\alpha k$ is not an integer, \cite[Example 5.15]{Lunardi} and \cite[Corollary 1.7]{Lunardi} imply that
	\[
	\|f\|_{W^{s,p}(\R)} \le c(\alpha,p,k) \|f\|^{1-\alpha}_{L^p(\R)}\|f\|^\alpha_{W^{k,p}(\R)},
	\]
	for any $f\in W^{k,p}(\R)$. By taking $\alpha\in(0,1)$ such that $\alpha k > k-1$ and it is not an integer, we have the inequality
	\[
	\|f\|_{W^{k-1,p}(\R)} \le c(\alpha,p,k) \|f\|^{1-\alpha}_{L^p(\R)}\|f\|^\alpha_{W^{k,p}(\R)}.
	\]
	Hence for a function $F\in W^{k,p}(\S^1;\R^n)$, using a continuous extension operator $T: W^{k,p}(\S^1;\R^n)\to  W^{k,p}(\R;\R^n)$ we deduce the inequality
	\begin{equation}\label{eq:Interpolation}
	\|F\|_{W^{k-1,p}(\S^1;\R^n)} \le c(\alpha,p,k) \|F\|^{1-\alpha}_{L^p(\S^1;\R^n)}\|F\|^\alpha_{W^{k,p}(\S^1;\R^n)}.
	\end{equation}
	Similarly, setting $k=1$ and thus $\alpha=s$ it holds that
	\begin{equation}\label{eq:Interpolation3}
	\|F\|_{W^{s,p}(\S^1;\R^n)} \le c(s,p) \|F\|^{1-s}_{L^p(\S^1;\R^n)}\|F\|^s_{W^{1,p}(\S^1;\R^n)},
	\end{equation}
	for any $F \in W^{1,p}(\S^1;\R^n)$.
	
	The same references also imply the following interpolation inequality. Let $k\in \N$ with $k\ge2$ and $0<\de<\be<1$; then there exists $\te'\in(0,1)$ such that
	\[
	\|f\|_{C^{k,\de}(\R)} \le c(k,\be,\ep,\te') \|f\|^{1-\te'}_{C^{k-2}(\R)} \|f\|^{\te'}_{C^{k,\be}(\R)},
	\]
	for any $f \in C^{k,\be}(\R)$. More precisely, we can choose $\te'=\frac{2+\de}{2+\be}$, so that $k+\de = (1-\te')(k-2) +\te'(k+\be)$. By suitable extension of a function $F\in C^{k,\be}(\S^1;\R^n)$, we have the inequality
	\begin{equation}\label{eq:Interpolation2}
	\|F\|_{C^{k,\de}(\S^1;\R^n)} \le c(k,\be,\ep,\te') \|F\|^{1-\te'}_{C^{k-2}(\S^1;\R^n)} \|F\|^{\te'}_{C^{k,\be}(\S^1;\R^n)}.
	\end{equation}
\end{remark}

\medskip

Let us adopt the following notation. If $\ga(t,\cdot)$ is some curve, we will denote by
\[
\cS(t,x)= \frac{2\pi}{L(\ga(t,\cdot))}\int_0^x |\pa_x\ga(t,y)|\,dy,
\]
so that the reparametrization
\[
\ga(t,\cS^{-1}(t,\cdot))
\]
is a constant speed curve.

\medskip

We are finally ready to prove the following theorem, which promotes sub-convergence to full convergence of the flow.

\begin{thm}\label{thm:ConvergenceManifoldsMAIN}
	Suppose that $(M,g)$ is an analytic complete Riemannian manifold endowed with an analytic metric tensor $g$. Let $p\ge2$ and suppose that $\ga:[0,+\infty)\times \S^1\to M$ is a smooth solution of \eqref{eq:DefnFlowPGenerico}. Suppose that there exist a sequence of isometries $I_n:M\to M$, a sequence of times $t_n\nearrow+\infty$, and a smooth critical point $\ga_\infty:\S^1\to M$ of $\sE$ such that
	\[
	I_n \circ \ga(t_n,\cS^{-1}(t_n,\cdot)) -  \ga_\infty (\cdot) \xrightarrow[n\to\infty]{} 0 \qquad \mbox{ in } C^{m}(\S^1),
	\]
	for any $m\in\N$. If $p>2$ assume also that  $|k_{\ga_\infty}(x)|\neq0$ for any $x$.
	
	Then the flow $\ga(t,\cdot)$ converges in $C^m(\S^1)$ to a critical point as $t\to+\infty$, for any $m$ and up to reparametrization.
\end{thm}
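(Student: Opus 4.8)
The plan is to transport to the present Riemannian, $p$-elastic setting the scheme sketched for $\R^n$ and $p=2$ in \Cref{sec:Rn}, the only genuinely new analytic point being the passage from the natural $L^2$ energy dissipation to the $L^{p'}$ norm appearing in the \L ojasiewicz--Simon inequality of \Cref{cor:FullLojasiewiczPGenerico}. First I would reduce to a flow that is already close to $\ga_\infty$. Since every term of \eqref{eq:DefnFlowPGenerico} (the quantities $k$, $\tau$, $R$, $\na$) is intrinsic to $(M,g)$, the flow is equivariant under isometries: for each fixed $n$ the curve $I_n\circ\ga(t_n+t,\cdot)$ is again a smooth solution of \eqref{eq:DefnFlowPGenerico}, and $I_n^{-1}$ maps critical points to critical points. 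Hence it suffices to prove convergence for $\be(t,\cdot)\coloneqq I_{n_0}\circ\ga(t_{n_0}+t,\cdot)$ for a single index $n_0$, chosen by the sub-convergence hypothesis so that the constant speed reparametrization $\hat\be(0,\cdot)=\be(0,\cS^{-1}(0,\cdot))$ satisfies $\|\hat\be(0,\cdot)-\ga_\infty\|_{C^m(\S^1)}\le\ep$, with $m\ge 8$ fixed and $\ep>0$ to be taken small. After relabelling I call this solution $\ga$ again, with constant speed reparametrization $\hat\ga$; by the geometric uniqueness up to reparametrization of \Cref{rem:Riparametrizzazioni} it coincides with the flow restarted from the constant speed datum $\hat\ga(0,\cdot)$ via \Cref{thm:LocalExistencePGenerico} (for $p>2$ the assumption $|k_{\ga_\infty}|\neq0$ guarantees $|k_{\hat\ga(0,\cdot)}|\neq0$ once $\ep$ is small). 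Using \Cref{lem:ReparametrizationManifold} I then let $T'\in(0,+\infty]$ be maximal such that for every $t\in[0,T')$ there is $\psi_t\in B_\si(0)\subset T(\ga_\infty)^{4,p}$ with $\hat\ga(t,\cdot)=\exp_{\ga_\infty}(\psi_t)$, where $\si$ is the radius of \Cref{cor:FullLojasiewiczPGenerico} at $\ga_\infty$.

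Assume by contradiction $T'<+\infty$. On $[0,T')$ the flow stays $W^{4,p}$-close to $\ga_\infty$, so I may apply \Cref{prop:StimaParabolica} with $\Ga=\ga_\infty$: shrinking $\si$ below the threshold $\bar\si(k_{\ga_\infty})$ there, I obtain $\|k_{\hat\ga}(t,\cdot)\|_{W^{m-2,2}}\le C(m,\ga_\infty)(1+\|k_{\hat\ga}(0,\cdot)\|_{W^{m-2,2}})$ for all $t\in[0,T')$, with a constant \emph{independent of} $\ep$. Since $\hat\ga(0,\cdot)$ is $\ep$-close to $\ga_\infty$ in $C^m$, Sobolev embedding converts this into the uniform bound $\sup_{[0,T')}\|\hat\ga(t,\cdot)-\ga_\infty\|_{C^{m-3}(\S^1)}\le C(\ga_\infty)$, again with $C$ not depending on $\ep$; this is the analogue of \eqref{eq:BoundEstimateRn}.

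The core step is the differential inequality. Along the flow $\pa_t\ga=-\na_{T(\ga)^{p'},T(\ga)^p}\bE$ is normal, so the energy identity reads $\tfrac{d}{dt}\sE(\hat\ga(t,\cdot))=\tfrac{d}{dt}\sE(\ga(t,\cdot))=-\|\pa_t^\perp\ga\|_{L^2(ds)}^2$, with $\|\pa_t^\perp\hat\ga\|_{L^2(ds_{\hat\ga})}=\|\pa_t\ga\|_{L^2(ds_\ga)}$ by reparametrization invariance. Setting $H(t)\coloneqq(\sE(\hat\ga(t,\cdot))-\sE(\ga_\infty))^\te$ with $\te$ the exponent of \Cref{cor:FullLojasiewiczPGenerico}, I bound $-H'=\te(\sE-\sE(\ga_\infty))^{\te-1}\|\pa_t^\perp\hat\ga\|_{L^2}^2\ge(\te/C)\,\|\pa_t^\perp\hat\ga\|_{L^2}^2/\|\na_{T(\ga_\infty)^{p'},T(\ga_\infty)^p}\bE(\psi_t)\|_{L^{p'}}$. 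The new point for $p>2$ is that $p'\le2$, so on a curve of bounded length $\|\,\cdot\,\|_{L^{p'}(ds)}\le L(\hat\ga)^{1/p'-1/2}\|\,\cdot\,\|_{L^2(ds)}$ by H\"older; since in the variation regime $L(\hat\ga(t,\cdot))$ stays close to $L(\ga_\infty)>0$, this yields $-H'(t)\ge C(\ga_\infty)\|\pa_t^\perp\hat\ga\|_{L^2(ds_{\hat\ga})}$. Integrating over $[0,T')$ gives $\int_0^{T'}\|\pa_t^\perp\hat\ga\|_{L^2}\,dt\le C^{-1}H(0)\le C(\sE(\hat\ga(0,\cdot))-\sE(\ga_\infty))^\te\le C\ep^\te$, using continuity of $\sE$ in $C^2$. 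As in the Euclidean sketch this controls the $L^2$ displacement and, fed back into the equation, shows the parametrization of $\ga$ does not degenerate, whence $\|\hat\ga(t,\cdot)-\ga_\infty\|_{L^2(dx)}\le C\ep^\te$ on $[0,T')$. Interpolating (\Cref{rem:Interpolation}) between this $L^2$ smallness and the uniform $C^{m-3}\supset C^5$ bound gives $\|\hat\ga(t,\cdot)-\ga_\infty\|_{W^{4,p}}\le C\ep^{\te(1-\alpha)}$ for some $\alpha\in(0,1)$; choosing $\ep$ small forces $\|\psi_t\|_{W^{4,p}}\le\tfrac12\si$ for all $t\in[0,T')$, contradicting the maximality of $T'$.

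Therefore $T'=+\infty$. In particular $\hat\ga(t,\cdot)$ remains in a fixed $W^{4,p}$-neighbourhood of $\ga_\infty$ for all time, and \Cref{prop:StimaParabolica} upgrades this to uniform bounds in every $C^m$, so the flow stays in a compact set. The velocity bound $\int_0^{+\infty}\|\pa_t^\perp\hat\ga\|_{L^2}\,dt<+\infty$ makes $t\mapsto\hat\ga(t,\cdot)$ Cauchy in $L^2$, hence convergent in $L^2$; interpolating once more against the uniform higher-order bounds promotes this to convergence in every $C^m$. The limit has vanishing first variation (the velocity tends to $0$ in $L^2$), so it is a smooth critical point of $\sE$; undoing $I_{n_0}^{-1}$ and the reparametrizations yields the claimed $C^m$-convergence of the original flow up to reparametrization. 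I expect the main obstacle to be the bookkeeping around reparametrization — showing that $\hat\ga$ and the actual flow share comparable, non-degenerating parametrizations so that the $L^2$-displacement estimate can be read back on $\hat\ga$ — together with verifying that the parabolic constant in the $C^{m-3}$ bound is genuinely independent of $\ep$; the $L^2\hookrightarrow L^{p'}$ interpolation, though the one feature special to $p>2$, is comparatively elementary.
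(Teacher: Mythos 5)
Your proposal is correct and, up to the final step, coincides with the paper's proof: restart the flow from $I_{n_0}\circ\ga(t_{n_0},\cdot)$ using equivariance under isometries and the uniqueness of \Cref{thm:LocalExistencePGenerico} and \Cref{rem:Riparametrizzazioni}; introduce the maximal time $T'$ during which the constant speed flow is a $W^{4,p}$-variation of $\ga_\infty$ of size $<\si$; and rule out $T'<+\infty$ by combining the $\ep$-independent parabolic bound of \Cref{prop:StimaParabolica}, the differential inequality $-H'\ge C\|\pa_t^\perp\hat\ga\|_{L^2}$ obtained from \Cref{cor:FullLojasiewiczPGenerico} together with the H\"older embedding $L^2(ds)\hookrightarrow L^{p'}(ds)$ on curves of controlled length (this is exactly the chain \eqref{eq:DifferentialInequalityPGenerico'}), the control of the reparametrization, and the interpolation of \Cref{rem:Interpolation}. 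The only genuine divergence is the endgame once $T'=+\infty$ is known. You deduce full convergence from $\int_0^{+\infty}\|\pa_t^\perp\hat\ga\|_{L^2}\,dt\le C\ep_0^{\te}<+\infty$ by a Cauchy argument in $L^2$ (properly applied to the unreparametrized flow $\tilde\ga$, whose velocity is purely normal, and then transported to $\hat\ga$ through the non-degenerating reparametrization), and you must then argue separately that the limit is critical; note that integrability of $\|\pa_t\tilde\ga\|_{L^2}$ alone does not give $\|\pa_t\tilde\ga\|_{L^2}\to0$, but the square-integrability of the dissipation plus the uniform smooth bounds and the continuity of the velocity as a function of the curve in $C^4$ close this easily. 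The paper instead restarts the flow at arbitrarily late times $t_l$, obtains $\|\hat\ga_l(t,\cdot)-\ga_\infty\|_{L^2}\le C\ep^{\te}$ for every $\ep$ and $l\ge n_\ep$, transfers these estimates back to the single flow through the isometries $I_{n_{\ep_0}}\circ I_l^{-1}$, and identifies the limit as $I_{n_{\ep_0}}\circ I^{-1}\circ\ga_\infty$ after extracting a uniformly convergent subsequence of isometries (Ascoli--Arzel\`a plus Myers--Steenrod). Your route is more direct; the paper's route exhibits the limit explicitly as an isometric image of $\ga_\infty$, so its criticality is automatic. Both are sound.
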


\begin{proof}
	In the following the constants may change from line to line and their dependence on universal parameters will be omitted. Let $m\ge 8$ be fixed. Let $\ep\in(0,1)$ that will be fixed later on. By hypothesis there exists $n_\ep\in \N$ such that
	\[
	\|I_{l}\circ \ga(t_{l},\cS^{-1}(t_{l},\cdot)) - \ga_\infty (\cdot)\|_{C^{m}(\S^1)} \le \ep \qquad\forall\,l\ge n_\ep.
	\]
	Let us rename $\ga_\ep(\cdot)\coloneqq I_{l}\circ  \ga(t_{l},\cS^{-1}(t_{l},\cdot))$ for a chosen $l \ge n_\ep$. If $p>2$, for $\ep$ small we can assume that $|k_{\ga_\ep}(x)|\neq0$ for any $x$. By \Cref{thm:LocalExistencePGenerico} there exists a solution $\tilde\ga:[0,T)\times \S^1\to M$ of
	\[
	\begin{cases}
	\pa_t \tilde\ga = -\left( \na_{s}^2 | k_{\tilde\ga}|^{p-2} k_{\tilde\ga} + \frac{1}{p'} |k_{\tilde\ga}|^p k_{\tilde\ga} -k_{\tilde\ga} + R(| k_{\tilde\ga}|^{p-2} k_{\tilde\ga},\tau_{\tilde\ga})\tau_{\tilde\ga} \right) & \mbox{ on } [0,T)\times \S^1,\\
	\tilde \ga(0,\cdot)=\ga_\ep(\cdot),
	\end{cases}
	\]
	for some $T>0$. Since $I_{n_\ep}$ is an isometry, we have that $\tilde \ga (t,x) = I_{n_\ep} \circ \ga(t_{n_\ep}+t,x)$ up to reparametrization. Hence, recalling \Cref{rem:Riparametrizzazioni}, as $\ga$ exists for any time by hypothesis, we get that $T=+\infty$.
	
	We denote by $\hat \ga$ the constant speed reparametrization of $\tilde \ga$.	
	For any $\ep$ sufficiently small, we can write $\hat\ga$ as a variation of $\ga_\infty$, at least for small times. More precisely, there is some $T'\in(0,+\infty]$ such that for any $t\in[0,T')$ there exists $\psi_t\in T(\ga_\infty)^{4,p}$ such that $\hat\ga(t,\cdot)= \exp\psi_t(x)$ and $\|\psi_t\|_{W^{4,p}}<\si$, with $\sigma$ is as in \Cref{cor:FullLojasiewiczPGenerico} applied to $\ga_\infty$; the existence of $\psi_t$ follows as in \Cref{lem:ReparametrizationManifold} as the exponential map restricted to tangent vectors with small norm along $\gamma_\infty$ parametrizes a neighborhood of $\gamma_\infty$.
	We assume that $T'$ is the maximal time such that $\hat\ga$ can be written in such a way with fields $\psi_t$ with $\|\psi_t\|_{W^{4,p}}<\si$.
	
	Suppose by contradiction that $T'<+\infty$.	Up to choosing a smaller $\si=\si(\ga_\infty)$, we can apply \Cref{prop:StimaParabolica} with $\Ga=\ga_\infty$ and $\tau=T'$ on the flow $\tilde \ga$. In the notation on \Cref{prop:StimaParabolica}, we obtain
	\[
	\begin{split}
	 \sup_{[0,T')} \|\hat \ga(t,\cdot) - \ga_\ep\|_{C^{m-3}(\S^1)}
	 &\le \sup_{[0,T')} c \|\hat \ga(t,\cdot) - \ga_\ep(\cdot)\|_{W^{m-2,2}(\S^1)} \\
	 & \le \sup_{[0,T')} c\|\hat \ga(t,\cdot) - \ga_\ep\|_{C^1(\S^1)} + C(\si(\ga_\infty)) \|k_{\hat\ga}(t,\cdot) - k_{\ga_\ep}\|_{W^{m-4,2}(\S^1)}
	 \\
	 & \le \sup_{[0,T')} c\|\hat{\ga}(t,\cdot) - \ga_\infty\|_{C^1(\S^1)} + c\|\ga_\ep - \ga_\infty\|_{C^1(\S^1)} +\\
	 &\qquad\qquad + C(\si(\ga_\infty))( \|k_{\hat\ga}(t,\cdot)\|_{W^{m-4,2}(\S^1)} + \|k_{\ga_\ep}\|_{W^{m-4,2}(\S^1)} )
	 \\
	 &\le C( \si(\ga_\infty)) + c\ep +C(m,\|k_{\ga_\infty}\|_{W^{2,2}}, \si (\ga_\infty) , \cU, \La ) (1 + \|k_{\ga_\ep}\|_{W^{m-2,2}})
	 \\
	 &\le
	 C(m,\ga_\infty, \cU, \La ) (1 + \|k_{\ga_\ep}\|_{W^{m-2,2}}) \\
	 &\le C(m,\ga_\infty, \cU, \La) ( 1 + \ep + \|k_{\ga_\infty}\|_{C^{m-2}(\S^1)} ) \\
	 &\le C(m,\ga_\infty,  \cU, \La).
	\end{split}
	\]
	Also, $\cU$ and $\La$ only depend on $\ga_\infty$ and $\si=\si(\ga_\infty)$, then the above estimate becomes
	\[
	\sup_{[0,T')} \|\hat \ga(t,\cdot) - \ga_\ep(\cdot)\|_{C^{m-3}(\S^1)} \le C(m,\ga_\infty),
	\]
	and we observe that the constant on the right hand side is independent of $\ep$.
	By triangular inequality we also have
	\begin{equation}\label{eq:mStima}
	\sup_{[0,T')} \|\hat \ga(t,\cdot) - \ga_\infty(\cdot)\|_{C^{m-3}(\S^1)} \le  C(m,\ga_\infty).
	\end{equation}
	Now define
	\[
	H(t)= (\sE(\hat\ga(t,\cdot)) - \sE(\ga_\infty) )^\te,
	\]
	where $\te$ is the \L ojasiewicz-Simon exponent of \Cref{cor:FullLojasiewiczPGenerico}. Observe that since $\sE(\ga_0)\ge \sE(\ga_\infty)$, by uniqueness of the flow we also have that $\sE(\hat\ga(t,\cdot))\ge \sE(\ga_\infty)$ for any $t$, and we can also assume that $0<\sE(\hat\ga(t,\cdot)) - \sE(\ga_\infty)<1$ without loss of generality. In particular $H$ is well defined and positive.
	
	By \Cref{rem:Riparametrizzazioni} we have that $\pa^\perp_t \hat\ga = -|\pa_x\hat\ga|^{-1}\nabla_{T(\hat\ga)^{p'},T(\hat\ga)^p}E(0)$. Using \Cref{cor:FullLojasiewiczPGenerico} get that
	\begin{equation}\label{eq:DifferentialInequalityPGenerico'}
	\begin{split}
	-\frac{d}{dt}H(t) &= - \te H^{\frac{\te-1}{\te}}(t) \scal{ \nabla_{T(\hat\ga)^{p'},T(\hat\ga)^p}E(0) , \pa^\perp_t\hat\ga   }_{L^{p'}(dx),L^{p}(dx)}\\
	&= \te H^{\frac{\te-1}{\te}}(t) \|\pa^\perp_t\hat\ga\|^2_{L^2(ds_{\hat\ga})} \\
	&\ge C(L(\hat\ga)) \te H^{\frac{\te-1}{\te}}(t) \|\pa^\perp_t\hat\ga\|_{L^{p'}(ds_{\hat\ga})}  \|\pa^\perp_t\hat\ga\|_{L^2(ds_{\hat\ga})}\\
	&\ge C(L(\hat\ga),\ga_\infty)\te H^{\frac{\te-1}{\te}}(t) \|\nabla_{T( \ga_\infty)^{p'},T(\ga_\infty)^p} \bE (\psi_t)\|_{L^{p'}(ds_{\hat\ga})} \|\pa^\perp_t\hat\ga\|_{L^{2}(ds_{\hat\ga})}  \\
	& \ge C(L(\hat\ga),\ga_\infty) \te H^{\frac{\te-1}{\te}}(t) |\bE (\psi_t) -\bE(0)|^{1-\te} \|\pa^\perp_t\hat\ga\|_{L^{2}(ds_{\hat\ga})} \\
	&= C(L(\hat\ga),\ga_\infty) \te H^{\frac{\te-1}{\te}}(t) |\sE(\hat\ga(t,\cdot))-\sE(\ga_\infty)|^{1-\te} \|\pa^\perp_t\hat\ga\|_{L^{2}(ds_{\hat\ga})} \\
	&= C(L(\hat\ga),\ga_\infty) \te\|\pa^\perp_t\hat\ga\|_{L^{2}(ds_{\hat\ga})} ,
	\end{split}
	\end{equation}
	for $t\in[0,T')$.
	
	Now let us write more explicitly $\hat\ga$ as $\hat \ga (t,x)= \tilde \ga(t,\chi(t,x))$, where $\chi(t,\cdot)$ is the inverse of
	\[
	x\mapsto \frac{2\pi}{L(\tilde \ga(t,\cdot))}\int_0^xds_{\tilde \ga},
	\]
	and $|\pa_x\tilde\ga|(0,x)=|\pa_x\hat\ga|(0,x)=\frac{L(\ga_\ep)}{2\pi}$, as the initial datum $\ga_\ep$ is parametrized with constant speed.
	
	Using \eqref{eq:DifferentialInequalityPGenerico'} we have that
	\[
	\begin{split}
		\left| \int_0^x ds_{\tilde\ga} - \frac{L(\ga_\ep)}{2\pi}x \right|
		&= \left| \int_0^t \pa_t \int_0^x  ds_{\tilde\ga} \,dt \right| 
		\le \int_0^t \left| \int_0^x \scal{k_{\tilde \ga}, \pa_t\tilde{ \ga}}\,ds_{\tilde\ga} \right|\,dt
		\le  C(\ga_\infty) \int_0^t \|\pa_t\tilde\ga\|_{L^2(ds_{\tilde\ga})} \, dt \\
		&=  C(\ga_\infty) \int_0^t \|\pa^\perp_t\hat\ga\|_{L^2(ds_{\hat\ga})} \, dt \le C(\ga_\infty) H(0) \le C(\ga_\infty) \|\ga_\ep-\ga_\infty\|_{C^2}^\theta\le C(\ga_\infty)\ep^\te,
	\end{split}
	\]
	for any $t<T'$ and any $x\in \S^1$. It follows that $\Big| \tfrac{2\pi}{L(\tilde \ga)}\int_0^xds_{\tilde\ga}-x \Big|\le C(\ga_\infty)\ep^\te$ as well. If $\ep$ is sufficiently small, we deduce that
	\begin{equation}\label{eq:BoundParametrizzazione}
		|\chi(t,x)-x|\le C(\ga_\infty)\ep^\te,
	\end{equation}
	for any $t<T'$ and any $x\in \S^1$.
	
	Therefore, writing $\tilde\ga(t,y)=\hat\ga(t,\vp(t,y))$ and letting $\tilde\ga_\infty\coloneqq\ga_\infty(t,\vp(t,y))$, we have
	\[
	\begin{split}
		\|\hat\ga(t,\cdot)- \ga_\infty\|^2_{L^2(dx)}  
		&\le C(\ga_\infty) \|\hat\ga(t,\cdot)- \ga_\infty\|^2_{L^2(ds_{\hat\ga})} = \int |\tilde\ga(t,y)-\ga_\infty(\vp(t,y))|^2\,ds_{\tilde\ga}\\
		&\le C(\ga_\infty) \int |\tilde{\ga}(t,y)-\tilde{ \ga}(0,y)|^2\,ds_{\tilde{ \ga}} 
		+ |\tilde{ \ga}(0,y)-\tilde\ga_\infty(t,y)|^2 \,ds_{\tilde{ \ga}} .
	\end{split}
	\]
	We estimate the two terms above as
	\[
	\begin{split}
		\left(\int |\tilde{\ga}(t,y)-\tilde{ \ga}(0,y)|^2\,ds_{\tilde{ \ga}} \right)^{\frac12} 
		&\le \int_0^t \| \pa_t \tilde{ \ga}\|_{L^2(ds_{\tilde\ga})} \, dt = \int_0^t \|\pa_t^\perp \hat \ga \|_{L^2(ds_{\hat\ga})} \, dt 
		\le C(\ga_\infty)H(0) \le C(\ga_\infty)\ep^\te,
	\end{split}
	\]
	and
	\[
	\begin{split}
		\int |\tilde{ \ga}(0,y)-\tilde\ga_\infty(t,y)|^2 \,ds_{\tilde{ \ga}} 
		&= \int |\ga_\ep(y)-\tilde\ga_\infty(t,y)|^2 \,ds_{\tilde{ \ga}} 
		= \int |\ga_\ep(\chi(t,x)) - \ga_\infty (x) |^2 \, ds_{\hat\ga}\\
		& \le 2 \int |\ga_\ep(\chi(t,x)) - \ga_\ep (x) |^2 
		+ |\ga_\ep(x)- \ga_\infty (x) |^2 \, ds_{\hat\ga} \\
		&\le C(\ga_\infty) \left(
		(\Lip(\ga_\ep))^2 \int_{\S^1} |\chi(t,x)-x|^2\,dx + \|\ga_\ep-\ga_\infty\|_{\infty}^2
		\right)	\\
		&\le C(\ga_\infty)\ep^{2\te},
	\end{split}
	\]
	where we used \eqref{eq:BoundParametrizzazione}. It follows that
	\begin{equation}\label{eq:Stima}
	\|\hat\ga(t,\cdot)- \ga_\infty\|_{L^2(dx)} \le C(\ga_\infty)\ep^\te,	
	\end{equation}
	for any $t<T'$.
	%
	%
	%
	%
	By \eqref{eq:Interpolation} and \eqref{eq:Interpolation3}, using that for $s\in(\tfrac12,1)$ we have the continuous embeddings $W^{s,2}\hookrightarrow C^{0,\alpha'}\hookrightarrow L^p$, for some $\alpha\in(0,1)$ we can write
	\begin{equation*}
	\begin{split}	
	\|\hat\ga(t,\cdot)-\ga_\infty\|_{W^{4,p}}
	&\le C  \|\hat\ga(t,\cdot)- \ga_\infty\|_{C^5}^{\alpha} \|\hat\ga(t,\cdot)- \ga_\infty\|_{L^p(dx)}^{1-\alpha} \\
	&\le C  \|\hat\ga(t,\cdot)- \ga_\infty\|_{C^5}^{\alpha}
	\|\hat\ga(t,\cdot)- \ga_\infty\|_{W^{s,2}(dx)}^{1-\alpha} \\
	&\le C  \|\hat\ga(t,\cdot)- \ga_\infty\|_{C^5}^{\alpha}
	\left(
	C\|\hat\ga(t,\cdot)- \ga_\infty\|_{L^2(dx)}^{1-s} \|\hat\ga(t,\cdot)- \ga_\infty\|_{W^{1,2}(dx)}^s	
	\right)^{1-\alpha}\\
	& \le  C(\ga_\infty)  \ep^{\te(1-s)(1-\alpha)}
	\end{split}
	\end{equation*}
	for $t\in[0,T')$, where in the last inequality we used 	\eqref{eq:mStima} and \eqref{eq:Stima}. This means that
	\[
	\|\exp(\psi_t)- \ga_\infty\|_{W^{4,p}} \le C(\ga_\infty)  \ep^{\te(1-s)(1-\alpha)},
	\]
	for $t\in[0,T')$, and if $\ep$ is sufficiently small this implies that $\|\psi_t\|_{W^{4,p}}\le \tfrac12\si$ for any $t\in[0,T')$, contradicting the maximality of $T'$.
	
	\medskip
	
	Hence we proved that for some now fixed $\ep_0>0$, for any $l \ge n_{\ep_0}$ the constant speed parametrized flow $\hat\ga_l$ starting at $\ga_l\coloneqq I_l\circ \ga(t_l,\cS^{-1}(t_l,\cdot)))$ exists for any time and it can be written as a variation of $\ga_\infty$ with uniformly bounded fields. In particular the evolution $\hat\ga_l$ stays in the compact set $\cU$ for any $t$ for any $l\ge n_{\ep_0}$. Similarly, it follows that the original flow $\ga$ definitely remains in $I_{n_{\ep_0}}^{-1}(\cU)\eqqcolon\cV$.
	
	For $l\ge n_{\ep_0}$ consider the sequence of isometries $I_l:\cV\to\cU$. By Ascoli-Arzelà Theorem, up to subsequence we have that $I_l$ uniformly converges to a map $I:\cV\to \cU$, which is still an isometry, and thus it is smooth by Myers-Steenrod Theorem. Moreover, observe that by compactness there exists a constant $\De>0$ such that
	\[
	|x-y| \le d_{(M,g)}(x,y) \le \De |x-y|,
	\]
	for any couple $x,y\in \cU$ or $x,y\in \cV$, where $d_{(M,g)}$ is the geodesic distance on $M$.
	
	Let us call $\hat\ga_0\coloneqq \hat\ga_{n_{\ep_0}}$.	
	By uniqueness and \Cref{rem:Riparametrizzazioni} we have that
	\[
	\hat \ga_0 = I_{n_{\ep_0}} \circ I_l^{-1} \circ \hat \ga_l,
	\]
	up to a translation in time depending on $l$, for any $l>n_{\ep_0}$.
	
	We know that for any $\ep\in(0,\ep_0)$ and any $l\ge n_\ep$ the flow $\hat\ga_l$ verifies that
	\[
	\|\hat\ga_l(t,\cdot) - \ga_\infty \|_{L^2(dx)} \le C(\ga_\infty)\ep^\te,
	\]
	for any $t>0$. Indeed, this follows from \eqref{eq:Stima} applied on the flow $\hat\ga_l$.
	Therefore we have
	\[
	\| \hat\ga_0(t+T_l,\cdot) - I_{n_{\ep_0}} \circ I_l^{-1} \circ \ga_\infty \|_{L^2(dx)} 
	= \| I_{n_{\ep_0}} \circ I_l^{-1} \circ \hat \ga_l(t,\cdot)  - I_{n_{\ep_0}} \circ I_l^{-1} \circ \ga_\infty \|_{L^2(dx)}
	\le \bar{C} \ep^\te, 
	\]
	for any $\ep$, $l\ge n_\ep$, and any $t>0$, where $\bar{C}=\bar{C}(\ga_\infty,\De)$ and $T_l$ depends on $l$. Also, we have that
	\[
	I_{n_{\ep_0}} \circ I_l^{-1} \circ \ga_\infty \xrightarrow[l]{}
	I_{n_{\ep_0}} \circ I^{-1} \circ \ga_\infty,
	\]
	uniformly on $\S^1$, and then in $L^2(dx)$. Hence, finally, for any given $\bar\ep$ we can set $\ep=(\bar{C}\bar\ep)^{\frac1\te}$ and take $l\ge n_\ep$ such that $\|I_{n_{\ep_0}} \circ I_l^{-1} \circ \ga_\infty 
	- I_{n_{\ep_0}} \circ I^{-1} \circ \ga_\infty \|_{L^2(dx)} \le \bar \ep$, and we obtain
	\[
	\begin{split}
		\| \hat\ga_0 (t,\cdot) &- I_{n_{\ep_0}} \circ I^{-1} \circ \ga_\infty \|_{L^2(dx)}  \le\\
		& \le \| \hat\ga_0 (t,\cdot) - I_{n_{\ep_0}} \circ I_l^{-1} \circ \ga_\infty  \|_{L^2(dx)} 
		+
		\|I_{n_{\ep_0}} \circ I_l^{-1} \circ \ga_\infty 
		 - I_{n_{\ep_0}} \circ I^{-1} \circ \ga_\infty \|_{L^2(dx)} \\
		 & \le 2\bar \ep ,
	\end{split}
	\]
	for any $t> T_l$. This implies that
	\[
	\exists\, \lim_{t\to+\infty} \hat\ga_0(t,\cdot) = I_{n_{\ep_0}} \circ I^{-1} \circ \ga_\infty \eqqcolon \bar \ga \qquad\quad \mbox{ in } L^2(dx).
	\]
	Now we can use \eqref{eq:Interpolation} to get that
	\[
	\|\hat\ga_0(t,\cdot)-\bar\ga\|_{W^{m-4,2}}
	\le C \|\hat\ga_0(t,\cdot)-\bar\ga\|_{C^{m-3}}^{\alpha} \|\hat\ga_0(t,\cdot)-\bar\ga\|_{L^2(dx)}^{1-\alpha},
	\]
	and using \eqref{eq:mStima} we see that $\hat\ga_0 \to \bar \ga$ in $W^{m-4,2}$. Taking higher $m$ and interpolating using \eqref{eq:Interpolation2}, one gets that $\hat\ga_0$ converge smoothly.
	Finally, since the original flow $\ga$ is a reparametrization of $I_{n_{\ep_0}}^{-1}\circ \hat\ga_0$, it smoothly converges as desired, up to reparametrization.	
\end{proof}

\bigskip

\appendix

\section{Proof of \Cref{prop:SecondVariation}}

Here we prove the general formula of the second variation of the $p$-elastic energy.
We remark that we need to keep track of every term in the final formulas \eqref{eq:SecVarTotale} and \eqref{eq:SecondVarTotale} as we need to recall the precise expression of the second variation a couple of times.

\begin{proof}[Proof of \Cref{prop:SecondVariation}]
	Denoting by $\ga_\ep(\cdot)=\Phi(\ep,\cdot)$, by $\Phi$ the variation of $\ga$ with variation field $\vp$, and by $\cT:T_{\ga(x)}M\to T_{\ga_\ep(x)}M$ the map $\cT\psi=d[\exp_{\ga(x)}]_{\ep\vp}(\psi)$, we have that
	\[
	\cL(\vp)[\psi]=  \frac{d}{d\ep}\bigg|_0 \int \left\lgl 
	|k_{\ga_\ep}|^{p-2}k_{\ga_\ep}
	, \na_{{\ga_\ep}}^2( \ga_\ep^\perp (\cT\psi)) \right\rgl
	+ \left\lgl
	\frac{1}{p'}|k_{\ga_\ep}|^pk_{\ga_\ep} - k_{\ga_\ep} +R(|k_{\ga_\ep}|^{p-2}k_{\ga_\ep},\tau_{\ga_\ep})\tau_{\ga_\ep} 
	, \cT\psi 
	\right\rgl \, ds_{\ga_\ep}.
	\]
	Now we calculate term by term the above identity.
	Using \eqref{eq:V2} twice we have that
	\[
	\begin{split}
	(M^\top-\ga^\top)\frac{d}{d\ep}\bigg|_0 &\left( \na_{{\ga_\ep}}^2( \ga_\ep^\perp (\cT\psi))  \right)
	= \na \left( (M^\top-\ga^\top)\frac{d}{d\ep}\bigg|_0 \na_{{\ga_\ep}} (\ga_\ep^\perp (\cT\psi)) \right) +\\
	&\indent -\scal{\na\psi,\na\vp}k + \scal{k,\vp}\na^2\psi + \scal{\na\psi,k}\na\vp +\ga^\perp R(\vp,\tau)\na\psi\\
	&= \na\left( \na (M^\top-\ga^\top)\frac{d}{d\ep}\bigg|_0 (\ga_\ep^\perp (\cT\psi)) \right) +\\
	&\indent + \na\left[ -\scal{\psi,\na\vp}k + \scal{k,\vp}\na\psi + \scal{\psi,k}\na\vp +\ga^\perp R(\vp,\tau)\psi  \right] +\\
	&\indent -\scal{\na\psi,\na\vp}k + \scal{k,\vp}\na^2\psi + \scal{\na\psi,k}\na\vp +\ga^\perp R(\vp,\tau)\na\psi.
	\end{split}
	\]
	We can compute
	\begin{equation}\label{eq:J1}
	\begin{split}
	(M^\top-\ga^\top)\frac{d}{d\ep}\bigg|_0 (\ga_\ep^\perp (\cT\psi))
	&=  (M^\top-\ga^\top)\frac{d}{d\ep}\bigg|_0 \left( d[\exp_{\ga(x)}]_{\ep\vp}(\psi) - \scal{d[\exp_{\ga(x)}]_{\ep\vp}(\psi),\tau_{\ga_\ep}}\tau_{\ga_\ep }  \right)\\
	&=  (M^\top-\ga^\top)\frac{d}{d\ep}\bigg|_0  d[\exp_{\ga(x)}]_{\ep\vp}(\psi),
	\end{split}
	\end{equation}
	where we used that $\scal{d[\exp_{\ga(x)}]_{\ep\vp}(\psi),\tau_{\ga_\ep}}\big|_0= \scal{d[\exp_{\ga(x)}]_{0}(\psi),\tau_{\ga}}=\scal{\psi,\tau}=0$. Now the field
	\[
	\ep \mapsto J(\ep)= d[\exp_{\ga(x)}]_{\ep\vp}(\ep\psi)
	\]
	is a Jacobi field along the geodesic $\si_\vp$ such that $\si_\vp(0)=\ga(x)$, $\si_\vp'(0)=\vp$, $J(0)=0$, and $J'(0)=\psi$ (see \cite[Chapter 5, Corollary 2.5]{DoCarmo}). Then
	\begin{equation}\label{eq:J2}
	\frac{d}{d \ep}\bigg|_0
	d[\exp_{\ga(x)}]_{\ep\vp}(\psi) =
	\frac{d}{d \ep}\bigg|_0\left(
	\frac1\ep d[\exp_{\ga(x)}]_{\ep\vp}(\ep\psi)
	\right) 
	= \frac{d}{d \ep}\bigg|_0\left(
	\frac1\ep J(\ep)
	\right).
	\end{equation}
	We claim that $\tfrac{d}{d \ep}\bigg|_0\left(\frac1\ep J(\ep)\right)=0$ for any $x$. In fact let $\{E_i(\cdot)\}$ be an orthonormal parallel frame along $\si_\vp$, and write $J(\ep)=J^i(\ep)E_i(\ep)$. The Jacobi Equation (see \cite[Chapter 5, Definition 2.1]{DoCarmo}) for $J$ then reads
	\[
	(J^i)''(\ep) E_i(\ep) + R(J^i(\ep)E_i(\ep),\si_\vp'(\ep))\si_\vp'(\ep)=0.
	\]
	Therefore since $J^i(\cdot)$ is of class $C^2$ with $J^i(0)=0$ we conclude that
	\begin{equation}\label{eq:J3}
	\frac{d}{d \ep}\bigg|_0\left(\frac1\ep J(\ep)\right) =  \frac{d}{d \ep}\bigg|_0\left( \frac{J^i(\ep)}{\ep} \right) E_i(0) = \frac12 (J^i)''(0) E_i(0) = - R(J(0),\vp(x))\vp(x) =0,
	\end{equation}
	and thus
	\begin{equation}\label{eq:Jacobi0}
	(M^\top-\ga^\top)\frac{d}{d\ep}\bigg|_0 (\ga_\ep^\perp (\cT\psi))=0.
	\end{equation}
	
	Eventually we deduce that
	\begin{equation}\label{eq:SecondVar1}
	\begin{split}
	(M^\top-\ga^\top)\frac{d}{d\ep}\bigg|_0 &\left( \na_{{\ga_\ep}}^2( \ga_\ep^\perp (\cT\psi))  \right)
	= \\
	&= \na\left[ -\scal{\psi,\na\vp}k + \scal{k,\vp}\na\psi + \scal{\psi,k}\na\vp +\ga^\perp R(\vp,\tau)\psi  \right] +\\
	&\indent -\scal{\na\psi,\na\vp}k + \scal{k,\vp}\na^2\psi + \scal{\na\psi,k}\na\vp +\ga^\perp R(\vp,\tau)\na\psi.
	\end{split}
	\end{equation}
	
	On the other hand we have that
	\begin{equation}\label{eq:SecondVar2}
	\begin{split}
	(M^\top-\ga^\top)&\frac{d}{d\ep}\bigg|_0 \left(|k_{\ga_\ep}|^{p-2}k_{\ga_\ep}\right)
	=\\
	&= (M^\top-\ga^\top) \bigg( (p-2)|k|^{p-4}\scal{k
		, \na^2\vp + \scal{\vp,k}k +\\
		&\indent + R(\vp,\tau)\tau} k 
	+ |k|^{p-2}  ( \na^2\vp + \scal{\vp,k}k + R(\vp,\tau)\tau )  \bigg) \\
	&=  |k|^{p-2}\na^2\vp + |k|^{p-2}\scal{\vp,k}k +\\
	&\indent + |k|^{p-2}\ga^\perp R(\vp,\tau)\tau  
	+(p-2)\Big( |k|^{p-4}\scal{k,\na^2\vp}k + |k|^{p-2}\scal{\vp,k} k +\\
	&\indent + |k|^{p-4}\scal{k, R(\vp,\tau)\tau}k   \Big) \\
	&=  |k|^{p-2}\na^2\vp 
	+(p-2) |k|^{p-4}\scal{k,\na^2\vp}k +\\
	&\indent + |k|^{p-2} R(\vp,\tau)\tau 
	+ (p-2)|k|^{p-4}\scal{k, R(\vp,\tau)\tau}k +\\
	&\indent +(p-1) |k|^{p-2}\scal{\vp,k} k,
	\end{split}
	\end{equation}
	where we used that $R(\vp,\tau)\tau \in T\ga^\perp$.
	Similarly
	\begin{equation}\label{eq:SecondVar3}
	\begin{split}
	(M^\top-\ga^\top)\frac{d}{d\ep}\bigg|_0 \left(|k_{\ga_\ep}|^{p}k_{\ga_\ep}\right) 
	&= p|k|^{p-2}\scal{\na^2\vp,k}k + p|k|^p\scal{\vp,k}k +\\
	&\indent +p|k|^{p-2} \scal{R(\vp,\tau)\tau,k} k +\\
	&\indent + |k|^p\left( \na^2\vp + \scal{\vp,k}k + \ga^\perp R(\vp,\tau)\tau  \right) \\
	&= |k|^p \na^2\vp  +  p|k|^{p-2}\scal{\na^2\vp,k}k +\\
	&\indent +p|k|^{p-2} \scal{R(\vp,\tau)\tau,k} k 
	+ |k|^p R(\vp,\tau)\tau +\\
	&\indent +(p+1)|k|^p\scal{\vp,k}k.
	\end{split}
	\end{equation}
	
	Also, we already know that
	\begin{equation}\label{eq:SecondVar4}
	(M^\top-\ga^\top)\frac{d}{d\ep}\bigg|_0 (-k_{\ga_\ep}) = - \left( \na^2\vp + \scal{\vp,k}k + R(\vp,\tau)\tau  \right).
	\end{equation}
	
	Finally, since
	\[
	\scal{R(|k_{\ga_\ep}|^{p-2}k_{\ga_\ep},\tau_{\ga_\ep })\tau_{\ga_\ep },\ga_\ep^\perp \cT\psi } =\scal{ R(\ga_\ep^\perp \cT\psi, \tau_{\ga_\ep}  )\tau_{\ga_\ep} , |k_{\ga_\ep}|^{p-2}k_{\ga_\ep} },
	\]
	calling $\cR$ the tensor $\cR(X,Y,Z)=R(X,Y)Z$ we have that
	\begin{equation*}
	\begin{split}
	\frac{d}{d\ep}\bigg|_0  \scal{R(|k_{\ga_\ep}|^{p-2}k_{\ga_\ep},\tau_{\ga_\ep })\tau_{\ga_\ep }&,\ga_\ep^\perp \cT\psi } 
	=\\
	&= \scal{ (D_\vp \cR)(\psi,\tau,\tau) + \cR(\pa_\ep|_0 (\ga_\ep^\perp\cT\psi),\tau,\tau) +\\
		&\indent+\cR(\psi,\na\vp,\tau) 
		+ \cR(\psi,\tau,\na\vp), |k|^{p-2}k } +\\
	&\indent +\scal{R(\psi,\tau)\tau, (M^\top-\ga^\top) \pa_\ep|_0 (|k_{\ga_\ep}|^{p-2}k_{\ga_\ep})}.
	\end{split}
	\end{equation*}
	
	Using \eqref{eq:J1}, \eqref{eq:J2}, and \eqref{eq:J3} we see that
	\[
	\frac{d}{d\ep}\bigg|_0 \ga_\ep^\perp \cT\psi= -\scal{\psi,\na\vp}\tau.
	\]
	
	Therefore
	\begin{equation}\label{eq:SecondVar5}
	\begin{split}
	\frac{d}{d\ep}\bigg|_0  \scal{R(|k_{\ga_\ep}|^{p-2}k_{\ga_\ep},\tau_{\ga_\ep })\tau_{\ga_\ep },\ga_\ep^\perp \cT\psi } 
	&= \scal{  (D_\vp \cR)(\psi,\tau,\tau) + R(\psi,\na\vp)\tau 
		+ R(\psi,\tau)\na\vp, |k|^{p-2}k  } +\\
	&\indent +\big\lgl
	R(\psi,\tau)\tau \,,\, 
	|k|^{p-2}\na^2\vp  +(p-2) |k|^{p-4}\scal{k,\na^2\vp}k +\\
	&\indent + |k|^{p-2} R(\vp,\tau)\tau 
	+ (p-2)|k|^{p-4}\scal{k, R(\vp,\tau)\tau}k +\\
	&\indent +(p-1) |k|^{p-2}\scal{\vp,k} k
	\big\rgl .
	\end{split}
	\end{equation}

	Putting together \eqref{eq:SecondVar1}, \eqref{eq:SecondVar2}, \eqref{eq:SecondVar3}, \eqref{eq:SecondVar4}, and \eqref{eq:SecondVar5} we conclude that
	\[
	\cL(\vp)[\psi] = I_1+I_2+I_3+I_4+I_5,
	\]
	where
	\[
	\begin{split}
	I_1 &= \int \bigg\lgl |k|^{p-2}k, 
	\na\left[ -\scal{\psi,\na\vp}k + \scal{k,\vp}\na\psi + \scal{\psi,k}\na\vp +\ga^\perp R(\vp,\tau)\psi  \right] +\\
	&\indent -\scal{\na\psi,\na\vp}k + \scal{k,\vp}\na^2\psi +\\
	&\indent + \scal{\na\psi,k}\na\vp +\ga^\perp R(\vp,\tau)\na\psi
	\bigg\rgl\,ds_\ga,
	\end{split}
	\]
	\[
	\begin{split}
	I_2 &= \int 
	\bigg\lgl 
	|k|^{p-2}\na^2\vp 
	+(p-2) |k|^{p-4}\scal{k,\na^2\vp}k +\\
	&\indent + |k|^{p-2} R(\vp,\tau)\tau 
	+ (p-2)|k|^{p-4}\scal{k, R(\vp,\tau)\tau}k +\\
	&\indent +(p-1) |k|^{p-2}\scal{\vp,k} k
	, \na^2\psi \bigg\rgl \,ds_\ga,
	\end{split}
	\]
	\[
	\begin{split}
	I_3 &= \int \bigg\lgl
	|k|^p \na^2\vp  +  p|k|^{p-2}\scal{\na^2\vp,k}k +\\
	&\indent +p|k|^{p-2} \scal{R(\vp,\tau)\tau,k} k 
	+ |k|^p R(\vp,\tau)\tau +\\
	&\indent +(p+1)|k|^p\scal{\vp,k}k
	- \left( \na^2\vp + \scal{\vp,k}k + R(\vp,\tau)\tau  \right)
	,\psi
	\bigg\rgl\,ds_\ga,
	\end{split}
	\]
	\[
	\begin{split}
	I_4 &= \int  \left\lgl   (D_\vp \cR)(\psi,\tau,\tau) + R(\psi,\na\vp)\tau + R(\psi,\tau)\na\vp, |k|^{p-2}k  \right\rgl +\\
	&\indent +\bigg\lgl
	R(\psi,\tau)\tau, 
	|k|^{p-2}\na^2\vp 
	+(p-2) |k|^{p-4}\scal{k,\na^2\vp}k +\\
	&\indent + |k|^{p-2} R(\vp,\tau)\tau 
	+ (p-2)|k|^{p-4}\scal{k, R(\vp,\tau)\tau}k +\\
	&\indent +(p-1) |k|^{p-2}\scal{\vp,k} k
	\bigg\rgl \,ds_\ga ,
	\end{split}
	\]
	\[
	\begin{split}
	I_5 &= - \int \left(\left\lgl |k|^{p-2}k, \na^2 \psi \right\rgl 
	+ \left\lgl \frac{1}{p'}|k|^p k -k + R(|k|^{p-2}k,\tau)\tau , \psi   \right\rgl\right) \scal{k,\vp}\,ds_\ga.
	\end{split}
	\]
	Integrating by parts and rearranging the terms we end up with
	\begin{equation}\label{eq:SecVarTotale}
	\begin{split}
	\cL(\vp)[\psi]
	&=
	\int 
	\scal{k,\vp}\scal{|k|^{p-2}k,\na^2\psi}
	+ \bigg\lgl
	|k|^{p-2}\na^2\vp
	+(p-2) |k|^{p-4}\scal{k,\na^2\vp}k +\\
	&\indent + |k|^{p-2} R(\vp,\tau)\tau 
	+ (p-2)|k|^{p-4}\scal{k, R(\vp,\tau)\tau}k
	,\na^2\psi
	\bigg\rgl\,ds_\ga +\\
	&+\int \big\lgl 
	-\na|k|^{p-2}k,
	\scal{k,\vp}\na\psi 
	\big\rgl
	-(p-1)\big\lgl
	\na(\scal{\vp,k}|k|^{p-2}k)
	,\na\psi
	\big\rgl
	\,ds_\ga +\\
	&+\int \big\lgl
	-\na|k|^{p-2}k,
	R(\vp,\tau)\psi
	\big\rgl 
	+ \scal{ |k|^{p-2}k,R(\vp,\tau)\na\psi }
	+ \\
	&\indent +  \left\lgl   (D_\vp \cR)(\psi,\tau,\tau) + R(\psi,\na\vp)\tau + R(\psi,\tau)\na\vp, |k|^{p-2}k  \right\rgl 
	+\bigg\lgl
	R(\psi,\tau)\tau, 
	|k|^{p-2}\na^2\vp +\\
	&\indent +(p-2) |k|^{p-4}\scal{k,\na^2\vp}k
	+ |k|^{p-2} R(\vp,\tau)\tau 
	+ (p-2)|k|^{p-4}\scal{k, R(\vp,\tau)\tau}k +\\
	&\indent +(p-1) |k|^{p-2}\scal{\vp,k} k
	\bigg\rgl
	\,ds_\ga +\\
	&+\int \bigg\lgl
	-\na|k|^{p-2}k, 
	-\scal{\psi,\na\vp}k 
	+\scal{\psi,k}\na\vp
	\bigg\rgl
	+ \bigg\lgl \na(|k|^p\na\vp) +\\
	&\indent -\na(\scal{|k|^{p-2}k,\na\vp}k)
	+
	|k|^p \na^2\vp  +  p|k|^{p-2}\scal{\na^2\vp,k}k +\\
	&\indent +p|k|^{p-2} \scal{R(\vp,\tau)\tau,k} k 
	+ |k|^p R(\vp,\tau)\tau 
	+(p+1)|k|^p\scal{\vp,k}k +\\
	&\indent - \left( \na^2\vp + \scal{\vp,k}k + R(\vp,\tau)\tau  \right)
	,\psi
	\bigg\rgl
	\,ds_\ga +\\
	&-  \int \left(\left\lgl |k|^{p-2}k, \na^2 \psi \right\rgl 
	+ \left\lgl \frac{1}{p'}|k|^p k -k + R(|k|^{p-2}k,\tau)\tau , \psi   \right\rgl\right) \scal{k,\vp}\,ds_\ga,
	\end{split}
	\end{equation}
	that is
	\begin{equation}\label{eq:SecondVarTotale}
	\begin{split}
	\cL(\vp)[\psi]
	&=
	\int 
	\scal{k,\vp}\scal{|k|^{p-2}k,\na^2\psi}
	+ \bigg\lgl
	|k|^{p-2}\na^2\vp
	+(p-2) |k|^{p-4}\scal{k,\na^2\vp}k +\\
	&\indent + |k|^{p-2} R(\vp,\tau)\tau 
	+ (p-2)|k|^{p-4}\scal{k, R(\vp,\tau)\tau}k
	,\na^2\psi
	\bigg\rgl\,ds_\ga +\\
	&+\int \big\lgl 
	-\na|k|^{p-2}k,
	\scal{k,\vp}\na\psi 
	\big\rgl
	-(p-1)\big\lgl
	\na(\scal{\vp,k}|k|^{p-2}k)
	,\na\psi
	\big\rgl
	\,ds_\ga +\\
	&+\int \big\lgl
	-\na|k|^{p-2}k,
	R(\vp,\tau)\psi
	\big\rgl 
	+ \scal{ |k|^{p-2}k,R(\vp,\tau)\na\psi }
	+ \\
	&\indent +  \left\lgl   (D_\vp \cR)(\psi,\tau,\tau) + R(\psi,\na\vp)\tau + R(\psi,\tau)\na\vp, |k|^{p-2}k  \right\rgl 
	+\bigg\lgl
	R(\psi,\tau)\tau, 
	|k|^{p-2}\na^2\vp +\\
	&\indent +(p-2) |k|^{p-4}\scal{k,\na^2\vp}k
	+ |k|^{p-2} R(\vp,\tau)\tau 
	+ (p-2)|k|^{p-4}\scal{k, R(\vp,\tau)\tau}k +\\
	&\indent +(p-1) |k|^{p-2}\scal{\vp,k} k
	\bigg\rgl
	\,ds_\ga +\\
	&+\int \scal{\Om(\vp),\psi}\,ds_\ga +\\
	&-  \int \left(\left\lgl |k|^{p-2}k, \na^2 \psi \right\rgl 
	+ \left\lgl \frac{1}{p'}|k|^p k -k + R(|k|^{p-2}k,\tau)\tau , \psi   \right\rgl\right) \scal{k,\vp}\,ds_\ga,
	\end{split}
	\end{equation}
	where $\Om:T(\ga)^{4,p,\perp}\to T(\ga)^{p,\perp}$ is a compact operator, indeed, comparing \eqref{eq:SecVarTotale} and \eqref{eq:SecondVarTotale}, one sees that $\Omega(\varphi)$ contains at most second order derivatives of $\varphi \in T(\ga)^{4,p,\perp}$. Hence by Sobolev embeddings, $\Omega$ is compact as a functional from $T(\ga)^{4,p,\perp}$ to $T(\ga)^{p,\perp}$. Therefore the statement follows.
\end{proof}

\section{Proof of \Cref{prop:StimaParabolica}}\label{app:StimaParabolica}

Throughout this section we assume the hypotheses of \Cref{prop:StimaParabolica}. More precisely we consider $\ga:[0,\tau)\times \S^1 \to M$ to be a smooth solution of \eqref{eq:DefnFlowPGenerico}, and $\Ga:\S^1\to M$ is a fixed smooth curve parametrized with constant speed. We let $\hat \ga$ be the constant speed reparametrization of $\ga$. We assume that $\bar{\si}>0$ is such that
\[
\|\hat \ga(t,\cdot) - \Ga \|_{W^{4,p}} \le \bar{\si},
\]
for any $t\in[0,\tau)$. From now on we denote by $k=k(t,x)$ and $\hat k = \hat k(t,x)$ the curvature of $\ga(t,\cdot)$ and $\hat\ga(t,\cdot)$ at the point $x$ respectively. We are going to prove that if $\bar{\si}$ is small enough, we can bound the Sobolev norm $\|\hat k\|_{W^{m,2}}$ uniformly in time in terms of the initial datum and a suitable constant.
We will deal here only with the case $p>2$. The very same argument can be replicated for the case $p=2$, and the calculations become even simpler. Therefore we also assume that $|k_{\Ga}(x)|\neq0$ for any $x$.

\medskip

We assume that $\bar{\si}>0$ is small enough so that $\|\hat \ga(t,\cdot) -\Ga\|_{C^{3}}$ is so small that
\begin{equation}\label{eq:StimeIpotesi}
|k(t,x)|\ge c>0,
\end{equation}
for any $t\in[0,\tau)$ and $x$, where $c=c(\bar{\si},k_\Ga)$. Observe that therefore the choice of $\bar\si$ only depends on the curvature $k_\Ga$ of $\Ga$. Moreover, by assumptions there exists a bounded neighborhood $\cU=\cU(\bar{\si})$ of $\Ga$ in $M$ such that the flow $\ga$ is contained in $\cU$ for every time. We then let $\La\coloneqq \sup_{x\in\cU} |B_x|$, where $|B|$ is the norm of the second fundamental form of $M\hookrightarrow\R^n$. In the forthcoming constants denoted by the capital letter $C$ we will usually omit the dependence on $\bar{\si}$, $\sE(\Ga)$, $\cU$, $\La$, and a chosen index $m\in\N$.

\medskip

Let us introduce the so called scale invariant norms on $k$. We let
\[
\|k\|_{n,q} \coloneqq \sum_{j=0}^n \|\na^j k \|_q,
\]
where
\[
\|\na^j k \|_q \coloneqq L(\ga(t,\cdot))^{j+1-\frac1q} \left( \int |\na^j k|^q\,ds \right)^{\frac1q}.
\]
As the integrand is a geometric quantity integrated with respect to arclegth, one can verify that
\[
\|k\|_{n,q}= \|\hat k \|_{n,q}.
\]

First we need to recall a few facts about these norms and some interpolation inequalities.

\begin{lemma}\label{lem:StimeKuwert}
	Let $\ga(t,x)$ be as above.
	\begin{enumerate}
		\item\label{it:SobolevEquiv} For any $n\in\N$ and $q\in[1,\infty)$, it holds that
		\begin{equation}\label{eq:SobolevEquiv}
			\frac1C \|k\|_{n,q} \le \|\hat k\|_{W^{n,q}} \le C \|k\|_{n,q}.
		\end{equation}
		\item\label{it:2.17} For any $n\in\N$ it holds that
		\begin{equation}\label{eq:2.17}
		\|k\|_{n,2}^2 \le c(n) (\|\na^nk\|_2^2 + \|k\|_2^2).
		\end{equation}
		\item\label{it:2.14} For any $n\in\N$, $q\ge2$, and $0\le i < n$, it holds that
		\begin{equation}\label{eq:2.14}
			\|\na^i k \|_q \le C(q,n) \|k\|_2^{1-\alpha} \|k\|_{n,2}^\alpha,
		\end{equation}
		with $\alpha=\tfrac1n\left(i+\tfrac12-\tfrac1q\right)$.
		\item\label{it:2.16} Suppose $\nu\ge2$ and $1\le i_1\le ... \le i_\nu\le n-1$ are integers. Let $\mu= \sum_{j=1}^\nu i_j$. If $\mu+\tfrac\nu2 < 2n+1$, then for any $\ep>0$ it holds that
		\begin{equation}\label{eq:2.16}
			\int \Pi_{j=1}^\nu |\na^{i_j} k | \le \ep \int |\na^n k|^2\,ds + C(\ep).
		\end{equation}
	\end{enumerate}
\end{lemma}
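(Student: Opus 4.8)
The plan is to treat the norm equivalence \eqref{eq:SobolevEquiv} first, and then to deduce the three interpolation statements \eqref{eq:2.17}, \eqref{eq:2.14}, \eqref{eq:2.16} from the classical scale-invariant estimates of Dziuk--Kuwert--Sch\"atzle \cite{DzKuSc02}, the only additional input being the uniform bounds on the geometry of $M$ over $\cU$.

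For \eqref{eq:SobolevEquiv} I would use that $\hat\ga$ is parametrized with constant speed, so that $|\pa_x\hat\ga|\equiv L/2\pi$ with $L=L(\ga(t,\cdot))$, whence $\pa_s=\tfrac{2\pi}{L}\pa_x$ and $ds=\tfrac{L}{2\pi}\,dx$. The weights $L^{j+1-1/q}$ in the definition of $\|k\|_{n,q}$ are designed exactly so as to cancel the powers of $L$ produced when converting $\pa_s$ into $\pa_x$ and $ds$ into $dx$; this is what makes $\|k\|_{n,q}=\|\hat k\|_{n,q}$ a reparametrization invariant quantity in the first place. The only genuine discrepancy between $\|k\|_{n,q}$ and $\|\hat k\|_{W^{n,q}}$ is therefore the difference between the normal connection $\na$ and the flat derivative $\pa_s$. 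Writing $\na\phi=M^\top\pa_s\phi-\scal{\pa_s\phi,\tau}\tau$ and iterating, one obtains by induction an expansion $\pa_s^j\hat k=\na^j\hat k+P_j$, where $P_j$ is a universal sum of products of the second fundamental form $B$ of $M\hookrightarrow\R^n$, of $\tau$, and of the $\na$-derivatives of $k$ of order strictly less than $j$. Each such term carries the correct scaling and involves only lower-order derivatives, so it is absorbed by the lower-order summands of $\|k\|_{n,q}$; the bounded factors $|B|\le\La$ and $|k|\le C$ (guaranteed by $\|\hat\ga-\Ga\|_{W^{4,p}}\le\bar\si$ and \eqref{eq:StimeIpotesi}) keep the constants under control. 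Running the induction both ways yields the two-sided bound \eqref{eq:SobolevEquiv}.

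With \eqref{eq:SobolevEquiv} at hand, the inequalities \eqref{eq:2.17}, \eqref{eq:2.14}, \eqref{eq:2.16} are exactly the scale-invariant interpolation inequalities for closed curves proved in \cite{DzKuSc02}: \eqref{eq:2.14} is a Gagliardo--Nirenberg estimate on $\S^1$ (transported from $\pa_s$ to $\na$ via \eqref{eq:SobolevEquiv}), with the exponent $\al=\tfrac1n(i+\tfrac12-\tfrac1q)$ dictated by scaling; \eqref{eq:2.17} follows from \eqref{eq:2.14} with $q=2$, $\al=i/n$, together with Young's inequality applied to each intermediate term in the form $\|\na^ik\|_2^2\le\ep\|k\|_{n,2}^2+C(\ep)\|k\|_2^2$ and a subsequent absorption of the $\ep$-term; and \eqref{eq:2.16} follows by iterating \eqref{eq:2.14} on each factor $|\na^{i_j}k|$, estimating the product by H\"older's inequality, and invoking the hypothesis $\mu+\tfrac\nu2<2n+1$ to guarantee that the total power of $\|k\|_{n,2}$ stays strictly below $2$, so that Young's inequality places the top-order norm $\|\na^nk\|_2$ with the prescribed small coefficient $\ep$ and relegates everything else to $C(\ep)$.

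I expect the only real obstacle to be \eqref{eq:SobolevEquiv}, namely the bookkeeping in the inductive expansion of $\na^j$ versus $\pa_s^j$: in the manifold case the presence of the projection $M^\top$ and of the shape operators $S_{N_j}$ produces additional lower-order terms (as already seen in the computations of \Cref{lem:FredholmnessP2}), and one must verify that each of them carries the correct power of $L$ and involves only strictly lower-order derivatives of $k$, so that they can be absorbed uniformly in time. Once this scale invariance and the uniform geometric bounds over $\cU$ are in place, the interpolation inequalities \eqref{eq:2.17}--\eqref{eq:2.16} become purely one-dimensional and reduce essentially verbatim to \cite{DzKuSc02}.
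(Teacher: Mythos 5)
Your proposal is correct and follows essentially the same route as the paper: the two-sided bound \eqref{eq:SobolevEquiv} via the inductive expansion of $\na^j$ versus $\pa_s^j$ with bounded tensor corrections on $\cU$, the Gagliardo--Nirenberg/Aubin-type estimate for \eqref{eq:2.14}, and the reduction of \eqref{eq:2.16} to H\"older plus \eqref{eq:2.14} and \eqref{eq:2.17} as in \cite{DzKuSc02}. The only cosmetic difference is that you derive \eqref{eq:2.17} from \eqref{eq:2.14} with Young's inequality and absorption, whereas the paper proves it directly by integration by parts and induction; both are valid and standard.
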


\begin{proof}
	We prove the items separately.
		\begin{enumerate}
			\item[(\ref{it:SobolevEquiv})] If $\phi$ is a smooth normal field along $\ga$ on $M$, it holds that
			\begin{equation}\label{eq:DerivateNormali}
			\na^n\phi = \pa_s^n\phi + \sum_{j=0}^{n-1} L_j^n(\pa_s^j \phi),
			\end{equation}
			for any $n$, where $L_j^n$ is a smooth tensor defined on $\cU$, and $\|L_j^n\| \le C(n,\bar{\si},k_\Ga,\cU,\La)$. Indeed the equality immediately follows for $n=1$, and then
			\[
			\na^{n+1}\phi = \pa_s \na^n\phi - \scal{ \pa_s \na^n\phi,\tau}\tau - \scal{ \pa_s \na^n\phi, N_j} N_j  
			=\pa_s \na^n\phi + \scal{  \na^n\phi,\pa_s\tau}\tau - \scal{  \na^n\phi, \pa_sN_j} N_j ,
			\]
			where $\{N_j\}$ is a locally defined orthonormal frame of $M$, and then \eqref{eq:DerivateNormali} follows by induction on $n$.
			
			Now we consider \eqref{eq:DerivateNormali} with $\phi=k$. Observe that for any $n\in\N$ we have
			\[
			\int |\pa_{s_\ga}^n k|^q\,ds_\ga = \int |\pa_{s_{\hat \ga}}^n \hat k|^q\,ds_{\hat \ga}.
			\]
			Since $0<L(\Ga)-\eta\le L(\ga) \le L(\Ga)+\eta$ for $\bar \si$ small, we get that \eqref{eq:SobolevEquiv} follows from \eqref{eq:DerivateNormali} by induction on $n$ using that $\hat\ga$ is parametrized with constant speed.	
			\item[(\ref{it:2.17})] By scaling invariance, we can assume $L(\ga(t,\cdot))=1$. For $n=2$ the inequality follows from $\|\na k\|_2^2 = -\int  \scal{k,\na^2k}\,ds$; for higher $n$ the inequality follows by induction.
			\item[(\ref{it:2.14})] By scaling invariance, we can assume that $L(\ga(t,\cdot))=1$ and also that $\ga(t,\cdot)$ is arclength parametrized. Then since $|\pa_s|\phi||\le |\na \phi|$ for any normal field $\phi$ along $\ga$ on $M$, the standard proof of \cite[Theorem 3.70]{Aubin} applies.
			\item[(\ref{it:2.16})] \Cref{eq:2.16} follows by the very same arguments leading to \cite[(2.16)]{DzKuSc02}, observing that the proof only relies on H\"{o}lder's inequality, on \eqref{eq:2.17}, and on \eqref{eq:2.14}. The constant $C(\ep)$ appearing on the right hand side of \eqref{eq:2.16} depends on $\ep$ and we omitted dependence on $\int |k|^2\,ds$ as it is estimated in terms of $\sE(\Ga)$ and $\bar{\si}$.
		\end{enumerate}
\end{proof}

By \Cref{it:SobolevEquiv} in \Cref{lem:StimeKuwert}, our aim is then to bound the norms $\|\na^m k\|_2$ uniformly in time for any $m\in \N$, as this will imply \Cref{prop:StimaParabolica}.

Let us denote by $V$ the velocity of the flow $\ga$, that is
\[
V = -\na^2(|k|^{p-2}k) - \frac{1}{p'} |k|^pk + k - R(|k|^{p-2}k,\tau)\tau.
\]
We denote $\pa_t^\perp \coloneqq \ga^\perp \pa_t $, where recall that $\ga^\perp = M^\top-\ga^\top$. First of all, we need to derive the evolution equations for the derivatives of the curvature.

\begin{lemma}\label{lem:Evolution}
	Let $\ga(t,x)$ be as above.
	\begin{enumerate}
		\item\label{it:EvolutionK} It holds that
		\begin{equation}\label{eq:EvolutionK}
		\pa_t^\perp k = \na^2 V +\scal{V,k}k +R(V,\tau)\tau.
		\end{equation}
		\item\label{it:Commutatore} For any smooth normal field $\phi$ along $\ga$ on $M$ it holds that
		\begin{equation}\label{eq:Commutatore}
			\pa_t^\perp \na \phi  -  \na \pa_t^\perp \phi = \scal{\phi,k}\na V + \scal{k,V} \na\phi -\scal{\phi,\na V} k + R(V,\tau)\phi.
		\end{equation}
		\item\label{it:EvolutionDerivate} For any $m\in\N$, denoting by $\phi_m=\na^mk$, we have that
		\begin{equation}\label{eq:EvolutionDerivate}
			\begin{split}
				\pa_t^\perp \phi_m + \na^{m+4}(|k|^{p-2}k) 
				&=
				\na^{m+2} \left( - \frac{1}{p'} |k|^pk + k - R(|k|^{p-2}k,\tau)\tau \right) +\\
				&\indent + \na^m \left( \scal{V,k}k + R(V,\tau)\tau \right) 
				+ \sum_{j=0}^{m-1} \na^{m-1-j} (X(\phi_j)),
			\end{split}
		\end{equation}
		where
		\[
		X(\phi) = \scal{\phi,k}\na V + \scal{k,V} \na\phi -\scal{\phi,\na V} k + R(V,\tau)\phi,
		\]
		for any smooth normal field $\phi$ along $\ga$ on $M$.
	\end{enumerate}
\end{lemma}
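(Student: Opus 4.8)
The plan is to recognize all three identities as consequences of the variational computations already in hand: \Cref{lem:Variations} and equation \eqref{eq:V2}, applied with the flow velocity $V=\pa_t\ga$ in the role of the variation field. The point is that, freezing the flow at a time $t_0$ and regarding $\ep\mapsto\ga(t_0+\ep,\cdot)$ as a variation of $\ga(t_0,\cdot)$ in the sense of \Cref{def:Variation}, its variation field is exactly $\pa_t\ga(t_0,\cdot)=V$. Since the first variation of a geometric quantity depends only on the variation field, every formula of \Cref{lem:Variations} transfers verbatim with $\pa_t^\perp=\ga^\perp\pa_t$ replacing $\ga^\perp\frac{d}{d\ep}\big|_0$. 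One first checks that $V$ is a genuine normal field: the terms $\na^2(|k|^{p-2}k)$, $|k|^pk$ and $k$ are normal, and $R(|k|^{p-2}k,\tau)\tau$ lies in $TM$ and is orthogonal to $\tau$ by the antisymmetry $\scal{R(X,\tau)\tau,\tau}=-\scal{R(X,\tau)\tau,\tau}=0$, hence $V\in T\ga^\perp$ and normal variations are admissible.

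For \eqref{eq:EvolutionK} I would simply invoke \eqref{eq:VarKPerp} with $\si=\ga$ and $\vp=V$, which yields $\pa_t^\perp k=\na^2V+\scal{V,k}k+R(V,\tau)\tau$ at once. For the commutator \eqref{eq:Commutatore} I would apply \eqref{eq:V2} with variation field $\vp=V$ and auxiliary field $V(\ep)=\phi$, reading $\frac{d}{d\ep}\big|_0$ as $\pa_t$. The left-hand side becomes $\pa_t^\perp\na\phi$, while the term $\ga^\perp\pa_s(\ga^\perp\pa_t\phi)$ on the right is precisely $\na\pa_t^\perp\phi$, since $\na$ acts on normal fields as $\ga^\perp\pa_s$ and $\pa_t^\perp\phi=\ga^\perp\pa_t\phi$ is normal. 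The remaining summands reproduce $\scal{\phi,k}\na V+\scal{k,V}\na\phi-\scal{\phi,\na V}k$, and the curvature term $(\id-\ga^\top)R(V,\tau)\phi$ equals $\ga^\perp R(V,\tau)\phi$ because $R$ is $TM$-valued; one then reads \eqref{eq:Commutatore} as an identity between normal fields, so that $R(V,\tau)\phi$ denotes its normal-to-$\ga$ projection.

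Finally, \eqref{eq:EvolutionDerivate} would be established by induction on $m$. The base case $m=0$ is \eqref{eq:EvolutionK} combined with the definition of $V$: substituting $\na^2V=-\na^4(|k|^{p-2}k)+\na^2\big(-\tfrac{1}{p'}|k|^pk+k-R(|k|^{p-2}k,\tau)\tau\big)$ into \eqref{eq:EvolutionK} and moving the top-order term to the left gives exactly the $m=0$ instance, with empty sum. For the inductive step I would apply $\na$ to the equation for $m$ and use \eqref{eq:Commutatore} with $\phi=\phi_m$ to write $\pa_t^\perp\phi_{m+1}=\pa_t^\perp\na\phi_m=\na\pa_t^\perp\phi_m+X(\phi_m)$. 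Applying $\na$ to every term of the $m$-th equation raises each $\na$-exponent by one and turns $\sum_{j=0}^{m-1}\na^{m-1-j}(X(\phi_j))$ into $\sum_{j=0}^{m-1}\na^{m-j}(X(\phi_j))$; absorbing the extra $X(\phi_m)=\na^0X(\phi_m)$ produces $\sum_{j=0}^{m}\na^{(m+1)-1-j}(X(\phi_j))$, which is the claimed sum at level $m+1$, and the remaining terms match after the index shift.

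The computations are routine once \eqref{eq:VarKPerp} and \eqref{eq:V2} are quoted with $\vp=V$. I expect the only genuine obstacle to be notational rather than analytic: keeping the normal and tangential projections consistent in the commutator — in particular the convention of reading \eqref{eq:Commutatore} as an identity of normal fields — and carefully re-indexing the telescoping sum in the inductive step so that the $\na$-exponents line up correctly.
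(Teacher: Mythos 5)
Your proposal is correct, and for items \eqref{it:EvolutionK} and \eqref{it:EvolutionDerivate} it coincides with the paper's argument: \eqref{eq:EvolutionK} is read off from \eqref{eq:VarKPerp} applied to the flow (whose velocity $V$ is indeed a normal field), and \eqref{eq:EvolutionDerivate} is obtained by the same induction, applying $\na$ to the level-$m$ identity and absorbing the commutator error $X(\phi_m)$ into the sum; your index bookkeeping and the base case check out. The only genuine divergence is item \eqref{it:Commutatore}: the paper proves the commutator by a direct computation in a local orthonormal frame $\{N_j\}$, using $\pa_t\pa_s=\pa_s\pa_t+\scal{k,V}\pa_s$, the decomposition of $\pa_t\phi$ into its normal part and the components along $\tau$ and $N_j$, the identity $\pa_t^\perp\tau=\na V$ from \eqref{eq:Vartauperp}, and the Gauss equation; you instead recycle \eqref{eq:V2} with $\vp=V$ and $V(\ep)=\phi$. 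Your route is legitimate and shorter, but it rests on a point you correctly flag and should make explicit: \eqref{eq:V2} is stated for the exponential-map variation of \Cref{lem:Variation}, whereas the flow is not of that form, so one must observe that the proof of \eqref{eq:V2} uses only the zeroth- and first-order data of the variation (namely $\ga$, the variation field, $V(0)$ and $\pa_\ep|_0 V(\ep)$), hence the formula holds for any variation with normal variation field. With that remark, and with the convention (implicit in the paper as well) that $R(V,\tau)\phi$ in \eqref{eq:Commutatore} denotes its projection $(\id-\ga^\top)R(V,\tau)\phi$ onto the normal bundle of $\ga$, the two arguments prove the same identity; the paper's version is self-contained at the cost of repeating the frame computation, while yours buys brevity by reusing the variational machinery.
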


\begin{proof}
	We prove the items separately.
	\begin{enumerate}
		\item[(\ref{it:EvolutionK})] \Cref{eq:EvolutionK} follows from \eqref{eq:VarKPerp}.
		\item[(\ref{it:Commutatore})] For $\phi,\psi$ normal along $\ga$ on $M$ we compute
		\[
		\begin{split}
			\scal{\pa_t^\perp \na\phi, \psi} 
			&= 
			\scal{\pa_t ( \pa_s \phi + \scal{\phi,k}\tau + \scal{\phi,\pa_s N_j}N_j), \psi},
		\end{split}
		\]
		where $\{N_j\}$ is a locally defined orthonormal frame of $M$. Using that $\pa_t\pa_s = \pa_s\pa_t +\scal{k,V}\pa_s$, writing
		\[
		\pa_t\phi = \pa_t^\perp \phi - \scal{\phi,\pa_t^\perp \tau}\tau - \scal{\phi,\pa_t^\perp N_j}N_j,
		\]
		using that $\pa_t^\perp \tau = \na V$ by \eqref{eq:Vartauperp} and using Gauss equation, one obtains \eqref{eq:Commutatore}.		
		\item[(\ref{it:EvolutionDerivate})] We know that \eqref{eq:EvolutionDerivate} holds for $m=0$ by \eqref{eq:EvolutionK}. The cases $m\ge1$ then easily follow by induction on $m$ using \eqref{eq:Commutatore}.
	\end{enumerate}
\end{proof}

We introduce the following notation. If $\phi_1,...,\phi_r$ are vector fields along $\ga$, we denote by
\[
\phi_1*...*\phi_r,
\]
a generic contraction of the given fields by some tensor whose norm is locally bounded on $M$. The outcome of the contraction may be a vector or a scalar function. In particular we have that $|\phi_1*...*\phi_r| \le C |\phi_1|...|\phi_r|$ on $\cU$ for some constant $C$ clearly depending also on the specific tensor.

We need a few last inequalities and then we will be able to prove the desired bounds using the evolution equations \eqref{eq:EvolutionDerivate}.

\begin{lemma}\label{lem:AltreStime}
	Let $\ga(t,x)$ be as above.
	\begin{enumerate}
		\item\label{it:Bound1}
		Let $n\in \N$ with $n\ge1$ and $j\in\{1,...,n\}$. For any $\ep>0$ it holds that
		\begin{equation}\label{eq:Bound1}
			\int |\na^{n-j} k |^2\,ds \le \ep\|\na^nk\|_2^2 + C(\ep).
		\end{equation}
		\item\label{it:Bound2}
		Let $n\in \N$ with $n\ge1$ and $j\in\{1,...,n\}$. For any $\ep>0$ it holds that
		\begin{equation}\label{eq:Bound2}
		\int |\na^{n-j} k ||\na^n k|\,ds \le \ep\|\na^nk\|_2^2 + C(\ep).
		\end{equation}
		\item\label{it:DerivatePotenzaK}
		For any $q\in\R$ and $L\in \N$ with $L\ge1$ it holds that
		\begin{equation}
		\begin{split}
			\pa_s^L |k|^q &= 
			C_{q,L} |k|^{q-2L}\scal{k,\na k}^L + \sum_{\stackrel{l_1+...+l_\lambda=L}{1\le l_1\le...\le l_\lambda<L}}
			 C_{\bar l}|k|^{q_{\bar l}} k * ... * k * \na^{l_1} k * ... * \na^{l_\lambda}k +\\
				&\indent + q|k|^{q-2}\scal{k,\na^Lk},
		\end{split}
		\end{equation}
		where $C_{q,L}\in\R$ and we denoted by $\bar l$ the array $(l_1,...,l_\lambda)$, and $C_{\bar l}$, $q_{\bar l}$ are some numbers depending on $\bar l$.
	\end{enumerate}
\end{lemma}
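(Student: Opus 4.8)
The three assertions are of genuinely different nature, and I would organize the proof accordingly. First let me record the standing reduction used throughout: since $\|\hat\ga(t,\cdot)-\Ga\|_{W^{4,p}}\le\bar\si$ forces $L(\ga(t,\cdot))$ to stay in a fixed neighborhood of $L(\Ga)$, the scale-invariant norms $\|\cdot\|_{n,q}$ and the ordinary Sobolev norms are comparable up to constants depending only on the admissible data (cf. \eqref{eq:SobolevEquiv}); hence I may freely pass between $\|\na^n k\|_2$ and $\bigl(\int|\na^n k|^2\,ds\bigr)^{1/2}$. For the first inequality \eqref{eq:Bound1} with $1\le j\le n-1$, the plan is to write the left-hand side as $\int|\na^{n-j}k|\,|\na^{n-j}k|\,ds$ and apply \eqref{eq:2.16} with $\nu=2$ and $i_1=i_2=n-j$: the indices lie in $\{1,\dots,n-1\}$, and the admissibility condition $\mu+\tfrac\nu2=2(n-j)+1<2n+1$ holds precisely because $j\ge1$, yielding \eqref{eq:Bound1}. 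The remaining case $j=n$ reduces to $\int|k|^2\,ds\le C$, which follows from the energy bound $\int|k|^p\,ds\le p\,\sE(\ga(t,\cdot))\le C(\sE(\Ga),\bar\si)$ together with H\"older's inequality, and is then trivially dominated by $\ep\|\na^n k\|_2^2+C(\ep)$.

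The second inequality \eqref{eq:Bound2} I would deduce from \eqref{eq:Bound1} by Young's inequality. Indeed, for $\de>0$,
\[
\int|\na^{n-j}k|\,|\na^n k|\,ds\le\frac\de2\int|\na^n k|^2\,ds+\frac1{2\de}\int|\na^{n-j}k|^2\,ds,
\]
and estimating the last integral by \eqref{eq:Bound1} with a small parameter $\ep'$ gives $\tfrac\de2\|\na^n k\|_2^2+\tfrac1{2\de}\bigl(\ep'\|\na^n k\|_2^2+C(\ep')\bigr)$; choosing first $\de$ and then $\ep'$ small enough that $\tfrac\de2+\tfrac{\ep'}{2\de}\le\ep$ yields \eqref{eq:Bound2}.

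For the last item I would argue by induction on $L$, using two exact differentiation rules. The first is $\pa_s|k|^a=a|k|^{a-2}\scal{k,\na k}$, which follows from $\pa_s|k|^2=2\scal{\pa_s k,k}=2\scal{\na k,k}$, valid because $\scal{\pa_s\phi,\phi}=\scal{\na\phi,\phi}$ for every normal field $\phi$ along $\ga$ (the tangential and ambient-normal parts of $\pa_s\phi$ die upon contraction with $\phi$). The same mechanism gives the crucial second rule: for the normal fields $\na^a k,\na^b k$,
\[
\pa_s\scal{\na^a k,\na^b k}=\scal{\na^{a+1}k,\na^b k}+\scal{\na^a k,\na^{b+1}k},
\]
since $\scal{\pa_s\na^a k,\na^b k}=\scal{(M^\top-\ga^\top)\pa_s\na^a k,\na^b k}=\scal{\na^{a+1}k,\na^b k}$. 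The key point I want to emphasize is that no ambient-curvature correction tensors ever appear: every $\pa_s$-derivative of $|k|^q$ stays a polynomial in $|k|^{-1}$ and the scalars $\scal{\na^a k,\na^b k}$, and each application of $\pa_s$ raises the total derivative weight $\sum_i(a_i+b_i)$ by exactly one. Starting from weight $0$ it follows that $\pa_s^L|k|^q$ is a sum of monomials of weight exactly $L$; no lower-order terms survive and every factor $\na^{l}k$ has $l\le L$ with orders summing to $L$. Tracking the two extreme branches of the Leibniz tree then isolates the stated terms: always differentiating the power produces $C_{q,L}|k|^{q-2L}\scal{k,\na k}^L$, while always pushing a single derivative factor up produces $q|k|^{q-2}\scal{k,\na^L k}$, and every remaining monomial has all derivative orders strictly below $L$, hence lies in the middle sum with $1\le l_1\le\cdots\le l_\lambda<L$ and $\sum_j l_j=L$. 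The base case $L=1$ is $\pa_s|k|^q=q|k|^{q-2}\scal{k,\na k}$, in which the first and last terms coincide and the middle sum is empty.

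I expect the only place requiring care to be the combinatorial bookkeeping in the last item, namely verifying that the two extreme branches carry precisely the asserted leading factors and that nothing outside the middle family escapes the index constraints. The clean differentiation rules above, and in particular the observation that contraction against normal fields suppresses all ambient corrections, reduce this to a routine exercise rather than a delicate computation; by contrast, the first two inequalities are essentially immediate once the comparability of the two families of norms and the energy bound on $\int|k|^2\,ds$ have been recorded.
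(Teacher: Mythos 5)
Your proposal is correct, and all three items go through; the only difference from the paper is in the mechanics of items \ref{it:Bound1} and \ref{it:Bound2}. For \eqref{eq:Bound1} the paper does not invoke \eqref{eq:2.16} at all: it interpolates directly via \eqref{eq:2.14}, $\|\na^{n-j}k\|_2\le C\|k\|_2^{1-\alpha}\|k\|_{n,2}^{\alpha}$ with $\alpha=1-\tfrac jn<1$, then uses \eqref{eq:2.17} and Young's inequality on $\|\na^nk\|_2^{2\alpha}$; this treats all $j\in\{1,\dots,n\}$ uniformly (for $j=n$ one has $\alpha=0$ and the bound is just $\int|k|^2\,ds\le C$), whereas your route through \eqref{eq:2.16} needs $i_1=i_2=n-j\ge1$ and hence the separate case $j=n$, which you correctly dispose of via the energy bound. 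Your application of \eqref{eq:2.16} is legitimate ($\mu+\tfrac\nu2=2(n-j)+1<2n+1$ exactly because $j\ge1$), and since \eqref{eq:2.16} is itself derived from \eqref{eq:2.14} and \eqref{eq:2.17} there is no circularity; the paper's version is just one layer more elementary. For \eqref{eq:Bound2} the paper again interpolates first, obtaining $C\|\na^nk\|_2+C\|\na^nk\|_2^{1+\alpha}$ with $1+\alpha<2$ and closing with Young, while you apply Cauchy--Schwarz and then feed in \eqref{eq:Bound1}; your two-parameter choice (first $\de$, then $\ep'$) is the standard way to make that absorption rigorous, and the norm-comparability you record at the outset (boundedness of $L(\ga(t,\cdot))$ above and below) is exactly what is needed to pass between $\int|\na^nk|^2\,ds$ and the scale-invariant $\|\na^nk\|_2^2$. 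For item \ref{it:DerivatePotenzaK} the paper only states ``$L=1$ plus induction''; your argument supplies the substance, and the two key observations — that $\scal{\pa_s\phi,\psi}=\scal{\na\phi,\psi}$ for normal fields because the projection $M^\top-\ga^\top$ is orthogonal and self-adjoint, so no ambient-curvature corrections survive contraction, and that the two extreme Leibniz branches are each produced by a unique term of the previous step (giving the exact coefficients $C_{q,L}$ and $q$) — are precisely what make the induction close with the stated index constraints $1\le l_1\le\dots\le l_\lambda<L$, $\sum l_j=L$ on the middle sum.
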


\begin{proof}
	We prove the items separately.
	\begin{enumerate}
	\item[(\ref{it:Bound1})] Applying \eqref{eq:2.14} and then \eqref{eq:2.17} we find
	\[
	\begin{split}
		\int|\na^{n-j}k|^2\,ds &\le C \|\na^{n-j}k\|_2^2 \le C \|k\|_2^{2(1-\alpha)}\|k\|_{n,2}^{2\alpha}
		\le C \|k\|_2^{2(1-\alpha)}  ( \|\na^nk\|_2^2 + \|k\|_2^2 )^\alpha \\
		&\le C \|\na^nk\|_2^{2\alpha} + C \le \ep \|\na^nk\|_2^{2} + C(\ep),
	\end{split}
	\]
	where we used that $\alpha=\tfrac{n-j}{n}=1-\tfrac{j}{n}<1$, and then $2\alpha<2$ and we used Young inequality.
	\item[(\ref{it:Bound2})] Arguing as in \ref{it:Bound1} we obtain
	\[
	\begin{split}
		\int |\na^{n-j} k ||\na^n k|\,ds 
		&\le C \|\na^{n-j}k\|_2 \|\na^nk\|_2
		\le C \|k\|_2^{1-\alpha}\|k\|_{n,2}^\alpha   \|\na^nk\|_2 \\
		&\le C (\|\na^nk\|_2^2 + \|k\|_2^2)^{\frac\alpha2} \|\na^nk\|_2 \le C\|\na^nk\|_2 + C\|\na^nk\|_2^{1+\alpha},
	\end{split}
	\]
	where $\alpha=1-\tfrac{j}{n}$, then $1+\alpha<2$, and thus the conclusion follows again by Young inequality.
	\item[(\ref{it:DerivatePotenzaK})] For $L=1$ we have $\pa_s|k|^q = q|k|^{q-2}\scal{k,\na k}$, and then for $L\ge2$ the claim follows by induction.
	\end{enumerate}
\end{proof}

We are ready for estimating the norms $\|\na^mk\|_2$ uniformly in time. Recall that $\|\na k\|_{L^\infty} \le C=C(\bar{\si},\| k_\Ga\|_{W^{2,2}})$ and also $\|\na^2k\|_p \le C=C(\bar{\si},\|\na^2k_\Ga\|_{L^2})$ uniformly in time by assumptions. From now on let
\[
m\ge 3,
\]
and assume by induction that
\[
\|\hat k\|_{W^{m-1,2}} (t)\le C(1+ \|\hat k\|_{W^{m-1,2}} (0)),
\]
for any $t\in[0,\tau)$, where always $C=C(m-1,\bar{\si},\cU,\|k_\Ga\|_{W^{2,2}},\La)$. In particular $|\na^rk|$ is uniformly bounded in $L^\infty$ for any $r=0,...,m-2$, and $\na^{m-1}k \in L^2(ds_\ga)$.

\medskip

Multiplying \eqref{eq:EvolutionDerivate} by $\na^mk$ and integrating we get the evolutions
\begin{equation}\label{eq:EvolutionIntegrals}
\begin{split}
	\pa_t\left(\frac12 \int |\na^mk|^2\,ds\right) &+ \int \scal{\na^{m+2}k, \na^{m+2}(|k|^{p-2}k)} \,ds 
	=\\
	&= \int \left\lgl \na^m k , \na^{m+2} \left( - \frac{1}{p'} |k|^pk + k - R(|k|^{p-2}k,\tau)\tau \right) \right\rgl \,ds
	- \frac12 \int |\nabla^m k|^2 \scal{V,k} \, ds +\\
	&\indent + \int \left\lgl \na^mk
	,
	\na^m \left( \scal{V,k}k + R(V,\tau)\tau \right) 
	+ \sum_{j=0}^{m-1} \na^{m-1-j} (X(\na^jk))
	\right\rgl \,ds,
\end{split}
\end{equation}
in the notation of \eqref{eq:EvolutionDerivate}. We now estimate each term in \eqref{eq:EvolutionIntegrals}. We are going to see that, once the second summand on the left hand side is correctly estimated, by the same arguments also the remaining terms will be controlled in the right way.

\medskip

We have
\begin{equation}\label{eq:TermineSinistraCompleto}
\begin{split}
	\int \scal{\na^{m+2}k &, \na^{m+2}(|k|^{p-2}k)}
	=
	\sum_{j=0}^{m+2} \begin{pmatrix}m+2 \\ j\end{pmatrix}
	 \int  \scal{\na^{m+2}k, \pa_s^j(|k|^{p-2}) \na^{m+2-j}k} \\
	 &= \int |k|^{p-2} |\na^{m+2}k|^2 +\sum_{j=1}^{m+2} \begin{pmatrix}m+2 \\ j\end{pmatrix}
	 \int  \scal{\na^{m+2}k, \pa_s^j(|k|^{p-2}) \na^{m+2-j}k}\\
	 &\ge C \int |\na^{m+2}k|^2 
	 +\sum_{j=1}^{m+2} \begin{pmatrix}m+2 \\ j\end{pmatrix}
	 \int \scal{\na^{m+2}k, \pa_s^j(|k|^{p-2}) \na^{m+2-j}k}.
\end{split}
\end{equation}
Let $j\in\{1,...,m+2\}$. By \Cref{lem:AltreStime} we find
\begin{equation}\label{eq:TermineSinistraJ}
\begin{split}
	\int \scal{\na^{m+2}k&, \pa_s^j(|k|^{p-2}) \na^{m+2-j}k} 
	= C_{p-2,j} \int |k|^{p-2-2j}\scal{k,\na k}^j\scal{\na^{m+2}k, \na^{m+2-j}k} +\\
	&\indent + \sum_{\stackrel{l_1+...+l_\lambda=j}{1\le l_1\le...\le l_\lambda<j}}
	C_{\bar l} \int |k|^{q_{\bar l}}k*...*k \na^{l_1}k*...*\na^{l_\lambda}k
	 \scal{\na^{m+2}k, \na^{m+2-j}k} +\\
	&\indent + (p-2)\int |k|^{p-4}\scal{k,\na^jk}\scal{\na^{m+2}k, \na^{m+2-j}k}.
\end{split}
\end{equation}
We study separately the case $j=m+2$. So let $j=m+2$. In this case
\[
\begin{split}
\int \scal{\na^{m+2}k&, \pa_s^{m+2}(|k|^{p-2}) k} 
= C_{p-2,m+2} \int |k|^{p-2-2(m+2)}\scal{k,\na k}^{m+2}\scal{\na^{m+2}k, k} +\\
&\indent + \sum_{\stackrel{l_1+...+l_\lambda=m+2}{1\le l_1\le...\le l_\lambda<m+2}}
C_{\bar l} \int |k|^{q_{\bar l}}k*...*k \na^{l_1}k*...*\na^{l_\lambda}k
\scal{\na^{m+2}k, k} + \\
&\indent + (p-2)\int |k|^{p-4}\scal{\na^{m+2}k, k}^2 \\
&\ge 
C_{p-2,m+2} \int |k|^{p-2-2(m+2)}\scal{k,\na k}^{m+2}\scal{\na^{m+2}k, k} +\\
&\indent + C_{(1,m+1)} \int |k|^{q_{(1,m+1)}} k*...*k \na k* \na^{m+1}k \scal{\na^{m+2}k, k} + \\
&\indent  + \sum_{\stackrel{l_1+...+l_\lambda=m+2}{1\le l_1\le...\le l_\lambda<m+1}}
C_{\bar l} \int |k|^{q_{\bar l}}k*...*k \na^{l_1}k*...*\na^{l_\lambda}k
\scal{\na^{m+2}k, k}.
\end{split}
\]
Clearly
\[
\left| C_{p-2,m+2} \int |k|^{p-2-2(m+2)}\scal{k,\na k}^{m+2}\scal{\na^{m+2}k, k} \right| \le \ep\int|\na^{m+2}k|^2 + C(\ep),
\]
\[
\left|C_{(1,m+1)} \int |k|^{q_{(1,m+1)}} k*...*k \na k* \na^{m+1}k \scal{\na^{m+2}k, k}\right| \le \ep\int|\na^{m+2}k|^2 + C(\ep).
\]
We want to prove the very same estimate for the generic term
\begin{equation}\label{eq:GenericTerm}
	C_{\bar l} \int |k|^{q_{\bar l}}k*...*k \na^{l_1}k*...*\na^{l_\lambda}k
	*\na^{m+2}k,
\end{equation}
with $1\le l_1\le...\le l_\lambda<m+1$ and $l_1+...+l_\lambda=m+2$. We divide two cases.

\begin{itemize}
	\item Suppose that writing $l_1\le...\le l_{i-1} < m-1 \le l_i \le...\le l_\lambda\le m$ it occurs that $\sum_{r=i}^\lambda l_r = m+2$, or suppose that there is some $s$ such that $l_s=m-2$.
	
	In the first situation, since $\sharp\{l_r\ge m-1\} \le 1 + \frac{3}{m-1}$, if $m>4$ then $\sharp\{l_r\ge m-1\} \le 1$, and it is impossible to satisfy that $\sum_{r=i}^\lambda l_r = m+2$. If instead $m=3$, then it must be that $l_\lambda=3,l_{\lambda-1}=2$, while if $m=4$, then it must occur that $l_\lambda=l_{\lambda-1}=3$. We can handle these two cases individually:
	\begin{equation}\label{eq:Conto}
	\begin{split}
		\int |\na^2k||\na^3k||\na^5k| &\le C\|\na^2k\|_p \|\na^3k\|_q\|\na^5k\|_2 \le C\|\na^5 k\|_2(\|\na^5k\|_2^\alpha + C) \\
		&\le \ep\int|\na^5k|^2 + C(\ep),
	\end{split}
	\end{equation}
	where $\tfrac12 + \tfrac1p +\tfrac1q =1$, and then $q>2$, and $\alpha=\tfrac15(3+\tfrac12 -\tfrac1q)=\tfrac35+\tfrac{1}{5p}$ by \Cref{lem:StimeKuwert}, so that $1+\alpha<2$;
	\[
	\begin{split}
		\int |\na^3k|^2|\na^6k| 
		&\le C(\|\na^3k\|_4^4)^{\frac12}\|\na^6 k\|_2 
		\le \left( \|\na^3k\|_2^{2(1-s)} \|\na^3k\|_q^{sq} \right)^{\frac12} \|\na^6k\|_2 \\
		&\le  \|\na^3k\|_2^{(1-s)} ( C+ \|\na^6k\|_2^\alpha )^{\frac{sq}{2}} \|\na^6k\|_2 \le C\|\na^6k\|_2 + \|\na^6k\|_2^{1+\frac{\alpha s q}{2}},
	\end{split}
	\]
	where we used that $\|\na^3k\|_2$ is uniformly bounded by induction, $4=2(1-s)+qs$, and $\alpha=\tfrac16(3+\tfrac12-\tfrac1q)$ by \Cref{lem:StimeKuwert}. Since $sq=2+2s$, for $s$ small we get $\tfrac{\alpha s q}{2} = \alpha(1+s) < (\tfrac12 +\tfrac{1}{12})(1+s) <1 $, and we conclude the desired estimate by Young inequality.
	
	In the other situation, when there is some $l_s=m-2$, we see that if $\sharp\{l_r\ge m-1\}<2$ then $\{l_r\ge m-1\}$ is either empty or only contains $l_\lambda$, and thus by induction the integral is estimated by
	\[
	C \int |\na^{l_\lambda}k||\na^{m+2}k|, \qquad\mbox{ or } \qquad C\int |\na^{m+2}k|,
	\]
	and we can apply \Cref{lem:AltreStime}. If instead $\sharp\{l_r\ge m-1\}\ge 2$, then, from $\sum_1^\lambda l_r = m+2$, we are in the case $m=3$, and one estimates a term like
	\[
	C \int |\na^3k||\na^5k|,
	\]
	by \Cref{lem:AltreStime}, or like
	\[
	C \int |\na^2k|^2|\na^5k|,
	\]
	in the same way we treated \eqref{eq:Conto}.
	\item In this second case we have that writing $l_1\le...\le l_{i-1} < m-1 \le l_i \le...\le l_\lambda\le m$ it occurs that $\sum_{r=i}^\lambda l_r <m+2$, and $l_r\neq m-2$ for any $r$.
	
	In this case we first integrate by parts in \eqref{eq:GenericTerm} once, and thus we get something of the form
	\[
	\begin{split}
		\left|C_{\bar l} \int |k|^{q_{\bar l}}k*...*k \na^{l_1}k*...*\na^{l_\lambda}k
		*\na^{m+2}k \right| 
		&\le
		C \int \Pi_{r=1}^\lambda |\na^{j_r}k| \,\, |\na^{m+1}k| \\
		& \le C \int \Pi_{r=i}^\lambda |\na^{j_r}k| \,\, |\na^{m+1}k|,
	\end{split}
	\]
	where we wrote $1\le j_1\le  ... j_{i-1}< m-1 \le j_i \le ... \le j_\lambda \le m+1$, where we used that since $l_r\neq m-2$ for any $r$, then $\sharp\{l_r\ge m-1\}=\sharp \{j_r\ge m-1\}$. It follows that $\sum_{r=i}^\lambda j_r \le m+2$, and then $\sharp\{j_r \ge m-1\} \le \tfrac{m+2}{m-1}$. Therefore the desired estimate follows by applying \eqref{eq:2.16} with
	\[
	\mu = m+1 +\sum_{r=i}^\lambda j_r \le 2m+3, \qquad
	\nu = 1 + 	\sharp\{j_r \ge m-1\} \le \frac{2m+1}{m-1}, \qquad
	n= m+2,
	\]
	observing that $\mu+\tfrac\nu2 \le 2m+3 + \tfrac12 \frac{2m+1}{m-1} < 2(m+2) +1$ for any $m\ge 3$.
\end{itemize}

\medskip

Consider now the remaining cases of $j\in\{1,...,m+1\}$ in \eqref{eq:TermineSinistraJ}. For $j=1$, by \Cref{lem:AltreStime}, we get
\[
\left|\int \scal{\na^{m+2}k, \pa_s(|k|^{p-2}) \na^{m+1}k} \right| 
\le C \int |\na^{m+2}k||\na^{m+1}k| \le \ep \int |\na^{m+2}k|^2 + C(\ep).
\]
For $j\in\{2,...,m+1\}$ we see that \eqref{eq:TermineSinistraJ} can be rewritten
\[
\begin{split}
\int \scal{\na^{m+2}k&, \pa_s^j(|k|^{p-2}) \na^{m+2-j}k} 
= C_{p-2,j} \int |k|^{p-2-2j}\scal{k,\na k}^j\scal{\na^{m+2}k, \na^{m+2-j}k} +\\
&\indent + \sum_{\stackrel{l_1+...+l_\lambda=m+2}{1\le l_1\le...\le l_\lambda<m+1}}
C_{\bar l} \int |k|^{q_{\bar l}}k*...*k \na^{l_1}k*...*\na^{l_\lambda}k *
\na^{m+2}k +\\
&\indent + (p-2)\int |k|^{p-4}\scal{k,\na^jk}\scal{\na^{m+2}k, \na^{m+2-j}k},
\end{split}
\]
and we have that
$\left| C_{p-2,j} \int |k|^{p-2-2j}\scal{k,\na k}^j\scal{\na^{m+2}k, \na^{m+2-j}k}  \right| \le \ep\int |\na^{m+2}k|^2 + C(\ep)$ by \Cref{lem:AltreStime}, the terms in the sum have exactly the form of the generic term \eqref{eq:GenericTerm} we studied above, and thus they can be estimated as desired, and finally the last summand
\[
(p-2)\int |k|^{p-4}\scal{k,\na^jk}\scal{\na^{m+2}k, \na^{m+2-j}k},
\]
is estimated by \Cref{lem:AltreStime} if $j=m+1$, otherwise it is again of the form \eqref{eq:GenericTerm}.

Putting together the estimates we found, coming back to \eqref{eq:TermineSinistraCompleto}, we get that the estimate becomes
\begin{equation*}
\begin{split}
C(\ep) + \ep \int |\na^{m+2}k|^2 +\int \scal{\na^{m+2}k &, \na^{m+2}(|k|^{p-2}k)}
\ge  C \int |\na^{m+2}k|^2,
\end{split}
\end{equation*}
and choosing $\ep>0$ sufficiently small this becomes
\begin{equation*}
	C_1  \int |\na^{m+2}k|^2 \le \int \scal{\na^{m+2}k , \na^{m+2}(|k|^{p-2}k)} + C_2.
\end{equation*}

\medskip

Finally it is easy to check that the terms on the right hand side of \eqref{eq:EvolutionIntegrals} can be estimated by analogous quantities. More precisely, after integration by parts, we need to estimate
\[
\int \left\lgl \na^{m+2} k , \na^m \left( - \frac{1}{p'} |k|^pk + k - R(|k|^{p-2}k,\tau)\tau \right) \right\rgl ,
\]
and
\[
\int \left\lgl \na^mk , \na^m \left( \scal{V,k}k + R(V,\tau)\tau \right) 
+ \sum_{j=0}^{m-1} \na^{m-1-j} (X(\na^jk))
\right\rgl .
\]
Noticing that in a term of the form $ \na^{m-1-j} (X(\na^jk))$ the sum of the orders of the derivatives of $k$ is $m+2$, one can check that the above two integrals can be estimated by exactly the same arguments employed before. Moreover, also the remained integral on the right hand side of \eqref{eq:EvolutionIntegrals}, namely $\int |\nabla^m k|^2 \scal{V,k} \, ds$, can be easily estimated using \Cref{lem:StimeKuwert}.

In the end \eqref{eq:EvolutionIntegrals} becomes
\[
\begin{split}
\pa_t\left(\frac12 \int |\na^mk|^2\,ds\right) &+ C_1  \int |\na^{m+2}k|^2 \le\\
&\le \pa_t\left(\frac12 \int |\na^mk|^2\,ds\right)
+\int \scal{\na^{m+2}k , \na^{m+2}(|k|^{p-2}k)} + C_2 \\
& \le \ep\int |\na^{m+2}k|^2   + C(\ep) + C_2,
\end{split}
\]
and taking $\ep>0$ sufficiently small we get the estimate
\[
\pa_t\left(\frac12 \int |\na^mk|^2\,ds\right) + C_3  \int |\na^{m+2}k|^2 \le  C_4.
\]
By \eqref{eq:2.17} one finally obtains
\[
\pa_t\left( \int |\na^mk|^2\,ds\right) + C_5  \int |\na^{m}k|^2 \le  C_6,
\]
and by comparison we get the desired bound
\[
\|\na^mk\|_2^2(t) \le \|\na^mk\|_2^2(0) + C,
\]
for any $t\in[0,\tau)$.

\section{An example of a non-converging flow}

In this appendix we construct an example of an analytic complete $2$-dimensional submanifold $M$ of $\R^3$ and of a solution to the gradient flow of the elastic energy with exponent $p=2$ such that it does not converge. Indeed, the resulting flow will escape from any compact set of $M$ for times sufficiently large. By \Cref{thm:MainIntro} such a flow cannot even subconverge in the sense of the hypotheses of \Cref{thm:MainIntro}.

\medskip

In $\R^3=\{(x,y,z)\,:\,x,y,z\in\R\}$, consider the curve
\[
\si:(0,+\infty)\to\R^3, \qquad \si(t)=(f(t),0,t),
\]
that parametrizes the graph of a function $f\colon (0,+\infty)\to\R$ on the plane $\{y=0\}$, and assume that $f$ is analytic and $f(t)>0$ for any $t>0$. Consider the surface of revolution about the $z$-axis of the curve $\si$, that is, the analytic surface parametrized by the immersion
\[
F(\te,t) = \left( f(t)\cos \te, f(t) \sin \te, t \right),
\]
for $t>0$ and $\te\in[0,2\pi]$.
We denote by $M$ such a complete analytic submanifold of $\R^3$. Consider $t_0>0$ and the closed curve $\ga_0$ given by the intersection $M\cap \{z=t_0\}$. Letting
\[
f(t)=1+\frac1t,
\]
we want to show that if $t_0$ is sufficiently big, the resulting solution $\ga$ of the gradient flow of the elastic energy with exponent $p=2$ starting from $\ga_0$ does not converge, and, in fact, it escapes from any compact set of $M$ for times sufficiently large.

\medskip

As the manifold $M$ lacks of a ``good'' family of isometries that could satisfy the hypotheses of \Cref{thm:ConvergenceManifoldsMAIN}, it may be not too surprising that the flow does not converge. However, we notice that $(M,g)$ is a manifold of bounded geometry, that is, the Riemann curvature tensor, that we will compute later, is uniformly pointwise bounded together with all its derivatives and the injectivity radius of $(M,g)$ is strictly positive. Therefore, this example shows that not even such hypotheses are sufficient for the convergence of the flow. 

\medskip

Since $M$ is a surface of revolution, its Gaussian curvature $K$ can be computed in terms of $f$ and it equals
\[
K(\te,t)= \frac{-f''(t)}{f(t)(1+(f'(t))^2)^2}.
\]
Hence, as $M$ is a $2$-dimensional, the Riemann tensor is given by (see \cite{DoCarmo})
\[
R(X,Y)Z = K ( \scal{Y,Z}X - \scal{X,Z}Y  ),
\]
for any tangent vectors $X,Y,Z$. Moreover, a unit normal field along $M$ is given by
\[
\nu (\te,t) = \frac{1}{\sqrt{1+(f'(t))^2}} \left( -\cos\te, - \sin \te, f'(t) \right).
\]

By the rotational symmetry of $M$ and the choice of $\ga_0$, the flow $\ga$ is of the form $\ga(t,\S^1)= M \cap \{z=G(t)\}$ for some function $G$ (this also follows from the explicit computation of the driving velocity of the flow computed below, which is a multiple of the curvature of the evolving curve). Hence the desired conclusion follows once we prove the following two facts.

\begin{enumerate}
	\item \label{it:Example1}
	There exists $\tau_0>0$ such that for any $\tau\ge\tau_0$, the curve $\alpha(\te)=(f(\tau)\cos\te,f(\tau)\sin\te,\tau)$ is not a critical point of the energy.
	
	\item \label{it:Example2}
	There exists $\tau_0>0$ such that for any $\tau\ge\tau_0$, letting $\alpha(\te)=(f(\tau)\cos\te,f(\tau)\sin\te,\tau)$, we have that $-\na_{T(\alpha)^2,T(\al)^2} E(0)$ is a positive multiple of the curvature $k_\al$ of $\alpha$, and the third component of $k_\alpha$ is positive.
\end{enumerate}

Both the above items will follow from the direct calculation of $\na_{T(\alpha)^2,T(\al)^2} E(0)$, that is, from the computation of the first variation at the curve $\alpha$.

We have that $|\alpha'|= f(\tau)$, and then
\[
\begin{split}
	k_\alpha &= M^\top(\pa_{s_\alpha}^2 \al)
	= -\frac{1}{f(\tau)}\left[ \left( \cos \te, \sin \te, 0 \right) - \left\langle \left( \cos \te, \sin \te, 0 \right), \nu \right\rangle \nu
	\right] \\
	&= -\frac{1}{f(\tau)}\left[ \frac{(f'(\tau))^2}{1+(f'(\tau))^2}\left( \cos \te, \sin \te, 0 \right)
	+ \left(  0,0, \frac{f'(\tau)}{1+(f'(\tau))^2} \right)
	\right],
\end{split}
\]
and we see that the third component of $k_\alpha$ is always strictly positive, indeed $f'(\tau)=-\tfrac{1}{\tau^2}$.
Denoting as usual $\al^\perp\coloneqq M^\top - \alpha^\top$, we compute
\[
\begin{split}
	\na k_\al &= \al^\perp(\pa_{s_\alpha}k_\al) = \frac{1}{f(\tau)} \al^\perp(\pa_\te k_\al) 
	= \frac{1}{f(\tau)}   \frac{(f'(\tau))^2}{1+(f'(\tau))^2} \al^\perp\left( -\sin \te, \cos \te, 0 \right) 
	=0,
\end{split}
\]
and then also $\na^2 k_\al=0$. Therefore
\[
\begin{split}
	\frac{1}{|\alpha'|}\na_{T(\alpha)^2,T(\al)^2} E(0) 
	&= \na^2 k_\al + \frac12 |k_\alpha|^2 k_\al - k_\al + R(k_\al,\tau_\al)\tau_\al  
	= \frac12 |k_\alpha|^2 k_\al - k_\al + K k_\al \\
	&= \left( \frac12 |k_\alpha|^2 + K -1 \right) k_\al 
	= \left(
	\frac12 \frac{(f'(\tau))^2}{f^2(\tau)(1+(f'(\tau))^2)} 
	- \frac{f''(\tau)}{f(\tau)(1+(f'(\tau))^2)^2} -1	
	\right) k_\al \\
	&= 
	\left(  \frac12 (f'(\tau))^2(1+(f'(\tau))^2) - f(\tau)f''(\tau) - f^2(\tau)(1+(f'(\tau))^2)^2  \right)
	\frac{k_\al}{f^2(\tau)(1+(f'(\tau))^2)^2} \\
	&= \big( -1 + O(\tau^{-1}) \big) \frac{k_\al}{f^2(\tau)(1+(f'(\tau))^2)^2}.
\end{split}
\]
From the above computation we see that for $\tau_0$ sufficiently large and $\tau\ge\tau_0$, we have that $\na_{T(\alpha)^2,T(\al)^2} E(0) \neq 0$, and then $\al$ is not a critical point, and $-\na_{T(\alpha)^2,T(\al)^2} E(0) $ is a positive multiple of $k_\al$.

We deduce that if $t_0\ge\tau_0$, then the flow $\ga$ does not remain in a bounded subset of $M$, and, in fact, it sweeps the set $M\cap \{ z\ge t_0 \}$.

\bigskip

\begin{bibdiv}
	\begin{biblist}		
		
		\bib{Aubin}{book}{
			AUTHOR = {Aubin, T.},
			TITLE = {Some nonlinear problems in {R}iemannian geometry},
			SERIES = {Springer Monographs in Mathematics},
			PUBLISHER = {Springer-Verlag, Berlin},
			YEAR = {1998},
			PAGES = {xviii+395},
		}
		
		\bib{Chavel}{book}{
			AUTHOR = {Chavel, I.},
			TITLE = {Riemannian geometry. A modern introduction},
			SERIES = {Cambridge Studies in Advanced Mathematics},
			VOLUME = {98},
			EDITION = {Second},
			NOTE = {xvi+471},
			PUBLISHER = {Cambridge University Press, Cambridge},
			YEAR = {2006},
		}

		\bib{Ch03}{article}{
			AUTHOR = {Chill, R.},
			TITLE = {On the \L ojasiewicz-{S}imon gradient inequality},
			JOURNAL = {J. Funct. Anal.},
			FJOURNAL = {Journal of Functional Analysis},
			VOLUME = {201},
			YEAR = {2003},
			NUMBER = {2},
			PAGES = {572--601},
		}

		\bib{ChFaSc09}{article}{
			AUTHOR = {Chill, R.},
			author = {Fa\v{s}angov\'{a}, E.},
			author = {Sch\"{a}tzle, R.},
			TITLE = {Willmore blowups are never compact},
			JOURNAL = {Duke Math. J.},
			FJOURNAL = {Duke Mathematical Journal},
			VOLUME = {147},
			YEAR = {2009},
			NUMBER = {2},
			PAGES = {345--376},
		}
	
\bib{ChMi18}{article}{
	AUTHOR = {Chill, R.},
	author = {Mildner, S.},
	TITLE = {The {K}urdyka-\L{}ojasiewicz-Simon inequality and stabilisation in nonsmooth
		infinite-dimensional gradient systems},
	JOURNAL = {Proc. Amer. Math. Soc.},
	FJOURNAL = {Proc. Amer. Math. Soc.},
	VOLUME = {146},
	YEAR = {2018},
	NUMBER = {},
	PAGES = {4307--4314},
}

	\bib{DaLaLiPoSp18}{article}{
		AUTHOR = {Dall'Acqua, A.},
		author = {Laux, T.},
		author = {Lin, C.-C.},
		author = {Pozzi, P.},
		author = {Spener, A.},
		TITLE = {The elastic flow of curves on the sphere},
		JOURNAL = {Geom. Flows},
		FJOURNAL = {Geometric Flows},
		VOLUME = {3},
		YEAR = {2018},
		NUMBER = {1},
		PAGES = {1--13},
	}

		\bib{DaLiPo19}{article}{
		author = {Dall’Acqua, A.},
		author = {Lin, C.-C.},
		author = {Pozzi, P.},
		title = {Elastic flow of networks: long-time existence result},
		journal = {Geometric Flows},
		year = {2019},
		publisher = {De Gruyter},
		address = {Berlin, Boston},
		volume = {4},
		number = {1},
		pages = {83--136},
	}

\bib{DaPo14}{article}{
	AUTHOR = {Dall'Acqua, A.},
	author = {Pozzi, P.},
	TITLE = {A {W}illmore-{H}elfrich {$L^2$}-flow of curves with natural
		boundary conditions},
	JOURNAL = {Comm. Anal. Geom.},
	FJOURNAL = {Communications in Analysis and Geometry},
	VOLUME = {22},
	YEAR = {2014},
	NUMBER = {4},
	PAGES = {617--669},
}
	
		\bib{DaPoSp16}{article}{
			AUTHOR = {Dall'Acqua, A.},
			author = {Pozzi, P.},
			author = { Spener, A.},
			TITLE = {The \L ojasiewicz-{S}imon gradient inequality for open elastic
				curves},
			JOURNAL = {J. Differential Equations},
			FJOURNAL = {Journal of Differential Equations},
			VOLUME = {261},
			YEAR = {2016},
			NUMBER = {3},
			PAGES = {2168--2209},
		}
		
	\bib{DaSp17}{article}{
			AUTHOR = {Dall'Acqua, A.},
			author = {Spener, A.},
			TITLE = {The elastic flow of curves	in the hyperbolic plane},
			JOURNAL = {arXiv e-print, \href{https://arxiv.org/abs/1710.09600}{arXiv:1710.09600}},
			YEAR = {2017},
		}

	\bib{DoCarmo}{book}{
		title     = {Riemannian geometry. Translated from the second Portuguese edition by Francis Flaherty},
		publisher = {Birkh\"{a}user Boston, Inc., Boston, MA},
		year      = {1992},
		author    = {do Carmo, M. P.},
		volume    = {},
		series    = {Mathematics: Theory \& Applications},
		address   = {},
		pages     = {xiv+300},
		timestamp = {},
	}

		\bib{DzKuSc02}{article}{
			AUTHOR = {Dziuk, G.},
			author = {Kuwert, E.},
			author = {Sch\"{a}tzle, R.},
			TITLE = {Evolution of elastic curves in {$\mathbb R^n$}: existence and
				computation},
			JOURNAL = {SIAM J. Math. Anal.},
			FJOURNAL = {SIAM Journal on Mathematical Analysis},
			VOLUME = {33},
			YEAR = {2002},
			NUMBER = {5},
			PAGES = {1228--1245},
		}

		\bib{GaMePl19a}{article}{
			AUTHOR = {Garcke, H.},
			author = {Menzel, J.},
			author = {Pluda, A.},
			TITLE = {Willmore flow of planar networks},
			JOURNAL = {J. Differential Equations},
			FJOURNAL = {Journal of Differential Equations},
			VOLUME = {266},
			YEAR = {2019},
			NUMBER = {4},
			PAGES = {2019--2051},
		}

	\bib{GaMePl19b}{article}{
		AUTHOR = {Garcke, H.},
		author = {Menzel, J.},
		author = {Pluda, A.},
		TITLE = {Long Time Existence of Solutions to an Elastic Flow
			of Networks},
		JOURNAL = {Comm. Partial Differential Equations},
		YEAR = {2020},
	}

\bib{HaMa19}{article}{
	AUTHOR = {Hauer, D.},
	author = {Maz\'{o}n, J. M.},
	TITLE = {Kurdyka-\L{}ojasiewicz-Simon inequality for gradient flows in metric	spaces},
	JOURNAL = {Trans. Amer. Math. Soc.},
	YEAR = {2019},
	VOLUME = {372},
	PAGES = {4917--4976},
}

\bib{HormanderIII}{book}{
	AUTHOR = {H\"{o}rmander, L.},
	TITLE = {The analysis of linear partial differential operators. {III}},
	SERIES = {Classics in Mathematics},
	NOTE = {Pseudo-differential operators,
		Reprint of the 1994 edition},
	PUBLISHER = {Springer, Berlin},
	YEAR = {2007},
	NOTE = {viii+525},
}

		\bib{HuPo99}{article}{
			AUTHOR = {Huisken, G.},
			author = {Polden, A.},
			TITLE = {Geometric evolution equations for hypersurfaces},
			JOURNAL = {Calculus of Variations and Geometric Evolution Problems (Cetraro 1996), Springer-Verlag, Berlin},
			YEAR = {1999},
			PAGES = {45--84},
		}

\bib{Loja63}{incollection}{
	AUTHOR = {\L{}ojasiewicz, S.},
	TITLE = {Une propri\'{e}t\'{e} topologique des sous--ensembles analytiques
		r\'{e}els},
	BOOKTITLE = {Les \'{E}quations aux {D}\'{e}riv\'{e}es {P}artielles ({P}aris, 1962)},
	PAGES = {87--89},
	PUBLISHER = {\'{E}ditions du Centre National de la Recherche Scientifique,
		Paris},
	YEAR = {1963},
}

\bib{Loja84}{article}{
	AUTHOR = {\L{}ojasiewicz, S.},
	TITLE = {Sur les trajectoires du gradient d'une fonction analytique},
	JOURNAL = {Seminari di Geometria (1982/83), Universit\`{a} degli Studi di Bologna},
	FJOURNAL = {},
	VOLUME = {},
	YEAR = {1984},
	NUMBER = {},
	PAGES = {115--117},
}
		
		\bib{Lunardi}{book}{
			AUTHOR = {Lunardi, A.},
			TITLE = {Interpolation theory},
			SERIES = {Appunti. Scuola Normale Superiore di Pisa (Nuova Serie)
				[Lecture Notes. Scuola Normale Superiore di Pisa (New
				Series)]},
			VOLUME = {16},
			NOTE = {xiv+199},
			PUBLISHER = {Edizioni della Normale, Pisa},
			YEAR = {2018},
		}

	\bib{Ma02}{article}{
		AUTHOR = {Mantegazza, C.},
		TITLE = {Smooth geometric evolutions of hypersurfaces},
		JOURNAL = {Geom. Funct. Anal.},
		FJOURNAL = {Geometric and Functional Analysis},
		VOLUME = {12},
		YEAR = {2002},
		NUMBER = {1},
		PAGES = {138--182},
	}

		\bib{MaMa12}{article}{
			AUTHOR = {Mantegazza, C.},
			author ={Martinazzi, L.},
			TITLE = {A note on quasilinear parabolic equations on manifolds},
			JOURNAL = {Ann. Sc. Norm. Super. Pisa Cl. Sci. (5)},
			FJOURNAL = {Annali della Scuola Normale Superiore di Pisa. Classe di
				Scienze. Serie V},
			VOLUME = {11},
			YEAR = {2012},
			NUMBER = {4},
			PAGES = {857--874},
		}

	\bib{MaNoPl17}{article}{
		AUTHOR = {Mantegazza, C.},
		author = {Novaga, M.},
		author = {Pluda, A.},
		TITLE = {Motion by curvature of networks with two triple junctions},
		JOURNAL = {Geom. Flows},
		FJOURNAL = {Geometric Flows},
		VOLUME = {2},
		YEAR = {2017},
		NUMBER = {1},
		PAGES = {18--48},
	}

\bib{MaNoPl19}{book}{
	AUTHOR = {Mantegazza, C.},
	author = {Novaga, M.},
	author = {Pluda, A.},
	TITLE = {Lectures on curvature flow of networks},
	BOOKTITLE = {In: Contemporary research in elliptic {PDE}s and related topics},
	SERIES = {In: Contemporary research in elliptic {PDE}s and related topics, Springer INdAM Ser.},
	VOLUME = {33},
	PAGES = {369--417},
	PUBLISHER = {Springer, Cham},
	YEAR = {2019},
}

\bib{MaPlPo20}{article}{
	AUTHOR = {Mantegazza, C.},
	author = {Pluda, A.},
	author = {Pozzetta, M.},
	TITLE = {A Survey of the Elastic Flow of Curves and Networks.},
	JOURNAL = {Milan J. Math.},
	FJOURNAL = {Milan J. Math.},
	YEAR = {2021},
	note = {https://doi.org/10.1007/s00032-021-00327-w},
}

\bib{MaPo20}{article}{
	AUTHOR = {Mantegazza, C.},
	author = {Pozzetta, M.},
	TITLE = {The \L ojasiewicz--Simon inequality for the elastic flow},
	JOURNAL = {Calc. Var.},
	FJOURNAL = {Calc. Var.},
	VOLUME = {60},
	YEAR = {2021},
	NUMBER = {56},
	NOTE = {https://doi.org/10.1007/s00526-020-01916-0},
}

	\bib{MuSp20}{article}{
		AUTHOR = {M\"{u}ller, M.},
		author = {Spener, A.},
		TITLE = {On the {C}onvergence of the {E}lastic {F}low in the
			{H}yperbolic {P}lane},
		JOURNAL = {Geom. Flows},
		FJOURNAL = {Geometric Flows},
		VOLUME = {5},
		YEAR = {2020},
		NUMBER = {1},
		PAGES = {40--77},
	}

		\bib{NoOk17}{article}{
			AUTHOR = {Novaga, M.},
			author = {Okabe, S.},
			TITLE = {Convergence to equilibrium of gradient flows defined on planar
				curves},
			JOURNAL = {J. Reine Angew. Math.},
			FJOURNAL = {Journal f\"{u}r die Reine und Angewandte Mathematik. [Crelle's
				Journal]},
			VOLUME = {733},
			YEAR = {2017},
			PAGES = {87--119},
		}

		\bib{PoldenThesis}{book}{
			AUTHOR = {Polden, A.},
			TITLE = {Curves and Surfaces of Least Total Curvature and Fourth-Order Flows},
			SERIES = {Ph.D. Thesis, Mathematisches Institut, Univ. T\"{u}bingen},
			PUBLISHER = {Arbeitbereich Analysis Preprint Server - Univ. T\"{u}bingen},
			NOTE = { \url{https://www.math.uni-tuebingen.de/ab/analysis/pub/alex/haiku/haiku.html}},
			YEAR = {1996},
		}

	\bib{Ru20}{article}{
		title = {On the Łojasiewicz–Simon gradient inequality on submanifolds},
		journal = {Journal of Functional Analysis},
		volume = {279},
		number = {8},
		pages = {},
		year = {2020},
		author = {Rupp, F.},
	}
	
	\bib{RuSp20}{article}{
		title = {Existence and convergence of the length-preserving elastic flow of clamped curves},
		pages = {},
		year = {2020},
		author = {Rupp, F.},
		author = {Spener, A.}
		JOURNAL = {arXiv e-print, \href{https://arxiv.org/abs/2009.06991?context=math.DG}{arXiv:2009.06991}},
	}
	
		\bib{Si83}{article}{
			AUTHOR = {Simon, L.},
			TITLE = {Asymptotics for a class of nonlinear evolution equations, with
				applications to geometric problems},
			JOURNAL = {Ann. of Math. (2)},
			FJOURNAL = {Annals of Mathematics. Second Series},
			VOLUME = {118},
			YEAR = {1983},
			NUMBER = {3},
			PAGES = {525--571},
		}

	\end{biblist}
\end{bibdiv}

\end{document}